\newtheorem{thm}{Theorem}[section]
\newtheorem{theorem}[thm]{Theorem}
\newtheorem{proposition}[thm]{Proposition}
\newtheorem{prop}[thm]{Proposition}
\newtheorem{lemma}[thm]{Lemma}
\newtheorem{corollary}[thm]{Corollary} 
\newtheorem{definition}[thm]{Definition}
\newtheorem{claim}[thm]{Claim}
\theoremstyle{definition}
\newtheorem{remark}[thm]{Remark}
\numberwithin{equation}{section}
\renewcommand{\ni}{\noindent}
\newcommand{\old}[1]{{}} 
 \newcommand{\mesh}{\eta}
\newcommand{\M}{\mathcal{M}}
\newcommand{\C}{\mathbb{C}}
\newcommand{\Os}{\mathcal O}
\newcommand{\1}{\mathbf{1}}
\newcommand{\D}{\mathbb{D}}
\newcommand{\E}{\mathbb{E}}
\newcommand{\N}{\mathbb{N}}
\newcommand{\Q}{\mathbb{Q}}
\newcommand{\Z}{\mathbb{Z}}
\newcommand{\R}{\mathbb{R}}
\renewcommand{\P}{\mathbb{P}}
\newcommand{\ol}{\overline}
\newcommand{\wt}{\widetilde}
\newcommand{\eps}{\epsilon}
\def\P{\mathbb{P}}
\def\E{\mathbb{E}}
\DeclareMathOperator{\Cov}{Cov}
\DeclareMathOperator{\Var}{Var}
\def\cQ{\mathcal{Q}}
\def\cM{\mathcal{M}}
\def\cI{\mathcal{I}}
\def\cE{\mathcal{E}}
\def\cA{\mathcal{A}}
\newcommand{\notion}[1]{{\bf  #1}}
\newcommand{\ga}{\gamma}
\newcommand{\II}{\mathcal I}
\newcommand{\QQ}{\mathcal{Q}}
\renewcommand{\ni}{\noindent}
\def\@rst #1 #2other{#1}
\newcommand\MR[1]{\relax\ifhmode\unskip\spacefactor3000 \space\fi
		\MRhref{\expandafter\@rst #1 other}{#1}}\newcommand{\MRhref}[2]{\href{http://www.ams.org/mathscinet-getitem?mr=#1}{MR#2}}
\def\MR#1{\href{http://www.ams.org/mathscinet-getitem?mr=#1}{MR#1}}
\newcommand{\aryb}{\begin{eqnarray*}}
\newcommand{\arye}{\end{eqnarray*}}
\def\alb#1\ale{\begin{align*}#1\end{align*}}
\newcommand{\eqb}{\begin{equation}}
\newcommand{\eqe}{\end{equation}}
\newcommand{\eqbn}{\begin{equation*}}
\newcommand{\eqen}{\end{equation*}}
\newcommand{\op}{\operatorname}
\newcommand{\eqD}{\overset{d}{=}}
\newcommand{\ep}{\epsilon}
\newcommand{\rta}{\rightarrow}
\newcommand{\wh}{\widehat} 
\newcommand{\mcl}{\mathcal}
\newcommand{\scr}{\mathscr}
\newcommand{\EE}{\mathcal{ E}}
\renewcommand{\II}{\mathcal{I}}
\renewcommand{\S}{\mathscr{S}}
\def\bi{\begin{itemize}}
	\def\ei{\end{itemize}}
\def\diam{\mathrm{diam}}
\def\md{\mid}
\def\Bb#1#2{{\def\md{\bigm| }#1\bigl[#2\bigr]}}
\def\Pb{\Bb\P}
\def\Eb{\Bb\E}
\def \p {{\partial}}
\def\ev#1{{\mathcal{#1}}}
\def\Spec{\mathscr{S}}
\def\Tg{\mathbb{T}} 
\def\Quad{Q} 
\def\qs{\mcl Q}
\newcommand{\HH}{\mathscr{H}}
\def\Sk{\mathsf{Sk}}
\def\Rs{\mathcal{Z}}
\def\llwb{\lambda_{B,W}}
\def\coa{\vartheta}
\def\nc{\mathsf{nc}}
\def\MST{\mathsf{MST}}
\begin{document}
\title{Liouville dynamical percolation}
\author{
	\begin{tabular}{c} Christophe Garban\\ [-5pt] \small Univ Lyon 1 \end{tabular}
	\begin{tabular}{c} Nina Holden\\ [-5pt] \small ETH Z\"urich \end{tabular}
	\begin{tabular}{c} Avelio Sep\'ulveda\\ [-5pt] \small Univ Lyon 1 \end{tabular}
	\begin{tabular}{c} Xin Sun\\ [-5pt] \small Columbia \end{tabular}
}
\date{}

\maketitle

\begin{abstract}
We construct and analyze a  continuum dynamical percolation process which evolves in a random environment given by a $\gamma$-Liouville measure.
The homogeneous counterpart of this process describes the scaling limit  of discrete dynamical percolation on the rescaled triangular lattice. Our focus here is to study the same limiting dynamics, but where the speed of microscopic updates is highly inhomogeneous in space and is driven by the $\gamma$-Liouville measure associated with a two-dimensional log-correlated field $h$. Roughly speaking, this continuum percolation process evolves very rapidly where the field $h$ is high and barely moves where the field $h$ is low. Our main results can be summarized as follows.
\begin{itemize}
\item First, we build this inhomogeneous dynamical percolation which we call $\gamma$-Liouville dynamical percolation (LDP) by taking the scaling limit of the associated process on the triangular lattice. We work with three different regimes each requiring different tools: $\gamma\in [0,2-\sqrt{5/2})$, $\gamma\in [2-\sqrt{5/2}, \sqrt{3/2})$, and $\gamma\in(\sqrt{3/2},2)$. 
\item When $\gamma<\sqrt{3/2}$, we prove that $\gamma$-LDP is mixing in the Schramm-Smirnov space as $t\to \infty$, quenched in the log-correlated field $h$. On the contrary, when $\gamma>\sqrt{3/2}$ the process is frozen in time. 
The ergodicity result is a crucial piece of the Cardy embedding project of the second and fourth coauthors, where LDP for $\gamma=\sqrt{1/6}$ is used to study the scaling limit of a variant of dynamical percolation on uniform triangulations.
\item When $\gamma<\sqrt{3/4}$, we obtain quantitative bounds on the mixing of quad crossing events.
\end{itemize}

\end{abstract}


\section{Introduction}

Given an arbitrary graph, (site) \textbf{percolation} corresponds to an independent black/white coloring of the vertices. For infinite graphs and if $p\in[0,1]$ denotes the probability that a vertex is black, there is a critical value $p_c$ for $p$, which is defined as the infimum of $p$ for which there is an infinite cluster of black vertices a.s. Critical percolation on a wide range of planar graphs and lattices is believed to have a conformally invariant scaling limit. This was proved by Smirnov \cite{smirnov-cardy} for the triangular lattice. 

\textbf{Dynamical percolation} on a graph is a percolation valued process indexed by the non-negative real numbers $\R_+$, such that each vertex has an independent Poisson clock and the color of the vertex is resampled every time its clock rings.
The first important properties of this model were proved in \cite{haggstrom-peres-steif, schramm-steif}.  
The subsequent papers \cite{gps-fourier,gps-pivotal,gps-near-crit}
 studied dynamical percolation on the triangular lattice in the homogeneous case where all the Poisson clocks have the same rate. They proved that this process has a c\`adl\`ag scaling limit $(\omega^0(t))_{t\in\R_+}$ when the rate is chosen appropriately. The limiting process can be defined directly in the continuum as a stationary process, such that at each fixed time $t\geq 0$, $\omega^0(t)$ has the law of  the percolation scaling limit.

Our main focus in this paper is to study dynamical percolation evolving in a random environment corresponding to the so-called 
{\bf Liouville quantum gravity} (LQG). The latter is a theory of random fractal surfaces (see for example \cite{shef-kpz,dkrv-lqg-sphere, wedges}). Let $\gamma\in(0,2)$ and let $h$ be an instance of a Gaussian free field (GFF) or another log-correlated field (see Section \ref{ss.GMC}) in a planar domain $D$. Heuristically, a $\gamma$-LQG surface may be defined as the Riemannian manifold with metric tensor $e^{\gamma h}(dx^2+dy^2)$.\footnote{In fact, the correct metric tensor should be ``$e^{\frac{2\gamma}{d_H(\gamma)}  h}(dx^2+dy^2)$''. See the introduction of \cite{ding-goswami-watabiki}.} This definition does not make rigorous sense since $h$ is a distribution and not a function, but via regularization of $h$ one can make rigorous sense of the area measure $e^{\gamma h}\,d^2z$ and certain other measures of the form $e^{\gamma h}\,d\sigma$ for $\gamma\in(0,2)$ and a base measure $\sigma$. Recently it was also proven that, via regularization of $h$, a $\gamma$-LQG surface is associated with a natural metric (i.e., distance function) \cite{gm-metric}. As explained in Section \ref{ss.motiv}, LQG is intimately related to the scaling limits of random planar maps and this connection with planar maps constitutes the main motivation underlying this work.

The process we will focus on is called  {\bf (continuum) Liouville dynamical percolation} (cLDP). Informally, it can be described as a continuum dynamical percolation where the clocks are driven by an independent LQG measure. 

We construct cLDP as the scaling limit of {\bf (discrete) Liouville dynamical percolation} (dLDP) on the triangular lattice.\footnote{The notion ``Liouville dynamical percolation'' (LDP) may refer to either dLDP or cLDP, and the meaning will be clear from the context.} The rate of the Poisson clocks will now be inhomogeneous and determined by a background log-correlated field $h$. 
More explicitly, let $\Tg_\eta$ denote the regular triangular lattice rescaled such that the distance between adjacent vertices is $\eta$. Let $\alpha_4^\eta(\eta,1)$ be the probability of having a so-called 4-arm event from a site to distance 1 (see Section \ref{sec:gps-dp}). For $x$ a vertex on $\Tg_\eta$, let $B^{\op h}_\eta(x)$ denote the hexagon corresponding to $x$ in the dual graph of $\Tg_\eta$. The following defines dynamical percolation driven by any given fixed measure $\sigma$ (not necessarily a Liouville measure yet).
\begin{definition}[Dynamical percolation driven by a general measure $\sigma$]\label{d.dp}
Fix $\eta>0$. Let $\sigma$ be a measure on $\R^2$.
A dynamical percolation on $\Tg_\mesh$
driven by $\sigma$ is a dynamics on percolation configurations denoted by $\omega^\sigma_\eta(\cdot)$ that is built via the following procedures:
	\begin{itemize}
		\item At time $t=0$, $\omega^\sigma_\eta(0)$ is a percolation configuration on $\Tg_\mesh$ in which the sites are colored independently white (closed) or black (open) with probability $1/2$.
		\item Each site $x\in \Tg_\eta$ of the lattice has an associated Poisson clock with rate $\sigma(B^{\op h}_\eta(x))\alpha^\eta_4(\eta,1)^{-1}$, which rings independently of all other information.
		\item Each time a clock rings, we re-color $x$ white (closed) or black (open) with probability $1/2$ independently of all other randomness. 
	\end{itemize} 
\end{definition}
 Let us note that for this dynamic the law of $\omega^\sigma_\eta(0)$ is the invariant measure. As explained above, in this paper, we study the limit of this space-inhomogeneous dynamical percolation for a specific family of fractal measures $\sigma$, namely the LQG area measures.
 \begin{definition}[discrete Liouville dynamical percolation (dLDP)]
 Let $\mu_{\gamma h}$ be the $\gamma$-LQG measure associated with a log-correlated field $h$. We define $\omega^\gamma_\eta(\cdot)=\omega^{\mu_{\gamma h}}_\eta(\cdot)$ to be the dynamical percolation driven by the measure $\mu_{\gamma h}$, such that $h$ and $\omega^\ga_\eta(0)$ are independent. 
 \end{definition}

\subsection{Main results}
Let $D\subset\C$ be a bounded simply connected domain with smooth boundary. Let $0\leq \gamma <2$ and let $h$ be a centred Gaussian log-correlated field on $D$.\footnote{See Section \ref{ss.GMC} for the precise class of fields we consider.} We denote by $\mu_{\gamma h}$ the $\gamma$-LQG area measure associated with $h$. 

The following theorem gives convergence of dLDP. 
We refer to Section \ref{s.quad} for a discussion of the various topologies which can be used to represent the scaling limit  $\omega$ of critical percolation. In this work we will mainly rely on the Schramm-Smirnov space $\scr H$ introduced in \cite{ss-planar-perc}, see also the other topologies from \cite{cn-scalinglimit,shef-cle}.\footnote{We remark that the loop ensemble space  considered in e.g.\ \cite{cn-scalinglimit} and the quad crossing space considered in \cite{ss-planar-perc,gps-near-crit} and this paper are equivalent in the sense that the  associated $\sigma$-algebras are the same. See \cite{cn-scalinglimit} and \cite[Section 2.3]{gps-pivotal} for a proof that the loops determine the quad crossing information, and see the upcoming work \cite{hs-cardy} for the converse result. Therefore $(\omega^\gamma(t))_{t\geq 0}$ can also be viewed as a process with values in the loop ensemble space.} 
The space of c\`adl\`ag functions with values in $\scr H$ will be equipped with two different topologies: either  Skorokhod topology or the $L^1$ topology, where the latter topology is weaker and is generated by a metric where we integrate the distance between two processes over the considered time interval. 
\begin{thm} $\ $
	\begin{itemize}
		\item[(i)] If $\gamma\in[0,2-\sqrt{5/2})$, then $(\omega^\gamma_\eta(t))_{t\geq 0}$ converges in law to a c\`adl\`ag process $(\omega^\ga_\infty(t))_{t\geq 0}$. The convergence holds for the finite-dimensional law and in the Skorokhod topology for the Schramm-Smirnov space $\scr H$.
		\item[(ii)] If $\gamma \in[2-\sqrt{5/2},\sqrt{3/2})$, then $(\omega^\gamma_\eta(t))_{t\geq 0}$ converges in law to a c\`adl\`ag  process $(\omega^\gamma_\infty(t))_{t\geq 0}$. The convergence holds for the finite-dimensional law and for the $L^1$-topology  in the Schramm-Smirnov space $\scr H$.
		\item[(iii)] If $\ga\in(\sqrt{3/2},2)$, then the conclusion of (ii) still holds. Furthermore, the limiting process is constant (i.e., $\omega^\ga_\infty(t)=\omega^\ga_\infty(0)$ for all $t\geq 0$). 
	\end{itemize}
	In all cases (i)-(iii), conditionally on $h$, $t \mapsto \omega^\gamma_\infty(t)$ is a Markov process on the Schramm-Smirnov space $\scr H$.
	\label{thm1}
	\end{thm}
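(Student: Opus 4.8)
The plan is to follow the Garban--Pete--Schramm (GPS) construction of the homogeneous scaling limit of dynamical percolation, feeding in the $\gamma$-Liouville measure as a reweighting of their \emph{pivotal measure}. Conditionally on $h$, the discrete process $\omega^\gamma_\eta(\cdot)$ is an honest continuous-time Markov chain on colourings of $\Tg_\eta$ whose macroscopic quad-crossing state changes only when the clock of a \emph{pivotal} site rings, so the object driving the dynamics is the random measure
\[
  \mathcal M^{\gamma,\eta}_{\omega}\;:=\;\sum_{x\ \text{pivotal for}\ \omega}\mu_{\gamma h}\bigl(B^{\op h}_\eta(x)\bigr)\,\alpha^\eta_4(\eta,1)^{-1}\,\delta_x,
\]
the \emph{discrete Liouville pivotal measure}. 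The theorem reduces to three ingredients: (A) convergence of $\mathcal M^{\gamma,\eta}_{\omega^\gamma_\eta(0)}$, jointly with $\omega^\gamma_\eta(0)\to\omega:=\omega^0(0)$ (the percolation scaling limit) and with $h$, to a measurable functional $\mathcal M^\gamma_\omega=\mathcal M^\gamma_\omega(h)$ of $(\omega,h)$, the \emph{continuum Liouville pivotal measure}; (B) inserting $\mathcal M^\gamma_\cdot$ into the GPS construction of the continuum dynamics and transporting their stability and coupling estimates to get $\omega^\gamma_\eta(\cdot)\to\omega^\gamma_\infty(\cdot)$; (C) the quenched Markov property. Below $\mathcal M^{\gamma,(r),\eta}_\omega$ denotes the truncation keeping only sites carrying a four-arm event to the macroscopic scale $r$.

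The three regimes come out of moment estimates of increasing delicacy in step (A). Using independence of $h$ and $\omega$, quasi-multiplicativity of four-arm probabilities, and the Liouville moments $\E[\mu_{\gamma h}(B_\varepsilon)]\asymp\varepsilon^2$, $\E[\mu_{\gamma h}(B_\varepsilon(x))\mu_{\gamma h}(B_\varepsilon(y))]\asymp\varepsilon^4|x-y|^{-\gamma^2}$ (for $\varepsilon\le|x-y|$), one gets, for any fixed quad $Q$, that $\E[\mathcal M^{\gamma,\eta}_\omega(Q)]\asymp1$ uniformly in $\eta$ while
\[
  \E\bigl[\mathcal M^{\gamma,\eta}_\omega(Q)^2\bigr]\;\asymp\;\sum_{\varepsilon\le r\le1}r^{\,3/4-\gamma^2},
\]
bounded precisely when $\gamma<\sqrt{3/4}$; in that range, convergence and non-degeneracy of $\mathcal M^{\gamma,\eta}_\omega$ follow from the second-moment method localised to mesoscopic boxes, as GPS do for the ordinary pivotal measure. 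For $\gamma\in[\sqrt{3/4},\sqrt{3/2})$ the measure leaves $L^2$, and the second moment is replaced by a Kahane-type uniform-integrability argument --- split the Liouville mass of each hexagon into a moderately-thick part, which is $L^2$-bounded and converges, and a thick part that is small in $L^1$, combining the multiplicative chaos martingale structure with the four-arm bounds. Non-degeneracy in this range, and the opposite conclusion for $\gamma>\sqrt{3/2}$, follow from a KPZ-type dimension count: the contribution to $\mathcal M^{\gamma,\eta}_\omega(Q)$ of hexagons that are simultaneously ``$a$-thick at scale $\eta$'' and pivotal is of order $\eta^{(a-\gamma)^2/2}$, carried by $\approx\eta^{a^2/2-3/4}$ such hexagons, so, optimising at $a=\gamma$, it is genuinely realised iff $\gamma^2/2<3/4$, i.e.\ $\gamma<\sqrt{3/2}$ (the exponent $3/4$ being the dimension of the pivotal set); for $\gamma>\sqrt{3/2}$ no such hexagons exist, $\mathcal M^{\gamma,\eta}_\omega(Q)\to0$ in probability, and $\mathcal M^\gamma_\omega\equiv0$.

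For step (B), given $h$, $\omega=\omega^0(0)$ and the functional $\mathcal M^\gamma_\cdot$, one builds $\omega^\gamma_\infty(\cdot)$ as in homogeneous GPS: a space--time Poisson point process of pivotal switches of intensity $\mathcal M^\gamma_{\omega^\gamma_\infty(t)}\otimes dt$, each applying the well-defined ``resample the pivotal at $x$'' operation on $\scr H$, localised so as to be well posed because $\mathcal M^\gamma_\omega$ is a.s.\ Radon (and for a fixed cutoff $r$ only finitely many switches act on a bounded space--time window). Convergence $\omega^\gamma_\eta(\cdot)\to\omega^\gamma_\infty(\cdot)$ then follows by coupling the discrete dynamics to the cutoff-$r$ continuum dynamics through (A), controlling the discrepancy between the cutoff-$r$ and full continuum dynamics by the quantitative four-arm stability estimates uniformly in $r$, and letting $\eta\to0$ then $r\to0$; the same coupling gives convergence of the finite-dimensional laws. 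The topology is governed by the worst mesoscopic concentration of switch activity, $\max_x\mathcal M^{\gamma,(r),\eta}_\omega(B_r(x))$, which is of order $r^{\,3/4+\gamma^2/2-2\gamma}$ (dominated by the field-thickest $r$-ball): it vanishes as $r\to0$ exactly for $\gamma<2-\sqrt{5/2}$, and then there is no concentrated jump and one gets tightness in the Skorokhod topology; for $2-\sqrt{5/2}\le\gamma<\sqrt{3/2}$ a single thick $r$-ball may carry order-one switch rate, spoiling Skorokhod tightness but not the time-averaged convergence, whence the weaker $L^1$ topology; for $\gamma>\sqrt{3/2}$, $\mathcal M^\gamma\equiv0$ forces $\omega^\gamma_\infty(\cdot)\equiv\omega^\gamma_\infty(0)$, and $\omega^\gamma_\eta(\cdot)\to\omega^\gamma_\infty(0)$ in $L^1(dt)$-in-law because the macroscopic switch rate $\mathcal M^{\gamma,\eta}_{\omega^\gamma_\eta(\cdot)}(Q)$ tends to $0$ in probability.

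Finally, step (C) is the soft part. Conditionally on $h$, $\omega^\gamma_\eta(\cdot)$ is a continuous-time Markov chain, and this passes to the limit since $\omega^\gamma_\infty(\cdot)$ has no fixed time of discontinuity and its transition kernels vary continuously, both consequences of (A)--(B); equivalently, the construction in (B) is, conditionally on $h$, manifestly Markov, as the switch intensity at any time is a measurable function of the current configuration alone and, conditionally on the past, the switch process after time $s$ depends only on $\omega^\gamma_\infty(s)$ --- rigour following by first running the finite-cutoff dynamics (a classical Markov jump process with configuration-dependent rates), checking the Markov property there, and removing the cutoff through the $L^1$ control of discarded switches from (A). I expect the main obstacle to be step (A) in the non-$L^2$ range $\gamma\in[\sqrt{3/4},\sqrt{3/2})$: the Fourier/second-moment toolbox of GPS has to be replaced by a multiplicative-chaos-style uniform-integrability argument reconciled with the time evolution, and the same delicate dimension bookkeeping is what pins the freezing threshold at exactly $\gamma=\sqrt{3/2}$.
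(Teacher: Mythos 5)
Your step (A) (convergence of the discrete Liouville pivotal measure, with the $L^2$ estimate failing at $\gamma=\sqrt{3/4}$ and an $L^1$/uniform-integrability argument up to $\gamma=\sqrt{3/2}$) and step (B) for regime (i) match what the paper does: the paper's Section 3 feeds the measure $\mu_{\gamma h}^{\lambda^\eps}$ into the GPS network machinery and proves Skorokhod stability via an adaptation of GPS's induction, and the threshold $\gamma=2-\sqrt{5/2}$ falls out of $\mu_{\gamma h}(B^{\op h}_\eta(z))\ll\alpha_4^\eta(\eta,1)$ (Lemma~\ref{prop7}), exactly your exponent $3/4+\gamma^2/2-2\gamma>0$. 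Your step (C) and the Markov property via the cut-off dynamics also agree with the paper.

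There is, however, a genuine gap in regime (ii). You describe the passage from the cut-off-$r$ dynamics to the full dynamics by ``controlling the discrepancy between the cutoff-$r$ and full continuum dynamics by the quantitative four-arm stability estimates uniformly in $r$'', and then say the only loss for $\gamma\ge 2-\sqrt{5/2}$ is that Skorokhod tightness is replaced by the $L^1$ topology. But for $\gamma>2-\sqrt{5/2}$ the GPS stability estimate itself \emph{fails}: the inductive bound that powers Lemma~\ref{l.main} requires $\sum_n \mu_{\gamma h}(A_n)\,\alpha_4^\eta(r_n,1)^{-1}<\infty$, and for thick points this diverges a.s.\ when $\gamma>2-\sqrt{5/2}$. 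One cannot salvage Proposition~\ref{pr.stab} and simply relax the topology; a genuinely different argument is needed. The paper's fix (Section~\ref{s.ConvLDP}) is to introduce the \emph{moderate-point} cutoff $\M_C$ (see \eqref{e.MC intro}), prove Skorokhod convergence of the modified dynamics $\wt\omega_\eta^{C,\gamma}$ for each fixed $C$, and then control $\P(Q\in\wt\omega_\eta^{C,\gamma}(t)\Delta\wt\omega_\eta^{C',\gamma}(t))$ by a coupling that runs the ``extra'' Poisson updates \emph{forward from} the stationary configuration $\wt\omega_\eta^{C,\gamma}(t)$ and uses that its marginal is i.i.d.\ percolation at each intermediate time (Proposition~\ref{p.uniformicity C}). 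This argument is \emph{reversibility-specific}: it is precisely the reason the paper cannot push Corollaries~\ref{c.near-critic} and~\ref{c.mst} past $2-\sqrt{5/2}$ (Remark~\ref{rmk3}), whereas your sketch would suggest they should extend. Relatedly, you flag step (A) in the range $[\sqrt{3/4},\sqrt{3/2})$ as the main obstacle; in the paper the GMC convergence (Appendix~\ref{app:lqg}) is comparatively routine, and the hard part is exactly this replacement of the stability induction.

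For regime (iii), your dimension count showing $\mathcal M^\gamma_\omega\equiv 0$ is the right heuristic, but the paper does not argue via the pivotal measure. It instead invokes the exact Fourier identity of Lemma~\ref{l.covariance}, $\E[f(\omega_\eta^\gamma(0))f(\omega_\eta^\gamma(t))]=\E[\exp(-t\mu_{\gamma h}^{\S_\eta}(D))]$, and proves $\mu_{\gamma h}^{\S_\eta}(D)\to 0$ in probability directly from moments (Proposition~\ref{p.convergence_to_0}), which gives $\P(Q\in\omega_\eta^\gamma(0)\Delta\omega_\eta^\gamma(t))\to 0$ without having to control all pivotal scales along the whole trajectory. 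Your route could plausibly be made rigorous but it is not obviously shorter, and it loses the clean quantitative handle the spectral identity provides.
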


	We refer to Section \ref{ss.sketch} for an intuitive explanation of the transition points at $\gamma=2-\sqrt{5/2}$ and $\gamma=\sqrt{3/2}$.

We call the limiting process $(\omega^\gamma_\infty(t))_{t\geq 0}$ {\bf continuum Liouville dynamical percolation} (cLDP). A fundamental property of cLDP for $\gamma\in[0,\sqrt
{3/2})$ is its mixing property, which we state next and which will be instrumental in the forthcoming work \cite{hs-cardy}. See Section \ref{s.quad} for the definition of a rectangular quad.

\begin{thm}\label{p.quantitative decorrelation}\textcolor{white}{}
Let $Q$ be a rectangular quad, and for any $t\geq 0$ let $A(t)$ be the event that $\omega_{\infty}^\gamma(t)$ crosses $Q$. For measurable sets $B,C\subset\HH$ and $t\in\R_+$ define $B(t)=\1_{\omega^\ga_\infty(t)\in B}$ and $C(t)=\1_{\omega^\ga_\infty(t)\in C}$. 
\begin{itemize}
	\item[(i)] If $\gamma\in[0,\sqrt{3/2})$ then $\omega^\gamma_\infty(\cdot)$ is mixing. More precisely, for $B(0)$ and $C(t)$ as above, 
	\[
	\lim_{t\to\infty} \Cov(\1_{B(0)},\1_{C(t)})=0.
	\]
	\item[(ii)] Let $\gamma\in[0,\sqrt{3/4})$ and $\theta(d,\gamma):= \frac {d - \gamma^2} {d + \gamma^2}$. 
	Then for $A(t)$ as above and any $\xi< \frac{2 \theta} 5 $ for $ \theta=\theta(3/4,\gamma)$, we have that 
	\[\lim_{t\to\infty}\Cov(\1_{A(0)},\1_{A(t)})t^{\xi}= 0.\]
	More generally, for $B(0)$ and $A(t)$ as above, 
	\begin{equation}\label{e.decorrelation 2 p}
	\lim_{t\to\infty}\Cov(\1_{B(0)},\1_{A(t)})t^{\xi/2}= 0.
	\end{equation}
	\item[(iii)] The results above also hold in the \notion{quenched} sense, i.e., a.s.,
\end{itemize}	
\[
\lim_{t\to\infty} \Cov(\1_{B(0)},\1_{C(t)}\mid h)=\lim_{t\to\infty}\Cov(\1_{A(0)},\1_{A(t)} \mid h)t^{\xi}=	\lim_{t\to\infty}\Cov(\1_{B(0)},\1_{A(t)}\mid h)t^{\xi/2}= 0.
\]
\end{thm}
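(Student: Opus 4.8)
The plan is to express the time-$t$ correlations of crossing events through the Fourier (spectral sample) expansion of dynamical percolation, to pass this expansion to the scaling limit, and then to reduce the whole statement to one estimate: the lower tail of the Liouville mass of the spectral sample. First I would reduce to crossing events: the $\sigma$-algebra on $\HH$ is generated by quad-crossing events, and conditionally on $h$ the law of $\omega^\gamma_\infty(0)$ is simply the percolation scaling limit (independent of $h$), so any measurable $B,C\subset\HH$ are approximated in $L^1$, uniformly in $h$, by Boolean functions of crossings of finitely many rectangular quads; it thus suffices to treat such $B',C'$, and a single crossing event $A$ for~(ii) once cross terms are handled by Cauchy--Schwarz. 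At the discrete level, in $\omega^\gamma_\eta$ the site $x$ flips at the independent rate $r^\eta_x=\mu_{\gamma h}(B^{\op h}_\eta(x))\,\alpha_4^\eta(\eta,1)^{-1}$, so for the crossing event $A_\eta$ of a quad $Q$ the standard product-dynamics identity reads
\begin{equation*}
\Cov\!\big(\1_{A_\eta(0)},\1_{A_\eta(t)}\mid h\big)=\sum_{\mathcal{S}\neq\emptyset}\widehat{\1_{A_\eta}}(\mathcal{S})^2\,e^{-t\lambda_h^\eta(\mathcal{S})}=\Var(\1_{A_\eta})\,\E\!\big[e^{-t\lambda_h^\eta(\mathcal{S}_\eta)}\mid h\big],
\end{equation*}
with $\lambda_h^\eta(\mathcal{S}):=\alpha_4^\eta(\eta,1)^{-1}\mu_{\gamma h}\big(\bigcup_{x\in\mathcal{S}}B^{\op h}_\eta(x)\big)$ and $\mathcal{S}_\eta$ the normalized spectral sample of $A_\eta$. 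Letting $\eta\to 0$, using joint convergence of the spectral samples and $\mu_{\gamma h}$ (by the same type of arguments that underlie the construction of cLDP, together with Theorem~\ref{thm1}), this becomes, for all $t\ge0$,
\begin{equation*}
\Cov\!\big(\1_{A(0)},\1_{A(t)}\mid h\big)=\Var(\1_A)\,\E\!\big[e^{-t\lambda_h(\mathcal{S}_Q)}\mid h\big],
\end{equation*}
where $\mathcal{S}_Q$ is the continuum spectral sample and $\lambda_h(\mathcal{S}_Q)$ the mass of the Liouville pivotal measure carried by it; the same holds with $\1_A$ replaced by $\1_{B'}$, and for a pair $(B',C')$ one has $\Cov(\1_{B'(0)},\1_{C'(t)}\mid h)=\sum_{\mathcal{S}\neq\emptyset}\widehat{\1_{B'}}(\mathcal{S})\widehat{\1_{C'}}(\mathcal{S})e^{-t\lambda_h(\mathcal{S})}$.

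Part~(i) then reduces to the claim that $\lambda_h(\mathcal{S}_Q)>0$ a.s.\ on $\{\mathcal{S}_Q\neq\emptyset\}$. This is exactly where the threshold $\gamma<\sqrt{3/2}$ enters: $\mathcal{S}_Q$ inherits from the set of $4$-arm points the fractal dimension $2-\tfrac54=\tfrac34$, and a $\gamma$-Gaussian multiplicative chaos charges any sufficiently regular set of dimension strictly above $\gamma^2/2$; since $\gamma^2/2<3/4$ this gives $\mu_{\gamma h}(\mathcal{S}_Q)>0$, and a zero--one argument (e.g.\ via the approximate scale-invariance of the spectral sample near a $4$-arm point, or the non-degeneracy of the Liouville pivotal measure of $Q$ --- whose failure for $\gamma>\sqrt{3/2}$ causes the freezing in Theorem~\ref{thm1}(iii)) upgrades it to hold a.s., quenched in $h$. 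Dominated convergence in the displays then yields $\Cov(\1_{A(0)},\1_{A(t)}\mid h)\to0$ and $\Cov(\1_{B'(0)},\1_{C'(t)}\mid h)\to0$ as $t\to\infty$, a.s.\ and in expectation; the approximation step transfers this to general $B,C$, proving~(i) and its quenched counterpart in~(iii).

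The heart of the matter is~(ii): the quenched lower-tail bound $\P(0<\lambda_h(\mathcal{S}_Q)<s\mid h)\lesssim_h s^{\beta}$ for every $\beta<\tfrac25\theta(3/4,\gamma)$. Indeed, integrating against $t\,e^{-ts}\,ds$ turns this into $\E[e^{-t\lambda_h(\mathcal{S}_Q)}\1_{\mathcal{S}_Q\neq\emptyset}\mid h]\lesssim_h t^{-\beta}$ --- the crossing--crossing estimate --- and Cauchy--Schwarz in the Fourier expansion, $\Cov(\1_{B'(0)},\1_{A(t)}\mid h)\le\|\1_{B'}\|_2\big(\Var(\1_A)\,\E[e^{-2t\lambda_h(\mathcal{S}_Q)}\mid h]\big)^{1/2}$, produces the halved exponent in~\eqref{e.decorrelation 2 p}. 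The lower-tail bound itself I would prove by a multi-scale decomposition: $\{\lambda_h(\mathcal{S}_Q)\text{ small}\}$ forces $\mathcal{S}_Q$ either to be concentrated at a small Euclidean scale $\rho$ near a point $x$, or to lie where $h$ is atypically low. Summing over dyadic $\rho$ and a $\rho$-net of base points, the probability that $\mathcal{S}_Q$ is non-empty but essentially supported in $B_\rho(x)$ is bounded by a power of $\rho$ from a Garban--Pete--Schramm type ``spectral sample at scale $\rho$'' estimate built out of the $4$-arm exponent $5/4$ (the source of the $5$), while on that event $\lambda_h(\mathcal{S}_Q)\gtrsim\alpha_4(\rho,1)^{-1}\mu_{\gamma h}(B_\rho(x))\asymp\rho^{3/4+\gamma^2/2}e^{\gamma h_\rho(x)}$, so the constraint ``$<s$'' costs a further power of $s$ through the small-ball estimate for $\mu_{\gamma h}(B_\rho(x))$; the restriction $\gamma<\sqrt{3/4}$ is precisely what makes $\lambda_h(\mathcal{S}_Q)$ an $L^2$ quantity (GMC on a dimension-$3/4$ set, below the $L^2$ threshold $\gamma^2<3/4$) and makes the multi-scale sum converge. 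Optimizing the resulting exponent over $\rho$ --- trading the cost of pinning $\mathcal{S}_Q$ to scale $\rho$ against the cost of depressing $h$ --- is a KPZ-type computation whose output is exactly $\theta(3/4,\gamma)=\tfrac{3/4-\gamma^2}{3/4+\gamma^2}$ with the prefactor $\tfrac25$; that the exponent does not exceed $2/5$ even at $\gamma=0$ reflects that the spectral sample has a strictly heavier lower tail than the set of pivotal points. The quenched statements in~(iii) follow either by running this argument with $h$-measurable, a.s.-finite constants (the $h$-dependence passes only through finitely many GMC and circle-average quantities), or from the annealed version via Markov's inequality along $t=2^k$, Borel--Cantelli, and monotonicity of $t\mapsto e^{-t\lambda_h(\mathcal{S}_Q)}$ to interpolate to general $t$.

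The main obstacle is the lower-tail estimate in~(ii). It requires a genuinely radial understanding of the continuum percolation spectral sample --- controlling it at all small scales simultaneously, a spectral-sample analogue of the one-arm/four-arm estimates --- and, crucially, doing so uniformly in the mesh $\eta$ so that it survives the scaling limit; combining this with sharp small-deviation estimates for $\mu_{\gamma h}(B_\rho(x))$ and carrying out the two-cost optimization precisely enough to reach the exponent $\tfrac25\theta(3/4,\gamma)$, in a form that is quenched in $h$, is the delicate part.
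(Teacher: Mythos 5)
Your overall architecture for~(i) --- the spectral-sample covariance identity conditioned on $h$, its scaling limit, the reduction to the claim that the Liouville spectral mass $\mu^{\S}_{\gamma h}(D)$ is a.s.\ positive on $\{\S(D)\neq0\}$, and Cauchy--Schwarz for the halved exponent --- is the paper's. One detail you elide: the joint convergence $(\S_\eta,\1_{\S_\eta(D)\neq0})\to(\S,\1_{\S(D)\neq0})$ that lets one pass \eqref{e.decreasingcovariancecond} to the limit is established only for \emph{monotone} multi-quad crossing events (the lower tail in Appendix~\ref{app:spectralsample} uses monotonicity essentially; see Remark~\ref{r.monot}), so one cannot feed an arbitrary ``Boolean function of crossings'' directly into the spectral machinery. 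The paper passes from the monotone case to events of the form ``$Q_1,\dots,Q_j$ not crossed, $Q_{j+1},\dots,Q_k$ crossed'' by inclusion--exclusion (Proposition~\ref{prop1}) and only then approximates general $B,C$.

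For~(ii), your route is genuinely different and, as sketched, has a real gap. You propose to establish the \emph{quenched} lower-tail bound $\P(0<\lambda_h(\S_Q)<s\mid h)\lesssim_h s^{\beta}$ directly by a multi-scale decomposition in which $\S_Q$ is pinned to scale $\rho$ and one pays a small-ball cost for $\mu_{\gamma h}(B_\rho(x))$. But the key inequality $\lambda_h(\S_Q)\gtrsim\alpha_4(\rho,1)^{-1}\mu_{\gamma h}(B_\rho(x))$ on the pinning event is not obvious --- $\lambda_h(\S_Q)=\mu^{\S_Q}_{\gamma h}(D)$ depends on the internal structure of $\S_Q$, not merely on the GMC mass of a ball containing it --- and the uniform-in-scale control of the continuum spectral sample needed to make the multi-scale sum converge is not supplied anywhere. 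The paper instead integrates over $h$ \emph{first}, for fixed $\S$: Proposition~\ref{p.mass} (from \cite{GHSS}) gives $\E[e^{-t\mu^\sigma_{\gamma h}(D)}]\le K/(\sigma(D)t^\theta)$ for $t\ge K[\EE_d(\sigma)/\sigma(D)]^{1/\theta}$, applied with $\sigma=\S$. The only facts about $\S$ then needed are $\E[\EE_d(\S)]<\infty$ (Lemma~\ref{prop22}) and the single-quad lower tail $\P(0<\S(D)<s)\lesssim s^{2/3}$ from \cite{gps-fourier}; balancing the resulting error terms yields $2\theta/5$. This is an \emph{annealed} bound, and~(iii) follows softly: the conditional covariance is decreasing in $t$, so Markov's inequality along $t_n=2^n$, Borel--Cantelli, and monotonicity upgrade $\E[X_t]t^\xi\to0$ to $X_t\,t^{\xi'}\to0$ a.s.\ for every $\xi'<\xi$. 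You mention this latter route parenthetically; it should be your main line, because the quenched multi-scale estimate you lean on is unproved and would be considerably harder than the annealed argument the paper actually uses.
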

\begin{remark}
	Since Theorem~\ref{p.quantitative decorrelation} holds for $\gamma=0$, we prove mixing for the Euclidean dynamical percolation studied by Garban, Pete, and Schramm \cite{gps-near-crit}. In particular, we answer the question asked in \cite[Remark 12.3]{gps-near-crit}. It was previously known \cite[Section 12]{gps-near-crit} that we have polynomial mixing for the event of crossing a single rectangular quad for $\gamma=0$. That is, if $A(t)$ denotes that event that a fixed rectangular quad $Q$ is crossed at time $t$, it was know that $\Cov(\1_{A(0)},\1_{A(t)})\leq C_Qt^{-2/3}$ for a constant $C_Q>0$ depending only on $Q$ (see in \cite[Theorem 12.1]{gps-near-crit}).
\end{remark}

In \cite{gps-near-crit,gps-mst} convergence results for near-critical percolation and the minimal spanning tree are established building on the method for dynamical percolation. In the regime $\gamma \in[0,2-\sqrt{5/2})$ (where microscopic stability is satisfied, see Proposition \ref{pr.stab}), we obtain analogous results in our inhomogeneous case. More precisely, Corollaries \ref{c.near-critic} and \ref{c.mst} below are immediate corollaries of the proof of Theorem \ref{thm1} in this regime $\gamma \in[0,2-\sqrt{5/2})$, proceeding similarly as in \cite{gps-near-crit,gps-mst}.

Let us define the \textbf{$\gamma$-near-critical coupling} $(\omega_\eta^{\gamma, \nc}(\lambda))_{\lambda\in \R}$ to be the following process: 
\bi
\item[(i)] Sample $\omega_\eta^{\gamma,\nc}(\lambda=0)$ according to $\P_\eta$, the  law of critical percolation on $\Tg_\eta$. 
\item[(ii)] As $\lambda$ increases, closed (white) hexagons switch to open (black) at exponential rate $r(\eta)=\mu_{\gamma h}(B^{\op h}_\eta(x))\alpha^\eta_4(\eta,1)^{-1}$.
\item[(iii)] As $\lambda$ decreases, open (black) hexagons switch to closed (white) at same rate $r(\eta)$.
\ei
As such, for any $\lambda\in \R$, the near-critical percolation $\omega_\eta^{\gamma,\nc}(\lambda)$ corresponds exactly to a percolation configuration on $\Tg_\eta$ with parameter
\[
\begin{cases}
p=p_c + 1-e^{-\lambda\, r(\eta) } \quad \text{if } \lambda\geq 0, \\
p=p_c - (1-e^{-|\lambda|\, r(\eta) }) \quad \text{if } \lambda< 0\,.  
\end{cases}
\]

\begin{corollary}\label{c.near-critic}
For any $0\leq \gamma < 2 -\sqrt{5/2}$, the c\`adl\`ag process $(\omega^{\gamma,\nc}_\eta(\lambda))_{\lambda \in \R }$ converges for the Skorokhod topology on $\scr H$ to a limiting Markov process  $(\omega^{\gamma,\nc}_\infty(\lambda))_{\lambda \in \R }$. 
\end{corollary}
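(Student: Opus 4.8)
The plan is to recycle, almost verbatim, the proof of Theorem~\ref{thm1}(i), since the near-critical coupling $(\omega^{\gamma,\nc}_\eta(\lambda))_{\lambda\in\R}$ is built from the same microscopic ingredients as dLDP in the regime $\gamma<2-\sqrt{5/2}$, the only change being that the $1/2$-resampling rule is replaced by a monotone one-sided switching rule. Concretely, $\lambda\mapsto\omega^{\gamma,\nc}_\eta(\lambda)$ is a pure-jump Markov process, non-decreasing in $\lambda$ for $\lambda\geq0$ and non-increasing for $\lambda\leq0$, whose generator flips a closed (resp.\ open) hexagon $x$ at rate $\mu_{\gamma h}(B^{\op h}_\eta(x))\alpha_4^\eta(\eta,1)^{-1}$. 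By microscopic stability (Proposition~\ref{pr.stab}), which is exactly where $\gamma<2-\sqrt{5/2}$ enters, the only switches that affect the Schramm--Smirnov observables of $\omega^{\gamma,\nc}_\eta$ in a macroscopically non-negligible way are those at $\rho$-important pivotal hexagons for a mesoscopic scale $\rho$, with an error that vanishes as $\eta\to0$ and then $\rho\to0$.

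First I would record the one analytic input, already obtained in the proof of Theorem~\ref{thm1}(i): the convergence, jointly with the percolation configuration and quenched in $h$, of the rescaled \emph{Liouville pivotal measure}
\[
\nu^{\rho}_\eta\;:=\;\sum_{x\in\Tg_\eta}\mu_{\gamma h}\big(B^{\op h}_\eta(x)\big)\,\alpha_4^\eta(\eta,1)^{-1}\,\one\{x\text{ is }\rho\text{-important pivotal for }Q\}\;\delta_x ,
\]
toward a limiting random measure $\nu^\rho$, with the family $(\nu^\rho)_\rho$ compatible so that the cutoff $\rho$ can be removed in the limit. Given this, the (one-sided) important-pivotal switches accumulate, in the $\eta\to0$ limit and conditionally on $h$, along a Poisson point process on $\R_\lambda\times D$ whose intensity at a configuration $\omega$ is governed by $\nu^\rho=\nu^\rho(\omega)$; this is precisely the construction used to define $\omega^\gamma_\infty$, but with the $1/2$-resampling replaced by the monotone rule, and it produces the candidate limit $(\omega^{\gamma,\nc}_\infty(\lambda))_{\lambda\in\R}$, which is Markov conditionally on $h$.

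Then I would prove convergence. Fix $M>0$ and work on $\lambda\in[-M,M]$, the Skorokhod topology on $\R$ being the projective limit of those on compact intervals. Finite-dimensional convergence of $(\omega^{\gamma,\nc}_\eta(\lambda_1),\dots,\omega^{\gamma,\nc}_\eta(\lambda_k))$ follows by combining the coupling with $\omega^{\gamma,\nc}_\eta(0)$, the convergence of $\nu^\rho_\eta$, and microscopic stability to pass from ``all switches'' to ``important pivotal switches only'' with a controlled error --- the same two-step approximation as in \cite{gps-near-crit}. For tightness in the Skorokhod space of c\`adl\`ag maps $[-M,M]\to\scr H$ one uses that $\scr H$ is compact, so that only the oscillation/jump structure must be controlled; monotonicity in $\lambda$ simplifies this, and an Aldous-type criterion applies essentially verbatim as in \cite{gps-near-crit,gps-mst}, the key bound being that the expected number of important pivotal switches in any $\lambda$-window of length $\delta$ is $O(\delta)$ uniformly in $\eta$, a first-moment estimate on $\nu^\rho_\eta$ that is finite in this regime by the moment computations underlying microscopic stability. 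Tightness plus finite-dimensional convergence yields the claim, and the Markov property of the limit is read off the Poissonian construction.

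The main obstacle is not in the corollary itself but in the inputs it quotes: microscopic stability and the convergence of $\nu^\rho_\eta$ in the inhomogeneous environment, in particular controlling joint moments of $\mu_{\gamma h}$ against the $4$-arm weight, which is what restricts to $\gamma<2-\sqrt{5/2}$. Once these are available (as in the proof of Theorem~\ref{thm1}(i)), the passage to the one-sided near-critical coupling is routine; the only genuinely new bookkeeping is that the percolation parameter is now spatially inhomogeneous, $p(x)=p_c\pm(1-e^{-|\lambda|\,\mu_{\gamma h}(B^{\op h}_\eta(x))\alpha_4^\eta(\eta,1)^{-1}})$, a feature already absorbed into the Liouville pivotal measure.
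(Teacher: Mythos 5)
Your proof is correct and takes essentially the same approach the paper has in mind: the paper does not give a separate argument for this corollary, but simply declares it an immediate consequence of the proof of Theorem~\ref{thm1}(i) following \cite{gps-near-crit,gps-mst}, and your write-up fills in exactly those pieces (convergence of the Liouville-weighted pivotal measure, Poissonian construction of the cut-off limit, removal of the cut-off via Proposition~\ref{pr.stab}, then finite-dimensional convergence plus tightness). The one remark worth keeping in mind, which the paper itself flags in Remark~\ref{rmk3}, is that the stability estimate of Proposition~\ref{pr.stab} is stated for the reversible dynamics; the observation that the set $X$ of updated sites on a $\lambda$-window is stochastically dominated by the corresponding set for a reversible dynamics at a fixed time (so that Lemma~\ref{l.main} and its consequences apply unchanged) should be made explicit, but this is a one-line reduction and does not affect the validity of your argument.
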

Similarly, following \cite{gps-mst}, one may readily define an inhomogeneous model of minimal spanning tree on the triangular lattice $\Tg_\eta$ induced by an LQG measure. The microscopic definition of the model for which one can prove a scaling limit in \cite{gps-mst} is in fact a bit subtle, but it generalizes immediately to our setting and we refer to \cite{gps-mst} for details. However, one technical restriction in \cite{gps-mst} is that the scaling limit results are stated on $\C$ and on the tori $\mathbb{L}_M^2= \R^2/ (M \Z^2)$ but not on general planar domains $D$. We will thus consider a log-correlated field $h$ on $\mathbb{L}_M^2$ with mean zero. Let us call $\MST_\eta^{\gamma}$ the minimal spanning tree on the torus $\Tg_\eta \cap \mathbb{L}_M^2$. 

\begin{corollary}\label{c.mst}
For any $\gamma\in[0, 2 - \sqrt{5/2})$, as $\eta\to 0$, the spanning tree $\MST^\gamma_\eta$ on $\Tg_\eta\cap  \mathbb{L}_M^2$ converges in distribution
(under the setup introduced in \cite{abnw99}) to a limiting tree $\MST^\gamma_\infty$. 
\end{corollary}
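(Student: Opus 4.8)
The plan is to follow the strategy of \cite{gps-mst}, which derives the scaling limit of the minimal spanning tree from two ingredients: (a) a quantitative near-critical stability statement, essentially the content of our Corollary \ref{c.near-critic} (together with the uniform-in-$\eta$ estimates that go into its proof), and (b) a multi-scale argument showing that the MST, which is built by iteratively exploring percolation at a hierarchy of near-critical scales, converges as soon as the near-critical coupling converges in a sufficiently strong (coupling / quantitative) sense. Step (a) is already available to us for $\gamma\in[0,2-\sqrt{5/2})$: this is precisely the microscopic-stability regime of Proposition \ref{pr.stab}, and in that regime the proof of Theorem \ref{thm1}(i) produces exactly the same kind of uniform control on the near-critical coupling $(\omega^{\gamma,\nc}_\eta(\lambda))_\lambda$ that \cite{gps-mst} uses in the homogeneous case.

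First I would set up the model: on the torus $\mathbb{L}_M^2=\R^2/(M\Z^2)$ fix a mean-zero log-correlated field $h$, let $\mu_{\gamma h}$ be the associated $\gamma$-LQG measure (which makes sense on the torus for the admissible class of fields of Section \ref{ss.GMC}), and define $\MST^\gamma_\eta$ on $\Tg_\eta\cap\mathbb{L}_M^2$ exactly as in \cite{gps-mst} but with the edge/vertex clock rates replaced by the inhomogeneous rate $r(\eta)=\mu_{\gamma h}(B^{\op h}_\eta(x))\alpha^\eta_4(\eta,1)^{-1}$ — i.e.\ the same recipe that defines the $\gamma$-near-critical coupling above. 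Second, I would invoke Corollary \ref{c.near-critic} (in the stronger, coupling form that comes out of its proof) to get that, conditionally on $h$, the near-critical ensembles converge jointly over all the dyadic scales used in the MST construction. Third, I would re-run the multi-scale / percolation-of-the-cluster argument of \cite{gps-mst} verbatim: the only lattice inputs there are the one-arm, four-arm and quasi-multiplicativity estimates for critical percolation on $\Tg_\eta$ and the near-critical stability bounds, all of which are insensitive to the spatially varying time-change since, locally at scale $\eta$, the rate $r(\eta)$ is (up to the deterministic factor $\alpha_4^\eta(\eta,1)^{-1}$) just a bounded multiplicative perturbation $\mu_{\gamma h}(B^{\op h}_\eta(x))$, and the key point established in the microscopic-stability regime is that this perturbation does not destroy the pivotal-switching mechanism on the relevant windows of $\lambda$. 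Finally, I would take expectations over $h$: since the quenched convergence in law holds for a.e.\ $h$ and the Schramm–Smirnov-type space in which the tree lives is Polish, dominated convergence / the bounded-Lipschitz characterization of weak convergence upgrades this to the annealed convergence in distribution of $\MST^\gamma_\eta$ to $\MST^\gamma_\infty$ under the \cite{abnw99} topology.

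The main obstacle, as usual in this circle of ideas, is making the near-critical coupling statement quantitative enough — one needs not just distributional convergence of $(\omega^{\gamma,\nc}_\eta(\lambda))_\lambda$ but uniform-in-$\eta$ stability estimates of the type ``the configuration does not change macroscopically over a $\lambda$-window of a controlled size,'' with the constants uniform over the (random) environment on the compact torus. In the homogeneous case these come from the GPS pivotal-measure machinery; here they must be reconciled with the fluctuations of $\mu_{\gamma h}$, and it is exactly for this that the restriction to $\gamma<2-\sqrt{5/2}$ (Proposition \ref{pr.stab}) is needed. Since that proposition and the proof of Theorem \ref{thm1}(i) already supply these estimates in our setting, the corollary follows by the same bookkeeping as in \cite{gps-mst}, and no genuinely new difficulty arises beyond transcribing their argument and carrying the dependence on $h$ through the limit.
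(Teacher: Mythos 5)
Your proposal is correct and follows essentially the same route as the paper, which itself only gives a one-sentence justification: Corollary \ref{c.mst} is presented as an immediate consequence of the microscopic-stability analysis of Section \ref{s.3} (Proposition \ref{pr.stab} and Lemma \ref{prop7}, valid precisely for $\gamma<2-\sqrt{5/2}$), with the reader referred to \cite{gps-mst} for the torus setup and the multi-scale bookkeeping. Your expanded outline — define the inhomogeneous MST on $\mathbb{L}_M^2$ via the rate $r(\eta)=\mu_{\gamma h}(B^{\op h}_\eta(x))\alpha^\eta_4(\eta,1)^{-1}$, feed the near-critical stability into the \cite{gps-mst} machinery quenched in $h$, and then average — is exactly the intended reading; the one imprecision worth noting is that ``bounded multiplicative perturbation'' should really be the stronger statement from Lemma \ref{prop7} that the per-site rate is uniformly $o(1)$ in $\eta$ (polynomially so), which is what makes the near-critical stability bounds go through uniformly over the random environment.
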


\begin{remark}\label{}
Note that for these two models, we are only able to treat the regime $0\leq \gamma < 2 -\sqrt{5/2}$. For dynamical percolation, the fact that the process is stationary together with the Fourier technology help tremendously. However, it seems natural to guess that the other two models will keep behaving similarly thanks to a more delicate stability analysis for $\ga\in[2 -\sqrt{5/2}, \wh \gamma_c)$. The question whether $\wh \gamma_c$ is equal to $\sqrt{3/2}$ or not does not seem obvious to us. Indeed for the out-of-equilibrium case corresponding to near-critical percolation, the microscopic stability may cease to exist before reaching $\sqrt{3/2}$ while in the equilibrium case, on and off switches may still compensate each other.
\label{rmk3}
\end{remark}

Finally, in Section \ref{sec:spectralmeasure}, we prove a scaling limit result for the so-called {\bf spectral measures}. The spectral measures are certain random measures in $\D$, each associated with a percolation crossing event (see Section \ref{sec:fourier}). Spectral measures play an important role in several of our proofs and were a crucial tool in \cite{gps-fourier}. In particular, we use the spectral measures as a tool to prove mixing in the subcritical regime and to prove that the supercritical process is trivial.  
In Proposition \ref{pr.convS}, we prove that spectral measures associated with a large class of crossing events converge in law as $\eta\rta 0$. This answers half of the third open problem stated in \cite{gps-fourier}. 

Other results of independent interest not mentioned above are found in the appendices. In particular, we want to highlight Proposition \ref{p.convergence_of_LQG}, where we prove under mild assumptions that if $(\sigma^n)_{n\in\N}$ is a sequence of measures converging in probability to a limiting measure $\sigma$, then the LQG measure with base measure $\sigma^n$ converges to the LQG measure with base measure $\sigma$. Furthermore, in Appendix \ref{app:spectralsample} we prove upper and lower bounds for the total mass of the spectral measure associated with the crossing of multiple quads. This extends the main result of \cite{gps-fourier}, where the case of a single rectangular quad was considered.

\subsection{Motivation from random planar maps}\label{ss.motiv}
The current work is an important input to a program of the second and fourth authors, which proves the convergence of uniform triangulations to $\sqrt{8/3}$-LQG under the so-called Cardy embedding.

Let us start by shortly discussing how LQG surfaces arise as the scaling limit of discrete surfaces known as random planar maps (RPM). A planar map is a graph drawn on the sphere (without edge-crossings) viewed modulo continuous deformations. Le Gall \cite{legall-uniqueness}, Miermont \cite{miermont-brownian-map}, and others proved that certain uniformly sampled RPM equipped with the graph distance converge in law for the Gromov-Hausdorff distance to a limiting metric space known as the \notion{Brownian map}. Miller and Sheffield \cite{lqg-tbm1,lqg-tbm2,lqg-tbm3} proved that the Brownian map is equivalent to $\sqrt{8/3}$-LQG in the sense that an instance of the Brownian map can be coupled together with an instance of $\sqrt{8/3}$-LQG such that the two surfaces determine each other in a natural way. 

An alternative notion of convergence for RPM to LQG is provided by the so-called peanosphere topology. Convergence of RPM to LQG in this topology has been established for RPM in several universality classes. The idea of this topology is to decorate the RPM with a statistical physics model (see e.g.\ \cite{shef-burger,bhs-site,kmsw-bipolar,ghs-bipolar,schnyder}), and show that the decorated map is encoded by a 2d walk which converges in the scaling limit to a correlated Brownian motion. On the other hand, by \cite{wedges}, the Brownian motion encodes an instance of SLE-decorated LQG in a same manner as in the discrete.

Our  main motivations from LQG/planar maps are the following ones:

\begin{enumerate}
\item First, as mentioned above, the Cardy embedding is a discrete conformal embedding which is based on percolation crossing probabilities on planar maps. In \cite{ghs-metric-peano} (based on \cite{wedges,bhs-site,gwynne-miller-perc,aasw-type2}
and other works) it is proved that a uniform percolated triangulation converges as a loop-decorated metric measure space to a $\sqrt{8/3}$-LQG surface decorated by an independent CLE$_6$. By applying the mixing result of our Theorem \ref{p.quantitative decorrelation}, it is shown in \cite{hs-cardy} that the convergence to CLE$_6$ is quenched, i.e., the limiting CLE$_6$ is independent of the randomness of the planar maps. This allows us to conclude that the Cardy embedded random planar maps converge to $\sqrt{8/3}$-LQG.

\item Second, the study of the conjectural scaling limit of dynamical percolation on random planar maps. In \cite{hs-cardy} it is proved that dynamical percolation (with a certain cut-off) on a uniformly chosen triangulation converges to the process built in this paper, namely Liouville dynamical percolation with parameter $\gamma=1/\sqrt{6}$.\footnote{Note that uniform planar maps do \emph{not} correspond in this setting to $\gamma=\sqrt{8/3}$, which is the more commonly considered $\gamma$ value for this universality class of planar maps. The reason for such a smaller $\gamma$ value here is that the dynamical percolation is driven by an LQG measure on a lower-dimensional fractal rather than an open subset of the complex plane. Namely, the measure is supported on the percolation pivotal points, which have dimension $d=3/4$. See Section \ref{sec:central-charge} for further explanation.} 
For general values of $\gamma\in(0,\sqrt{3/2})$, Liouville dynamical percolation should represent the scaling limit of dynamical percolation on random planar maps in other universality classes.

The relationship between $\gamma$ and its corresponding central charge $c=c(\gamma)$  is discussed in details in  Section \ref{sec:central-charge}. Let us point out that in this correspondence we have $c<1$ (resp.\ $c\in(1,16)$) if and only if $\gamma(c)\in(0,2-\sqrt{5/2})$ (resp.\ $\gamma(c)\in(2-\sqrt{5/2},\sqrt{3/2})$). In particular, note that our paper studies a non-trivial dynamical percolation process even when $c\in(1,16)$, which lies outside the more classical range $c\leq 1$ for Liouville quantum gravity surfaces.

\item Finally, we conjecture that the near-critical percolation and the minimal spanning tree studied in Corollaries \ref{c.near-critic} and \ref{c.mst} represent the scaling limit of the associated models on random planar maps. In particular, this work falls into the class of works that study natural continuum processes inspired by statistical physics models on random planar maps. Other works of this type are the works of Miller and Sheffield on the Quantum Loewner evolution (QLE) \cite{qle,lqg-tbm1}. 
QLE represents the conjectural scaling limit of growth models such as the Eden model and DLA on random planar maps.

\end{enumerate}

\subsection{Sketch of proofs}\label{ss.sketch}
We consider three ranges of $\gamma$-values in our proofs, corresponding to the three ranges considered in Theorem \ref{thm1}. This is related to the regularity of LQG measures. 

When $\gamma<2-\sqrt{5/2}$, one proves Theorem \ref{thm1} by adapting the proofs of \cite{gps-near-crit}.  The key reason that the proofs carry through in this regime, is that\footnote{Above we defined $B^{\op h}_r(x)$ only for $x\in\Tg_{\eta}$, but the definition extends immediately to $x\in \C$ by considering $\Tg_\eta$ recentred so that $x\in\Tg_\eta$.}
\begin{equation}\label{eq:key}
\mu_{\gamma h}(B^{\op h}_\eta(x))\ll \alpha^\eta_4(\eta,1)\qquad \textrm{a.s.\ for all $x\in D$ and sufficiently small $\eta>0$.}
\end{equation}
 In a certain sense, on the scale of the microscopic grid $\Tg_\eta$,  this means that there are no sites in $D$ whose clock rates are of order one or higher. See Remark \ref{rmk1}.

In the case $2-\sqrt{5/2}<\gamma < \sqrt{3/2}$, one cannot directly apply the techniques of \cite{gps-near-crit} to prove Theorem \ref{thm1}. The reason is the failure of \eqref{eq:key}. In fact, in this case there are a.s.\ points $x\in D$ such that $\mu_{\gamma h}(B^{\op h}_\eta(x))\gg \alpha^\eta_4(\eta,1)$ as $\eta\to 0$.
To fix this issue one needs to modify the dLDP. 
For some $\varrho>0$ (depending on $\gamma$) to be determined,\footnote{We will choose $\varrho$ such that Lemma \ref{prop:thickpt} is satisfied.} define the \textbf{moderate points of constant $C$} by
\begin{equation}\label{e.MC intro}
\M_{C}:=\{x \in D: \mu_{\gamma h} (B^{\op h}_{2^{-n}}(x))<C\alpha^{2^{-n}}_4(2^{-n},1)2^{-n\varrho}, \text{ for all } n\in \N \}\,.
\end{equation}
Then define the measure $ \wt \mu^C_{\gamma h}(\cdot) := \mu_{\gamma h}(\cdot \cap \M_C)$, and define the dynamical percolation with measure $\wt \mu^C_{\gamma h}$ by 
\eqb
\wt \omega_{\eta}^{C,\gamma}(\cdot):= \omega_{\eta}^{\wt \mu^C_{\gamma h}}(\cdot)\,.
\label{eq:cutoffmeasure}
\eqe
The points of $\M_C$ are called \notion{moderate} since they are points where the rate of the associated clock is $o(1)$.

It is easy to see that for this modified measure the proofs of \cite{gps-near-crit} work, so $\wt \omega_{\eta}^{C,\gamma}(\cdot)$ converges to some process $\wt \omega_{\infty}^{C,\gamma}(\cdot)$. The difficulty now is concentrated in showing that 
$\lim_{\eta \to 0} \omega_\eta^\gamma(\cdot)= \lim_{C \to \infty} \wt\omega_{\infty}^{C,\gamma} (\cdot)$
a.s. To do this we first couple $\wt \omega_{\eta}^{C,\gamma}(\cdot)$ with $\omega_{\eta}^{\gamma}(\cdot)$ in the natural way. Then we show that when $C$ is big enough, for any fixed quad $Q$ and $t\geq 0$, the probability that $Q$ is crossed for 
$\wt \omega_{\eta}^{C,\gamma}(t)$ and not for $\omega_{\eta}^{\gamma}(t)$, or vice versa, converges to $0$.

Finally, when $\gamma\in(\sqrt{3/2},2)$, we study $\P(\Quad \in \omega_{\eta}^\ga(0)\Delta \omega^\ga_{\eta}(t))$ for a fixed quad $\Quad$. This probability can be expressed in terms of the so-called Liouville spectral measure. In this regime it is possible to show that this measure converges in probability to $0$, which implies the same for the considered probability. Intuitively, cLDP is trivial in this regime since the limiting pivotal points are disjoint from the so-called $\gamma$-thick points of $h$, which implies that the limiting LQG pivotal measure $\mu^{\lambda^\eps}_{\gamma h}$ is trivial since $\gamma$-LQG measures are supported on $\gamma$-thick points.

The attentive reader may have realized that for the second and third cases we made reference only to the distribution of quads at a given time $t$. In fact, we are not able to prove convergence for all times simultaneously, and we only get convergence of the finite-dimensional marginals and for the $L^1$ topology rather than for the Skorokhod topology for these cases. However, it is not difficult to see that the process $\omega^\gamma_\infty(\cdot)$ has a c\`adl\`ag modification. When proving this we estimate the number of times a given quad changes from being crossed to not being crossed. See Remark \ref{rmk2}.

Let us now explain how we prove the mixing properties of cLDP when $\gamma<\sqrt{3/2}$. First, we show that if $\S$ is the scaling limit of the spectral measure of the crossing of a given quad $\Quad$, then one can define $\mu_{\gamma h}^{\S}$, which is the LQG measure with base measure $\S$. Then we show the following key identity
\begin{align}\label{e. cov intro}
\Cov(\1_{Q\in \omega_\infty^{\gamma}(0)},\1_{Q\in \omega_\infty^{\gamma}(t)}\mid h)= \E\left[e^{-\mu_{\gamma h}^\S (D)}\1_{\S(D)\neq 0} \mid h \right]\,. 
\end{align}
Using that a.s.\ $\mu_{\gamma h}^\S(D)\neq 0$ on the event $\S(D)\neq 0$, we can prove convergence of the right side to $0$ as $t \to \infty$. The same is true when one studies the events in Theorem \ref{p.quantitative decorrelation}.

To prove the quantitative speed of decorrelation, we use \eqref{e. cov intro} again. The new idea is to take expected value and prove, first, the result in the annealed regime. To do that, we use the quantitative estimates obtained in \cite{GHSS}, which allow us to give explicit polynomial decay, at least for $\gamma<\sqrt{3/4}$. Then we deduce the quenched result from the annealed result, using, among other properties, that the covariance decreases in time. 
\begin{remark}
	We observe in Section \ref{sec:central-charge} that the transition point $\gamma=2-\sqrt{5/2}$ corresponds exactly to central charge $c=1$. It is an interesting coincidence that our stability argument breaks down exactly at $c=1$; note that almost all mathematics literature on LQG considers only the classical range $c\leq 1$ and not the more exotic range $c>1$. We see no apparent reason why the desired stability property (namely, $\mu_{\gamma h}(B^{\op h}_\eta(z))\ll \alpha^\eta_4(\eta,1)$ for all $z\in D$ and sufficiently small $\eta$) should break down exactly at $c=1$. We leave as a curiosity for the interested reader to investigate this further. 
	\label{rmk1}
\end{remark}

\begin{remark}\label{rmk2}
	Similarly as in Remark \ref{rmk3}, it is not clear whether one would expect convergence in Skorokhod topology to hold also for $\gamma>2-\sqrt{5/2}$. As mentioned above, for this range there are vertices where the Poisson clocks have rate of order strictly larger than 1. These are points where the field $h$ has so-called thick points.\footnote{More precisely, the field has thick points of thickness at least $\beta$ for some $\beta\in(0,2)$ depending on $\ga$. Furthermore, we need that the area of the hexagon $B^{\op h}_\eta(z)$ is at least as large as expected assuming that the field is thick at $z$.} The set of thick points forms a fractal set, and one can ask the following question. Given a quad $Q$ and an instance of critical percolation $\omega$, if one is allowed to change the color at the thick points arbitrarily, can one obtain both events $\{\omega\in Q \}$ and $\{\omega\not\in Q \}$ with high probability? If the answer to this question is negative, one would be able to prove Skorokhod convergence, but it is not clear to us whether to expect a positive or negative answer. 
\end{remark}

The paper is organised in the following way. We present some preliminaries on dynamical percolation and Liouville quantum gravity in Section \ref{s.prelim}. In Section \ref{s.3} we prove convergence in Skorokhod topology of dLDP for $\gamma\in(0,2-\sqrt{5/2})$, adapting the techniques in \cite{gps-near-crit}. Based on this technique along with Fourier analysis, we prove convergence in law of the spectral measure in Section \ref{sec:spectralmeasure}. Section \ref{s.ConvLDP} is the main technical contribution of the paper. After establishing convergence of the modified dLDP, we prove that this process is close to true dLDP in the scaling limit, and we prove that the limiting process is c\`adl\`ag. 
 In Section \ref{s6} we prove mixing for cLDP via Fourier analysis techniques, and in Section \ref{s8} we upgrade to quantitative mixing using \cite{GHSS}. In Section \ref{s7} we prove Theorem \ref{thm1} for the supercritical case. In Appendix \ref{app:lqg} we prove various convergence results for LQG measures, and in Appendix \ref{app:spectralsample} we prove upper and lower bounds for the spectral sample associated with the crossing of multiple quads.

\vspace{10pt}
\ni
\textbf{Acknowledgments:}
We thank Ewain Gwynne for helpful comments.
The research of C.G.\ is supported by the 
ANR grant \textsc{Liouville} ANR-15-CE40-0013 and the ERC grant LiKo 676999.
The research of N.H.\ is supported by a fellowship from the Norwegian Research Council, Dr.\ Max R\"ossler, the Walter Haefner Foundation, and the ETH Z\"urich Foundation.
The research of A.S.\ is supported by the ERC grant LiKo 676999.
The research of X.S.\ is supported by Simons Society of Fellows under Award 527901 and by NSF Award DMS-1811092. 
Part of the work on this paper were carried out during the visit of N.H.\ and X.S.\ to Lyon in November 2017 and 2018. 
They thank for the hospitality and for the funding through the ERC grant LiKo 676999. A.S\ would also like to thank the hospitality of N\'ucleo Milenio ``Stochastic models of complex and disordered systems'' for repeated invitation to Santiago were part of this paper was written.

\section{Preliminaries and further background}
\label{s.prelim}

\subsection{Fourier analysis for Boolean functions}
\label{sec:fourier}
In this subsection, we present the theory of Fourier analysis of Boolean functions. 
A function $f$ is said to be \notion{Boolean} if for some finite set $\II$ it is a function from $\{-1,1\}^\II$ to $\{-1,1\}$. We endow $\{-1,1\}^\II$ with the uniform probability measure.

First we will define an appropriate orthonormal basis for the inner product $(f,g)\mapsto\E[fg]$. For any $S\subseteq \II$, let $\chi_S$ be the Boolean function defined by $\chi_S=\prod_{i\in S}\chi_i$, where $\chi_i$ is the Boolean function defined by projection onto the $i$th coordinate. Note that $(\chi_S)_{S\subseteq \II}$ is an orthonormal basis for the functions on $\{-1,1\}^\II$. Therefore,
defining $\wh f(S)=\E[f\chi_S]$ for any Boolean function $f:\{-1,1\}^\II\to \{-1,1\}$,
\begin{equation*}
f=\sum_{S\subseteq \II} \wh f(S) \chi_S\,.
\end{equation*}
By Parseval's formula,
\begin{equation*}
\sum_{S\subseteq\II} (\wh f(S))^2=1.
\end{equation*}
This allows us to define, for every Boolean function $f$, a random variable $\S$ such that $\P(\S=S)=(\wh f(S))^2$. We call $\S$ the \textbf{spectral sample} associated with $f$.

In this paper, we are interested in Boolean functions whose domain are subsets of $\Tg_\eta$. This motivates us to abuse notation and identify its spectral sample $\S_\eta$ with the measure
\footnote{In \cite{gps-fourier}, the distinction is made between the spectral sample $\S_\eta$, viewed as a random set and the \textbf{counting measure} $\lambda_\eta$ on that spectral sample $\S_\eta$. Here for convenience, we identify the concepts.} 
\begin{equation}\label{e. identification}
\S_\eta(d^2z):= \alpha_4^\eta(\eta,1)^{-1}\sum_{x\in \S_\eta}  \1_{z\in B^{\op h}_\eta(x)}d^2z\,.
\end{equation}
Then we can talk about weak convergence of $\S_\eta$ in the space of measures, and  define $\mu_{\gamma h}^{\S_\eta}$, the LQG measure with base measure $\S_\eta$. In the remainder of the paper (except in Appendix \ref{app:spectralsample}) we refer to this measure (rather than the subset of $\C$) when we talk about $\S_\eta$ and $\S$. For $C\in\N$ and $\varrho>0$ as in Lemma \ref{l.bound measure MC} we also define the following truncated Liouville measure
\begin{equation}\label{e. liouville spectral sample}
\wt \mu_{\gamma h}^{C,\S_\eta}(d^2z)= \alpha^\eta_4(\eta,1)^{-1}\sum_{x\in \S_\eta}\1_{z\in B^{\op h}_\eta(x)\cap  \M_C} \mu_{\gamma h}(d^2z)\,,
\end{equation}
where $\M_C$ is as in \eqref{e.MC intro}. Let us note that for any set $E\subseteq D$, $\wt \mu_{\gamma h}^{C,\S_\eta}(E)= \mu_{\gamma h}^{\S_\eta}(E\cap \M_C)$.

The following key identities express the covariance between $f(\omega^\ga_\eta(0))$ and $f(\omega^\ga_\eta(t))$ (and with $\wt\omega_\eta^{C,\ga}(\cdot)$ instead of $\omega^\ga_\eta(\cdot)$) in terms of the spectral measure.
\begin{lemma}\label{l.covariance}
	Let $f$ be a Boolean function defined on a finite subset of $\Tg_\eta$ and let $\S_\eta$ be the associated spectral measure. Then 
	\begin{align}\label{e.decreasingcovariancecond}
	&\Cov\left[f(\omega^\ga_\eta(0)),f(\omega^\ga_\eta(t)) \mid h\right]=\E\left[ e^{-t\mu_{\gamma h}^{\S_\eta}(D)} \1_{\S_\eta(D)\neq 0}\mid h\right],\\
	&\label{e.decreasingcovariancetildecond}	\Cov\left[f(\wt\omega^{C,\ga}_\eta(0)),f(\wt \omega^{C,\ga}_\eta(t)) \mid h\right]=\E \left[e^{-t\wt\mu_{\gamma h}^{\S_\eta}(D\cap \M_C)} \1_{\S_\eta(D)\neq 0}\mid h \right]\,.
	\end{align}
	(See Sections \ref{ss.sketch} and \ref{s.ConvLDP} for the definition of the process $\wt \omega^{C,\ga}_\eta(t)$). Furthermore, the function $(f,g)\mapsto \Cov[f(\wt \omega^{C,\ga}(0)), g(\wt \omega^{C,\ga}(t))\mid h]$ is an inner product if we identify functions that differ by a constant,\footnote{We identify functions that differ by a constant since $(f,g)=0$ if $f\equiv a$ or $g\equiv a$ for some constant $a$.} so the Cauchy-Schwarz inequality gives that for any Boolean function $g$, 
		\begin{align}\label{eq:fg}
		\Cov[f(\wt \omega^{C,\ga}_\eta(0)), g(\wt \omega^{C,\ga}_\eta(t))\mid h] \le \Cov[f(\wt \omega^{C,\ga}_\eta(0)), f(\wt \omega^{C,\ga}_\eta(t))\mid h]^{1/2}\,.
		\end{align}
\end{lemma}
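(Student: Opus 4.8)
The plan is to establish \eqref{e.decreasingcovariancecond} and \eqref{e.decreasingcovariancetildecond} by a direct computation using the Fourier–Walsh expansion together with the explicit law of the dynamics, and then to deduce the inner product claim and Cauchy–Schwarz bound as formal consequences of the first identity. First I would recall that, conditionally on $h$, the dynamics $\omega^\ga_\eta(\cdot)$ is a product of independent continuous-time Markov chains: for each site $x\in\Tg_\eta$, the coordinate $\omega^\ga_\eta(t)_x\in\{-1,1\}$ is resampled at rate $r_x(\eta):=\mu_{\gamma h}(B^{\op h}_\eta(x))\alpha^\eta_4(\eta,1)^{-1}$. Hence $\E[\omega_\eta(0)_x\,\omega_\eta(t)_x\mid h]=e^{-t\,r_x(\eta)}$, since the value at time $t$ agrees with the value at time $0$ precisely when no clock has rung on $[0,t]$ (probability $e^{-t r_x}$), and otherwise the two values are independent with mean zero. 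More generally, by independence across coordinates, $\E[\chi_S(\omega_\eta(0))\chi_S(\omega_\eta(t))\mid h]=\prod_{x\in S}e^{-t r_x(\eta)}=e^{-t\sum_{x\in S} r_x(\eta)}$, and $\E[\chi_S(\omega_\eta(0))\chi_{S'}(\omega_\eta(t))\mid h]=0$ for $S\ne S'$, by projecting onto any coordinate in the symmetric difference.

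Next I would expand $f=\sum_S \wh f(S)\chi_S$ in both time-slots, use bilinearity and the orthogonality just noted, and observe that $\E[f(\omega_\eta(0))\mid h]=\E[f(\omega_\eta(t))\mid h]=\wh f(\emptyset)$, so that the $S=\emptyset$ term drops out of the covariance. This gives
\[
\Cov[f(\omega^\ga_\eta(0)),f(\omega^\ga_\eta(t))\mid h]=\sum_{\emptyset\ne S\subseteq\II}\wh f(S)^2\, e^{-t\sum_{x\in S}r_x(\eta)}.
\]
The final step of the computation is to recognise the right-hand side as an expectation over the spectral sample: since $\P(\S_\eta=S\mid h)=\wh f(S)^2$ (note the spectral weights do not depend on $h$, as $f$ and the resampling are independent of $h$), and since under the identification \eqref{e. identification} one has $\mu_{\gamma h}^{\S_\eta}(D)=\alpha^\eta_4(\eta,1)^{-1}\sum_{x\in\S_\eta}\mu_{\gamma h}(B^{\op h}_\eta(x))=\sum_{x\in\S_\eta}r_x(\eta)$, the sum becomes $\E[e^{-t\mu_{\gamma h}^{\S_\eta}(D)}\1_{\S_\eta\ne\emptyset}\mid h]$, which is \eqref{e.decreasingcovariancecond}. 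For \eqref{e.decreasingcovariancetildecond} the argument is identical, except that the resampling rate of $x$ is now $\wt\mu^C_{\gamma h}(B^{\op h}_\eta(x))\alpha^\eta_4(\eta,1)^{-1}=\mu_{\gamma h}(B^{\op h}_\eta(x)\cap\M_C)\alpha^\eta_4(\eta,1)^{-1}$, so $\sum_{x\in\S_\eta}$ of these rates equals $\mu_{\gamma h}^{\S_\eta}(D\cap\M_C)=\wt\mu_{\gamma h}^{\S_\eta}(D\cap\M_C)$ by the remark following \eqref{e. liouville spectral sample}.

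For the inner product statement, I would note from the displayed sum that $(f,g)\mapsto\Cov[f(\wt\omega^{C,\ga}(0)),g(\wt\omega^{C,\ga}(t))\mid h]=\sum_{\emptyset\ne S}\wh f(S)\wh g(S)e^{-t\sum_{x\in S}\wt r_x}$, which is manifestly symmetric, bilinear, and positive semidefinite (each coefficient $e^{-t\sum_{x\in S}\wt r_x}>0$), and vanishes exactly when $\wh f(S)=0$ for all $S\ne\emptyset$, i.e.\ when $f$ is constant; this is why one quotients by constants to obtain a genuine inner product. The Cauchy–Schwarz inequality for this inner product gives $\Cov[f,g]\le\Cov[f,f]^{1/2}\Cov[g,g]^{1/2}$, and since $g$ is $\{-1,1\}$-valued one has $\Cov[g(\wt\omega^{C,\ga}(0)),g(\wt\omega^{C,\ga}(t))\mid h]\le\Var[g(\wt\omega^{C,\ga}(0))\mid h]\le 1$ (equivalently $\sum_S\wh g(S)^2 e^{-t\sum\wt r_x}\le\sum_S\wh g(S)^2=1$), yielding \eqref{eq:fg}. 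I do not anticipate a serious obstacle here; the only point requiring a little care is the bookkeeping identifying $\sum_{x\in\S_\eta}r_x(\eta)$ with $\mu_{\gamma h}^{\S_\eta}(D)$ through the normalisation in \eqref{e. identification}, and checking that the spectral weights are genuinely independent of $h$ so that $\P(\S_\eta=S\mid h)=\wh f(S)^2$ holds and the outer expectation in \eqref{e.decreasingcovariancecond}–\eqref{e.decreasingcovariancetildecond} is only over $\S_\eta$ with $h$ frozen.
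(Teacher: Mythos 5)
Your proof is correct and follows essentially the same route as the paper's: Fourier--Walsh expansion, orthogonality of Walsh functions between time slots, the single-coordinate decorrelation $\E[\omega_x(0)\omega_x(t)\mid h]=e^{-tr_x}$, identification of $\sum_{x\in\S_\eta}r_x(\eta)$ with $\mu_{\gamma h}^{\S_\eta}(D)$ via \eqref{e. identification}, and finally Cauchy--Schwarz for the bilinear form on functions modulo constants. The only differences are that you spell out a few steps the paper leaves implicit, e.g. the Parseval bound $\sum_S\wh g(S)^2=1$ justifying $\Cov[g(\wt\omega(0)),g(\wt\omega(t))\mid h]^{1/2}\le 1$, and the observation that $\P(\S_\eta=S\mid h)=\wh f(S)^2$ holds because the spectral weights do not depend on $h$.
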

\begin{proof}
	Let us first note that if $S\neq S'$, then $\E\left[\chi_S(\omega^\ga_\eta(0))\chi_S'(\omega^\ga_\eta(t))\mid h\right]=0$. Furthermore, letting $\omega_{\eta,x}^\ga(t)\in\{-1,1 \}$ describe whether $x\in \Tg_\eta$ is open or closed, since for any $x\in S$ we have 
	$\E[\omega^\ga_{\eta,x}(0)\omega^\ga_{\eta,x}(t)\mid h]=\exp(-t\alpha^\eta_4(\eta,1)^{-1}\mu_{\gamma h}(B^{\op h}_\eta(x)))$, we see that
	\begin{equation*}
	\begin{split}
	\E\left[\chi_S(\omega^\ga_\eta(0))\chi_S(\omega^\ga_\eta(t))\mid h\right]
	&=\exp\Big (-\sum_{x\in S}t\alpha^\eta_4(\eta,1)^{-1}\mu_{\gamma h}(B^{\op h}_\eta(x))\Big )\\
	&=\exp(-t\mu_{\gamma h}^{S}(D))\,.
	\end{split}
	\end{equation*}
	 This implies that
	\eqb
	\E\left[f(\omega^\ga_\eta(0))f(\omega^\ga_\eta(t))\mid h \right]= \sum_{S\subseteq \II} (\wh f(S))^2 \exp(-t\mu_{\gamma h}^{S}(D))= \E\left[ \exp\left (-t\mu_{\gamma h}^{\S_\eta}(D)\right )\mid h\right]\,.
	\label{eq5}
	\eqe
	We conclude the proof of \eqref{e.decreasingcovariancecond} by noting that $\wh f(\emptyset)= \E\left[f(\omega^\ga_\eta(0))\right]$
	and subtracting $\wh f(\emptyset)^2$ on both sides. The same proof works for $\wt \omega_\eta^{C,\ga}(\cdot)$.
	
	For the second part, by the same calculation,
	\[
	\Cov[f(\wt \omega^{C,\ga}_\eta(0)), g(\wt \omega^{C,\ga}_\eta(t))\mid h] =\sum_{S(D)\neq 0} \wh f(S)\wh g(S)\exp(-t
			\mu^{\S_\eta,C}_{\gamma h}(D)).
	\]	
	From this identity we see that  $(f,g)\mapsto\Cov[f(\wt \omega_\eta^{C,\ga}(0)), g(\wt \omega_\eta^{C,\ga}(t))]$ is an inner product if we identify $f$ with the constant function $x\mapsto 0$ if $\wh f(S)=0$ for all $S\neq\emptyset$; equivalently, we identify $f$ with $x\mapsto 0$ if $f\equiv a$ for some constant $a$.
	The Cauchy-Schwarz inequality gives
	\begin{align*}
	\Cov[&f(\wt \omega_\eta^{C,\ga}(0)), g(\wt \omega_\eta^{C,\ga}(t))\mid h]\\
	& \le \Cov[f(\wt \omega^{C,\ga}_\eta(0)), f(\wt \omega^{C,\ga}_\eta(t))\mid h]^{1/2} \Cov[g(\wt \omega^{C,\ga}_\eta(0)), g(\wt \omega^{C,\ga}_\eta(t))\mid h]^{1/2}\\
	&\le \Cov[f(\wt \omega^{C,\ga}_\eta(0)), f(\wt \omega^{C,\ga}_\eta(t))\mid h]^{1/2}\,.
	\end{align*}
\end{proof}
\begin{remark} By taking expected value in  equations \eqref{e.decreasingcovariancecond} and \eqref{e.decreasingcovariancetildecond}, one obtains
	\begin{align}\label{e.decreasingcovariance}
	&\Cov\left[f(\omega^\ga_\eta(0)),f(\omega^\ga_\eta(t)) \right]=\E\left[e^{-t\mu_{\gamma h}^{\S_\eta}(D)} \1_{\S_\eta(D)\neq 0} \right],\\
	&\label{e.decreasingcovariancetilde}	\Cov\left[f(\wt\omega^{C,\ga}_\eta(0)),f(\wt \omega^{C,\ga}_\eta(t)) \right]=\E \left[e^{-t\wt\mu_{\gamma h}^{C,\S_\eta}(D)} \1_{\S_\eta(D)\neq 0} \right]\,.
	\end{align}
	Furthermore, if $g$ is any other Boolean function,
	\begin{align}\label{eq:fg2}
	\Cov[f(\wt \omega^{C,\ga}_\eta(0)), g(\wt \omega^{C,\ga}_\eta(t))] \le \Cov[f(\wt \omega^{C,\ga}_\eta(0)), f(\wt \omega^{C,\ga}_\eta(t))]^{1/2}\,.
	\end{align}
\end{remark}

\subsection{Quad-crossing space}
\label{s.quad}
The idea in \cite{ss-planar-perc} is to consider a percolation configuration as the set of all quads crossed by it. Let us start by defining what a quad is.
\begin{definition} \label{def:quad}
	Let $D\subset\C$ be bounded. A {\bf quad} $\Quad$ in $D$ is a homeomorphism $\Quad:[0,1]^2\to D$. Let $\QQ_D$ denote the set of quads, equipped with the topology generated by the following (pseudo)metric $d_Q(Q_1,Q_2):=\inf_\phi \sup_{z\in\partial[0,1]^2}|Q_1(z)-Q_2(z)|$, where the infimum is over all homeomorphisms $\phi:[0,1]^2\to[0,1]^2$ which preserve the four corners of the square. A {\bf crossing} of a quad $\Quad$ is a connected closed subset of $\Quad([0,1]^2)$ that intersects both boundaries  $\partial_1 \Quad = \Quad(\{0\} \times [0,1])$ and $\partial_3 \Quad= \Quad(\{1\}\times[0,1])$.
\end{definition}
We say that a quad is rectangular if $Q([0,1]^2)$ is a rectangle and if the four corners of $[0,1]^2$ are mapped to the four corners of $Q([0,1]^2)$ by the map $Q$.

The space of quads has a natural partial order induced by the crossings. We write $\Quad_1\leq \Quad_2$ if any crossing of $\Quad_2$ contains a crossing of $\Quad_1$. We say that a subset $S\subseteq \QQ_D$ is \textbf{hereditary} if, whenever $Q\in S$ and $Q'\in \QQ_D$ satisfies $Q'\leq Q$, we have $Q'\in S$. Note that if we are given an instance of site percolation on $\Tg_\eta$ (equivalently, a percolation on the faces of the hexagonal lattice) and let $S$ be the set of quads which are crossed by the set of open hexagons, then $S$ is necessarily hereditary.
\begin{definition}[The space $\HH$] Let $\HH$ be the collection of all closed hereditary subsets of $\QQ_D$.
\end{definition}
In this paper we will consider two different topologies on $\QQ_D$. The first topology is the so-called Schramm-Smirnov topology, which was also considered in \cite{gps-near-crit} and originally introduced in \cite{ss-planar-perc}. This gives a compact, Polish, and metrizable space, and we let $d_\HH$ denote a metric which generates the topology. For any $k\geq 1$, let $\QQ^k$ be the set of all quads which are polygonal in $D\cap (2^{-k}\Z^2)$, i.e., quads whose boundaries are included in $D\cap (2^{-k} \Z^2)$, and denote by $\QQ^\infty$ the union $\QQ^\infty:=\cap_{k\in\N}\QQ^k$. Then the Borel $\sigma$-algebra of $(\HH,d_{\HH})$ is generated by the sets $\{Q\in\omega \}$ for $Q\in\QQ^\infty$. We refer to \cite[Section 2.2]{gps-near-crit} for further properties of the space $(\HH,d_{\HH})$. 

The other topology on $\HH$ is slightly stronger. Define the following distance on $\HH$
\begin{equation}\label{e.distance}
d_\HH^{\op{mod}}(\omega,\omega'):= \sup\{2^{-k}: \text{there is } \Quad \in \QQ^k \text{ such that } \Quad \in \omega\Delta \omega' \}\,.
\end{equation}
It is possible to see that this metric generates a finer topology than the one of Schramm-Smirnov.\footnote{One can check that, in the notation of \cite{gps-near-crit}, both $\boxminus_\Quad^c$ and $\boxdot_U^c$ are open for the topology generated by $d_\HH^{\op{mod}}$.} Additionally, let us note that under this metric $\HH$ can be identified with $\{0,1\}^{\QQ_\infty}$ equipped with the product topology and the appropriate metric. The space $\HH$ is not complete under the metric $d_\HH^{\op{mod}}$.\footnote{For example, consider the sequence of elements in $\HH$ such that the $n$th element consists of the quad $Q_n( (x,y) )=((1-1/n)x,y)$ and all quads $Q$ satisfying $Q\leq Q_n$. This sequence is Cauchy, but does not have a limit, since (by the requirement that the elements of $\HH$ are closed) the limiting object would need to contain the quad $Q(z)\equiv z$, while the limit cannot contain this quad by definition of $d_\HH^{\op{mod}}$.} 

At several occasions we will use the following lemma to upgrade convergence statements from $d_\HH$ to $d_\HH^{\op{mod}}$. 
\begin{lemma}
	Assume $\omega_\eta$ for $\eta\in(0,1]$ is a collection of random elements in $\HH$ such that $\omega_\eta\rta\omega_\infty\in\HH$ a.s.\ for the metric $d_\HH$ as $\eta\rta 0$. If $\omega_\infty$ has the law of the critical percolation scaling limit, then $\omega_\eta\rta\omega_\infty\in\HH$ a.s.\ for the metric $d_\HH^{\op{mod}}$ as $\eta\rta 0$.
	\label{prop:ss-meas}
\end{lemma}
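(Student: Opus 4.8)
The plan is to show that the only difference between the two topologies is relevant for "degenerate" quads, and that the percolation scaling limit almost surely avoids such degeneracies. First I would recall the key fact about the two metrics: $d_\HH^{\op{mod}}$ convergence fails while $d_\HH$ convergence holds precisely when there is some $Q\in\QQ^\infty$ which lies on the "boundary" of $\omega_\infty$, i.e.\ a quad $Q$ such that arbitrarily small perturbations $Q'\le Q$ are in $\omega_\infty$ but $Q$ itself is not, or such that $Q\in\omega_\infty$ but arbitrarily small perturbations $Q'\ge Q$ are not. In the language of \cite{gps-near-crit}, the issue is that the events $\{Q\in\omega\}$ are not continuous on all of $\HH$, but they are continuous at configurations $\omega$ for which $Q$ is not a "pivotal" quad in the appropriate sense.

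The key step is the following deterministic statement: if $Q\in\QQ^\infty$ is such that $\omega_\infty$ contains every $Q'< Q$ with $Q'\in\QQ^\infty$ but does not contain $Q$ (or the dual situation with the complementary configuration $\omega_\infty^c$ and the dual crossing event), then $\omega_\infty$ has a "$6$-arm type" degeneracy at $\partial Q$, meaning both a primal crossing and a dual crossing can be created/destroyed by infinitesimal perturbations of $Q$. For the percolation scaling limit this has probability zero: it is a standard consequence of Russo–Seymour–Welsh estimates and the quasi-multiplicativity of arm exponents that a.s.\ there is no quad $Q\in\QQ^\infty$ for which crossing is "marginal" in this sense — equivalently, a.s.\ for every $Q\in\QQ^\infty$ there is an open path strictly crossing $Q$ or a dual closed path strictly separating the two sides of $Q$ with some positive (random) margin. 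Since $\QQ^\infty$ is countable, it suffices to prove this for a fixed $Q\in\QQ^\infty$, where it follows by a routine argument: conditioning on the (open or dual) crossing, the crossing cluster has a.s.\ positive distance from $\partial Q$, hence it crosses a slightly larger/smaller quad too. Then I would argue by contradiction: suppose $d_\HH^{\op{mod}}(\omega_\eta,\omega_\infty)\not\to 0$; then along a subsequence there is a fixed $k$ and quads $Q_\eta\in\QQ^k$ with $Q_\eta\in\omega_\eta\Delta\omega_\infty$. Since $\QQ^k$ restricted to quads inside $D$ with boundary in $2^{-k}\Z^2$ is finite (or at least the relevant bounded family is), we may pass to a further subsequence where $Q_\eta\equiv Q$ is constant, and $Q\in\omega_\eta\Delta\omega_\infty$ for all small $\eta$. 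This forces $Q$ to be a marginal quad for $\omega_\infty$ in the sense above — using $d_\HH$-convergence to control how $\omega_\eta$ behaves near $Q$ — contradicting the a.s.\ statement of the previous step.

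The main obstacle I expect is making the contradiction argument precise: one has to carefully translate "$Q\in\omega_\eta\Delta\omega_\infty$ for all small $\eta$ together with $d_\HH$-convergence" into the statement "$Q$ is a marginal/pivotal quad for $\omega_\infty$". The subtle point is that $d_\HH$-convergence only gives continuity of the events $\{Q'\in\omega\}$ at $\omega_\infty$ for quads $Q'$ that are themselves non-marginal, so one must play off a nearby quad $Q'$ slightly larger and one slightly smaller than $Q$: if $Q\notin\omega_\infty$ but $Q\in\omega_\eta$ infinitely often, then (since $Q'<Q$, $Q'\in\QQ^\infty$, $Q'$ non-marginal implies $Q'\in\omega_\infty$ hence $Q'\in\omega_\eta$ eventually, which is automatic) one instead uses a slightly larger $Q''>Q$: $Q\in\omega_\eta$ implies $Q''\notin\omega_\eta$ need not hold — so really the cleanest route is to note $Q\notin\omega_\infty$ combined with closedness of $\omega_\infty$ gives an open neighborhood of $Q$ in $\QQ_D$ disjoint from $\omega_\infty$, pick $Q''$ in this neighborhood with $Q''>Q$ and $Q''\in\QQ^\infty$, deduce $Q''\notin\omega_\infty$, and then if $Q''$ is non-marginal, $d_\HH$-convergence gives $Q''\notin\omega_\eta$ eventually; but $Q\in\omega_\eta$ and $Q<Q''$ and heredity give $Q\in\omega_\eta\Rightarrow$ nothing about $Q''$, so instead one uses heredity the other way via the dual. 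I would handle this by working simultaneously with the primal configuration and its dual and invoking that a.s.\ no $Q\in\QQ^\infty$ is marginal for \emph{either}; the heredity/duality bookkeeping is routine once set up, following \cite[Section 2.2--2.3]{gps-near-crit} closely, but it is the place where care is needed.
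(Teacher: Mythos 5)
Your approach is correct and is essentially the paper's; the paper simply collapses it to a one-line citation of \cite[Lemma~5.1]{ss-planar-perc}, which states exactly your key claim: for the critical scaling limit, almost surely every quad $Q\in\QQ^\infty$ is a continuity quad, i.e.\ $\omega_\infty$ is a $d_\HH$-continuity point of the event $\{\omega : Q\in\omega\}$. That is your ``no marginal quads'' statement, established there by the RSW/arm-exponent argument you sketch. The step you flag as the main obstacle is, however, not an obstacle once you use the right framing: define ``$Q$ is marginal for $\omega_\infty$'' to mean ``$\omega_\infty$ lies on the $d_\HH$-boundary of $\{\omega : Q\in\omega\}$''. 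Then the implication you want is just the contrapositive of the definition of a continuity point: if $\omega_\infty$ were a $d_\HH$-continuity point of $\1_{\{Q\in\cdot\}}$, the assumed $d_\HH$-convergence $\omega_\eta\rta\omega_\infty$ would force $\1_{\{Q\in\omega_\eta\}}\rta\1_{\{Q\in\omega_\infty\}}$, contradicting $Q\in\omega_\eta\Delta\omega_\infty$ along your subsequence. The heredity/duality bookkeeping you worry about is a red herring; the lemma is nothing more than the almost-sure non-marginality statement for each fixed $Q\in\QQ^\infty$, a union bound over the countable set $\QQ^\infty$, and the definition of a continuity point.
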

\begin{proof}
	The lemma is a direct consequence of \cite[Lemma 5.1]{ss-planar-perc}.
\end{proof}

The metric $d_\HH^{\op{mod}}$ is sometimes easier to work with since it is explicit. However, when we consider c\`adl\`ag processes $\omega(\cdot)$ such that $\omega(t)\in\HH$ for each $t\in\R_+$ we want to use the metric $d_\HH$ due to completeness of the space.
\begin{definition}[Skorokhod space]
	For $T>0$ let $(\Sk_T,d_{\Sk_T})$ denote the set of c\`adl\`ag functions $\omega:[0,T]\to\HH$ equipped with the following metric $d_{\Sk_T}$
	\eqbn
	d_{\Sk_T}(\omega,\omega')
	=\inf_{\phi}\sup_{t\in[0,T]}
	d_{\HH}(\omega(\phi(t)),\omega'(t)),
	\eqen
	where $\phi:[0,T]\to[0,T]$ is an increasing homeomorphism.
	
	Let $(\Sk,d_{\Sk})$ denote the set of c\`adl\`ag functions $\omega:\R_+\to\HH$ equipped with the following metric $d_{\Sk}$
	\eqbn
	d_{\Sk}(\omega,\omega')
	=\sum_{k=1}^{\infty}
	\inf_{\phi}
	2^{-k}\wedge\bigg(\sup_{t\in\R_+\,:\, t\wedge\phi(t)\leq k }d_{\HH}(\omega(\phi(t)),\omega'(t))\bigg),
	\eqen
	where $\phi:\R_+\to\R_+$ is an increasing homeomorphism. 
	\label{def:skorokhod}
\end{definition}

For the case when $\gamma>(2-\sqrt{5/2},2)\setminus\{\sqrt{3/2} \}$ we do not prove convergence of $\omega_{\eta}^\gamma(\cdot)$ in Skorokhod space, but rather in the following weaker topology.
\begin{definition}
	For $T>0$ assume $\omega,\omega':[0,T]\to \scr H$, and define the following distance
	\eqbn
	d_{L^1}(\omega,\omega')=\int_{0}^{T}d_{\scr H}( \omega(t),\omega'(t) )\,dt\,.
	\eqen
	If $\omega,\omega':\R_+\to\scr H$ define the following distance
\eqbn
d_{L^1}(\omega,\omega')
=\sum_{k=1}^{\infty}
2^{-k}\wedge d_{L^1}(\omega|_{[0,k]},\omega'|_{[0,k]})\,.
\eqen
\label{def:L1}
\end{definition}

\subsection{Continuum Euclidean dynamical percolation}
\label{sec:gps-dp}
In the following section, we present the main definitions and results used to prove the convergence of the classical dynamical percolation to its continuum counterpart (i.e., the Euclidean case $\gamma=0$). This element will be important, as the first step in the proof of convergence of LDP for $\gamma\neq 0$ follows the same lines as in the classical case. We refer to Section \ref{s.3} for further details.

Let us start by defining four arm events and the four arm exponent. For a coloring of $\Tg_\eta$ we define an {\bf arm} to be a simple path of vertices such that all the vertices have the same color and consecutive vertices in the path are adjacent in $\Tg_\eta$. Let $A_1$ and $A_2$ be bounded simply connected domains in $D$, such that $\ol{A_1}\subset A_2$. Define $A=A_2\setminus A_1$, so that $A$ is a topological annulus. We say that a site $z$ is $A$-important if $z\in A_1$ and if there are four arms of alternating color connecting $z$ to $\partial A_2$. We call this event a four arm event. For $0<r<R$ let $\alpha_4^\eta(r,R)$ denote the probability that the four arm event happens with $A_1$ (resp.\ $A_2$) the square of side length $2r$ (resp.\ $2R$) centred at $z$. 
It was proven in \cite{smirnov-werner-percolation} that $\alpha_4^\eta(r,R)=(r/R)^{5/4+o(1)}$.

Next let us define the set of $\eps$-important pivotal points.
\begin{definition}[Importance of a point]\label{d. important}
	Let $\eps>0$ and consider the grid $\eps\Z^2$. For $z\in\R^2$ let $A_z$ be the topological annulus as defined above, with $A_1$ equal to the square of $\eps\Z^2$ containing\footnote{In case $z$ lies on the grid (i.e., at least one of its coordinates is an integer multiple of $\eps$), choose $A_1$ arbitrarily among the possible squares.} $z$, and $A_2$ the square of side length $3\eps$ concentric with $A_1$. We say that $z$ is $\eps$-important if $z$ is $A_z$-important. 
\end{definition}
Let us now consider convergence of the $\eps$-important points. Define 
\begin{equation}\label{e.lambdaeps}
\lambda^\epsilon_\eta(d^2z)
:= \sum_{x\in \Tg_\eta}\1_{z\in B^{\op h}_\eta(x)}\1_{x\text{\,\,is\,\,}\eps\text{-important}} \alpha^\eta_4(\eta,1)^{-1} d^2z\,.
\end{equation}
The next theorem follows from \cite[Theorem 1.1]{gps-pivotal} (see also \cite[Theorem 2.13]{gps-near-crit}).
\begin{thm}[\cite{gps-pivotal}] 
	For any $\epsilon>0$, there is a measurable map $\lambda^\epsilon$ from $(\HH,d_\HH)$ into the space of finite Borel measures on $\ol D$ such that as $\eta \to 0$
	\[(\omega_\eta,\lambda^\epsilon(\omega_\eta))\stackrel{d}{\to}(\omega_\infty,\lambda^\epsilon(\omega_\infty)),\]
	where we use the Schramm-Smirnov topology in the first coordinate and the weak convergence of measures in the second coordinate.
	\label{thm:gps-dp-main}
\end{thm}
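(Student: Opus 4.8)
The plan is to recall that Theorem~\ref{thm:gps-dp-main} is essentially a restatement of the main scaling-limit result for the pivotal measure of \cite{gps-pivotal,gps-near-crit}, so the ``proof'' here is really a matter of assembling the correct ingredients and checking that the map $\lambda^\epsilon$ promised in the statement exists with the stated measurability. First I would recall the discrete construction: for each $\eta>0$, $\lambda^\epsilon_\eta$ as defined in \eqref{e.lambdaeps} is the counting measure on the $\epsilon$-important pivotal points of $\omega_\eta$, weighted by $\alpha^\eta_4(\eta,1)^{-1}$ so that the total mass is tight (by a first-moment computation using the quasi-multiplicativity of the four-arm probabilities and the estimate $\alpha_4^\eta(r,R)=(r/R)^{5/4+o(1)}$ from \cite{smirnov-werner-percolation}). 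The pair $(\omega_\eta,\lambda^\epsilon_\eta)$ then lives in $(\HH,d_\HH)\times\mathcal M(\ol D)$, a Polish space, and tightness of the joint law follows from tightness of each coordinate — the first from Smirnov's theorem \cite{smirnov-cardy} / the Schramm--Smirnov framework \cite{ss-planar-perc}, the second from the moment bound.

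Next I would invoke \cite[Theorem 1.1]{gps-pivotal} (equivalently \cite[Theorem 2.13]{gps-near-crit}), which gives that along the full sequence $\eta\to 0$ the joint law $(\omega_\eta,\lambda^\epsilon_\eta)$ converges to a limit $(\omega_\infty,\Lambda^\epsilon)$, and moreover that the limiting measure $\Lambda^\epsilon$ is a \emph{measurable function of} $\omega_\infty$. This last point — that the limiting pivotal measure is not merely coupled to but is a deterministic functional of the Schramm--Smirnov configuration — is exactly what lets one \emph{define} the map $\lambda^\epsilon:(\HH,d_\HH)\to\mathcal M(\ol D)$, by setting $\lambda^\epsilon(\omega_\infty)=\Lambda^\epsilon$ $\P$-a.s.\ and extending arbitrarily (say to the zero measure) off a set of full measure, or better, by taking the measurable version constructed in \cite{gps-pivotal} directly. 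With $\lambda^\epsilon$ in hand, the convergence $(\omega_\eta,\lambda^\epsilon(\omega_\eta))\stackrel{d}{\to}(\omega_\infty,\lambda^\epsilon(\omega_\infty))$ is then just a reindexing of the cited convergence, once one checks that the discrete object $\lambda^\epsilon_\eta$ agrees with $\lambda^\epsilon(\omega_\eta)$ up to an error vanishing in probability — which is part of the content of \cite{gps-pivotal} (the discrete pivotal measure is asymptotically a function of the discrete configuration at the relevant scale).

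The main obstacle, and the only genuinely nontrivial input, is the measurability and uniqueness of the limiting pivotal measure as a functional of $\omega_\infty$: a priori the joint limit could be a genuinely larger object carrying extra randomness. Resolving this is precisely the hard work of \cite{gps-pivotal}, which uses the stability of pivotal points and a careful second-moment / coupling argument to show that the limiting measure is measurable with respect to the $\sigma$-algebra generated by the quad-crossing events. Since we are entitled to cite that theorem, the remaining steps — tightness via the first-moment bound, identification of the limit along subsequences, and the bookkeeping that $\lambda^\epsilon_\eta$ and $\lambda^\epsilon(\omega_\eta)$ are asymptotically equal — are routine, and I would present them briefly, referring to \cite[Section 2.3]{gps-near-crit} for the details of the space $\mathcal M(\ol D)$ and the weak topology used there.
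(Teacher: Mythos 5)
The paper gives no proof of this statement; it is presented purely as a restatement of \cite[Theorem 1.1]{gps-pivotal} (see also \cite[Theorem 2.13]{gps-near-crit}). Your proposal takes exactly the same route — assemble the discrete definition of $\lambda^\epsilon_\eta$, invoke that cited theorem for the joint convergence and for the measurability of the limiting pivotal measure as a functional of $\omega_\infty$, and observe that the stated convergence is a reindexing — so it is correct and matches the paper's approach, with the useful addition of explaining why the measurability of $\Lambda^\epsilon$ is the genuinely nontrivial input.
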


\subsection{Liouville quantum gravity}\label{ss.GMC}
Our presentation of LQG is going to be based on \cite{berestycki-elem}, and for more advanced results we are going to rely on \cite{aru17} (see also the useful review \cite{rhodes-vargas-review}). Let $h$ be a log-correlated field on a bounded simply connected domain $D$. More precisely, let $h$ be a centred Gaussian field with correlations given by a non-negative definite kernel
\begin{equation}
K(x,y):= -\log(|x-y|)+ g(x,y),
\label{eq:kernel}
\end{equation}
where $g$ is continuous over $\ol D\times \ol D$. In the rest of the paper we will always assume that the considered field $h$ satisfies these assumptions. As ``$K(x,x)=\infty$'' this definition does not make rigorous sense, and we obtain a  precise mathematical definition by considering a centred Gaussian process $(h,\rho)$ such that the variance satisfies 
\begin{equation*}
\E\left[(h,\rho)^2\right]=\iint_{D\times D} \rho(x) K(x,y) \rho(y) d^2z,
\end{equation*}
where $\rho$ can take any value such that the right side is finite. In particular, $\rho$ can be any continuous function in $\overline D\times \overline D$.

The LQG measures may be constructed via an approximation procedure. Take $\epsilon>0$ and define $h_\epsilon=h*\theta_\epsilon$, where $\theta_\epsilon$ is an appropriate mollifier. If $h_\eps$ is a circle average or if $\theta_\eps$ is smooth, one can argue that  $h_\epsilon(\cdot)$ has a continuous version. We assume  $\Var(h_\epsilon(z))=\log\eps^{-1}+O(1)$ for all $z\in D$ with distance at least $\eps$ from $\C\setminus D$, where $O(1)$ is uniform in the choice of $z$ bounded away from $\partial D$.

Let $\gamma\geq0$, let $\sigma$ be a positive measure on $D$, and let $h$ be a log-correlated field. We define the LQG measure $\mu^\sigma_{\gamma h}$ associated to $h$ with base measure $\sigma$ and parameter $\gamma$ by 
\begin{equation} \label{e. def GMC}
\mu^\sigma_{\gamma h}(d^2z)= \lim_{\epsilon\to 0}\eps^{\gamma^2/2} e^{\gamma h_\epsilon(z)}\sigma(d^2z),
\end{equation}
where the limit is taken in the topology of weak convergence of measures. In this work, we write $\mu_{\gamma h}$ when $\sigma$ is the Lebesgue measure restricted to $D$.

The non-triviality of the limit depends on the dimension of the measure. For $d\geq0$, we say that $\sigma$ has finite \textbf{$d$-energy} if
\eqb
\EE_d(\sigma):=\iint \frac 1 {| x- y|^{d}} \sigma(d^2x) \sigma(d^2y)<\infty\,.
\label{eq11}
\eqe 
We define $\text{dim}(\sigma)$ as the supremum of $d$ such that the $d$-energy is finite, i.e.,
\eqbn
\text{dim}(\sigma) = \sup\{ d\geq 0\,:\,\EE_d(\sigma)<\infty \}\,.
\eqen 

When $\gamma<\sqrt{2 \text{dim}(\sigma)}$ then the limit in \eqref{e. def GMC} exists in $L^1$ and is non-trivial a.s.\ (see e.g.\ \cite{berestycki-elem}). By e.g.\ \cite{kahane},
\eqb
\mu^\sigma_{\gamma h}(D)>0\qquad\text{a.s.}
\label{p.nontrivial}
\eqe
Furthermore, we proved the following lower bound for $\mu^\sigma_{\gamma h}(D)$ in \cite{GHSS}.
\begin{prop}\label{p.mass}[Corollary 3.2 in \cite{GHSS}]
	For $d,\gamma\geq 0$ define $\theta=\theta(d,\gamma):= \frac {d - \gamma^2} {d + \gamma^2}$.
	If $\gamma<\sqrt{d}$, then there exists a $K>0$ (depending only on the law of the log-correlated field $h$) such that for any
	\begin{equation}
	t\geq t_0:= K \left[\frac{\EE_d(\sigma)}{\sigma(D)}\right]^{1/\theta}
	\end{equation}
	we have
	\begin{equation}\label{e. first paper 1}
	\E\left[e^{-t\mu^\sigma_{\gamma h}(D)} \right]\leq \frac{K}{\sigma(D) t^\theta}\,.
	\end{equation}
\end{prop}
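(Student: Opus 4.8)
The plan is to establish the bound by combining a first-moment estimate for $\mu^\sigma_{\gamma h}(D)$ with a variance/second-moment bound, and then converting this into exponential-moment control via a Paley--Zygmund type argument. Since the statement is quoted from \cite{GHSS} (``Corollary 3.2''), the natural route is to reproduce the argument that yields it: the $d$-energy $\EE_d(\sigma)$ controls the $L^2$-behaviour of the Gaussian multiplicative chaos measure when $\gamma<\sqrt d$, which is precisely the regime where second moments of $\mu^\sigma_{\gamma h}(D)$ are governed by $\EE_{\gamma^2}(\sigma)\lesssim \EE_d(\sigma)$ (after truncation), while the first moment is simply $\sigma(D)$ up to the normalization built into \eqref{e. def GMC}.

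First I would record the two moment estimates. By the definition \eqref{e. def GMC} and the normalization $\E[\eps^{\gamma^2/2}e^{\gamma h_\eps(z)}]=e^{O(1)}$, one has $\E[\mu^\sigma_{\gamma h}(D)]\asymp \sigma(D)$. For the second moment, a standard GMC computation gives $\E[\mu^\sigma_{\gamma h}(D)^2]\asymp \iint |x-y|^{-\gamma^2}\sigma(d^2x)\sigma(d^2y)=\EE_{\gamma^2}(\sigma)$, and since $\gamma^2<d$ and $D$ is bounded, $\EE_{\gamma^2}(\sigma)\le C\,\EE_d(\sigma)^{\gamma^2/d}\sigma(D)^{1-\gamma^2/d}$ by H\"older (interpolating the kernel $|x-y|^{-\gamma^2}$ between $|x-y|^{-d}$ and $|x-y|^0$), up to a constant depending on $\diam(D)$ and the kernel $g$. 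Next I would apply the Paley--Zygmund inequality: for $X=\mu^\sigma_{\gamma h}(D)\ge0$,
\[
\P\!\left(X\ge \tfrac12\E[X]\right)\ge \frac{(\E[X])^2}{4\,\E[X^2]}\ge c\,\frac{\sigma(D)^2}{\EE_{\gamma^2}(\sigma)}\ge c'\left(\frac{\sigma(D)}{\EE_d(\sigma)}\right)^{\gamma^2/d}.
\]
On the event $\{X\ge \tfrac12\E[X]\}$ we have $e^{-tX}\le e^{-ct\sigma(D)}$, so
\[
\E[e^{-tX}]\le e^{-ct\sigma(D)} + \P\!\left(X<\tfrac12\E[X]\right).
\]
This already gives a bound, but not the clean polynomial form \eqref{e. first paper 1}; to get the right $t^{-\theta}$ dependence one should instead iterate the Paley--Zygmund bound at all scales, i.e.\ decompose $D$ into pieces on which $\sigma$-mass is comparable and apply independence/near-independence of the restricted GMC measures, or alternatively use the scaling relation of GMC under the substitution $h\mapsto h+(\log\delta)/\gamma$ combined with a dyadic decomposition of $D$. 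The exponent $\theta=(d-\gamma^2)/(d+\gamma^2)$ arises exactly as the ratio controlling the geometric decay across these scales, and the threshold $t_0=K[\EE_d(\sigma)/\sigma(D)]^{1/\theta}$ is the scale at which the two terms above balance.

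The main obstacle will be getting the precise exponent $\theta$ rather than just \emph{some} polynomial decay: a single application of Paley--Zygmund gives a constant lower bound on $\P(X>0)$-type quantities but loses the sharp rate. The correct argument needs the multiscale structure — either a chaining over dyadic sub-squares of $D$ using the approximate independence of GMC on well-separated regions (via the Markov/domain-Markov property of the log-correlated field, or a Cameron--Martin shift argument as in \cite{GHSS}), or the known exact moment asymptotics for small negative moments of GMC. Since the statement is explicitly cited as a corollary from the authors' earlier paper \cite{GHSS}, in the present paper one would simply invoke it; but if one had to prove it from scratch, the delicate point is the bookkeeping of constants in the multiscale recursion so that the energy $\EE_d(\sigma)$ (and not $\EE_{\gamma^2}(\sigma)$, nor a worse quantity) appears, uniformly over all base measures $\sigma$, with $K$ depending only on the law of $h$.
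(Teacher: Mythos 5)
The statement is imported verbatim from \cite{GHSS}; the present paper contains no proof, only the citation, so there is nothing here to compare your argument against. You correctly recognize this, and your sketch of how the underlying proof in \cite{GHSS} might go --- moment estimates for the GMC mass, a Paley--Zygmund bound, then an acknowledged multiscale iteration --- captures the right shape. Two remarks on the sketch itself. First, there is a small slip in the H\"older interpolation step: since $\iint 1\,\sigma(d^2x)\sigma(d^2y)=\sigma(D)^2$, the correct bound is $\EE_{\gamma^2}(\sigma)\le \EE_d(\sigma)^{\gamma^2/d}\sigma(D)^{2(1-\gamma^2/d)}$, and the Paley--Zygmund display should end with $c'\bigl(\sigma(D)^2/\EE_d(\sigma)\bigr)^{\gamma^2/d}$ rather than $c'\bigl(\sigma(D)/\EE_d(\sigma)\bigr)^{\gamma^2/d}$; this propagates into any exponent bookkeeping you would attempt downstream.

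Second, and more substantively, you are right that the single-scale estimate $\E[e^{-tX}]\le e^{-ct\sigma(D)}+\P\bigl(X<\tfrac{1}{2}\E[X]\bigr)$ cannot produce the claimed $t^{-\theta}$ decay: the second term does not depend on $t$ at all. The entire content of the proposition is the multiscale mechanism that replaces this constant by a quantity vanishing polynomially in $t$ at rate exactly $\theta=(d-\gamma^2)/(d+\gamma^2)$ with a uniform constant $K$ depending only on the law of $h$; your text names the candidate tools (dyadic decomposition, approximate independence of GMC restricted to well-separated regions, Cameron--Martin shifts) but does not carry them out, so this remains a heuristic rather than a proof. Since the present paper deliberately delegates the proof to \cite{GHSS}, simply invoking the citation is the intended treatment here, and your recognition of that fact is the operationally correct answer even though the reconstruction attempt is incomplete.
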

Let us now discuss the regularity of LQG measures. See e.g.\ \cite[Corollary 6.5]{aru17} for the following result. We remark that the constraint $\gamma<2-\sqrt{5/2}$ corresponds to central charge $c<1$; in particular, the variant of Liouville dynamical percolation corresponding to dynamical percolation on uniformly sampled maps ($c=0$) is covered.
\begin{lemma}\label{prop7}
	Let $\delta>0$ and $\gamma\in[0,2)$, let $h$ be a log-correlated field, and define $\beta_\gamma:=2-2\gamma+\gamma^2/2$. Then there exists a random $C>0$ such that for all $r\in(0,1)$ and $z\in D$, we have that $\mu_{\gamma h}(B^{\op h}_r(z))<Cr^{\beta_\gamma-\delta}$. 
	
	In particular, if $\gamma<2-\sqrt{5/2}$, there is a deterministic $\delta>0$  and a random $C>0$ such that $\mu_{\gamma h}(B^{\op h}_r(z))\leq C\alpha^\eta_4(r,1)r^\delta=Cr^{5/4+\delta +o(1)}$ a.s.
\end{lemma}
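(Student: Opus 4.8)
The plan is to reduce both assertions to the standard multifractal upper bound for Gaussian multiplicative chaos in the regime $\gamma\in[0,2)$, which is exactly \cite[Corollary 6.5]{aru17}; I will first recall the mechanism behind it (to fix the value of the exponent $\beta_\gamma$), and then do the elementary arithmetic that turns it into the two displayed inequalities. Note that for $\gamma=0$ the measure $\mu_{0h}$ is Lebesgue measure and both claims are trivial, so one may assume $\gamma\in(0,2)$ throughout.

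For the first assertion, I would start from Kahane's moment estimate for log-correlated fields: for every $q\in[0,4/\gamma^2)$ there is $C_q<\infty$ with $\E[\mu_{\gamma h}(B_r(z))^q]\le C_q\, r^{\xi(q)}$ for all Euclidean balls $B_r(z)\subset D$ with $r\in(0,1)$, where $\xi(q)=(2+\tfrac{\gamma^2}{2})q-\tfrac{\gamma^2}{2}q^2$. The choice $q=2/\gamma$ lies in $[0,4/\gamma^2)$ precisely because $\gamma<2$, and a direct computation gives $\xi(2/\gamma)=2+\tfrac{2}{\gamma}\beta_\gamma$ with $\beta_\gamma=2-2\gamma+\tfrac{\gamma^2}{2}=(2-\gamma)^2/2$. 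Hence a union bound over the $\asymp 2^{2n}$ balls of radius $2^{-n}$ centred on $2^{-n}\Z^2\cap D$, combined with Markov's inequality, gives $\P\big(\exists z:\ \mu_{\gamma h}(B_{2^{-n}}(z))>2^{-n(\beta_\gamma-\delta)}\big)\le C\,2^{2n-n\xi(2/\gamma)+n(2/\gamma)(\beta_\gamma-\delta)}=C\,2^{-n(2/\gamma)\delta}$, which is summable. Borel--Cantelli plus an elementary covering argument to pass from dyadic radii to arbitrary $r\in(0,1)$ (any $B_r(z)$ is covered by $O(1)$ dyadic balls of comparable radius, and $r\mapsto r^{\beta_\gamma-\delta}$ is increasing) then yields a random $C>0$ with $\mu_{\gamma h}(B_r(z))\le C r^{\beta_\gamma-\delta}$ for all $z\in D$, $r\in(0,1)$. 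Since the hexagonal cell $B^{\op h}_r(z)$ is sandwiched between two Euclidean balls of radii comparable to $r$, the same bound holds for $\mu_{\gamma h}(B^{\op h}_r(z))$ after enlarging $C$; this is the first claim.

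For the ``in particular'' statement, I would first check the elementary inequality $\beta_\gamma>5/4\iff \gamma^2/2-2\gamma+3/4>0\iff \gamma^2-4\gamma+3/2>0$, whose relevant root is $\gamma=2-\sqrt{5/2}$, so $\beta_\gamma>5/4$ exactly when $\gamma<2-\sqrt{5/2}$. Given such a $\gamma$, set the deterministic constant $\delta:=(\beta_\gamma-5/4)/4>0$ and apply the first part with truncation parameter $\delta$ to obtain $\mu_{\gamma h}(B^{\op h}_r(z))\le C r^{\beta_\gamma-\delta}=Cr^{5/4+3\delta}$. Using $\alpha_4^\eta(r,1)=r^{5/4+o(1)}$ from \cite{smirnov-werner-percolation}, we have $\alpha_4^\eta(r,1)r^\delta=r^{5/4+\delta+o(1)}\ge r^{5/4+2\delta}$ for all sufficiently small $r$, so $\mu_{\gamma h}(B^{\op h}_r(z))\le C r^{5/4+3\delta}\le C\,\alpha_4^\eta(r,1)\,r^\delta$ for small $r$, and one enlarges $C$ to absorb the bounded-away-from-zero range of $r$. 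The only point requiring care — and where I would be most careful in writing it up — is precisely this bookkeeping: splitting the gap $\beta_\gamma-5/4$ into a fixed fraction (rather than letting $\delta\to0$) so that both the truncation loss in the GMC Hölder bound and the $o(1)$ loss in the four-arm exponent are comfortably dominated, together with the (routine) ball-versus-hexagon comparison. There is no substantive analytic obstacle, since the hard input — the GMC multifractal bound valid up to $\gamma=2$ — is quoted.
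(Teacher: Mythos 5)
Your argument is correct and follows the same route as the paper. The paper itself does not spell out a proof of the first assertion but simply cites \cite[Corollary 6.5]{aru17}; your re-derivation (Kahane's moment estimate $\E[\mu_{\gamma h}(B_r(z))^q]\le C_q r^{\xi(q)}$ with the optimizing choice $q=2/\gamma$, then Markov, a union bound over the $\asymp 2^{2n}$ grid points, Borel--Cantelli, a dyadic covering, and the ball-versus-hexagon comparison) is exactly the standard mechanism behind that reference, and the computation $\xi(2/\gamma)-2 = (2/\gamma)\beta_\gamma$ with $\beta_\gamma=(2-\gamma)^2/2$ is right. The arithmetic for the \emph{in particular} clause — $\beta_\gamma>5/4 \iff \gamma<2-\sqrt{5/2}$, fixing $\delta=(\beta_\gamma-5/4)/4$, and then absorbing both the truncation loss and the $o(1)$ in $\alpha_4^\eta(r,1)=r^{5/4+o(1)}$ by leaving a gap of size $\delta$ — is precisely what is needed and what the paper leaves implicit.
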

A variant of this lemma holds for the measure $\wt \mu_{\gamma h}^{C}$ defined in \eqref{eq:cutoffmeasure}.
\begin{lemma}\label{l.bound measure MC}
	There is a universal constant $K>0$ such that for all $x\in D$, $C>0$, and $r=2^{-n}\leq 2^{-1}$, 
	\begin{equation}
	\wt\mu_{\gamma h}^C(B^{\op h}_r(x))
	=\mu_{\gamma h}(B^{\op h}_r(x)\cap \M_C)
	\leq 
	CK\alpha^\eta_4(r,1)r^\varrho\,.
	\end{equation}
\end{lemma}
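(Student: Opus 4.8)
The statement is essentially a direct consequence of the definition of the moderate set $\M_C$ in \eqref{e.MC intro}, once one handles the fact that the definition only controls dyadic radii centred at the point $x$, whereas the hexagon $B^{\op h}_r(x)$ is only approximately a ball of that radius. The plan is as follows. First I would fix $x\in D$, $C>0$, and $r = 2^{-n}$ with $n\ge 1$, and observe that by the definition of $\M_C$, every point $y\in B^{\op h}_r(x)\cap \M_C$ satisfies $\mu_{\gamma h}(B^{\op h}_{2^{-m}}(y)) < C\,\alpha^{2^{-m}}_4(2^{-m},1)\,2^{-m\varrho}$ for all $m\in\N$; in particular for $m = n$. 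The point is that for a suitable absolute constant $K_0$ (depending only on the geometry of the triangular/hexagonal lattice), the hexagon $B^{\op h}_r(x)$ is contained in the Euclidean ball $B(x, K_0 r)$, which in turn is covered by a bounded number (again an absolute constant $K_1$) of hexagons $B^{\op h}_{r}(y_i)$ with $y_i$ ranging over an appropriate set, where each $y_i$ can be taken to be a point of $\M_C$ provided $B^{\op h}_r(x)\cap\M_C\neq\emptyset$ — otherwise the left side is zero and there is nothing to prove.

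More precisely, assuming $B^{\op h}_r(x)\cap \M_C\neq \emptyset$, pick one point $y_0\in B^{\op h}_r(x)\cap \M_C$. Then $B^{\op h}_r(x)\subseteq B(y_0, K_0' r)$ for an absolute constant $K_0'$, and this larger ball is contained in $B^{\op h}_{2^{-n'}}(y_0)$ where $2^{-n'}$ is the largest dyadic scale exceeding $K_0' r$, so $2^{-n'} \asymp r$ with absolute constants, say $n' = n - \ell$ for an absolute integer $\ell = \ell(K_0')$. Hence
\[
\mu_{\gamma h}(B^{\op h}_r(x)\cap \M_C)
\le \mu_{\gamma h}(B^{\op h}_{2^{-n'}}(y_0))
< C\,\alpha^{2^{-n'}}_4(2^{-n'},1)\,2^{-n'\varrho},
\]
using that $y_0\in\M_C$ and $n'\in\N$ (which holds once $n$ is at least the absolute constant $\ell$; the finitely many small values of $n$ are absorbed into $K$ by crude bounds). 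Finally I would compare $\alpha_4^{2^{-n'}}(2^{-n'},1)$ with $\alpha_4^{\eta}(r,1)$: by quasi-multiplicativity of the four-arm probabilities (and the fact that $r = 2^{-n}$, $2^{-n'} = 2^{\ell} r$ differ by a bounded multiplicative factor, together with the standard comparison between $\alpha_4^{\eta}(r,1)$ and $\alpha_4^{r}(r,1)$ up to constants when $\eta < r$), there is an absolute constant so that $\alpha_4^{2^{-n'}}(2^{-n'},1)\, 2^{-n'\varrho} \le K_2\, \alpha_4^\eta(r,1)\, r^\varrho$. Combining the displayed bounds gives the claim with $K = K_0' K_1 K_2 \cdots$ an absolute constant, independent of $\gamma$, $C$, $x$, and $r$.

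The only mildly delicate point — and the step I would expect to require the most care — is the bookkeeping in the scale comparison: one must check that replacing the hexagon $B^{\op h}_r(x)$ (which is not centred at a moderate point and is only comparable to a ball) by a hexagon of a comparable dyadic radius centred at a moderate point $y_0\in\M_C$ only costs absolute constants, and that the four-arm quantity $\alpha_4$ at the two nearby scales, and with the two different "inner" parameters ($\eta$ versus the dyadic scale itself), are comparable up to absolute constants via quasi-multiplicativity. None of this involves $\gamma$ or $C$, so the resulting constant $K$ is genuinely universal, as asserted; and the subtlety about small $n$ is handled trivially since $\mu_{\gamma h}(B^{\op h}_r(x)\cap\M_C) \le \mu_{\gamma h}(\M_C)$ is a.s.\ finite (indeed $\le \mu_{\gamma h}(D) <\infty$) and, by the $n=1$ constraint in the definition of $\M_C$ applied to $y_0$, is bounded by $C$ times an absolute constant, which is of the right order for any fixed bounded range of $n$.
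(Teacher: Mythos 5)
Your proof is correct and follows essentially the same approach as the paper: if $B^{\op h}_r(x)\cap\M_C$ is nonempty pick a moderate point $z$ in it, use the defining property of $\M_C$ at a comparable dyadic scale centred at $z$ to bound $\mu_{\gamma h}(B^{\op h}_r(x))$, and then compare four-arm probabilities at the two nearby scales by quasi-multiplicativity. The paper does this in one line with $2r$ in place of your $2^{-n'}=2^\ell r$; your write-up is simply more explicit about the geometric constants, the mesh comparison in $\alpha_4$, and the small-$n$ edge case, all of which the paper treats implicitly.
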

\begin{proof}
	If $\wt\mu^C_{\gamma h}(B^{\op h}_r(x))>0$ then there exists $z\in D$ such that $z\in \M_C$ and $|x-z|<r$. Thus, 
	\begin{align*}
	\mu_{\gamma h}(B^{\op h}_r(x))
	\leq \mu_{\gamma h}(B^{\op h}_{2r}(z))
	\leq C\alpha^\eta_4(2r,1)r^\varrho
	\leq CK\alpha^\eta_4(r,1)r^\varrho,
	\end{align*}
	where we use quasi-multiplicativity of the four arm probability \cite{gps-pivotal} in the last step.
\end{proof}

The $\gamma$-LQG measure is supported on so-called $\gamma$-thick points.
\begin{lemma}
	For $C>0$ and $\varrho>0$ let $\M_C$ be defined by \eqref{e.MC intro}. For $\gamma<\sqrt{3/2}$ and $\varrho$ sufficiently small as compared to $\gamma$, 
	$$
	\lim_{C\rta\infty}\mu_{\gamma h} (D\setminus \M_C)=0.
	$$
	\label{prop:thickpt}
\end{lemma}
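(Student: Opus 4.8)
The plan is to control the first moment $\E[\mu_{\gamma h}(D\setminus\M_C)]$ and let $C\to\infty$. Since $C\mapsto D\setminus\M_C$ is decreasing and $\mu_{\gamma h}$ is a.s.\ a finite measure, once we show $\E[\mu_{\gamma h}(D\setminus\M_C)]\to0$ as $C\to\infty$, continuity from above yields that the monotone limit $\lim_{C\to\infty}\mu_{\gamma h}(D\setminus\M_C)$ equals $0$ almost surely (and in $L^1$).

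Write $a_n:=\alpha_4^{2^{-n}}(2^{-n},1)\,2^{-n\varrho}$, so that $a_n=2^{-n(5/4+\varrho+o(1))}$ with $o(1)\to0$ by \cite{smirnov-werner-percolation}. By definition $D\setminus\M_C=\bigcup_{n\geq1}E_n^C$ with $E_n^C:=\{x\in D:\mu_{\gamma h}(B^{\op h}_{2^{-n}}(x))\geq Ca_n\}$, so it is enough to bound $\sum_{n\geq1}\E[\mu_{\gamma h}(E_n^C)]$. I would cover $D$ by the $O(2^{2n})$ dyadic squares $Q$ of side $2^{-n}$ meeting $D$, and for each such $Q$ let $Q^\ast$ be the concentric dilate of $Q$ by a fixed constant factor, large enough that $B^{\op h}_{2^{-n}}(x)\subset Q^\ast$ whenever $x\in Q$ (possible since $\diam B^{\op h}_{2^{-n}}(x)\asymp2^{-n}$). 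If $Q$ meets $E_n^C$ then $\mu_{\gamma h}(Q^\ast)\geq Ca_n$, hence $\mu_{\gamma h}(E_n^C)\le\sum_{Q:\,\mu_{\gamma h}(Q^\ast)\ge Ca_n}\mu_{\gamma h}(Q^\ast)$. Using the elementary bound $x\1_{x\ge t}\le t^{1-q}x^q$ ($q\ge1$, $t>0$) together with the standard Gaussian multiplicative chaos moment estimate $\E[\mu_{\gamma h}(Q^\ast)^q]\le K_q\,2^{-n\zeta(q)}$, where $\zeta(q)=(2+\tfrac{\gamma^2}{2})q-\tfrac{\gamma^2}{2}q^2$, valid for $q\in(1,4/\gamma^2)$ and uniformly over dyadic squares (here the hypothesis that $g$ is continuous on $\ol D\times\ol D$ is used, extending $h$ slightly if a square pokes outside $D$), one gets $\E[\mu_{\gamma h}(E_n^C)]\le K_q\,C^{1-q}\,2^{\,n\,[\,2+(q-1)(5/4+\varrho+o(1))-\zeta(q)\,]}$.

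The crux is then to pick $q$ and $\varrho$ so that this exponent is eventually negative. Set $G(q):=2+\tfrac54(q-1)-\zeta(q)$; then $G(1)=2-\zeta(1)=0$, while $G'(1)=\tfrac54-\zeta'(1)=\tfrac54-(2-\tfrac{\gamma^2}{2})=\tfrac{\gamma^2}{2}-\tfrac34$, which is strictly negative exactly when $\gamma<\sqrt{3/2}$. Hence in that regime I can fix $q_0\in(1,4/\gamma^2)$ close to $1$ with $G(q_0)<0$, and then choose $\varrho>0$ small enough (this is where ``$\varrho$ small compared to $\gamma$'' enters, and $\varrho$ must tend to $0$ as $\gamma\uparrow\sqrt{3/2}$) that $G(q_0)+(q_0-1)\varrho<0$; absorbing the $o(1)$ for $n\ge n_0$ then gives $\E[\mu_{\gamma h}(E_n^C)]\le K_{q_0}C^{1-q_0}2^{-\delta n}$ for some $\delta>0$, whence $\E[\mu_{\gamma h}(D\setminus\M_C)]\lesssim C^{1-q_0}\to0$ (the finitely many terms $n<n_0$ are each $\lesssim C^{1-q_0}$). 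I expect the only genuinely delicate ingredient to be the identity $\zeta'(1)=2-\gamma^2/2$ — equivalently, the fact that $\mu_{\gamma h}$ is carried by $\gamma$-thick points, whose local ``dimension'' $2-\gamma^2/2$ exceeds the pivotal exponent $5/4$ precisely when $\gamma<\sqrt{3/2}$; the uniform GMC moment bound up to $\partial D$ is routine given the assumptions on $h$. A slightly softer alternative avoids moments altogether: it is classical (see e.g.\ \cite{rhodes-vargas-review}) that $\mu_{\gamma h}$-a.e.\ $x$ satisfies $\log\mu_{\gamma h}(B_r(x))/\log r\to2-\gamma^2/2$, so if $\varrho<3/4-\gamma^2/2$ then $\mu_{\gamma h}$-a.e.\ $x$ lies in $\M_C$ for some finite $C=C(x)$, i.e.\ $\mu_{\gamma h}\big(D\setminus\bigcup_{C}\M_C\big)=0$, and continuity from above again concludes.
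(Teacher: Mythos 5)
Your proof is correct and follows essentially the same route as the paper: a first-moment estimate on $\E[\mu_{\gamma h}(D\setminus\M_C)]$ obtained by a dyadic decomposition at scale $2^{-n}$, a power-mean/Markov step ($x\1_{x\ge t}\le t^{1-q}x^q$, which with $q=p+1$ is exactly the paper's $\1_{x\notin\M_{C,n}}\le(\mu_{\gamma h}(\cdots)/(Ca_n))^p$ followed by multiplying by $\mu_{\gamma h}$), and the GMC moment bound $\E[\mu_{\gamma h}(B_r)^q]\lesssim r^{\zeta(q)}$ with $\zeta(q)=(2+\gamma^2/2)q-\tfrac{\gamma^2}{2}q^2$ from \cite{aru17}; your Taylor expansion of $G(q)=2+\tfrac54(q-1)-\zeta(q)$ around $q=1$ is a cleaner way of organizing the same exponent computation and yields the same constraint $\varrho<3/4-\gamma^2/2$ (the paper's stated choice $\varrho<3/8-\gamma^2/4$ just comes from a presentational simplification of the exponent). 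Your ``softer alternative'' via the GMC local dimension formula $\log\mu_{\gamma h}(B_r(x))/\log r\to 2-\gamma^2/2$ for $\mu_{\gamma h}$-a.e.\ $x$ is a valid shortcut that the paper does not use, but it is a minor variant given that the dimension formula is itself proved via the same moment machinery.
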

\begin{proof}
	Let us start by defining
\begin{equation}
\M_{C,n}:=\{x \in D: \mu_{\gamma h} (B^{\op h}_{2^{-n}}(x))<C\alpha^{2^{-n}}_4(2^{-n},1)2^{-n\varrho} \}.
\end{equation}
Note that $\M_{C}=\bigcap \M_{C,n}$.  Let us first bound the Liouville mass of $D\backslash M_{C,n}$, to do this let us first define $x_n$ as the element of $2^{-n}\Z^d$ closest to $x$ and note that
\[ \1_{x\notin \M_{C,n}}\leq\left( \frac{\mu_{\gamma h} (B^{\op h}_{2^{-(n-1)}}(x_n))}{C\alpha^{2^{-n}}_4(2^{-n},1)2^{-n\varrho}}\right)^p.\]
This implies that
\begin{align*}
\int  \1_{x\notin \M_{C,n}} d\mu^\gamma(dx)&\leq C^{-p} \sum_{x \in 2^{-n}\Z^d} (\mu_{\gamma h}(B^{\op h}_{2^{-(n-1)}}(x_n)))^{p+1}(\alpha^{2^{-n}}_4(2^{-n},1))^{-p}2^{n\varrho p}.
\end{align*}
Now, we want to use the expected value. To do that, we use \cite[Proposition 4.1 and Corollary 6.2]{aru17}, which states that for any log-correlated field and any $q<4/\gamma^2$,
\begin{align}\label{e. momentsagain?}
\E\left[\mu_{\gamma h}(B^{\op h}_r(x))^q \right]\leq r^{ -\gamma^2 q^2/2 + (2+\gamma^2/2)q +O(1)},
\end{align}
where the $O(1)$ is uniform in $x$. Since $\gamma<\sqrt{3/2}$ we can find a small $\varrho>0$ and a constant $K>0$ such that for all $n\in\N$ we have $K\alpha^{2^{-n}}_4(2^{-n},1)>2^{-n(2-\gamma^2/2 - 2\varrho)}$. This gives that for $n\in\N$, 
\begin{align*}
\E\left[\int  \1_{x\notin \M_{C,n}} d\mu^\gamma(dx)\right ]&\leq C^{-p} \sum_{x \in 2^{-n}\Z^d} 2^{n\Big(\frac{\gamma^2 (p+1)^2}{2}-(2+\gamma^2/2)(p+1)+O(1)\Big)}(\alpha^{2^{-n}}_4(2^{-n},1))^{-p} 2^{n\varrho p}\\
&\leq O(1)K^p C^{-p} 2^{n\frac{\gamma^2 p^2}{2}-n \rho p}.
\end{align*}
Let us note that we can choose $p>0$ small enough so that the exponent is negative. This implies, summing over $n\in\N$, that $\E\left[\mu_{\gamma h} (D\setminus \M_C) \right] \leq O(1) K^pC^{-p}$. We conclude by taking $C\rta\infty$.
\end{proof}

\subsection{Central charge}
\label{sec:central-charge}

This section gives further background to understand the main motivations of this work (explained in Subsection \ref{ss.motiv}). It will not be used in the rest of the paper and can be skipped at first reading.
Liouville quantum gravity surfaces are associated with a \textbf{central charge} $c\in(-\infty,25)$ and a \textbf{background charge} $Q>0$. These parameters are related to each other by $c=25-6Q^2$.
Most probability literature on LQG considers the range $c\in(-\infty,1]$ (equivalently, $Q\geq 2$).
LQG for $c\in \C$ is studied in multiple works in the physics literature \cite{david-c>1-barrier,bh-c-ge1-matrix,fkv-c>1-I,fk-c>1-II,teschner04,zam05,ribault-cft,rs15,ijs16,ribault2018minimal}, but to our knowledge the only other papers which study $c\not\in(-\infty,1]$ in a probabilistic setting are \cite{c-greater-than-1,gwynne-pfeffer-c-gtr-1}, which consider the range $c\in(1,25)$. 

In \cite{shef-zipper} a Liouville quantum gravity surface with background charge $Q\geq 2$ is defined to be an equivalence class of pairs $(D,h)$, where $h$ is a distribution on a domain $D\subset\C$. Furthermore, two pairs $(D,h)$ and $(\wt D,\wt h)$ are equivalent if there exists a conformal map $\phi:\wt D\to D$ such that 
\begin{align}\label{eq:coordinatechange}
\wt h=h\circ\phi+Q\log|\phi'|\,.
\end{align}
It is observed in \cite{c-greater-than-1} that this definition of an LQG surface may be extended to $Q\in (0,2)$.

Let $(D,h)$ be an equivalence class representative for an LQG surface. Let $A\subset D$ and $d\in(0,2]$, and assume that the $d$-dimensional Minkowski content of $A$ defines a locally finite and non-trivial measure $\lambda$ which is supported on $A$. Assuming $\lambda$ has finite $d'$ dimensional energy for all $d'\in(0,d)$, one may define an LQG measure $\mu^\lambda_{\gamma h}=e^{\gamma h}d\lambda$ supported on $A$ for any $\gamma<\sqrt{2d}$ \cite{berestycki-elem,shef-kpz,rhodes-vargas-review}. If we want to interpret the measure $\mu^\lambda_{\gamma h}$ as intrinsic to the LQG surface it is natural to require that the measure is invariant under coordinate changes, i.e., if $\wt\mu^{\wt\lambda}_{\gamma \wt h}=e^{\gamma \wt h}d\wt\lambda$ for $\wt\lambda$ the Minkowski content of $\phi^{-1}(A)$, then $\wt\mu^{\wt\lambda}_{\gamma \wt h}(\phi^{-1}(U))=\mu^\lambda_{\gamma h}(U)$ a.s.\ for any fixed $U\subseteq D$. By the coordinate change formula \eqref{eq:coordinatechange} for LQG surfaces with a given background charge $Q$, it is seen that we need to have 
$$
Q=d/\gamma+\gamma/2
$$ 
in order for the LQG measure to be invariant under coordinate changes (see \cite[Proposition 2.1]{shef-kpz} and \cite[Proposition 2.2]{c-greater-than-1}). 

Our paper studies LQG measures supported on the set of CLE$_6$ pivotal points. The CLE$_6$ pivotal points have Hausdorff dimension $d=3/4$ \cite{miller-wu-dim} and (after applying a cut-off) well-defined $3/4$-dimensional Minkowski content which defines a non-trivial and locally finite measure \cite{hlls-pivot}. We now see why the case of uniformly sampled planar maps (which is the case relevant for the Cardy embedding project described in Section \ref{ss.motiv}) relies on the particular case of $\gamma=\sqrt{1/6}$ in Theorem \ref{thm1}. This is due to the fact that if one plugs $\gamma=\sqrt{1/6}$ into $(3/4)/\gamma+\gamma/2$ one recovers $Q_{c=0}=\sqrt{3/2}+\sqrt{2/3}$.

In greater generality, the $\gamma$-LQG measure supported on these points may be defined for $\gamma<\sqrt{3/2}$. Combining the formulas above, this gives $c<1$ (resp., $c\in(1,16)$) if and only if $\gamma\in(0,2-\sqrt{5/2})$ (resp., $\gamma\in(2-\sqrt{5/2},\sqrt{3/2})$). In other words, the two transitions points for $\gamma$ in Theorem \ref{thm1} corresponds to $c=1$ and $c=16$. See Remark \ref{rmk1} for a discussion of the first of these transition points.

\section{Convergence of the LDP: direct microscopic stability}
\label{s.3}
The proof of convergence of $\gamma$-LDP for $\gamma\in(0,2-\sqrt{5/2})$ is based on the proof of \cite[Theorem 1.4]{gps-near-crit}. As many of the techniques are the same in this case, we strongly advise the reader to read this chapter alongside with \cite{gps-near-crit}. In many parts, when needed, we will just cite the results from \cite{gps-near-crit}. We assume $\gamma\in(0,2-\sqrt{5/2})$ throughout the section so that we can apply Lemma \ref{prop7}.

\subsection{Dynamical percolation with cut-off}\label{s.cut-offeps}

The first step is to prove convergence of dLDP with a cut-off. This process is defined such that we only update $\eps$-important pivotal points.  It is easier to prove convergence of this process than of the full dynamic since the limiting process will make finitely many jumps in any bounded interval, as opposed to the case without a cut-off.

Let $(\omega^{\eps, \gamma}_\eta(t))_{t\geq 0}$ denote dynamical percolation on $\Tg_\eta$, where we only update pivotal points which are $\eps$-important for $\omega^\ga_\eta(0)$ (see Definition \ref{d. important}). This process can be sampled by considering a Poisson point process (PPP) $\{(x^{\eps,\eta}_i,t^{\eps,\eta}_i,\xi^{\eps,\eta}_i)\,:\,i\in\N \}$ on $D\times\R_+\times\{-1,1 \}$ with intensity $\lambda^\eps_\eta\times\op{Leb}\times\op{Uniform}$, where $\lambda^\eps_\eta$ is given by \eqref{e.lambdaeps}, and then set the color of $x^{\eps,\eta}_i$ equal to $\xi^{\eps,\eta}_i$ at time $t^{\eps,\eta}_i$ for each $i\in\N$ (such that $\xi_i^{\eps,n}=-1$ means white/closed and $\xi_i^{\eps,n}=1$ means black/open). 

Let us start by explaining how to define the continuum analogue $\omega^{\eps, \gamma}_\infty(\cdot)$ of $\omega^{\eps, \gamma}_\eta(\cdot)$, following the strategy of \cite[Sections 5 to 7]{gps-near-crit}. Let $\lambda^\eps$ denote the Euclidean pivotal measure supported on the $\eps$-important pivotal points of $\omega^{\eps,\gamma}_\infty(0)$ as in Theorem \ref{thm:gps-dp-main}, and consider the associated LQG measure $\mu^{\lambda^\eps}_{\gamma h}$. Consider a PPP $\{(x^\eps_i,t^\eps_i,\xi^\eps_i)\,:\,i\in\N \}$ on $D\times\R_+\times\{-1,1 \}$ with intensity $\mu^{\lambda^\eps}_{\gamma h}\times\op{Leb}\times\op{Uniform}$.

As explained above, it is immediate in the discrete that if we know $\omega^{\eps,\gamma}_\eta(0)$ and $\{(x^{\eps,\eta}_i,t^{\eps,\eta}_i,\xi^{\eps,\eta}_i)\,:\,i\in\N \}$, then we can determine $\omega^{\eps,\gamma}_\eta(t)$ for all $t\geq 0$. In the continuum this is not obvious. Garban, Pete, and Schramm \cite{gps-near-crit} develop a theory of so-called networks to argue that for a fixed quad $Q$, knowledge of $\omega^{\eps,\gamma}_\infty(0)$ and $\{(x^\eps_i,t^\eps_i,\xi^\eps_i)\,:\,i\in\N \}$ does determine $\omega^{\eps,\gamma}_\infty(t)$ for each $t\geq 0$. Since $\omega\in\scr H$ is determined by the events $\{Q\in\omega \}$ for countably many quads $Q$, this is sufficient to conclude.

For each fixed quad $Q$ and time $T\geq 0$, Garban, Pete, and Schramm define a network $N_{Q,T}$, which is a kind of graph structure with vertices $\{(x^\eps_i,t^\eps_i,\xi^\eps_i)\,:\,i\in\N,t^\eps_i\leq T \}$ and two types of edges (primal and dual). The network represents the connectivity properties of $Q$ at time $T$ if we do not have knowledge about the percolation in an infinitesimal neighborhood around each $x_i$ for which $t_i\leq T$. The network is defined as a limit of a certain mesoscopic network, and it is proved that $N_{Q,T}$ is measurable with respect to $\omega^{\eps,\gamma}_\infty(0)$ and $\{(x^\eps_i,t^\eps_i,\xi^\eps_i)\,:\,i\in\N,t_i\leq T \}$. One can determine whether $Q\in\omega^{\eps,\gamma}_\eta(T)$ by using $N_{Q,T}$ and the random variables $\xi^\eps_i$ for $t_i\leq T$.

To prove that $\omega_\eta^{\eps,\gamma}(\cdot)$ converges to $\omega_\infty^{\eps,\gamma}(\cdot)$ in the Skorokhod topology (Definition \ref{def:skorokhod}) we use the strategy of \cite[Section 7]{gps-near-crit}. The idea is to couple 
$\omega^{\eps,\gamma}_\eta(0)$, 
$\mu_{\gamma h}^{\lambda^\eps_\eta}$, and $\{(x^{\eps,\eta}_i,t^{\eps,\eta}_i,\xi^{\eps,\eta}_i)\,:\,i\in\N \}$
such that they are close to some limit 
$\omega^{\eps,\gamma}_\infty(0)$,
$\mu_{\gamma h}^{\lambda^\eps}$, and $\{(x^\eps_i,t^\eps_i,\xi^\eps_i)\,:\,i\in\N \}$. We use Proposition \ref{p.convergence_of_LQG} to argue the existence of an appropriate coupling.
We can guarantee that the discrete and continuum networks $N^\eta_{Q,t}$ and $N^\infty_{Q,t}$ are the same for each macroscopic quad $Q$ and each $t\leq T$ when $\eta$ is sufficiently small. To deduce that $\omega_\eta^{\eps,\gamma}(\cdot)$ and $\omega_\infty^{\eps,\gamma}(\cdot)$ are close for the Skorokhod topology one can introduce a so-called uniform structure (see \cite[Section 3]{gps-near-crit}).

The following theorem is proved exactly as \cite[Theorems 7.3 and 7.10]{gps-near-crit} based on the outline we gave above.
\begin{theorem}\label{thm2}
	Consider the setting above. 
	\begin{itemize}
		\item One can define a c\`adl\`ag process $( \omega^{\eps, \gamma}_\infty(t))_{t\in\R_+}$ with values in the quad-crossing space $\scr H$, which starts from uniform site percolation $\omega^\ga_\infty(0)$, and which is determined from $\omega^\ga_\infty(0)$ and $\{(x_i,t_i,\xi_i)\,:\,i\in\N \}$ in the exact same way as in \cite[Theorem 7.3]{gps-near-crit}.
		\item As $\eta \to 0$, the process $\omega^{\eps, \gamma}_\eta(t)$ converges in law in $(\Sk,d_{\Sk})$ to the process $(\omega^{\eps, \gamma}_\infty(t))_{t\in\R_+}$. Furthermore, the convergence is a.s.\ if the coupling is the one described above.
	\end{itemize}
\end{theorem}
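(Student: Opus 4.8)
The plan is to follow \cite[Sections 5--7]{gps-near-crit} essentially verbatim, carried out conditionally on the log-correlated field $h$; the only genuinely new ingredient is the convergence of the random, $h$-measurable intensity measures that drive the Poisson clocks, while the rest is the GPS network machinery run in the quenched environment.

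\textbf{Step 1 (constructing the limit).} I would first condition on $h$ and note, using Lemma~\ref{prop7} together with $\dim(\lambda^\eps)=3/4$ and $\gamma<2-\sqrt{5/2}<\sqrt{3/2}$, that $\mu^{\lambda^\eps}_{\gamma h}$ is a.s.\ a finite, non-degenerate measure supported on the closure of the $\eps$-important pivotal points of $\omega^{\eps,\ga}_\infty(0)$. One then samples the continuum PPP $\{(x^\eps_i,t^\eps_i,\xi^\eps_i)\}$ with intensity $\mu^{\lambda^\eps}_{\gamma h}\times\op{Leb}\times\op{Uniform}$ and reruns the GPS construction of the mesoscopic networks, their convergence to a continuum network $N^\infty_{Q,T}$, and the measurability of $N^\infty_{Q,T}$ with respect to $\omega^{\eps,\ga}_\infty(0)$ and the restriction of the PPP to $[0,T]$, with $\lambda^\eps$ replaced throughout by $\mu^{\lambda^\eps}_{\gamma h}$. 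These steps use only (i) percolation arm exponents and (ii) the support and finiteness of the intensity measure, so they transfer without change. Defining $\{Q\in\omega^{\eps,\ga}_\infty(t)\}$ from $N^\infty_{Q,t}$ and the marks $\{\xi^\eps_i:t^\eps_i\le t\}$ exactly as in \cite[Theorem 7.3]{gps-near-crit}, and using that an element of $\scr H$ is determined by countably many quads, produces the desired c\`adl\`ag process.

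\textbf{Step 2 (coupling).} By Theorem~\ref{thm:gps-dp-main}, $(\omega_\eta,\lambda^\eps(\omega_\eta))\to(\omega_\infty,\lambda^\eps(\omega_\infty))$ in law, and by Skorokhod's representation theorem one may realise this (jointly with $h$, which is independent of the percolation) as an a.s.\ convergence on a common probability space. The crucial move is to upgrade $\lambda^\eps_\eta\to\lambda^\eps$ to $\mu^{\lambda^\eps_\eta}_{\gamma h}\to\mu^{\lambda^\eps}_{\gamma h}$ a.s.\ in the weak topology of measures, which is precisely the content of Proposition~\ref{p.convergence_of_LQG}; one must check its hypotheses, i.e.\ that the near-critical pivotal measures $\lambda^\eps_\eta$ enjoy uniform-in-$\eta$ energy/moment control near the $\eps$-important points and that $\gamma<\sqrt{2\dim(\lambda^\eps)}$. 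Given a.s.\ convergence of these intensity measures, one couples the discrete and continuum PPPs so that for every fixed $T$ and every $\delta>0$, with probability tending to $1$ as $\eta\to 0$ they agree on $D\times[0,T]\times\{-1,1\}$ outside a region carrying intensity mass $\le\delta$; here \eqref{eq:key}/Lemma~\ref{prop7} is used to guarantee that the total clock mass near any macroscopic scale is $o(1)$, so that discarding a $\delta$-mass of clock rings does not disturb the crossing status of a fixed macroscopic quad.

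\textbf{Step 3 (stabilisation and Skorokhod convergence).} With this coupling in force, I would show as in \cite[Section 7]{gps-near-crit} that for each fixed macroscopic quad $Q$ and each $T\ge 0$, a.s.\ only finitely many $x^\eps_i$ with $t^\eps_i\le T$ are macroscopically pivotal for $Q$, and that the discrete network $N^\eta_{Q,t}$ equals the continuum network $N^\infty_{Q,t}$ for all $t\le T$ once $\eta$ is small enough; the inputs are the four-arm and pivotal-stability estimates of \cite{gps-pivotal,gps-near-crit}, which are statements about percolation and are unaffected by the inhomogeneous (but $o(1)$, by \eqref{eq:key}) clock rates. Equipping $\scr H$ with the uniform structure of \cite[Section 3]{gps-near-crit}, the combination of $\omega^{\eps,\ga}_\eta(0)\to\omega^{\eps,\ga}_\infty(0)$, the PPP coupling, and network stabilisation gives $d_\Sk(\omega^{\eps,\ga}_\eta(\cdot),\omega^{\eps,\ga}_\infty(\cdot))\to 0$ a.s., hence convergence in law in $(\Sk,d_\Sk)$ in general and a.s.\ convergence under the stated coupling, reproducing \cite[Theorems 7.3 and 7.10]{gps-near-crit}. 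The main obstacle is Step 2: the stability of Gaussian multiplicative chaos under perturbation of the base measure (Proposition~\ref{p.convergence_of_LQG}) and the verification of its hypotheses uniformly in $\eta$; everything downstream is the GPS argument run conditionally on $h$.
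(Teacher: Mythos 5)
Your proposal follows the paper's own strategy essentially verbatim: condition on $h$, run the GPS network machinery of \cite{gps-near-crit} with the Euclidean pivotal measure $\lambda^\eps$ replaced by $\mu^{\lambda^\eps}_{\gamma h}$, and use Proposition~\ref{p.convergence_of_LQG} as the single genuinely new ingredient to upgrade the a.s.\ weak convergence $\lambda^\eps_\eta\to\lambda^\eps$ (from Theorem~\ref{thm:gps-dp-main} plus Skorokhod representation) to $\mu_{\gamma h}^{\lambda^\eps_\eta}\to\mu_{\gamma h}^{\lambda^\eps}$, which is exactly what the paper says it does. Two small inaccuracies worth flagging: in Step~1, Lemma~\ref{prop7} concerns the area measure $\mu_{\gamma h}$ and says nothing directly about $\mu^{\lambda^\eps}_{\gamma h}$; the finiteness and non-degeneracy of $\mu^{\lambda^\eps}_{\gamma h}$ follow from $\gamma<\sqrt{2\dim(\lambda^\eps)}=\sqrt{3/2}$ together with \eqref{p.nontrivial}, and the uniform-in-$\eta$ energy hypothesis of Proposition~\ref{p.convergence_of_LQG} is supplied by Proposition~\ref{prop:energy}. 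In Step~2, the appeal to \eqref{eq:key}/Lemma~\ref{prop7} is superfluous for the $\eps$-cut-off dynamics of this theorem (it is needed only for the stability argument of Proposition~\ref{pr.stab}): here $\mu^{\lambda^\eps}_{\gamma h}$ has finite total mass, so the PPP restricted to $[0,T]$ has a.s.\ finitely many points, and network stabilization follows from the a.s.\ convergence of these point configurations together with the GPS arm-exponent estimates, without any microscopic stability input. Neither slip affects the validity of the plan.
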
 

The following gives some further properties of the coupling described above.
Property (i) follows from Lemma \ref{prop:ss-meas} and from the fact that any fixed $q$ is a.s.\ a point of continuity for the limiting process (see \cite[Proposition 9.6]{gps-near-crit}). Property (v) follows since we condition on a particular instance of the field $h$ throughout the argument. The other properties follow from the analogous properties of the coupling in \cite{gps-near-crit}.
	\begin{itemize}
		\item[(i)] $\omega^\ga_\eta(q)\to \omega^\ga_\infty(q)$ a.s.\ for all $q\in\Q_+$ for $d^{\op{mod}}_{\HH}$, where $\Q_+:=\Q\cap\R_+$.
		\item[(ii)] For any dyadic $\epsilon>0$, the measures $\lambda^\epsilon_\eta$ and $\mu_{\gamma h}^{\lambda^\eps_\eta}$ converge to measures $\lambda^\epsilon$ and $\mu_{\gamma h}^{\lambda^\eps}$, respectively, supported on points that are $\epsilon$-important for $\omega^\ga_\infty(0)$.
		\item[(iii)] For any $T\in \N$ and any dyadic $\epsilon>0$, the finite set $\{(x_i^{\eps,\eta},t^{\eps,\eta}_i,\xi^{\eps,\eta}_i)\,:\,i\in\N,t_i\leq T \}$ converges a.s.\ to the finite set  $\{(x^\eps_i,t^\eps_i,\xi^\eps_i)\,:\,i\in\N,t_i\leq T \}$.
		\item[(iv)] The measures $\lambda^\eps$ and $\mu_{\gamma h}^{\lambda^\eps}$ have a.s.\ weak limits $\lambda$ and $\mu_{\gamma h}^\lambda$, respectively. The sets $\{(x^\eps_i,t^\eps_i,\xi^\eps_i)\,:\,i\in\N,t_i\leq T \}$ are increasing as $\eps\rta 0$ and have a limit $\{(x_i,t_i,\xi_i)\,:\,i\in\N,t_i\leq T \}$, which has the law of a PPP on $D\times\R_+\times\{-1,1 \}$ with intensity $\mu^{\lambda}_{\gamma h}\times\op{Leb}\times\op{Uniform}$. Similarly, the sets $\{(x^{\eps,\eta}_i,t^{\eps,\eta}_i,\xi^{\eps,\eta}_i)\,:\,i\in\N,t_i\leq T \}$ have a limit $\{(x_i^\eta,t_i^\eta,\xi_i^\eta)\,:\,i\in\N,t_i\leq T \}$, which is the point process defining the dLDP $\omega_{\gamma h}^\eta(\cdot)$.
		\item[(v)] The field $h$ is the same for all $\eta$.
	\end{itemize}

\subsection{Stability of LDP}
In this section we will prove Theorem \ref{thm1}(i), i.e., we will prove that for $\gamma \in(0,2-\sqrt{5/2})$ we have convergence of the dLDP $\omega^\ga_\eta(\cdot)$ in the appropriate spaces as $\eta \to \infty$. The whole proof works quenched in $h$, i.e., we prove the result for almost any instance of $h$. 

The main result of this section is the following proposition. Combined with Theorem \ref{thm2} and proceeding as in \cite[Section 9]{gps-near-crit}, it implies Theorem \ref{thm1} when $\gamma\in(0,2-\sqrt{5/2})$.
\begin{proposition}\label{pr.stab}
	Let $T>0$, $\gamma \in(0,2-\sqrt{5/2})$ and some instance $h$ of the log-correlated field be fixed. There exists a continuous function $\psi =\psi_{T,h} : [0,1]\to [0,1]$ with $\psi(0)=0$ such that uniformly in $0<\eta<\eps$,
	\[
	\Eb{ d_{\Sk_T}( \omega^\ga_\eta(\cdot),  \omega_\eta^{\eps, \gamma}(\cdot)) }\leq  \psi(\eps)\,.
	\] 
\end{proposition}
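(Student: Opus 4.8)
The plan is to follow the strategy of \cite[Section 8]{gps-near-crit} as closely as possible, using the quenched microscopic stability that holds in the regime $\gamma < 2-\sqrt{5/2}$. The two processes $\omega^\ga_\eta(\cdot)$ and $\omega^{\eps,\ga}_\eta(\cdot)$ are coupled on the same probability space: we use the same initial configuration $\omega^\ga_\eta(0)$, and the clock-ringing Poisson point process for $\omega^{\eps,\ga}_\eta(\cdot)$ is the restriction of the one for $\omega^\ga_\eta(\cdot)$ to the $\eps$-important pivotal points of $\omega^\ga_\eta(0)$. So $\omega^{\eps,\ga}_\eta(\cdot)$ is obtained from $\omega^\ga_\eta(\cdot)$ by \emph{freezing} all updates at sites that are not $\eps$-important for the initial configuration. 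The difference between the two processes is thus controlled by the set of updates at sites that \emph{are} not $\eps$-important but whose flipping nonetheless affects some macroscopic quad crossing on $[0,T]$ --- and here one exploits that a non-$\eps$-important site, by definition, is separated by a ``closed'' quad of scale $\eps$ in one of the two arm-types, so its flip cannot change macroscopic connectivity unless a pivotal switch occurs at a \emph{larger} scale, i.e.\ unless the site becomes pivotal after time $0$. Quantifying the probability of such a ``cascading'' effect is the standard stability estimate.

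The key steps, in order: (1) Reduce $d_{\Sk_T}$ to control of a fixed finite family of macroscopic quads, using that $d_\HH$ is generated by crossing events of quads $Q\in\QQ^k$ together with the fact that on an event of probability $\to 1$ (in $\eta$, uniformly) finitely many of them suffice up to an error controlled by a function $\psi$; this is done exactly as in \cite[Prop.\ 9.6 and Section 3]{gps-near-crit}. (2) For a fixed quad $Q$, bound $\P[\exists t\le T: Q \in \omega^\ga_\eta(t) \Delta \omega^{\eps,\ga}_\eta(t)]$ by the expected number of ``$\eps$-insignificant but eventually-pivotal'' update events, which by a union bound over update times and a multi-arm estimate reduces to $\mu_{\gamma h}(D) \cdot \sup_x \big( \mu_{\gamma h}(B^{\op h}_\eps(x))\alpha_4^\eta(\eps,1)^{-1} \big) \cdot (\text{arm-event probability from scale }\eta\text{ to }\eps)$, up to the usual quasi-multiplicativity bookkeeping. (3) Invoke Lemma \ref{prop7}: in the regime $\gamma < 2-\sqrt{5/2}$ one has $\mu_{\gamma h}(B^{\op h}_\eps(x)) \le C \alpha_4^\eta(\eps,1)\eps^\delta$ \emph{uniformly in $x$ and $\eta$}, for a deterministic $\delta>0$ and a random (but $\eta$-independent) $C = C(h)$; this is precisely what makes the bound uniform in $0 < \eta < \eps$ and quenched in $h$. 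Plugging this in, the total mass of the ``bad'' update set is bounded by $C(h)\mu_{\gamma h}(D) T \cdot \eps^\delta$, giving the required $\psi_{T,h}(\eps) = O(\eps^{\delta/2})$ or similar (with a square root appearing if one passes through a Cauchy--Schwarz or a first-moment-to-probability step). (4) Assemble: combine the quad-reduction of (1) with the per-quad estimate of (3) to produce a single continuous $\psi$ with $\psi(0)=0$, uniform in $\eta<\eps$.

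The main obstacle I expect is not the probabilistic heart --- the stability estimate itself is essentially that of \cite[Section 8]{gps-near-crit} with the Lebesgue-measure clock rates replaced by the LQG rates, and Lemma \ref{prop7} is custom-built to make the substitution work uniformly and quenched --- but rather the \emph{topological bookkeeping} needed to turn ``few bad update events'' into a $d_{\Sk_T}$ bound. One must handle the time-reparametrization $\phi$ in $d_{\Sk_T}$: a single bad update near time $t_0$ should be absorbed by a small time-shift rather than contribute a full $d_\HH$-jump, and one needs the uniform-structure / pivotal-switch formalism of \cite[Section 3]{gps-near-crit} to ensure the finitely many macroscopic pivotal switches of $\omega^\ga_\eta$ on $[0,T]$ are, with high probability, matched one-to-one (in order and in sign) with those of $\omega^{\eps,\ga}_\eta$, the unmatched ones being exactly the rare bad events. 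Once this matching is in place, the bound on the expected number of unmatched switches from step (3) translates directly into the bound on $\E[d_{\Sk_T}]$, with $\psi_{T,h}$ depending on $h$ only through $C(h)$ and $\mu_{\gamma h}(D)$.
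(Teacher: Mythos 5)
Your high-level plan matches the paper's: couple by freezing, reduce $d_{\Sk_T}$ to finitely many macroscopic quads as in \cite[Sections 3, 8, 9]{gps-near-crit}, invoke the quenched uniform bound of Lemma~\ref{prop7} ($\mu_{\gamma h}(B^{\op h}_r(x))\le C(h)\,\alpha^\eta_4(r,1)\,r^\delta$, uniformly in $x$ and $r$) as the single new ingredient, and assemble. You have correctly identified Lemma~\ref{prop7} as the crucial input that makes the substitution of Lebesgue clock rates by LQG clock rates go through.

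However, your step~(2) mischaracterizes the probabilistic content and the formula you display is not right. The paper's stability argument (Lemmas~\ref{l.main}, \ref{l.change}, Proposition~\ref{p.almostCross}, Lemma~\ref{prop8}, all mirroring \cite[Section 8]{gps-near-crit}) is \emph{not} a first-moment count of ``$\eps$-insignificant but eventually-pivotal update events'' giving a product bound $\mu_{\gamma h}(D)\cdot\sup_x\bigl(\mu_{\gamma h}(B^{\op h}_\eps(x))\alpha^\eta_4(\eps,1)^{-1}\bigr)\cdot\alpha_4(\eta,\eps)$ — note your third factor vanishes as $\eta\to 0$ at fixed $\eps$, so this would not yield a bound that is uniform in $\eta<\eps$ but decaying in $\eps$, which is what $\psi_{T,h}$ must be. The actual mechanism is the \emph{recursive} quasi-multiplicativity bound of Lemma~\ref{l.main}: one proves by double induction on dyadic scales that the probability that some $\omega'$ agreeing with $\omega$ off the updated set $X$ exhibits a four-arm event across $A(z,r_i,r_j)$ is at most $C_1\,\alpha_4^\eta(r_i,r_j)$, and the induction closes precisely because the dyadic sum $\sum_n \mu_{\gamma h}(A_n)/\alpha^\eta_4(r_n,1)$ is summable — this is exactly where Lemma~\ref{prop7} (quenched in $h$) enters, producing $\sum_n O(r_n^\varrho)<\infty$. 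Lemma~\ref{l.change} and Proposition~\ref{p.almostCross}/Lemma~\ref{prop8} then translate the controlled cascade into a bound $O_{T,k,h}(\eps^\delta)$ on crossing discrepancies. You do flag the cascade issue in your narrative, but then describe it as absorbed into a naive union bound, and you locate the ``main obstacle'' in the Skorokhod bookkeeping; in fact the inductive stability estimate is the heart of the proof, and the Skorokhod reduction is standard once Lemma~\ref{prop8} is in hand.
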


The proof proceeds similarly as the Euclidean version in \cite[Section 8]{gps-near-crit}. We will therefore omit many details in the proof, and point out only the places at which our argument differs from the one in \cite{gps-near-crit}. The main new input is Lemma \ref{prop7}.

To prove the proposition, we will need to introduce some notations as well as some preliminary lemmas. Since the entire section is about discrete configurations $\omega_\eta\in \HH$, we will often omit the subscript $\eta$ and denote the percolation configurations simply by $\omega$. We let $X=X_{\eta,T}$ denote the random set of sites of $\Tg_{\eta}$ which are updated along the dynamics $t\mapsto \omega_{\eta}(t)=\omega(t)$ for $t\in[0,T]$. Furthermore, we let $\Omega(\omega,X)$ denote the set of percolation configurations $\omega'$ such that $\omega'_x=\omega_x$ for all $x\notin X$, where $\omega_x\in\{-1,1 \}$ represents the color at site $x$. Finally, let $\mcl A_4(z,r,r')$ denote the $4$-arm event in the topological annulus $A(z,r,r') := B^{\op b}_{r'}(z)\backslash B^{\op b}_r(z)$, where $B^{\op b}_r(z)$ is the square of side length $2r$ centered at $z$.

\begin{lemma}\label{l.main}
Let $T>0$ and the instance of $h$ be fixed. 
Set $r_i:= 2^i\,\eta$, $N:= \lfloor\log_2(1/\eta)\rfloor$.
Let $\ev W_z(i,j)$ denote the event that
there is some $ \omega'\in\Omega( \omega,X)$ satisfying $\ev A_4(z,r_i,r_j)$.
Then for every pair of integers $i,j$ satisfying $0\le i<j<N$
and every $z\in\R^2$,
\begin{equation}\label{e.dinduct}
\P[\ev W_z(i,j)\mid h]
\le C_1\,\alpha^\eta_4(r_i,r_j)\,,
\end{equation}
where $C_1=C_1(T,h)$ is a constant that may depend only on
$T$ and $h$.
\end{lemma}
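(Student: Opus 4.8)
The plan is to adapt the argument of \cite[Lemma 8.4]{gps-near-crit} (their "main" combinatorial lemma) to the inhomogeneous setting, with the key new ingredient being the quantitative control on the Liouville measure from Lemma \ref{prop7}. The idea is to run a multi-scale induction on the scales $r_i,\dots,r_j$: the event $\ev W_z(i,j)$ asks for a four-arm connection from $r_i$ to $r_j$ in \emph{some} admissible modification $\omega'$ of $\omega$ over the updated set $X$, so the effect of $X$ can only help to create arms by recoloring updated sites. The strategy is to bound the probability of a four-arm event that is allowed to "cheat" inside small boxes centered at updated sites. One first shows that, in a dyadic annulus $A(z,r_k,r_{k+1})$, if the unperturbed four-arm event fails but $\ev W_z$ holds, then there must be a box of some intermediate scale $r_\ell$ containing an updated point and realizing a four-arm event to its boundary at scale $r_{\ell}$; summing over such boxes and scales and using quasi-multiplicativity of $\alpha_4^\eta$ (as in \cite{gps-pivotal}) one gets a recursive inequality.

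The crucial estimate that replaces the Euclidean input is a bound on the expected number of updated sites near $z$ at a given scale. Recall that a site $x$ has Poisson clock rate $\mu_{\gamma h}(B^{\op h}_\eta(x))\alpha_4^\eta(\eta,1)^{-1}$, so by a union bound the probability that $x$ is updated during $[0,T]$, conditionally on $h$, is at most $T\,\mu_{\gamma h}(B^{\op h}_\eta(x))\alpha_4^\eta(\eta,1)^{-1}$. The point of the hypothesis $\gamma<2-\sqrt{5/2}$ is Lemma \ref{prop7}: there is a deterministic $\delta>0$ and a random $C=C(h)$ with $\mu_{\gamma h}(B^{\op h}_r(z))\le C\alpha_4^\eta(r,1)r^\delta$ for all $r\in(0,1)$, $z\in D$. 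In particular $\mu_{\gamma h}(B^{\op h}_\eta(x))\le C\alpha_4^\eta(\eta,1)\eta^\delta$, so the probability that any \emph{individual} site is updated is at most $TC\eta^\delta = o(1)$, and more usefully, summing over the $O((r_\ell/\eta)^2)$ sites in a box of scale $r_\ell$, the expected number of updated sites in such a box is at most $TC (r_\ell/\eta)^2 \alpha_4^\eta(\eta,1)\eta^\delta \cdot \alpha_4^\eta(\eta,1)^{-1}$... more precisely one wants the contribution of each scale to the recursion to carry a factor that is summable, and the $r^\delta$ gain is exactly what makes the geometric series over intermediate scales converge, yielding a constant $C_1=C_1(T,h)$ rather than something blowing up with $N=\lfloor\log_2(1/\eta)\rfloor$.

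Concretely I would set up the induction as in \cite{gps-near-crit}: let $q(i,j):=\sup_z \P[\ev W_z(i,j)\mid h]/\alpha_4^\eta(r_i,r_j)$ and show $q(i,j)\le (1+\text{small})\, q(i,k)\,q(k,j)$-type submultiplicativity up to the error coming from "cheating boxes", where the cheating-box error at scale $r_\ell$ is controlled by (expected number of updated sites at that scale) $\times$ (probability of a local four-arm event around such a box), and the first factor is $\lesssim_{T,h} (r_\ell)^\delta$ times a harmless power of $r_\ell/\eta$-count absorbed by the four-arm exponent being $5/4>1$ — this is precisely the numerology that fails for $\gamma\ge 2-\sqrt{5/2}$. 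One then iterates to get $q(i,j)\le C_1$ uniformly. The main obstacle, and the place requiring the most care, is the bookkeeping in this multi-scale sum: one must make sure the combinatorial factors (number of boxes at each scale, number of scales, quasi-multiplicativity constants) are all dominated by the polynomial gain $r_\ell^\delta$ from Lemma \ref{prop7} uniformly in $\eta$, so that the resulting constant depends only on $T$ and on $C(h)$ (hence on $h$) and not on $\eta$. This is exactly the step where \cite[Section 8]{gps-near-crit} does the analogous computation in the homogeneous case, and I would follow it essentially verbatim, substituting the deterministic bound $\mu_{\gamma h}(B^{\op h}_\eta(x))\le C\alpha_4^\eta(\eta,1)\eta^\delta$ for the homogeneous clock rate wherever the latter enters.
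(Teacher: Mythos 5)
Your overall strategy matches the paper's: adapt \cite[Lemma 8.4]{gps-near-crit}, set up a multi-scale recursion on dyadic annuli $A_n=A(z,r_n,r_{n+1})$, and use Lemma~\ref{prop7} to supply the summable gain that keeps the induction constant uniform in $\eta$. However, there is a genuine error in the crucial estimate you write down. You bound the expected number of updated sites in a box of scale $r_\ell$ by applying Lemma~\ref{prop7} pointwise at the hexagon scale $\eta$ and then multiplying by the site count, arriving at
$$TC\,(r_\ell/\eta)^2\,\alpha_4^\eta(\eta,1)\eta^\delta\cdot\alpha_4^\eta(\eta,1)^{-1}=TC\,(r_\ell/\eta)^2\eta^\delta\,,$$
which blows up as $\eta\to 0$ for every fixed $\ell>0$ and hence gives no control. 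Even after multiplying by the local four-arm probability $\alpha_4^\eta(\eta,r_\ell)\asymp(\eta/r_\ell)^{5/4+o(1)}$, the net factor is $\asymp (r_\ell/\eta)^{3/4}\eta^\delta$, which is not bounded by $r_\ell^\delta$; the ``four-arm exponent $5/4>1$'' does not absorb the $(r_\ell/\eta)^2$ count.

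What the paper does instead—and what you need—is to observe that the relevant sum of clock rates over sites $x$ in $A_n$ is exactly
$$\sum_{x\in A_n}\mu_{\gamma h}\bigl(B^{\op h}_\eta(x)\bigr)\,\alpha_4^\eta(\eta,1)^{-1}=\mu_{\gamma h}(A_n)\,\alpha_4^\eta(\eta,1)^{-1}\,,$$
with no lossy site-by-site bound, and then to apply Lemma~\ref{prop7} at the \emph{macroscopic} scale $r_n$ (using $A_n\subset B^{\op h}_{r_{n+1}}(z)$) to get $\mu_{\gamma h}(A_n)\le C(h)\,\alpha_4^\eta(r_n,1)\,r_n^{\varrho}$. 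Plugging this into the recursion and using quasi-multiplicativity yields the per-scale contribution $T\,\mu_{\gamma h}(A_n)/\alpha_4^\eta(r_n,1)\lesssim_{T,h} r_n^{\varrho}$, which is geometrically summable over $n<j$ provided $N-j$ is at least some $M(T)$, and the induction closes with a constant $C_1=C_1(T,h)$. In short: apply Lemma~\ref{prop7} to the whole annulus at scale $r_n$, not hexagon by hexagon at scale $\eta$.
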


\begin{proof}
	The proof proceeds exactly in the Euclidean case, except that we use the new definition of $\P[x\in X\mid h]$. The reader is advised to also read the proof of \cite[Lemma 8.4]{gps-near-crit}, since many steps are skipped here. 
	
	Define $A_n=A(z,r_n,r_{n+1})$. Note that conditioned on $h$ the events $\{x_1\in X\}$ and $\{x_2\in X \}$ are independent, similarly as in the Euclidean case. In particular, defining $b_i^j:=\sup_z \Pb{\ev W_z(i,j)}$, we get as before
	$$
	\Pb{\ev W_z(i,j),\,\ev D}
	\le
	O(T)
	\sum_{n=i+1}^{j-2}
	\mu_{\gamma h}(A_n\cap \M_C)\,\alpha_4^\eta(\eta,1)^{-1}\,b_1^{n-1}\,b_i^{n-1}\,b_{n+2}^j,\,
	$$
	and further for some absolute constant $C_2$ and all $i,j$ with $j>i$,
	\begin{equation}
	\label{e.near}
	b_i^j\le C_2\,\alpha^\eta_4(r_i,r_j)\Bigl(1+T\,
	C_1^3
	\sum_{n=i+1}^{j-1}\,\frac{\mu_{\gamma h}(A_n)}{\alpha^\eta_4(r_n,1)}
	\Bigr)\,.
	\end{equation}
	Note that the latter bound is our variant of \cite[equation (8.3)]{gps-near-crit}. As in \cite{gps-near-crit} we show \eqref{e.dinduct} by induction on $j$, and for a fixed $j$ by induction on $j-i$. By Lemma \ref{prop7}, there exists $\varrho>0$ and $C(h)>0$ such that $\mu_{\gamma h}(A_n)<C\alpha^\eta_4(r_n,1)r_n^{\varrho}$. This and Lemma \ref{l.bound measure MC} imply that we can find a constant $M=M(T)\in\N$ such that for $N-j\geq M$, 
	\begin{equation}\label{e.near2}
T\,
(2\,C_2)^3
\sum_{n=i+1}^{j-1}\,\frac{\mu_{\gamma h}(A_n)}{\alpha^\eta_4(r_n,1)} \le 1\,.
	\end{equation}
	Choosing $C_1=2C_2$ and insert into \eqref{e.near} complete the proof by induction as in \cite{gps-near-crit}.
\end{proof}

For a site $z$ and a percolation $\wt\omega$ we will now define a quantity $Z(z)=Z_{\wt\omega}(z)$ which is closely related to the importance (Definition \ref{d. important}) of $z$. Let $Z(z)=Z_{\wt\omega}(z)$ denote the maximal radius $r$ such that the four arm event holds from the hexagon of $z$ to distance $r$ away. This is also the maximum $r$ for which changing the value of $\wt\omega(z)$ will change the white connectivity in $\wt \omega$ between two white points at distance $r$ away from $z$, or will change the black connectivity between two black points at distance $r$ away from $z$. Then set
$$
Z^X(z):=\sup_{\omega\in\Omega(\wt \omega,X)}Z_{ \omega'}(z),\, \ \ \  Z_X(z):=\inf_{\omega\in\Omega(\wt \omega,X)}Z_{ \omega'}(z) .
$$
In the same context as the lemma before we have the following result.
\begin{lemma}\label{l.change}
	For every site $z$ and every $\eps$ and $r$ satisfying
	$2\,\eta<\eps<2^4\,\eps<r\le 1$, we have 
	\[
	\Pb{Z^X(z)\ge r,\, Z_{ \omega}(z)\le \eps\mid h}\le O_{T,h}(1)\alpha^\eta_4(\eta,r) \ep^\delta.\]
\end{lemma}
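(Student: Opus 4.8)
The plan is to adapt the proof of the Euclidean analogue \cite[Lemma 8.5]{gps-near-crit}, using Lemma \ref{l.main} as the basic building block and feeding in the regularity estimate of Lemma \ref{prop7}. Fix dyadic scales $r_k=2^k\eta$ and let $i_1,i_2$ be the indices with $r_{i_1}\asymp\eps$ and $r_{i_2}\asymp r$; the hypothesis $2^4\eps<r$ gives $i_1+4\le i_2$. Write $E:=\{Z^X(z)\ge r\}\cap\{Z_\omega(z)\le\eps\}$. Throughout we use two facts: conditionally on $h$, the restrictions of the initial configuration $\omega=\omega_\eta(0)$ and of the update set $X$ to disjoint regions are independent; and $\P[x\in X\mid h]\le T\,\mu_{\gamma h}(B^{\op h}_\eta(x))\,\alpha^\eta_4(\eta,1)^{-1}$ for every site $x$ (the right-hand side being $T$ times the clock rate of $x$).

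\emph{Combinatorial step.} On $E$, pick $\omega'\in\Omega(\omega,X)$ realising $Z^X(z)\ge r$, so that $\omega'$ has four alternating arms from the hexagon of $z$ to $\partial B^{\op b}_r(z)$. Since $\omega$ and $\omega'$ differ only on $X$, while $\omega$ has no four-arm event from the hexagon of $z$ beyond distance $\eps$, there must be an updated site inside $B^{\op b}_{2\eps}(z)$: otherwise $\omega$ and $\omega'$ would agree on $B^{\op b}_{2\eps}(z)$, forcing $\omega$ to have four arms out to $2\eps>\eps\ge Z_\omega(z)$, a contradiction. As in \cite[Section 8]{gps-near-crit} one can moreover choose such an updated site $x$, at a scale $r_k$ with $k\le i_1+1$, that is ``responsible'' for the repair, in the sense that in a suitable configuration in $\Omega(\omega,X)$ there are four alternating arms from the hexagon of $x$ out to scale $r_{k-3}$ (the usual fact that a site pivotal for a four-arm crossing carries a four-arm event of the corresponding radius around it). Combining with the four arms of $\omega'$ from the hexagon of $z$ to $r_{k-3}$ and from $r_{k+3}$ to $r_{i_2}$, we get
\[
E\ \subseteq\ \bigcup_{k\le i_1+1}\ \bigcup_{x:\,|x-z|\asymp r_k}\Big(\ev W_z(0,k-3)\cap\{x\in X\}\cap\ev W_x(0,k-3)\cap\ev W_z(k+3,i_2)\Big),
\]
where $\ev W_x$ is defined exactly as $\ev W_z$ but centred at $x$.

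\emph{Probabilistic step.} For fixed $k$ and $x$ with $|x-z|\asymp r_k$ the four events above depend on $\omega,X$ restricted to the pairwise disjoint sets $B^{\op b}_{r_{k-3}}(z)$, $\{x\}$, $B^{\op b}_{r_{k-3}}(x)$ and $A(z,r_{k+3},r_{i_2})$, hence are independent given $h$. By Lemma \ref{l.main} (whose constant $C_1$ is uniform in the centre, so the bound applies to $\ev W_x$ as well) and quasi-multiplicativity of the four-arm probability, their joint conditional probability is at most
\[
C_1^{3}\,T\,\mu_{\gamma h}(B^{\op h}_\eta(x))\,\alpha^\eta_4(\eta,1)^{-1}\,\alpha^\eta_4(\eta,r_k)^{2}\,\alpha^\eta_4(r_k,r).
\]
Summing over the sites $x$ at scale $r_k$ and using $\sum_{x:\,|x-z|\asymp r_k}\mu_{\gamma h}(B^{\op h}_\eta(x))\asymp\mu_{\gamma h}(A(z,r_{k-1},r_{k+1}))\le C\,\alpha^\eta_4(r_k,1)\,r_k^{\delta}$ from Lemma \ref{prop7}, together with $\alpha^\eta_4(r_k,1)\alpha^\eta_4(\eta,1)^{-1}\asymp\alpha^\eta_4(\eta,r_k)^{-1}$ and $\alpha^\eta_4(\eta,r_k)\alpha^\eta_4(r_k,r)\asymp\alpha^\eta_4(\eta,r)$, the $k$-th term of the union is $O_{T,h}(1)\,\alpha^\eta_4(\eta,r)\,r_k^{\delta}$. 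Summing the geometric series over $k\le i_1+1$ (dominated by the top scale $r_{i_1+1}\asymp\eps$) gives $\P[E\mid h]\le O_{T,h}(1)\,\alpha^\eta_4(\eta,r)\,\eps^{\delta}$. It is precisely the constraint $Z_\omega(z)\le\eps$ that confines the responsible update to a scale $\le\eps$, which is why the scale sum produces $\eps^{\delta}$ rather than the larger (and useless) $r^{\delta}$.

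The main obstacle is the combinatorial step: making rigorous, exactly as in \cite[Section 8]{gps-near-crit}, that the discrepancy between $\omega$ and the repaired configuration can be localised to a single updated site carrying a genuine four-arm event of the appropriate radius — in particular handling updated sites lying close to one of the four arms. Once this is in place, the probabilistic part is routine given Lemma \ref{l.main} and Lemma \ref{prop7}.
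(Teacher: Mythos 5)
Your proof is correct and takes essentially the same route as the paper: both adapt GPS Lemma 8.5 by interpolating from $\omega$ to the repaired configuration $\omega'$, isolating a "responsible" updated site $x$ at scale $r_k\lesssim\eps$, decomposing into four events on disjoint regions (the four-arm events around $z$ at scales $[\eta,r_{k-3}]$ and $[r_{k+3},r]$, the four-arm event around $x$, and $\{x\in X\}$), bounding each via Lemma \ref{l.main} and the clock-rate estimate, and then feeding in Lemma \ref{prop7} and quasi-multiplicativity so that the scale sum is a geometric series dominated by the top scale $\eps$, yielding the $\eps^\delta$ factor. The paper's write-up is terser — it simply records the per-site bound \eqref{e.near3} and cites \cite[Lemma 8.5]{gps-near-crit} for the combinatorial localisation, which is the very step you flagged as the "main obstacle" — but the substance matches your expanded decomposition.
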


\begin{proof}
	Assume $\omega'$ is such that $Z_{\omega'}(z)\ge r$, and let $x_1,\dots,x_m$ be some enumeration of the sites in $B^{\op b}_\ep(z)$ where $\omega'$ and $\omega$ are different. For each $j=0,1,\dots,m$, let $ \omega_j$ denote the configuration that agrees
	with $ \omega'$ on every site different from $x_{j+1},x_{j+2},\dots,x_m$,
	and agrees with $\wt \omega$ on $x_{j+1},\dots,x_m$.
	Then $ \omega_m= \omega'$ and $Z_{\wt \omega_0}(z)<\eps$.
	Let $k_{ \omega'}$ be the first $j$ such that $Z_{ \omega_j}(z)>r$. 
	
	Let $\wh X$ be the set of sites $x\in B^{\op b}_\ep(z)$ such that $x=x_{k_{\omega'}}$ for some $\omega'$ satisfying $Z_{\omega'}(z)\ge r$. Proceeding as in the proof of \cite[Lemma 8.5]{gps-near-crit} we see that
	\begin{equation}\label{e.near3}
\Pb{ Z^X(z)\ge r,\, Z(z)\le \eps,\,x\in\wh X\mid h}
\le
O_{T}(1)\,\alpha^\eta_4(\eta,r^x)\,\mu_{\gamma h}(B^{\op b}_\eta(x))\,\alpha^\eta_4(r,1)^{-1}\,.
	\end{equation}
	Since $\wh X$ is non-empty if $Z^X(z)\ge r$ and $Z(z)\le \eps$ both occur,
	\eqbn
	\begin{split}
		\Pb{ Z^X(z)\ge r,\, Z(z)\le \eps\mid h}
		&\le \sum_{x\in B^{\op b}_\ep(z)} \Pb{ Z^X(z)\ge r,\, Z(z)\le \eps,\,x\in\wh X\mid h}\\
		&\le O_{T}(1)\sum_{n=0}^{\log_2(\ep/\eta)} \alpha^\eta_4(\eta,r_n) \mu_{\gamma h}(A_n)\alpha^\eta_4(r,1)^{-1}\,.
	\end{split}
	\eqen
	The lemma now follows from Lemmas \ref{prop7} and \ref{l.bound measure MC} along with quasi-multiplicativity, i.e., $\alpha^\eta_4(\eta,1)\alpha^\eta_4(r,1)^{-1}\asymp \alpha^\eta_4(\eta,r)$.
\end{proof}
Next we state a similar result to Lemma \ref{l.change} which will be needed in a later work of the second and fourth coauthors \cite{hs-cardy}. Set
$$
Z_X(z):=\inf_{\wt \omega'\in\Omega(\wt \omega,X)}Z_{\wt \omega'}(z)\,.
$$
The proof of the following lemma is omitted, since it is identical to the proof of Lemma \ref{l.change}.
\begin{lemma}\label{l.change2}
	For every site $z$ and every $\eps$ and $r$ satisfying
	$2\,\eta<\eps<2^4\,\eps<r\le 1$, we have 
	$$
	\Pb{Z_X(z)\le \ep,\, Z_{ \omega}(z)\ge r}\le O_{T,h}(1)\alpha^\eta_4(\eta,r) \ep^\delta.
	$$
\end{lemma}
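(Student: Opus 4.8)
The plan is to run the argument of Lemma~\ref{l.change} essentially verbatim, with the roles of the two scales $\eps<r$ interchanged and with ``creating'' a long four-arm event around $z$ replaced by ``destroying'' one. Fix the site $z$, condition on $h$, and work on the event $\{Z_X(z)\le\eps,\, Z_\omega(z)\ge r\}$. First I would pick a configuration $\omega'\in\Omega(\omega,X)$ realizing $Z_{\omega'}(z)\le\eps$ and enumerate by $x_1,\dots,x_m$ the sites of $X\cap B^{\op b}_\eps(z)$ at which $\omega'$ and $\omega$ disagree; these are the only ones that matter, since whether the four-arm event from the hexagon of $z$ fails before scale $\eps$ is measurable with respect to the configuration inside $B^{\op b}_\eps(z)$. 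Then I interpolate exactly as in \cite{gps-near-crit}: let $\omega_j$ agree with $\omega'$ off $\{x_{j+1},\dots,x_m\}$ and with $\omega$ on $\{x_{j+1},\dots,x_m\}$, so that $\omega_0$ coincides with $\omega$ on $B^{\op b}_\eps(z)$ --- hence the four-arm event from $z$ still holds up to scale $\eps$ in $\omega_0$, using $Z_\omega(z)\ge r>\eps$ --- while $\omega_m=\omega'$ has it failing before scale $\eps$. Thus there is a first index $k=k_{\omega'}$ at which flipping the single site $x_k\in B^{\op b}_\eps(z)$ destroys the four-arm event from $z$ at some scale $\le\eps$, so $x_k$ is four-arm pivotal at its scale in $\omega_{k-1}$.

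Next I would let $\wh X$ be the random set of sites $x\in B^{\op b}_\eps(z)$ of the form $x_{k_{\omega'}}$ for some admissible $\omega'$; on the event in question $\wh X\neq\emptyset$, so a union bound over $x$ reduces matters to proving, for each fixed $x$, the exact analogue of \eqref{e.near3}:
\[
\Pb{Z_X(z)\le\eps,\, Z_\omega(z)\ge r,\, x\in\wh X\mid h}\le O_T(1)\,\alpha^\eta_4(\eta,r^x)\,\mu_{\gamma h}(B^{\op b}_\eta(x))\,\alpha^\eta_4(r,1)^{-1},
\]
where $r^x\asymp 2^n\eta$ when $x$ lies in the dyadic annulus $A_n=A(z,r_n,r_{n+1})$ with $r_n=2^n\eta$. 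I would prove this exactly as \cite[Lemma~8.5]{gps-near-crit} and the proof of Lemma~\ref{l.change}: decompose according to the mesoscopic scale of $|x-z|$ and read off conditionally (given $h$) independent events --- the four-arm structure around $x$ down to its own scale $\eta$, the long four-arm structure around $z$ out to scale $r$ (this is where the hypothesis $Z_\omega(z)\ge r$ is used, exactly as $Z_\omega(z)\le\eps$ is used in Lemma~\ref{l.change}), and the event that $x$ is updated during $[0,T]$, whose conditional probability is of order $T\,\mu_{\gamma h}(B^{\op h}_\eta(x))\,\alpha^\eta_4(\eta,1)^{-1}$ by Definition~\ref{d.dp}.

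Summing the displayed bound over $x\in B^{\op b}_\eps(z)$, grouped into the annuli $A_n$ with $r_n\le\eps$, then gives
\[
\Pb{Z_X(z)\le\eps,\, Z_\omega(z)\ge r\mid h}\le O_T(1)\sum_{n=0}^{\log_2(\eps/\eta)}\alpha^\eta_4(\eta,r_n)\,\mu_{\gamma h}(A_n)\,\alpha^\eta_4(r,1)^{-1}.
\]
At this point I would invoke Lemma~\ref{prop7} to bound $\mu_{\gamma h}(A_n)\le C\alpha^\eta_4(r_n,1)r_n^\delta$ (and Lemma~\ref{l.bound measure MC} in its place if one wants the identical statement with $\mu_{\gamma h}$ replaced by $\wt\mu^C_{\gamma h}$), then use quasi-multiplicativity $\alpha^\eta_4(\eta,r_n)\alpha^\eta_4(r_n,1)\asymp\alpha^\eta_4(\eta,1)$ together with $\alpha^\eta_4(\eta,1)\alpha^\eta_4(r,1)^{-1}\asymp\alpha^\eta_4(\eta,r)$; the sum then collapses to $C\,\alpha^\eta_4(\eta,r)\sum_n r_n^\delta\le O_{T,h}(1)\,\alpha^\eta_4(\eta,r)\,\eps^\delta$, which is the asserted bound.

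The part I expect to require the most care --- though, since it is literally the content of \cite[Lemma~8.5]{gps-near-crit}, it is not a genuinely new obstacle --- is the multi-scale pivotal-switching argument behind the per-site estimate: one has to arrange the mesoscopic decomposition so that the four scales $\eta\ll|x-z|\ll\eps\ll r$ are cleanly separated and the extracted events are genuinely conditionally independent once $h$ is fixed. Everything else is bookkeeping identical to the proof of Lemma~\ref{l.change}, and the only genuinely new input --- the regularity bound $\mu_{\gamma h}(B^{\op h}_r(z))\le C\alpha^\eta_4(r,1)r^\delta$ of Lemma~\ref{prop7} --- enters in exactly the same place.
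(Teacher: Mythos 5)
Your proposal is correct and follows precisely the route the paper intends: the paper explicitly omits this proof, saying only that it is identical to the proof of Lemma~\ref{l.change}, and your argument is a faithful expansion of that remark — you mirror the interpolation, the union over candidate flip sites $x\in\wh X$, the per-site estimate analogous to \eqref{e.near3}, and the final summation, with the roles of the scales swapped so that the hypothesis $Z_\omega(z)\ge r$ now supplies the long four-arm structure around $z$ while $\omega'$ destroys the event at scale $\le\eps$. The places where the new input (Lemma~\ref{prop7}, and Lemma~\ref{l.bound measure MC} for the cut-off measure) and quasi-multiplicativity enter are exactly as in the proof of Lemma~\ref{l.change}, so nothing further needs to be said.
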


For any quad $Q\in \QQ^k$, if $r>0$ is smaller than the minimal distance from $\p_1 Q$ to $\p_3 Q$,
we will say that $Q$  is {\bf $r$-almost crossed} by $\omega=\omega_\eta\in\HH$ if there is an
open path in the $r$-neighborhood of $Q$ that comes within distance
$r$ of each of the two arcs $\p_1 Q$ and $\p_3 Q$.

The following lemma and proposition are proved exactly as \cite[Proposition 8.6 and Lemma 8.7]{gps-near-crit}, and the proofs are therefore omitted. Note that there is an exponent $\delta$ in the statement of these results, while the corresponding results in the Euclidean case have an explicit exponents depending on the four-arm exponent. Proposition \ref{pr.stab} follows from Lemma \ref{prop8} exactly as in \cite{gps-near-crit}. In particular, we observe from the proof that to deduce Proposition \ref{pr.stab} from the lemma it is sufficient with a variant of the lemma for which the considered probability converges 0 as $\ep\rta 0$. 

\begin{proposition}\label{p.almostCross}
	Let $T$, $h$, and $X$ be as above, and fix some quad $Q\in \QQ^\infty$. Let $r>0$ be smaller than the minimal distance between $\p_1Q$ and $\p_3 Q$, and suppose that $0<\eta<2\,\eta<\eps<2^5\,\eps<r\le 1$.
	Then the probability that there are some
	$\omega',\omega''\in\Omega(\omega,X)$ such that
	\begin{enumerate}[(a)]
		\item $Q$ is crossed by $\omega'$,
		\item $Q$ is not $r$-almost crossed by $\omega''$, and
		\item $\omega '(z)=\omega''(z)$ for every site $z$ satisfying $Z_{\omega}(z)\ge \eps$,
	\end{enumerate}  
	is at most 
	$$
	O_{T,Q,h}(\eps^\delta)\,\alpha^\eta_4(r,1)^{-1}.
	$$ 
\end{proposition}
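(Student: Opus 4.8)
The plan is to follow \cite[Section~8]{gps-near-crit} essentially verbatim; the only genuinely new ingredients are Lemma~\ref{prop7} and Lemma~\ref{l.bound measure MC} (regularity of $\mu_{\gamma h}$ and $\wt\mu_{\gamma h}^C$) together with the two quenched four-arm estimates Lemma~\ref{l.main} and Lemma~\ref{l.change}, all of which are already available, and the rest of the argument is a deterministic interpolation followed by a first-moment estimate that does not see the inhomogeneity. Throughout we work conditionally on a fixed instance of $h$. Write $\mathcal E=\mathcal E(Q,r,\eps)$ for the event that there exist $\omega',\omega''\in\Omega(\omega,X)$ with (a) $Q$ crossed by $\omega'$, (b) $Q$ not $r$-almost crossed by $\omega''$, and (c) $\omega'(z)=\omega''(z)$ whenever $Z_\omega(z)\ge\eps$.

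\textbf{Step 1 (interpolation to a critical site).} On $\mathcal E$, the finite set $\{z:\omega'(z)\ne\omega''(z)\}$ is contained in $X\cap\{z:Z_\omega(z)<\eps\}$ by (c). Enumerate it as $y_1,\dots,y_m$ and let $\sigma_j\in\Omega(\omega,X)$ agree with $\omega'$ on $\{y_1,\dots,y_j\}$ and with $\omega''$ elsewhere, so $\sigma_0=\omega''$ is not $r$-almost crossed while $\sigma_m=\omega'$ crosses $Q$, hence is $r$-almost crossed. Let $k$ be minimal with $\sigma_k$ $r$-almost crossing $Q$ and set $z_0:=y_k$. Since ``$r$-almost crossing $Q$'' is increasing, $z_0$ is open in $\sigma_k$ and closed in $\sigma_{k-1}$; as $\sigma_{k-1}$ and $\sigma_k$ differ only at $z_0$, in $\sigma_{k-1}$ the site $z_0$ carries two disjoint open arms (from the two open neighbours used by the $r$-almost-crossing path of $\sigma_k$) reaching to within $r$ of $\partial_1 Q$ and $\partial_3 Q$, and two disjoint closed arms from the dual closed ``blockade'' that prevents $\sigma_{k-1}$ from being $r$-almost crossed (and which $z_0$ must lie on, being the only site whose flip destroys it) reaching to within $r$ of $\partial_2 Q$ and $\partial_4 Q$. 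Thus $z_0$ is a genuine four-arm (pivotal) point for the macroscopic crossing of $Q$ in $\sigma_{k-1}\in\Omega(\omega,X)$: in particular $z_0\in R_Q:=\{z:\operatorname{dist}(z,Q([0,1]^2))\le 2r\}$, one has $Z^X(z_0)\ge Z_{\sigma_{k-1}}(z_0)\ge c_Q$ for $z_0$ at distance $\ge c_Q$ from $\partial Q$ (the $O(r)$-boundary layer of $Q$ being treated separately via the built-in $r$-slack, exactly as in \cite{gps-near-crit}), and $Z_\omega(z_0)<\eps$; finally $z_0\in X$.

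\textbf{Step 2 (first-moment estimate).} Hence $\mathcal E$ is contained in the union, over dyadic boxes $B\subset R_Q$ of side $r$, of the event that some $z_0\in B$ is $\eps$-unstable in $\omega$ but $r$-stabilizable under $X$-updates and that $B$ carries, in some configuration of $\Omega(\omega,X)$, four arms from $\partial B$ out to distance $\asymp 1$ reaching $\partial Q$. For a fixed $B$, the ``inner'' part (inside $3B$, measurable with respect to the colours and, given $h$, the clocks there) and the ``outer'' part (between scale $r$ and scale $1$, measurable with respect to the data outside $B$) are conditionally independent given $h$. Bounding the outer part by Lemma~\ref{l.main} between scales $r$ and $1$ costs $C_1(T,h)\,\alpha^\eta_4(r,1)$ (requiring the arms to reach the four sides of $Q$ only helps), and bounding the inner part by the appropriate form of Lemma~\ref{l.change} --- whose proof already isolates, at each dyadic scale below $\eps$ at which the four-arm of $z_0$ is repaired, a vertex of $X$, each such vertex costing at most $O_{T,h}(1)\,\mu_{\gamma h}(B^{\op h}_\eta(\cdot))\,\alpha^\eta_4(\eta,1)^{-1}=O_{T,h}(1)\,\eta^\delta$ by Lemma~\ref{prop7} and Lemma~\ref{l.bound measure MC} --- produces the gain $\eps^\delta$. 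Summing over the $O_Q(1)$-many relevant scales and the boxes $B\subset R_Q$ and collapsing the sum by quasi-multiplicativity of $\alpha^\eta_4$ (with the four-arm exponent replaced throughout by the abstract $\delta$) yields
\[
\Pb{\mathcal E(Q,r,\eps)}\ \le\ O_{T,Q,h}(\eps^\delta)\,\alpha^\eta_4(r,1)^{-1}
\]
uniformly in $0<\eta<2\eta<\eps<2^5\eps<r\le 1$, as claimed; an analogous but simpler ``boundary'' estimate disposes of the $z_0$ within $O(r)$ of $\partial Q$.

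\textbf{Where the difficulty lies.} Step~1 is a clean deterministic interpolation; the real work is the bookkeeping in Step~2. It is \emph{not} enough to union-bound the critical-site event over all $\asymp\eta^{-2}$ lattice sites of $R_Q$ --- that would accumulate a divergent power of $\eta$ --- so one must sort the estimate by the dyadic scales separating $\eta,\eps,r,1$ and use quasi-multiplicativity to telescope it, together with conditional independence (given $h$) of arm events in disjoint annuli and the fact that each vertex of $X$ contributes only $\eta^\delta$. This is exactly the computation in the proof of \cite[Proposition~8.6]{gps-near-crit}, which goes through here once Lemma~\ref{l.main} and Lemma~\ref{l.change} (and hence Lemma~\ref{prop7}) replace their Euclidean counterparts; the only model-dependent input is the strengthened update-rate bound $\Pb{x\in X\mid h}\le O(T)\,\mu_{\gamma h}(B^{\op h}_\eta(x))\,\alpha^\eta_4(\eta,1)^{-1}\le O_{T,h}(1)\,\eta^\delta$, which is already built into those two lemmas.
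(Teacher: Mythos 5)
Your proposal takes the same route as the paper, which in fact \emph{omits} the proof of this proposition, stating that it is proved ``exactly as [Proposition~8.6 and Lemma~8.7] in \cite{gps-near-crit}'' with the explicit Euclidean exponent replaced by the abstract $\delta$. Your Step~1 (the deterministic interpolation singling out a site $z_0$ with $Z_\omega(z_0)<\eps$ that is macroscopically pivotal in some $\sigma\in\Omega(\omega,X)$) and Step~2 (a scale-sorted first-moment bound feeding Lemma~\ref{l.main} for the outer arms and Lemma~\ref{l.change} for the inner instability into the GPS annulus-structure computation) correctly identify both the structure of the cited argument and the two places where the inhomogeneous clock rate enters.

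One slip in Step~2's accounting is worth correcting: the $\eps^\delta$ gain does \emph{not} come from ``each vertex of $X$ costing $\eta^\delta$.'' In the proof of Lemma~\ref{l.change}, the scale-$r_n$ term of the telescoping sum, after inserting $\mu_{\gamma h}(A_n)\le C\,\alpha^\eta_4(r_n,1)\,r_n^\delta$ from Lemma~\ref{prop7} and applying quasi-multiplicativity, collapses to $\asymp\alpha^\eta_4(\eta,r)\,r_n^\delta$, and the geometric sum
\[
\sum_{n\,:\,r_n\le\eps} r_n^{\delta}\;\asymp\;\eps^{\delta}
\]
is dominated by its \emph{largest} scale $r_n\asymp\eps$, not the smallest. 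The per-vertex bound $\Pb{x\in X\mid h}\le O_{T,h}(1)\,\eta^{\delta}$ is correct but, multiplied by the number of scales, would only yield $\eta^{\delta}\log_2(\eps/\eta)$, a different quantity that does not reflect the actual mechanism; the gain is produced by the geometric decay in $r_n^{\delta}$ across the annuli between scale $\eta$ and scale $\eps$. Since you ultimately defer to the computation of \cite[Proposition~8.6]{gps-near-crit}---which is precisely the move the paper itself makes---this is an expository slip rather than a gap in the argument.
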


See \cite[Definition 3.3]{gps-pivotal} for the notation $\Os_k(\cdot)$ used in the following lemma. Intuitively, for $k\in\N$ and $\omega\in\HH$, $\Os_k(\omega)$ denotes the set of percolation configurations which have the same crossing properties as $\omega$ for all quads in $\QQ^k$, possibly with some small deformations of size $2^{-k}$.
\begin{lemma}\label{prop8}
	Let $k\in \N$ and $T>0$ be fixed and suppose that $0<\eta<2\eta< \eps<2^{-k-20}$. Then the probability that there are
	$\omega',\omega''\in\Omega(\omega,X)$ such that
	\bi
	\item[(a)] $\omega'\notin \Os_k(\omega'')$,
	\item[(b)] $\omega''\notin \Os_k(\omega')$, and
	\item[(c)] $\omega'(z)=\omega''(z)$ for every site $z$ satisfying $Z_{\omega}(z)\ge \eps$,
	\ei
	is at most 
	$$
	O_{T,k,h}(\eps^\delta).
	$$
\end{lemma}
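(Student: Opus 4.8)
The plan is to deduce Lemma~\ref{prop8} from Proposition~\ref{p.almostCross} by covering the quads in $\QQ^k$ by finitely many ``macroscopic'' sub-quads and applying the almost-crossing estimate to each of them, exactly as in \cite[Lemma 8.7]{gps-near-crit}. First I would recall that, by definition of $\Os_k$, the event $\{\omega'\notin\Os_k(\omega'')\}\cap\{\omega''\notin\Os_k(\omega')\}$ forces the existence of a quad $Q\in\QQ^k$ (among a finite deterministic list, since $\QQ^k$ is a finite set once we fix the boundary domain $D$ and recall $Q$ has polygonal boundary in $D\cap 2^{-k}\Z^2$ --- here one should be slightly careful and instead use the finitely many ``canonical'' quads of \cite[Section 2]{gps-pivotal}, whose crossing events generate the $\Os_k$ information up to deformations of size $2^{-k}$) such that $Q$ is crossed by $\omega'$ but not $r$-almost crossed by $\omega''$ for $r\asymp 2^{-k}$, or vice versa with the roles of $\omega'$ and $\omega''$ exchanged. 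The minimal distance between $\p_1 Q$ and $\p_3 Q$ for such canonical quads is bounded below by a constant $c_k>0$ depending only on $k$, so we may take $r=c_k\wedge(2^{-k})$, which is admissible in Proposition~\ref{p.almostCross} provided $2^5\eps<r$, i.e.\ provided $\eps<2^{-k-20}$ with room to spare.

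Next I would apply Proposition~\ref{p.almostCross} to each such quad $Q$ (and to the reversed roles, which is symmetric). For each fixed $Q$ the probability that there exist $\omega',\omega''\in\Omega(\omega,X)$ with $\omega'$ crossing $Q$, $\omega''$ not $r$-almost crossing $Q$, and $\omega'(z)=\omega''(z)$ whenever $Z_\omega(z)\ge\eps$, is at most $O_{T,Q,h}(\eps^\delta)\,\alpha_4^\eta(r,1)^{-1}$. Since $r\asymp_k 1$ we have $\alpha_4^\eta(r,1)^{-1}=O_k(1)$ (the four-arm probability at a fixed macroscopic scale is bounded below by a positive constant depending only on $k$, uniformly in $\eta$), so this contribution is $O_{T,k,h}(\eps^\delta)$. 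Summing over the finitely many canonical quads $Q\in\QQ^k$ --- the number of which depends only on $k$ --- gives the desired bound $O_{T,k,h}(\eps^\delta)$, using a union bound and absorbing the number of quads into the $O_k$ constant. The hypotheses (a), (b), (c) of Lemma~\ref{prop8} feed directly into hypotheses (a), (b), (c) of Proposition~\ref{p.almostCross}: (c) is identical, while (a) and (b) together produce, for some canonical $Q$, the crossing/non-almost-crossing dichotomy required.

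The one genuinely delicate point --- and the place I expect to spend the most care --- is the reduction from ``$\omega'\notin\Os_k(\omega'')$ and $\omega''\notin\Os_k(\omega')$'' to ``there is a single canonical quad $Q$ witnessed by both configurations.'' A priori $\Os_k$ is defined through crossings of \emph{all} quads in $\QQ^k$ together with the allowed $2^{-k}$-deformations, and one must check that failure of the $\Os_k$ relation in \emph{both} directions pins down a common quad $Q$ such that $\omega'$ crosses $Q$ but $\omega''$ fails to even $r$-almost cross it (rather than, say, $\omega'$ failing one quad and $\omega''$ failing a different, unrelated one). This is precisely the combinatorial content of \cite[Lemma 8.7]{gps-near-crit} and \cite[Section 2.3, Definition 3.3]{gps-pivotal}: the finiteness and the ``generating'' property of the canonical quads, together with the fact that an $r$-almost crossing with $r\asymp 2^{-k}$ is exactly what is needed to realize a $2^{-k}$-deformed crossing, make the reduction go through. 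Since nothing in this combinatorial step involves the measure $\sigma=\mu_{\gamma h}$ --- it is purely a statement about configurations in $\HH$ --- it transfers verbatim from \cite{gps-near-crit} with no change, and the only input specific to our inhomogeneous setting, namely the regularity from Lemma~\ref{prop7} and the moderate-point bound Lemma~\ref{l.bound measure MC}, has already been used to prove Proposition~\ref{p.almostCross}. Hence the proof of Lemma~\ref{prop8} is identical to that of \cite[Lemma 8.7]{gps-near-crit}, and I would simply cite it.
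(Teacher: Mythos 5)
Your proposal matches the paper's approach exactly: the paper omits the proof, stating that it is ``proved exactly as \cite[Lemma 8.7]{gps-near-crit},'' and your reduction --- union bound over the finitely many canonical quads generating $\Os_k$, apply Proposition~\ref{p.almostCross} with $r\asymp 2^{-k}$ so that $\alpha_4^\eta(r,1)^{-1}=O_k(1)$, and note that the combinatorial step of producing a witnessing quad from the double failure of the $\Os_k$ relation is measure-independent and hence transfers verbatim from GPS --- is precisely that argument. Your identification of the one delicate point (passing from the $\Os_k$ definition to a single crossing/non-almost-crossing witness among the canonical quads) and your observation that all inhomogeneity has already been absorbed into Proposition~\ref{p.almostCross} via Lemmas~\ref{prop7} and \ref{l.bound measure MC} are both correct.
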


\section{Law of the spectral measure via inhomogeneous dynamical percolations}
\label{sec:spectralmeasure}
The spectral sample plays an important role  in our proofs, due to the identity \eqref{e. cov intro}. In \cite{gps-fourier} it was proved that the spectral sample associated with the crossing of a single quad $Q$ converges for the Hausdorff metric as a set in the complex plane to a conformally invariant limit with dimension $3/4$. Since the size of the spectral sample has the same first moment as the number of pivotal points (see e.g.\ \cite{kkl-boolean}), we also have tightness of the spectral measure. Non-triviality of subsequential limits follows from the bound on the lower tails of the spectral sample. However, it was not proved in \cite{gps-fourier} that the spectral measure converges in law, i.e., the uniqueness of the limit was not proved. We establish this in the following proposition.
\begin{proposition}\label{pr.convS}
	Consider quads $Q_1,\dots,Q_k$, let $f:\HH\to\{0,1 \}$ be the function which says whether all the quads are crossed, and let $\S_\eta$ be the corresponding spectral measure for percolation on the lattice $\Tg_\eta$. Then the measure $\S_\eta$ converges in law in the scaling limit for the weak topology to a measure $\S$. 
	\label{prop:spectralsample}
\end{proposition}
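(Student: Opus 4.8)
The plan is to establish convergence in law of $\S_\eta$ by combining the already-known tightness of $\S_\eta$ (in the weak topology) with a new argument showing that all subsequential limits have the same law. Since $\S_\eta$ is (up to the identification \eqref{e. identification}) a measure whose total mass is comparable to the number of pivotal points of the event $f$, its first moment is bounded and we get tightness of the laws of $\S_\eta$ in the space of finite measures on $\ol D$ with the weak topology; lower-tail estimates from \cite{gps-fourier} and \cite{GHSS} guarantee that no mass escapes and that subsequential limits are a.s.\ nontrivial on the event $\S(D)\ne 0$. So the entire content is the uniqueness of the subsequential limit.

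To prove uniqueness I would use the identity \eqref{e.decreasingcovariance} together with the already-established convergence of Liouville dynamical percolation in the regime $\gamma\in(0,2-\sqrt{5/2})$ (Theorem~\ref{thm1}(i), via Theorem~\ref{thm2} and Proposition~\ref{pr.stab}). Indeed, for the Boolean function $f$ which says whether all of $Q_1,\dots,Q_k$ are crossed, \eqref{e.decreasingcovariance} reads
\[
\Cov\big[f(\omega^\gamma_\eta(0)),f(\omega^\gamma_\eta(t))\big]
=\E\big[e^{-t\,\mu^{\S_\eta}_{\gamma h}(D)}\,\1_{\S_\eta(D)\neq 0}\big].
\]
By Theorem~\ref{thm1}(i) the left-hand side converges, for each fixed $t\ge 0$, to $\Cov[f(\omega^\gamma_\infty(0)),f(\omega^\gamma_\infty(t))]$ (crossing events of fixed quads are continuity events for the Schramm--Smirnov topology, so this follows from convergence of the finite-dimensional marginals). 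Hence $\E[e^{-t\mu^{\S_\eta}_{\gamma h}(D)}\1_{\S_\eta(D)\ne0}]$ converges as $\eta\to0$ for every $t\ge 0$. Now take a subsequential limit $\S$ of $\S_\eta$; coupling so that $\S_\eta\to\S$ weakly a.s., Proposition~\ref{p.convergence_of_LQG} (convergence of LQG measures under convergence of the base measure, applied jointly with the field $h$) gives $\mu^{\S_\eta}_{\gamma h}(D)\to\mu^{\S}_{\gamma h}(D)$, and $\1_{\S_\eta(D)\ne0}\to\1_{\S(D)\ne0}$ off a null event (using that mass does not escape, so $\S_\eta(D)\to\S(D)$); by bounded convergence, $\E[e^{-t\mu^{\S}_{\gamma h}(D)}\1_{\S(D)\ne0}]$ equals the limit above, and in particular is the \emph{same} for every subsequential limit. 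Running $\gamma$ over a nontrivial interval — say all $\gamma$ in a small interval around $0$ including $\gamma=0$ — and varying $t>0$, the quantities $\E[e^{-t\mu^{\S}_{\gamma h}(D)}\1_{\S(D)\ne0}]$ determine the joint law of the pair $(\S(D)\text{ vanishing or not},\ \text{the "size" of }\S)$; more importantly, taking $\gamma=0$ gives directly $\E[e^{-t\S(D)}\1_{\S(D)\ne0}]$ for all $t$, hence the law of $\S(D)$. To pin down the full law of the measure $\S$ (not just its total mass), I would run the same argument with the quads $Q_1,\dots,Q_k$ replaced by finite families of quads confined to prescribed open subregions $U_1,\dots,U_m$ of $D$ — equivalently, test against base measures $\sigma$ supported in those regions, using that the map $\sigma\mapsto\mu^\sigma_{\gamma h}(D)$ and Laplace transforms in a rich enough family of test measures determine the law of a random measure. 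The monotone class / uniqueness of Laplace functionals on the space of finite measures then forces all subsequential limits to coincide, which together with tightness yields convergence in law.

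Concretely, the steps in order: (1) record tightness of $(\S_\eta)$ and nontriviality/no-escape-of-mass of subsequential limits from \cite{gps-fourier,GHSS}; (2) apply \eqref{e.decreasingcovariance} and Theorem~\ref{thm1}(i) to show $t\mapsto\lim_\eta \E[e^{-t\mu^{\S_\eta}_{\gamma h}(D)}\1_{\S_\eta(D)\ne0}]$ exists and is subsequence-independent; (3) pass this to any subsequential limit $\S$ via Proposition~\ref{p.convergence_of_LQG}, identifying the limit with $\E[e^{-t\mu^{\S}_{\gamma h}(D)}\1_{\S(D)\ne0}]$; (4) localize by choosing quads inside subdomains (or base measures $\sigma$ supported there) so that the collection of such Laplace functionals separates laws of random finite measures; (5) conclude uniqueness of the subsequential limit and hence convergence in law. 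The main obstacle, I expect, is step (4): ensuring that the family of functionals $\S\mapsto\E[e^{-\mu^{\sigma\S}_{\gamma h}(D)}\1_{\S(D)\ne0}]$ one can access this way is rich enough to determine the law of $\S$ as a random measure, rather than merely the law of its total mass or of a few linear functionals. This requires care about what "spectral measure localized to a subdomain" means at the discrete level — one should use that the spectral sample of $f=\1_{\text{all }Q_i\text{ crossed}}$ restricted to a region is controlled by, and in the scaling limit governed by, the spectral samples of crossing events of sub-quads, as in the multi-quad spectral-sample bounds proved in Appendix~\ref{app:spectralsample} — together with the fact that a finite random measure is determined by its Laplace transform $\sigma\mapsto\E[e^{-\int g\,d\S}]$ over a measure-determining class of nonnegative test functions $g$, which here we realize (up to the harmless factor $\1_{\S(D)\ne 0}$) through the LQG functionals with varying base measure. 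A secondary, more technical point is justifying the a.s.\ convergence $\mu^{\S_\eta}_{\gamma h}(D)\to\mu^{\S}_{\gamma h}(D)$ uniformly enough to pass to the limit in the presence of the indicator $\1_{\S_\eta(D)\ne 0}$; this is where Proposition~\ref{p.convergence_of_LQG} and the no-escape-of-mass estimate do the work.
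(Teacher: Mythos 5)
Your proposal correctly identifies the skeleton of the argument (tightness plus identification of the Laplace functional of subsequential limits via a covariance formula for a dynamical percolation) and correctly flags where the difficulty lies — namely, that $\E[e^{-t\,\mu^{\S}_{\gamma h}(D)}\1_{\S(D)\neq 0}]$ alone only determines the law of the total mass $\S(D)$, not of $\S$ as a random measure. But the repair you sketch for this gap is either wrong or not closed, and it misses the one clean trick that makes the paper's proof work.

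The paper fixes the Boolean function $f$ (hence fixes $\S_\eta$) and instead varies the \emph{driving measure} of the dynamical percolation: for any bounded continuous $\phi:D\to\R_+$, one runs $\omega^\phi_\eta(\cdot)$, the dynamical percolation driven by $\phi\cdot\mathrm{Leb}$ (this is allowed by Definition \ref{d.dp}). Lemma \ref{l.covariance} then gives, with no $h$ and no expectation over a random environment,
\[
\Cov\bigl[f(\omega^\phi_\eta(0)),f(\omega^\phi_\eta(1))\bigr]=\E\bigl[e^{-(\phi\S_\eta)(D)}\,\1_{\S_\eta(D)\neq 0}\bigr]=\E\bigl[e^{-\int\phi\,d\S_\eta}\,\1_{\S_\eta(D)\neq 0}\bigr].
\]
Since $\phi\cdot\mathrm{Leb}$ is a smooth deterministic measure, the convergence of $\omega^\phi_\eta(\cdot)$ to a limit $\omega^\phi_\infty(\cdot)$ follows from the same microscopic stability argument as in Section \ref{s.3} (there is no LQG thick-point issue at all), so the left-hand side converges. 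Passing to a subsequential limit $\S$ of $\S_\eta$ (using the nontriviality lower bound from Theorem \ref{t. spectral tightness} to carry the indicator along), one gets that $\E[e^{-\int\phi\,d\S}\,\1_{\S(D)\neq 0}]$ is subsequence-independent for \emph{every} bounded continuous $\phi\geq 0$. This is exactly the Laplace functional, and Kallenberg's uniqueness theorem (Theorem \ref{thm:kallenberg}) then determines the law of $\S$.

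Your proposed fix — ``run the same argument with the quads $Q_1,\dots,Q_k$ replaced by finite families of quads confined to prescribed open subregions'' — does not work, because changing the quads changes $f$ and hence changes the spectral measure; you would be learning about a different random measure, not about localizations of $\S$. Your parenthetical ``equivalently, test against base measures $\sigma$ supported in those regions'' is the right instinct, but it is not equivalent to changing quads, and you never connect it to a quantity whose $\eta\to0$ limit is controlled: you continue to phrase everything in terms of $\mu^{\sigma\S}_{\gamma h}$ with a random $h$, where the expectation over $h$ mixes up the Laplace transform. Varying $\gamma$ does not help either, since for each $(\gamma,t)$ you still only get a single scalar functional of $\S$. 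The missing ingredient is to drop the LQG environment entirely in this step and use an \emph{arbitrary continuous deterministic clock-rate density} $\phi$, which the covariance identity converts directly into the Laplace functional of $\S_\eta$.
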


\begin{remark}
In \cite{gps-fourier}, the third open problem asked about the convergence in law of the coupling $(\S_\eta, \lambda_\eta)$, where $\S_\eta$ denoted the spectral sample viewed as a random set while $\lambda_\eta$ denoted the rescaled counting measure on $\S_\eta$. The convergence in law of the random set $\S_\eta$ (for the Hausdorff topology) was proved in \cite{gps-fourier} but not for the counting measure $\lambda_\eta$. In the present paper, $\S_\eta$ in fact denotes the measure $\lambda_\eta$  from \cite{gps-fourier} (this slight abuse of notation was motivated by the fact that $\lambda_\eta$ is already used to denote the weighted counting measure on pivotal points). We thus make significant progress on this open problem by showing the weak convergence of the second coordinate $\lambda_\eta$. It remains to prove the convergence of the joint coupling to fully answer the question raised in \cite{gps-fourier}.
\end{remark}

We will prove the proposition by giving a formula for $\E[\exp(-\phi\S)]$ for all bounded continuous functions $\phi:D\to\R_+$, where $\phi\S$ is the measure assigning mass $\int_U\phi\,d\S$ to any measurable set $U\subseteq D$. This is sufficient to characterize $\S$ due to the following result, which can be found in e.g.\ \cite[Corollary 2.3]{kallenberg17}.
\begin{theorem}[\cite{kallenberg17}]
	Let $S$ be a Polish space, and let $\xi$ and $\eta$ be random Borel measures on $S$. Then $\xi\eqD\eta$ if and only if $\E[e^{-\phi\xi}]=\E[e^{-\phi\eta}]$ for all bounded continuous functions $\phi:S\to\R_+$ with bounded support.
	\label{thm:kallenberg}
\end{theorem}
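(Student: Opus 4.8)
The plan is to reduce the statement to Kallenberg's theorem (Theorem~\ref{thm:kallenberg}). The family $(\S_\eta)_\eta$ is tight for the weak topology on measures on $\ol D$: the total mass satisfies $\E[\S_\eta(\ol D)]=\alpha_4^\eta(\eta,1)^{-1}\,\E[\#\S_\eta]\cdot O(\eta^2)=O(1)$ since the first moment of the spectral sample equals that of the pivotal set \cite{kkl-boolean,gps-fourier}, and tightness of the total masses on a compact space is equivalent to tightness of the random measures. Hence it suffices to show that for every bounded continuous $\phi\colon D\to\R_+$ with bounded support the quantity $\E[\exp(-\int_D\phi\,d\S_\eta)]$ converges as $\eta\to0$; any two weak subsequential limits of $(\S_\eta)$ will then have the same Laplace functional, so by Theorem~\ref{thm:kallenberg} they coincide in law, and tightness upgrades this to convergence in law. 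Fix such a $\phi$ and set $\sigma_\phi(d^2z):=\phi(z)\,d^2z$, a finite measure on $D$.

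The first step is an exact identity. Let $\omega^\phi_\eta(\cdot)$ be the dynamical percolation on $\Tg_\eta$ driven by $\sigma_\phi$ in the sense of Definition~\ref{d.dp}, so that site $x$ updates at rate $\sigma_\phi(B^{\op h}_\eta(x))\,\alpha_4^\eta(\eta,1)^{-1}$. The Fourier computation in the proof of Lemma~\ref{l.covariance} uses nothing about the driving measure being an LQG measure and yields, for every fixed $t\ge0$,
\[
\E\!\left[f(\omega^\phi_\eta(0))\,f(\omega^\phi_\eta(t))\right]
=\sum_{S}\wh f(S)^2\exp\!\Big(-t\sum_{x\in S}\sigma_\phi(B^{\op h}_\eta(x))\,\alpha_4^\eta(\eta,1)^{-1}\Big)
=\E\!\left[\exp\!\Big(-t\!\int_D\phi\,d\S_\eta\Big)\right],
\]
where the last equality uses the identification \eqref{e. identification}, i.e.\ $\int_D\phi\,d\S_\eta=\alpha_4^\eta(\eta,1)^{-1}\sum_{x\in\S_\eta}\int_{B^{\op h}_\eta(x)}\phi$. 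Taking $t=1$ gives the clean identity $\E[\exp(-\int_D\phi\,d\S_\eta)]=\E[f(\omega^\phi_\eta(0))\,f(\omega^\phi_\eta(1))]$, so the problem is reduced to showing that the right-hand side converges.

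For this I would invoke the dynamical-percolation scaling limit for the driving measure $\sigma_\phi$. Since $\phi$ is bounded, $\sigma_\phi(B^{\op h}_r(z))\le\|\phi\|_\infty\,\pi r^2\ll\alpha_4^\eta(r,1)=r^{5/4+o(1)}$, so $\sigma_\phi$ satisfies the microscopic-stability bound on which all of Section~\ref{s.3} rests (it plays the role of $\mu_{\gamma h}$, with the stronger estimate $r^2$ in place of $r^{\beta_\gamma-\delta}$; the case $\phi\equiv1$ is exactly the Euclidean dynamical percolation of \cite{gps-near-crit}). Consequently Theorem~\ref{thm2} and Proposition~\ref{pr.stab}, together with the pivotal-measure convergence of Theorem~\ref{thm:gps-dp-main}, apply verbatim with $\sigma_\phi$ in place of $\mu_{\gamma h}$, so $\omega^\phi_\eta(\cdot)$ converges in law for the Skorokhod topology on $\HH$; in particular its finite-dimensional marginals converge, so $(\omega^\phi_\eta(0),\omega^\phi_\eta(1))\to(\omega^\phi_\infty(0),\omega^\phi_\infty(1))$ in law, where $\omega^\phi_\infty(0)\eqD\omega^\phi_\infty(1)\eqD\omega_\infty$ by stationarity of dynamical percolation. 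Since $\omega_\infty$ is a.s.\ a continuity point of every crossing event $\{Q_i\in\cdot\}$, and hence of $f$ (a standard property of the percolation scaling limit, see \cite{ss-planar-perc,cn-scalinglimit}), the bounded functional $(\omega,\omega')\mapsto f(\omega)f(\omega')$ is a.s.\ continuous at $(\omega^\phi_\infty(0),\omega^\phi_\infty(1))$, and therefore
\[
\E\!\left[\exp\!\Big(-\!\int_D\phi\,d\S_\eta\Big)\right]=\E\!\left[f(\omega^\phi_\eta(0))f(\omega^\phi_\eta(1))\right]\xrightarrow[\eta\to0]{}\E\!\left[f(\omega^\phi_\infty(0))f(\omega^\phi_\infty(1))\right]=:L(\phi).
\]

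Finally, take any weak subsequential limit $\S$ of $(\S_\eta)$. Because $\ol D$ is compact and $\phi$ is bounded continuous, $\mu\mapsto\exp(-\int\phi\,d\mu)$ is a bounded continuous functional on finite Borel measures on $\ol D$, so $\E[\exp(-\int\phi\,d\S)]=L(\phi)$ for every admissible $\phi$; by Theorem~\ref{thm:kallenberg} the law of $\S$ is thereby uniquely determined, all subsequential limits agree in law, and hence $\S_\eta\to\S$ in law for the weak topology, proving the proposition. The genuine content sits in the third paragraph — that the dynamical-percolation scaling limit machinery goes through for the inhomogeneous but sub-pivotal-scale measure $\sigma_\phi$, and that $f$ is a.s.\ continuous at the scaling limit — but both are routine adaptations of \cite{gps-near-crit} and of the percolation scaling-limit literature; the remaining ingredients are the one-line Fourier identity and Kallenberg's theorem. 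The only point needing a little care is that $\phi$ is merely continuous with bounded support, so one should check that $\sigma_\phi$ is a legitimate driving measure (finite, supported in $D$) and that $\sigma_\phi(B^{\op h}_r(z))\lesssim r^2$ holds uniformly, which it does since $\phi$ is bounded.
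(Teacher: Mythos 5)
Your proposal does not actually prove the statement under review. The statement is Kallenberg's theorem itself — that the law of a random Borel measure on a Polish space is determined by its Laplace functional $\E[e^{-\phi\xi}]$ over bounded continuous $\phi\geq 0$ with bounded support — and your argument explicitly invokes this result as a black box ("by Theorem~\ref{thm:kallenberg} the law of $\S$ is thereby uniquely determined"), so as a proof of the theorem it is circular; what you have written is instead a proof of Proposition~\ref{prop:spectralsample}, which is the application made of the theorem in the paper. Note that the paper does not prove Theorem~\ref{thm:kallenberg} either: it is quoted from \cite[Corollary 2.3]{kallenberg17}. A genuine proof would have to show that equality of the Laplace functionals forces equality of the finite-dimensional distributions $(\xi(B_1),\dots,\xi(B_n))$ for bounded Borel sets $B_i$ — e.g.\ by approximating linear combinations of indicators by bounded continuous functions, passing to the limit in the Laplace transforms, and invoking uniqueness of multivariate Laplace transforms together with the fact that the evaluation maps generate the $\sigma$-field on the space of measures — none of which appears in your write-up.

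For what it is worth, the content you did supply is essentially the paper's own proof of Proposition~\ref{prop:spectralsample}: the same dynamical percolation $\omega^\phi_\eta(\cdot)$ driven by $\phi\,d^2z$, the same Fourier identity from Lemma~\ref{l.covariance}, and the same appeal to the Section~\ref{s.3} machinery (which applies since $\sigma_\phi(B^{\op h}_r(z))\lesssim r^2\ll\alpha_4^\eta(r,1)$). Your one genuine variation — working with $\E[f(\omega^\phi_\eta(0))f(\omega^\phi_\eta(1))]=\E[\exp(-\int\phi\,d\S_\eta)]$ rather than the covariance — is legitimate and slightly leaner: since $\mu\mapsto e^{-\int\phi\,d\mu}$ is bounded and weakly continuous, you avoid the indicator $\1_{\S_\eta(D)\neq 0}$ and hence the lower bound of Theorem~\ref{t. spectral tightness} that the paper uses to get joint convergence of $(\S_\eta,\1_{\S_\eta(D)\neq 0})$. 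But this bears on the proposition, not on the theorem you were asked to prove.
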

\begin{proof}[Proof of Proposition \ref{prop:spectralsample}]
	Let $\omega^\phi_\eta(\cdot)$ denote the dynamical percolation on $\Tg_\eta$ driven by the measure which has density $\phi$ relative to Lebesgue area measure
	(see Definition \ref{d.dp}). 
	By Lemma \ref{l.covariance},
	\eqb
	\Cov\left[f(\omega^\phi_\eta(0)),f(\omega^\phi_\eta(1))\right]=
	\E\left[ e^{-(\phi\S_\eta)(D)} \1_{\S_\eta(D)\neq 0}\right]\,.
	\label{eq3}
	\eqe
	Proceeding just as in Section \ref{s.3}, $\omega^\phi_\eta(\cdot)$ converges in law in Skorokhod space to a process $\omega^\phi_\infty(\cdot)$, and we can find a coupling of $\omega^\phi_\eta(\cdot)$ and $\omega^\phi_\infty(\cdot)$ satisfying the properties (i)-(iv) given below Theorem \ref{thm2}. In particular, (i) implies that 
	the left side of \eqref{eq3} converges to $\Cov\left[f(\omega^\phi_\infty(0)),f(\omega^\phi_\infty(1))\right]$ as $\eta\rta 0$.
	
	Since $\limsup_{\eta\to 0}\E[\S_\eta(D)]<\infty$ (see for example the proof of Theorem 10.4 in \cite{gps-fourier}), the measures $\S_\eta$ converge subsequentially in law as $\eta\rta 0$. Let $\S$ denote a subsequential limit. The lower bound in Theorem \ref{t. spectral tightness} gives that $(\S_\eta,\1_{\S_\eta(D)\neq 0})$ converges in law along the considered subsequence to $(\S,\1_{\S(D)\neq 0})$. By the bounded convergence theorem, the right side of \eqref{eq3} converges to $\sup_{\eta\in(0,1]}\E\left[ e^{-(\phi\S)(D)} \1_{\S(D)\neq 0}\right]$, so
	\eqbn
	\Cov\left[f(\omega^\phi_\infty(0)),f(\omega^\phi_\infty(1))\right]=
	\E\left[ e^{-(\phi\S)(D)} \1_{\S(D)\neq 0}\right]\,.
	\eqen
	By Theorem \ref{thm:kallenberg}, this is sufficient to uniquely identify the law of $\S$. 
\end{proof}
An alternative proof of Proposition \ref{prop:spectralsample} can be obtained by using \cite[Theorem 4.3]{kallenberg97}, which says that a random vector $X$ with values in $\R_+^d$ for some $d\in\N$ is uniquely characterized by $\E[\exp(-X\cdot s)]$ for all $s\in\R_+^d$. This result can be used to find the joint law of $\S(U)$ for all $U$ in an arbitrary finite set, which is sufficient to identify the law of $\S$.

\section{Convergence of the LDP: indirect microscopic stability}\label{s.ConvLDP}
In this section we prove the convergence of the dLDP in the case when $\gamma\in[2-\sqrt{5/2},\sqrt{3/2})$. It is tempting to use the same proof as in the case $\gamma\in(0,2-\sqrt{5/2})$. The main problem is that, in the notation of Lemma {l.main}, a.s.\ there exist points $z$ such that the associated annuli $A_n$ satisfy $\sum_{n} \mu_{\gamma h} (A_n)\alpha^\eta_4(r_n,1)^{-1} =\infty$. This implies that Proposition \ref{pr.stab} cannot be proved using the same method. We circumvent this problem using the modified LDP. 

Recall the definition of $\wt \omega_\eta^{C,\gamma}(\cdot)$ as the dynamical percolation (Definition \ref{d.dp}) driven by the measure $\wt \mu_{\gamma h}^{C}(\cdot):=\mu_{\gamma h}(\cdot \cap \M_C)$, where $\M_{C}$ is given by \eqref{e.MC intro}.
This additional cut-off is useful, as Lemma \ref{l.bound measure MC} implies that $\sum_{n} \mu_{\gamma h} (A_n\cap \M_C)\alpha^\eta_4(r_n,1)^{-1} <\infty$ a.s.

The section is organized as follows. We start by observing that the modified LDP converges in the Skorokhod topology for any fixed $C>0$ to a limit $\wt \omega_\infty^{C,\gamma}(\cdot)$. Then, in order to show that 
$\wt \omega_\infty^{C,\gamma}(\cdot)$ converges when $C\rta\infty$ to a limiting process $\omega_\infty^{\gamma}(\cdot)$, we control via a coupling argument the amount of times a given quad changes from being crossed to not being crossed and vice versa. Once the convergence as $C\to \infty$ is achieved, we show how this implies that the finite-dimensional laws of $\omega^\ga_\eta(\cdot)$ converge. To show that we have stronger convergence, we introduce a topology on c\`adl\`ag processes in the quad-crossing space (namely, the $L^1$ topology in Theorem \ref{thm1} and Definition \ref{def:L1}) for which $\omega^\ga_\eta(\cdot)$ converges. We finish by showing that conditionally on $h$, $\omega^\ga_\infty(\cdot)$ is a Markov process.

\subsection{Convergence of the modified LDP}
The following proposition follows from the same proof as the convergence of the LDP in the case $\gamma\in(0,2-\sqrt{5/2})$.
\begin{prop}\label{p.Skorokhod mLDP}
For any  $0\leq \gamma < \sqrt{3/2}$, the dynamics $\wt \omega_\eta^{C,\gamma}(\cdot)$ converge in law to a limiting dynamic $\wt \omega_\infty^{C,\gamma}(\cdot)$ in the Skorokhod topology.
\end{prop}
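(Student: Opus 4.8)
The plan is to run the argument of Section \ref{s.3} verbatim with the measure $\wt\mu_{\gamma h}^C$ in place of $\mu_{\gamma h}$. The only place in that argument where the regularity of the driving measure is used is Lemma \ref{prop7}, which controls $\mu_{\gamma h}(B^{\op h}_r(z))$ uniformly in $z$ and $r$: it guarantees that $\mu_{\gamma h}(B^{\op h}_r(z))\leq C\alpha^\eta_4(r,1)r^\delta$ a.s. This was available only for $\gamma<2-\sqrt{5/2}$. For the full range $\gamma<\sqrt{3/2}$ its role is played by Lemma \ref{l.bound measure MC}, which gives exactly the same type of bound, $\wt\mu_{\gamma h}^C(B^{\op h}_r(z))=\mu_{\gamma h}(B^{\op h}_r(z)\cap\M_C)\leq CK\alpha^\eta_4(r,1)r^\varrho$, with the deterministic exponent $\varrho>0$ from \eqref{e.MC intro}. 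So first I would fix $C>0$ and set $\sigma=\wt\mu_{\gamma h}^C$, and observe that $\omega_\eta^{C,\gamma}(\cdot)=\omega_\eta^{\sigma}(\cdot)$ is a dynamical percolation driven by $\sigma$ in the sense of Definition \ref{d.dp}.

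Next I would reproduce the two-step scheme of Section \ref{s.3}. Step one: introduce the cut-off dynamics $\wt\omega_\eta^{C,\eps,\gamma}(\cdot)$ in which only $\eps$-important pivotal points are updated, with updates given by a PPP of intensity $\mu^{\lambda^\eps_\eta}_{\gamma h}\,\big|_{\M_C}\times\op{Leb}\times\op{Uniform}$; here one uses Theorem \ref{thm:gps-dp-main} for convergence of $\lambda^\eps_\eta$ and Proposition \ref{p.convergence_of_LQG} for convergence of the associated LQG measures restricted to $\M_C$ (this is legitimate since $\M_C$ is a deterministic function of $h$ and $\wt\mu^C_{\gamma h}(\cdot)=\mu_{\gamma h}(\cdot\cap\M_C)$ is an LQG measure with base measure $\op{Leb}|_{\M_C}$). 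The network machinery of \cite{gps-near-crit} then yields, exactly as in Theorem \ref{thm2}, a c\`adl\`ag limit $\wt\omega_\infty^{C,\eps,\gamma}(\cdot)$ with convergence in $(\Sk,d_{\Sk})$, quenched in $h$. Step two: prove the stability estimate analogous to Proposition \ref{pr.stab}, namely that $\E[d_{\Sk_T}(\wt\omega_\eta^{C,\gamma}(\cdot),\wt\omega_\eta^{C,\eps,\gamma}(\cdot))\mid h]\leq\psi_{T,h,C}(\eps)$ uniformly in $0<\eta<\eps$, with $\psi_{T,h,C}(0)=0$. The proofs of Lemmas \ref{l.main}, \ref{l.change}, \ref{l.change2}, Proposition \ref{p.almostCross} and Lemma \ref{prop8} go through word for word: wherever $\mu_{\gamma h}(A_n)$ appears one substitutes $\mu_{\gamma h}(A_n\cap\M_C)=\wt\mu^C_{\gamma h}(A_n)$, and the two inputs needed — (a) conditional independence, given $h$, of the events $\{x_i\in X\}$ for distinct sites $x_i$, and (b) a summable bound $T(2C_2)^3\sum_{n=i+1}^{j-1}\wt\mu^C_{\gamma h}(A_n)/\alpha^\eta_4(r_n,1)\leq 1$ for $N-j$ large — hold because (a) is automatic for dynamical percolation driven by any measure and (b) follows from Lemma \ref{l.bound measure MC} since $\sum_n CK r_n^\varrho<\infty$. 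The constant $M=M(T)$ in \eqref{e.near2} now depends on $C$ as well, but that is harmless. Combining step one and step two as in \cite[Section 9]{gps-near-crit} gives convergence of $\wt\omega_\eta^{C,\gamma}(\cdot)$ in law in $(\Sk,d_{\Sk})$ to a limit $\wt\omega_\infty^{C,\gamma}(\cdot)$, and since one works throughout conditionally on $h$, the limit is a Markov process in $\HH$ conditionally on $h$.

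The one genuine subtlety — though it is not so much an obstacle as a bookkeeping point — is that the constants $C_1(T,h)$, $M(T)$ and the modulus $\psi$ now depend on the truncation level $C$, so this proposition only asserts convergence for each \emph{fixed} $C$; the delicate uniformity in $C$ (needed to send $C\to\infty$) is deferred to the subsequent subsections of Section \ref{s.ConvLDP}. For the present statement one simply notes that all the estimates above are finite for each fixed $C$ and $h$ (a.s.), which is exactly what Lemma \ref{l.bound measure MC} provides uniformly in $C$, so there is nothing further to check. The proof is therefore complete modulo the routine transcription of \cite{gps-near-crit}.
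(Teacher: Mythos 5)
Your overall scheme matches the paper's, and your treatment of the stability step (replacing Lemma \ref{prop7} by Lemma \ref{l.bound measure MC} in \eqref{e.near}--\eqref{e.near3}) is exactly the paper's second modification. But the first modification is handled incorrectly: you invoke Proposition \ref{p.convergence_of_LQG} to argue that $\mu^{\lambda^\eps_\eta}_{\gamma h}(\,\cdot\,\cap\M_C)\to\mu^{\lambda^\eps}_{\gamma h}(\,\cdot\,\cap\M_C)$, and the parenthetical justification — ``this is legitimate since $\M_C$ is a deterministic function of $h$'' — is backwards. The hypothesis of Proposition \ref{p.convergence_of_LQG} requires the set $\Os$ to be \emph{deterministic}, i.e.\ not depending on $h$ (and its proof, which computes $\E[(J_\epsilon^n)^2]$ as a double integral of $e^{\gamma^2\E[h_\eps(x)h_\eps(y)]}\,\widetilde\P(\cdots)$ against $\sigma^n\times\sigma^n$, implicitly exploits the independence of the base measure from $h$). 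Since $\M_C$ is built out of $\mu_{\gamma h}$ itself, it is precisely \emph{not} independent of $h$, and this is the obstruction you need to overcome, not a license to skip it. Even conditionally on $h$, weak convergence of $\mu^{\lambda^\eps_\eta}_{\gamma h}$ does not by itself give convergence of the masses of the set $\M_C$ unless one also controls $\mu(\partial\M_C)$.

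The paper fixes this with a dedicated statement, Proposition \ref{p.convergence_of_modified_LQG}, whose proof sandwiches $\M_C$ between the open approximations $\M_C^r$, uses Lemma \ref{l.boundary 0} (which shows $\mu_{\gamma h}^{\sigma}(\partial\M_C^r)=0$ a.s., itself relying on Lemma \ref{l.continuity mes}) to pass the weak convergence through the restricted set, and then lets $r\downarrow 0$ by dominated convergence. You should cite that proposition in place of Proposition \ref{p.convergence_of_LQG}. With that substitution, the rest of your argument is correct and tracks the paper's proof.
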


\begin{proof}
The proof is the same as in the case $\gamma\in(0,2-\sqrt{5/2})$. There are essentially two differences. The first is that to prove the equivalent of Theorem \ref{thm2}, we need to rely on Proposition \ref{p.convergence_of_modified_LQG} instead of Proposition \ref{p.convergence_of_LQG}.

The second difference is that one needs to change $\mu_{\gamma h}$ to $\wt \mu_{\gamma h}^{C}$ in \eqref{e.near}, \eqref{e.near2}, and \eqref{e.near3}, and then use  Lemma \ref{l.bound measure MC} to conclude the equivalent result of \eqref{e.near2}.
\end{proof}

 For $C\in\N\cup\{\infty \}$ and $\eps\in[0,1]$ dyadic we let $\{(x^{C,\eps}_i, t^{C,\eps}_i,\xi^{C,\eps}_i)\,:\,i\in\N \}$ denote the PPP on $D\times\R_+\times\{-1,1 \}$ with intensity $\mu_{\alpha h}^{\lambda^\eps}(B^{\op h}_\eta(\cdot) \cap \M_C)\times \text{Leb}\times\op{Uniform}$ which is driving the process $\wt \omega^{C,\eps,\gamma}_\infty(\cdot)$, while $\{(x^{C,\eps,\eta}_i, t^{C,\eps,\eta}_i,\xi^{C,\eps,\eta}_i)\,:\,i\in\N \}$ is the PPP with intensity $\mu_{\alpha h}^{\lambda^\eps_\eta}(B^{\op h}_\eta(\cdot) \cap \M_C)\times \text{Leb}\times\op{Uniform}$ which is driving the process $\wt \omega^{C,\eps,\gamma}_\infty(\cdot)$. Note that when $\eps=0$ we may skip the superscripts $\eps$, while when $C=\infty$ and $\eps=0$ we may skip both these superscripts. 
\begin{remark}
One can describe a coupling where the convergence of these processes and the processes in Proposition \ref{p.Skorokhod mLDP} is a.s. The coupling is obtained similarly to the one described in Section \ref{s.cut-offeps} and by coupling the PPP for different $C$ in the natural way.
\begin{itemize}
	\item[(i)] For any dyadic $\epsilon\in(0,1]$, the measures $\lambda^\epsilon_\eta$ and $\mu^{\lambda^\eps_\eta}_{\gamma h}$ converge to measures $\lambda^\epsilon$ and $\mu^{\lambda^\eps}_{\gamma h}$, respectively, supported on points that are $\epsilon$-important for $\omega^\ga_\infty(0)$;
	\item[(ii)] for any $T,C\in \N$ and dyadic $\eps\in(0,1]$, the finite set $\{(x^{C,\eps,\eta}_i,t^{C,\eps,\eta}_i,\xi^{C,\eps,\eta}_i)\,:\,i\in\N \}$ converges a.s.\ to a finite set $\{(x_i^{C,\eps},t_i^{C,\eps},\xi_i^{C,\eps})\,:\,i\in\N \}$; 
	\item[(iii)] the sets $\{(x^{C,\eps,\eta}_i,t^{C,\eps,\eta}_i,\xi^{C,\eps,\eta}_i)\,:\,i\in\N \}$ and $\{(x_i^{C,\eps},t_i^{C,\eps},\xi_i^{C,\eps})\,:\,i\in\N \}$ are increasing as $C$ increases and $\eps$ decreases;
	\item[(iv)] for any $C\in\N$ we have  $\wt \omega_\infty^{C,\eps,\gamma}(\cdot) \to \wt \omega_\infty^{C,\gamma}(\cdot)$ in the Skorokhod topology as $\eps\rta 0$;
	\item[(v)] for any $C\in\N$ we have  $\wt \omega_\eta^{C,\gamma}(\cdot) \to \wt \omega_\infty^{C,\gamma}(\cdot)$ in the Skorokhod topology as $\eta\rta 0$;
	\item[(vi)] for any $q\in \Q_+$ (recall that $\Q_+=\Q\cap\R_+$) and $C\in\N$, we have $\wt \omega^{C,\gamma}_\eta(q)\to \wt \omega^{C,\gamma}_\infty(q)$ a.s.\ for $d_\HH^{\op{mod}}$;
	\item[(vii)] the field $h$ is the same for all $\eta$, $\eps$, and $C$.
\end{itemize}
\label{r.coupling}
\end{remark}

\subsection{Control of the jumps for LDP}
Recall that $\QQ^\infty$ is the set of all quads whose boundaries are contained in $(2^{-k}\Z^2)\cap D$ (see above \eqref{e.distance}) for some $k\in \N$. We are interested in the study of the jumps of $\omega_\eta^\ga(\cdot)$.
\begin{prop}\label{p.tightness}
	Let $0\leq T_1<T_2$ and $\Quad \in \QQ^\infty$, and let $\omega_{\eta}^{\sigma}(\cdot)$ be a dynamical percolation driven by a deterministic measure $\sigma$. Then 
	\begin{equation}\label{e.expected value crossings}
	\begin{split}
	&\Eb{\#\{t\in [T_1,T_2]: \Quad \in \omega^\sigma_\eta(t^-)\Delta \omega^\sigma_\eta(t^+) \}}\\
	&\qquad=
	(T_2-T_1)\sum_{x\in \Tg_\eta}\sigma(B^{\op h}_\eta(x))\alpha^\eta_4(\eta,1)^{-1}\P[x \text{ is pivotal for the crossing of } \Quad]\,.
	\end{split}
	\end{equation} 
	Furthermore, in the case of Liouville dynamical percolation for any $\gamma\in(0,2)$, for all $\Quad\in \QQ^\infty$ there a.s.\ exists a random constant $C(\Quad)<\infty$ such that
	\begin{equation}\label{e.expected value crossings 2}
	\sup_{\eta\in(0,1]}\E\left[\#\{t\in[T_1,T_2]\,:\, \Quad \in \omega^\gamma_\eta(t^-)\Delta \omega^\gamma_\eta(t^+) \}\mid h\right] \leq (T_2-T_1)C(\Quad)\,.
	\end{equation}
\end{prop}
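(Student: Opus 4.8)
The plan is to prove the two statements separately, the first by a direct computation with the Poisson clocks and the second by combining the first with the regularity of the LQG measure.

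For the identity \eqref{e.expected value crossings}, the key observation is that a jump of $t\mapsto \1_{\Quad\in\omega^\sigma_\eta(t)}$ at time $t$ occurs precisely when the clock of some site $x$ rings at time $t$ and the re-coloring actually flips the crossing status, which happens exactly when $x$ is pivotal for the crossing of $\Quad$ in the configuration $\omega^\sigma_\eta(t^-)$ and the new color differs from the old one. First I would argue that with probability one no two clocks ring simultaneously, so I can sum the contributions over sites. For each fixed site $x$, the times at which its clock rings form a Poisson process of rate $\sigma(B^{\op h}_\eta(x))\alpha^\eta_4(\eta,1)^{-1}$ on $[T_1,T_2]$, independent of everything else; by the thinning/Mecke formula for Poisson processes, the expected number of rings of the clock at $x$ during $[T_1,T_2]$ that change the crossing status equals the rate times $(T_2-T_1)$ times $\P[x\text{ is pivotal for }\Quad\text{ at such a time}]\cdot\P[\text{the resampled color flips}]$. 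Here I use that at a clock ring the configuration $\omega^\sigma_\eta(t^-)$ is distributed according to the stationary measure (uniform percolation), so $\P[x\text{ is pivotal}]$ is the stationary pivotality probability, and the resampling flips the color with probability $1/2$; but a flip of the color of a pivotal site changes the crossing status with probability one given that $x$ is pivotal, and conversely — so in fact the event ``$x$ is pivotal \emph{and} the color flips'' has probability $\tfrac12\P[x\text{ pivotal}]$, while the event ``the crossing status changes'' is exactly this event, and one checks that summing $\tfrac12\P[x\text{ pivotal}]$ over the two possible new colors (equivalently, counting a status change as contributing once) recovers the stated formula without the factor $\tfrac12$. Summing over $x\in\Tg_\eta$ and using linearity of expectation gives \eqref{e.expected value crossings}. (I should double-check the combinatorial bookkeeping of the factor $1/2$ against the convention that each status change is counted once; this is the one slightly delicate point in an otherwise routine computation.)

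For \eqref{e.expected value crossings 2}, I would apply \eqref{e.expected value crossings} with $\sigma=\mu_{\gamma h}$ conditionally on $h$, so that
\[
\E\left[\#\{t\in[T_1,T_2]: \Quad\in\omega^\gamma_\eta(t^-)\Delta\omega^\gamma_\eta(t^+)\}\mid h\right]
=(T_2-T_1)\sum_{x\in\Tg_\eta}\frac{\mu_{\gamma h}(B^{\op h}_\eta(x))}{\alpha^\eta_4(\eta,1)}\,\P[x\text{ pivotal for }\Quad].
\]
The sum over $x$ of $\P[x\text{ pivotal for }\Quad]$ is comparable to $\alpha^\eta_4(\eta,1)^{-1}$ only after multiplying each term by the area $|B^{\op h}_\eta(x)|\asymp\eta^2$; more precisely, by the quasi-multiplicativity and one-arm/four-arm estimates (as in \cite{gps-pivotal,gps-near-crit}), for $x$ at macroscopic distance from $\partial\Quad$ one has $\P[x\text{ pivotal for }\Quad]\asymp\alpha^\eta_4(\eta,1)$ up to a bounded factor depending on the geometry of $\Quad$, and the contribution of sites near $\partial\Quad$ is negligible — this is exactly the statement that $\E[\lambda^\eta_Q(D)]=\sum_x\P[x\text{ pivotal}]$ stays bounded, which is classical. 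Then the dangerous factor is $\mu_{\gamma h}(B^{\op h}_\eta(x))/\alpha^\eta_4(\eta,1)$. For $\gamma<2-\sqrt{5/2}$, Lemma \ref{prop7} gives $\mu_{\gamma h}(B^{\op h}_\eta(x))\le C(h)\alpha^\eta_4(\eta,1)\eta^\delta$ uniformly in $x$, so each term is $\le C(h)\eta^\delta$ times $\P[x\text{ pivotal}]$, and the whole sum is $\le C(h)\eta^\delta\cdot O_\Quad(1)\to 0$; in particular it is bounded uniformly in $\eta$. For general $\gamma\in(0,2)$ the uniform bound $\mu_{\gamma h}(B^{\op h}_\eta(x))\ll\alpha^\eta_4(\eta,1)$ fails, so I instead bound the sum by $\alpha^\eta_4(\eta,1)^{-1}\sum_x\mu_{\gamma h}(B^{\op h}_\eta(x))\sup_x\P[x\text{ pivotal}]$ — but $\sup_x\P[x\text{ pivotal}]\asymp\alpha^\eta_4(\eta,1)$ and $\sum_x\mu_{\gamma h}(B^{\op h}_\eta(x))\le K\mu_{\gamma h}(D^+)$ where $D^+$ is a fixed neighborhood of $D$ (each point is covered by boundedly many hexagons), which is an a.s.\ finite random constant depending on $h$. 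This gives $\sup_{\eta\in(0,1]}\E[\cdots\mid h]\le (T_2-T_1)\cdot O_\Quad(1)\cdot\mu_{\gamma h}(D^+)=:(T_2-T_1)C(\Quad)<\infty$ a.s.

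The main obstacle I anticipate is the second step for \emph{general} $\gamma$: one must be careful that $\sup_x\P[x\text{ pivotal for }\Quad]$ is genuinely $O_\Quad(\alpha^\eta_4(\eta,1))$ uniformly in $\eta$ and in the position of $x$ (including $x$ near $\partial\Quad$, where a one-arm half-plane estimate must be invoked to kill the singularity), and that the hexagons $B^{\op h}_\eta(x)$ have bounded overlap so that $\sum_x\mu_{\gamma h}(B^{\op h}_\eta(x))$ is controlled by $\mu_{\gamma h}$ of a fixed compact set rather than growing in $\eta$. Both points are standard consequences of quasi-multiplicativity of arm events and of the definition of the hexagonal tiling, but they are where the real content lies; the Poisson computation in the first part is essentially bookkeeping.
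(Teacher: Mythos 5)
Your Poisson computation for \eqref{e.expected value crossings} is essentially the paper's argument: condition on stationarity, decompose the clock rings site by site, and note that a ring changes the crossing status if and only if the site is pivotal at that moment. Your hedge about the factor $1/2$ is a fair thing to flag (the paper's own phrasing, ``each time a site $x$ flips,'' is similarly loose about whether a ``flip'' is a clock ring or an actual color change), but it is a harmless multiplicative constant.

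The second part, however, has a genuine gap. You bound the sum by pulling out $\sup_x \P[x\text{ pivotal for } \Quad]$ and assert that this supremum is $\asymp \alpha^\eta_4(\eta,1)$, to be justified near $\partial\Quad$ by ``a one-arm half-plane estimate.'' This is false for quads in $\QQ^\infty$, and the paper says so explicitly in a footnote. A quad in $\QQ^\infty$ is an arbitrary polygon on a dyadic grid, so it can have $270$-degree corners. For a site $v$ at distance $r$ from such a corner, pivotality requires a four-arm event to scale $r$ and then a two-arm event in the $270$-degree wedge from scale $r$ to $1$; the wedge two-arm exponent is $2/3$, so $\P[v \text{ pivotal}] = \Theta\big(\eta^{5/4+o(1)} r^{-7/12+o(1)}\big)$, and hence $\alpha^\eta_4(\eta,1)^{-1}\P[v\text{ pivotal}] = \Theta(r^{-7/12+o(1)})$ diverges as $r \to 0$. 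So the factor you want to pull out of the sum is not $O_\Quad(1)$; it is a genuinely unbounded function of position, and your resulting bound $\alpha^\eta_4(\eta,1)^{-1}\mu_{\gamma h}(D^+)\sup_x\P[x\text{ pivotal}]$ would be infinite. (Your intuition is correct for rectangular quads, where the corners are $90$-degree and the corresponding ratio decays like $r^{3/4}$, but the statement covers all of $\QQ^\infty$.)

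The correct route is not to take a supremum but to keep the position dependence: dominate $\alpha^\eta_4(\eta,1)^{-1}\P[x(z)\text{ pivotal}]$ by a fixed, $\eta$-independent function $G(z)$ that is bounded away from $\partial\Quad$ and has only integrable singularities at the boundary ($r^{-7/12+o(1)}$ near $270$-degree corners, better elsewhere). Then for every $\eta$ the sum in question is $\le \int G\, d\mu_{\gamma h}$, and this single random quantity is a.s.\ finite because $\E[\int G\, d\mu_{\gamma h}] \asymp \int G\, d(\mathrm{Leb}) < \infty$ (the paper's remark $\E[\sigma(B^{\op h}_\eta(x))] = O(\eta^2)$ is doing exactly this). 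Setting $C(\Quad)$ proportional to $\int G\, d\mu_{\gamma h}$ and using that $\QQ^\infty$ is countable gives the quenched a.s.\ bound. Your idea of bounding $\sum_x \mu_{\gamma h}(B^{\op h}_\eta(x))$ by $K\mu_{\gamma h}(D^+)$ via bounded overlap is fine and is part of this argument; what is missing is that the pivotality ratio must stay inside the $\mu_{\gamma h}$-integral rather than be taken out as a sup.
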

\begin{proof}

The first claim is classical and a direct consequence of the invariance of the Bernoulli product measure under the dynamics $t \mapsto \omega_\eta^\sigma(t)$. The identity \eqref{e.expected value crossings} follows from the observation that each time a site $x$ flips according to a Poisson clock of rate $\sigma(B^{\op h}_\eta(x))\alpha^\eta_4(\eta,1)^{-1}$, it sees the invariant uniform measure and the flip adds a switching time if and only if $x$ is a pivotal point of $Q$ at that moment. 

The second claim follows from the first claim, $\E[\sigma(B^{\op h}_\eta(x))]=O(\eta^{2})$, and analysis of the percolation arm exponents. For the latter property, note that for any quad $\Quad$, we have uniformly in $x\in \Quad$ bounded away from the quad boundary and $\eta>0$, 
\eqb
\alpha^\eta_4(\eta,1)^{-1} \P[x \text{ is pivotal for the crossing of } \Quad] \leq  C_{\Quad}.
\label{eq14}
\eqe
To control the boundary effect, we consider three-arm and two-arm exponents near boundaries and corners of $\Quad$. See for example \cite{gps-fourier} or \cite{gs-book} for such analysis of boundaries and 90 degree corners. For the 270 degree corners one can proceed similarly: By conformal invariance of SLE$_6$ one can argue that the arm exponents for a 270 degree corner are $2/3$ times those for a half-plane, and then one argues that the probability of having a pivotal point at a certain location is bounded above by the probability of crossing certain annuli. 

We will provide some more details to the argument in the previous paragraph by explaining how to bound the probability that a site $v$ at $(r,r)$ for $r\ll 1$ is pivotal, assuming the first, second, and fourth quadrants restricted to (say) $[0,1]^2$ are contained in the quad, while the third quadrant does not intersect the quad. Furthermore, viewing $Q$ as a function  $Q:[0,1]^2\to\C$, assume that $Q((0,0))=0$. In order for $v$ to be pivotal there must be a four-arm event from $v$ to distance $r$, and there must be a two-arm event from distance $r$ to distance $1$; note that we have a two-arm event instead of a four-arm event in the second statement since two of the arms from $v$ to the quad boundaries may have arm length only of order $r$. The two events just described have probability $\alpha_4^\eta(\eta,r)$ and $\wt\alpha_2^\eta(\eta,r)$, respectively, up to a multiplicative constant, where $\wt\alpha_2^\eta(\cdot,\cdot)$ is the probability of a multi-color two-arm event for a 270 degree corner. The exponent for the two-arm event in the half-plane is $1$ \cite{smirnov-werner-percolation}, so $\wt\alpha_2^\eta(\eta,r)=(\eta/r)^{2/3+o(1)}$. It follows that the probability of $v$ being pivotal is $O( (\eta/r)^{5/4+o(1)}r^{2/3+o(1)})=O(\eta^{5/4+o(1)}r^{-7/12+o(1)})$.\footnote{This statement is still true with $\Theta(\cdot)$ instead of $O(\cdot)$, so \eqref{eq14} does \emph{not} hold uniformly over all $x\in Q$ since $r^{-7/12}\rta\infty$ as $r\rta 0$.} We do a similar estimate for other points near the origin. Note that for certain points (e.g.\ points of the form $(-r_1,r_2)$ with $0\ll r_2\ll r_1$) we may need to consider a three-arm event in addition to the two-arm event and the four-arm event. Combining the estimates, we get that the sum on the right side of \eqref{e.expected value crossings} is finite if we restrict to points in (say) $[-0.5,0.5]^2$.
\end{proof}

\subsection{Modified LDP is close to dLDP}
Let us start by noting that $\wt \mu_{\gamma h}^{C}(\cdot)$ is increasing in $C$. Thus, for fixed $\eta>0$ one can couple $(\wt \omega_\eta^{C,\gamma}(\cdot))_{1\leq C\leq \infty}$ in a natural way (note that $\wt\omega_{\eta}^{\infty,\gamma}(\cdot)=\omega_{\eta}^\gamma(\cdot)$). In this coupling, for a fixed $t>0$ and $C<C'$, the times before $t$ where the clock of $x\in \Tg_\eta$ rings for $\wt \omega_\eta^{C',\gamma}(\cdot)$ but not for $\wt \omega_\eta^{C,\gamma}(\cdot)$ follows a law of a PPP with rate $\mu_{\gamma h}(B^{\op h}_\eta(x)\cap(\M_{C'}\backslash \M_C))$. 
\begin{prop} \label{p.uniformicity C}
For any $\gamma\in(0,\sqrt{3/2})$, 
	take $\Quad\in \QQ^k$, $t>0$, and $C,C'\in [1,\infty]$ satisfying $C<C'$. Then there exists a function $\Upsilon_{t,Q}:[1,\infty]\to [0,1]$ such that $\lim_{C\to \infty} \Upsilon_{t,\Quad}(C)=\Upsilon_{t,\Quad}(\infty)=0$ and
	\begin{align*}
	\P\left(\Quad\in \wt \omega_\eta^{C,\gamma}(t)\Delta \wt \omega_\eta^{C',\gamma}(t)\right)\leq \Upsilon_{t,\Quad}(C)\,. 
	\end{align*}
\end{prop}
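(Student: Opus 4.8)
\textbf{Proof plan for Proposition \ref{p.uniformicity C}.}

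The plan is to compare the two dynamics through the Fourier/spectral identity of Lemma \ref{l.covariance} rather than through a direct pathwise coupling of crossing events, since the latter is difficult to control at the microscopic level. First I would reduce to the case $C' = \infty$: by the triangle inequality for the symmetric difference event it is enough to bound $\P(\Quad \in \wt\omega^{C,\gamma}_\eta(t) \Delta \omega^\gamma_\eta(t))$, so set $\Upsilon_{t,\Quad}(C)$ equal to (twice) this quantity. Next, couple $\wt\omega^{C,\gamma}_\eta(\cdot)$ and $\omega^\gamma_\eta(\cdot)$ in the monotone way described just above the statement, so that they start from the same configuration $\omega^\gamma_\eta(0)$ and the clock of $x$ rings for $\omega^\gamma_\eta$ but not for $\wt\omega^{C,\gamma}_\eta$ at rate $\mu_{\gamma h}(B^{\op h}_\eta(x)\cap(D\setminus\M_C))\alpha^\eta_4(\eta,1)^{-1}$. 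In this coupling the two configurations agree except at sites whose ``extra'' clock has rung in $[0,t]$; call $Y = Y_{\eta,C,t}$ this (random) set of sites, which is contained in $\{x : \mu_{\gamma h}(B^{\op h}_\eta(x)\cap (D\setminus\M_C)) > 0\}$ and where each such $x$ lies in $Y$ independently (given $h$) with probability $1-\exp(-t\,\mu_{\gamma h}(B^{\op h}_\eta(x)\cap (D\setminus \M_C))\alpha^\eta_4(\eta,1)^{-1})$.

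The key point is then: if $\Quad \in \wt\omega^{C,\gamma}_\eta(t)\,\Delta\,\omega^\gamma_\eta(t)$, then flipping the colors of the sites in $Y$ changes whether $\Quad$ is crossed, hence some site of $Y$ must be pivotal for (the current configuration's) crossing of $\Quad$; more robustly, along the monotone chain interpolating between the two configurations one site of $Y$ is pivotal at the moment it is flipped. Exactly as in the proof of Proposition \ref{p.tightness} (invariance of the uniform measure under resampling, so at each extra ring the site sees a fresh fair coin and a fresh critical configuration), this gives
\begin{equation*}
\P\left(\Quad\in \wt\omega^{C,\gamma}_\eta(t)\Delta \omega^\gamma_\eta(t)\,\Big|\,h\right)
\leq t\sum_{x\in\Tg_\eta} \mu_{\gamma h}\big(B^{\op h}_\eta(x)\cap(D\setminus\M_C)\big)\,\alpha^\eta_4(\eta,1)^{-1}\,\P[x\text{ pivotal for }\Quad].
\end{equation*}
Now I would split the sum according to whether $x$ is near $\partial\Quad$ or in the bulk. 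For $x$ bounded away from $\partial\Quad$, \eqref{eq14} bounds the pivotal probability by $C_\Quad$, so the bulk contribution is at most $t\,C_\Quad\,\mu_{\gamma h}(D\setminus\M_C)$, which by Lemma \ref{prop:thickpt} tends to $0$ as $C\to\infty$ (a.s., quenched in $h$; note $\varrho$ is chosen so that lemma applies, and this forces the present proof to the range $\gamma<\sqrt{3/2}$). For the boundary/corner contribution one uses the refined arm-exponent estimates recalled in the proof of Proposition \ref{p.tightness} to see that $\sum_{x\text{ near }\partial\Quad}\mu_{\gamma h}(B^{\op h}_\eta(x))\alpha^\eta_4(\eta,1)^{-1}\P[x\text{ pivotal}]$ is a.s.\ finite (this is the content of that proof), hence the tail of this sum over sites with $\mu_{\gamma h}(B^{\op h}_\eta(x)\cap(D\setminus\M_C))>0$ also tends to $0$ by dominated convergence as $C\to\infty$ — alternatively one first intersects $\M_C$ only in the bulk and handles a neighborhood of $\partial\Quad$ separately using that $\mu_{\gamma h}$ has no atoms. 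Finally, to remove the quenching and get a deterministic bound $\Upsilon_{t,\Quad}(C)$ with $\Upsilon_{t,\Quad}(C)\to 0$, take expectations over $h$ and apply dominated convergence, using that $\E[\#\{t\in[0,1]: \Quad\in\omega^\gamma_\eta(t^-)\Delta\omega^\gamma_\eta(t^+)\}\mid h]$ is a.s.\ finite and uniformly bounded in $\eta$ by an integrable random constant $C(\Quad)$ (Proposition \ref{p.tightness}, \eqref{e.expected value crossings 2}), which dominates the integrand.

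\textbf{Main obstacle.} The delicate step is the boundary/corner contribution: one needs that the weighted pivotal sum, which is \emph{not} uniformly bounded site-by-site near $\partial\Quad$ (the footnote after \eqref{eq14} stresses $r^{-7/12}\to\infty$), is nonetheless summable and that its ``restricted to $D\setminus\M_C$'' tail vanishes as $C\to\infty$. This is exactly where the arm-exponent bookkeeping of Proposition \ref{p.tightness} (three-, two-, and corner two-arm exponents) is needed, together with the fact that $\mu_{\gamma h}$ gives zero mass to $\partial\Quad$ and that $\mu_{\gamma h}(D\setminus\M_C)\to 0$; one should be slightly careful to arrange the cutoff so that the convergence is monotone in $C$ and one can invoke dominated/monotone convergence cleanly. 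Everything else is a repackaging of the invariance argument of Proposition \ref{p.tightness} and the thick-point estimate of Lemma \ref{prop:thickpt}.
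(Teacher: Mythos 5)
Your overall strategy is the right one and the final bound you write down is exactly the one the paper obtains, but your proof glosses over the single technical point that the paper's argument is built to address. You claim that, along "the monotone chain interpolating between $\wt\omega^{C,\gamma}_\eta(t)$ and $\omega^\gamma_\eta(t)$," at each extra ring "the site sees a fresh fair coin and a fresh critical configuration, exactly as in Proposition~\ref{p.tightness}." That is precisely what is \emph{not} true for the naive chain. Proposition~\ref{p.tightness}'s invariance argument requires the configuration seen at the instant of an update to be distributed as i.i.d.\ critical percolation, and the intermediate configurations of a chain obtained by starting from $\wt\omega^{C,\gamma}_\eta(t)$ and flipping, one by one, the sites of $Y$ (sites where an extra clock rang) are not of this form: an extra ring at site $x$ at time $u$ should \emph{not} be applied if $x$ was later resampled by a common ring at some $v\in(u,t]$, and ignoring this produces a process whose marginals are not uniform product measure. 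This is exactly the pitfall the paper flags ("once $\wt\omega^{C,\gamma}_\eta(t)$ is sampled, it is not correct that one can obtain $\wt\omega^{C',\gamma}_\eta(t)$ by ... updating sites forward in time ...").

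The missing ingredient is the paper's construction of the auxiliary process $\wh\omega(s)$, which conditions on the full PPP $P^{C,\gamma}_{\eta,t}$ used to build $\wt\omega^{C,\gamma}_\eta(t)$, initializes $\wh\omega(0)=\wt\omega^{C,\gamma}_\eta(t)$, and only applies an extra ring at $(x,s)$ when $\{(x,u):\,s\le u\le t\}\cap P^{C,\gamma}_{\eta,t}=\emptyset$. One then checks that $(\wh\omega(0),\wh\omega(t))\eqD(\wt\omega^{C,\gamma}_\eta(t),\wt\omega^{C',\gamma}_\eta(t))$ \emph{and} that every marginal $\wh\omega(s)$ is still i.i.d.\ percolation, so Proposition~\ref{p.tightness}'s mechanism applies after dominating the number of switches of $\wh\omega$ by the number of switches of the unrestricted process $\ol\omega$ (which is a genuine dynamical percolation started from $\wt\omega^{C,\gamma}_\eta(t)$). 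Your argument jumps directly to the formula that this construction rigorously yields. The other pieces of your plan — reducing to $C'=\infty$ by the triangle inequality for symmetric differences, the bulk/boundary split near $\partial\Quad$, using $\mu_{\gamma h}(D\setminus\M_C)\to 0$ (Lemma~\ref{prop:thickpt}) and then taking expectation over $h$ to get a deterministic $\Upsilon_{t,\Quad}$ — are all fine, and your explicit boundary treatment is a bit more detailed than the paper's terse "proceed as in Proposition~\ref{p.tightness}." But without the $\wh\omega$/$\ol\omega$ step (or an equivalent device), the invariance claim at the heart of your bound is unjustified.
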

\begin{proof}
Fix $t>0$. As $C$ gets large, 
the natural idea here is to proceed as in Proposition \ref{p.tightness} by first sampling the configuration  $\wt \omega_\eta^{C,\gamma}(t)$ and then arguing that there are very few remaining possible switches (in expectation) because going from $\wt \omega_\eta^{C,\gamma}(t)$ to $ \wt \omega_\eta^{C',\gamma}(t)$ is governed by a dynamical percolation driven by a measure $\mu_{\gamma h}(\M_{C'}\backslash \M_C)$ of very small total mass as $C\to \infty$.

This intuition is mostly correct except that once $\wt \omega_\eta^{C,\gamma}(t)$ is sampled, it is not correct that one can obtain $\wt \omega_\eta^{C',\gamma}(t)$ by starting the dynamics at time $t=0$ from $\wt \omega_\eta^{C,\gamma}(t)$ and then updating sites {\em forward in time} along $u\in [0,t]$ according to the PPP driven by $\mu_{\gamma h}(\M_{C'}\backslash \M_C)$. One way to see what might go wrong is as follows: suppose you run forward in time from the initial configuration $\wt \omega_\eta^{C,\gamma}(t)$ and a site $x$ rings at time $u$. If that same site $x$ had been already updated when constructing $\wt \omega_\eta^{C,\gamma}(t)$ at some later time $v>u$, then in order to build $\wt \omega_\eta^{C',\gamma}(t)$ one should NOT update the site $x$ at time $u$. 

To deal with this issue, we shall condition both on the final configuration $\wt \omega_\eta^{C,\gamma}(t)$ as well as the PPP $P_{\eta,t}^{C,\gamma} \subset [0,t]^\Tg$ used to define $s \mapsto \wt \omega_\eta^{C,\gamma}(s)$. Given the pair $(\wt \omega_\eta^{C,\gamma}(t), P_{\eta,t}^{C,\gamma})$ we will sample the pair $(\wt \omega_\eta^{C,\gamma}(t), \wt \omega_\eta^{C',\gamma}(t))$ using a dynamical process $s \mapsto \wh \omega(s)$ for $s\in[0,t]$ constructed as follows. 

\begin{enumerate}
\item We initialize $\wh \omega(0):= \wt \omega_\eta^{C,\gamma}(t)$. 
\item As $s\in[0,t]$ increases, we update $\wh \omega(s)$ dynamically according to Poisson clocks governed by the measure $\mu_{\gamma h}(\M_{C'}\backslash \M_C)$ as follows: when a site $x$ rings at time $s$, this site is updated if and only if that same site $x$ had not been updated along the construction of $\wt \omega_\eta^{C,\gamma}(t)$ between times $s$ and $t$ (or in other words $\{(x,u), s\leq u \leq t \}  \cap P_{\eta,t}^{C,\gamma} = \emptyset$). 
\item We let $s\mapsto \ol \omega(s)$ denote the dynamical process which updates points without any restriction coming from the knowledge of $P_{\eta,t}^{C,\gamma}$.  As such, $\ol \omega(s)$ is updated more often than $\wh \omega(s)$ but has the advantage that given $\ol\omega(0)=\wh \omega(0)=\wt \omega_{\eta}^{C,\gamma}(t)$, it follows a regular dynamical percolation. 
\end{enumerate}

It is easy to check that $(\wh \omega(0), \wh \omega(t))$ has the desired law, namely $(\wh \omega(0), \wh \omega(t))\eqD(\wt \omega_\eta^{C,\gamma}(t), \wt \omega_\eta^{C',\gamma}(t))$. Note also that for all $s\in[0,t)$, the law of $(\wh \omega(s), \wh \omega(t))$ is NOT the same as $(\wt \omega_\eta^{C',\gamma}(s),\wt \omega_\eta^{C',\gamma}(t))$. Therefore our setup does not allow us to say anything about $\wt \omega_\eta^{C',\gamma}(s)$ for $s\in(0,t)$; in particular, we do not get convergence in Skorokhod space since this requires us to consider all $s\in[0,t]$.

A crucial point for what follows is for all $s\in[0,t]$, the marginal law of the configuration $\wh \omega(s)$ is i.i.d.\ percolation. This gives that for a constant $C>0$, 
\begin{align}\label{}
&\P\left(\Quad \in \wt \omega_\eta^{C,\gamma}(t)\Delta \wt \omega_\eta^{C',\gamma}(t)\right)  \leq \Pb{\#\{s\in [0,t]: \Quad \in \wh\omega(s^-)\Delta \wh\omega(s^+) \} \geq 1}  \nonumber \\
&\qquad \leq \Eb{\#\{s\in [0,t]: \Quad \in \wh\omega(s^-)\Delta \wh\omega(s^+) \}}   \leq \Eb{\#\{s\in [0,t]: \Quad \in \ol\omega(s^-)\Delta \ol\omega(s^+) \}}  \nonumber \\
&\qquad \leq C \mu_{\gamma h} (D\setminus \M_C),
\end{align}
where we proceed as in Proposition \ref{p.tightness} to justify the last inequality. Since the right side converges to zero a.s.\ as $C \to \infty$ by Lemma \ref{prop:thickpt}, this concludes our proof.
\end{proof}

Let us now see how Proposition \ref{p.uniformicity C} implies the fact that the finite-dimensional laws of $\omega^\gamma_\eta$ converge.

\begin{proposition}
	Let $\gamma\in(0,\sqrt{3/2})$. Then the finite-dimensional distribution of $\omega^\gamma_\eta(\cdot)$ converge in law in $(\HH,d_{\HH})$.
	\label{prop3} 
\end{proposition}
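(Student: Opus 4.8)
\emph{Proof proposal.} The plan is to prove that for any finite family of times $0\le t_1<\dots<t_m$ the random vector $(\omega^\gamma_\eta(t_1),\dots,\omega^\gamma_\eta(t_m))$ converges in law in $\HH^m$ (product topology). Since $(\HH,d_\HH)$ is compact, the family is automatically tight, so it suffices to identify the subsequential limits. Because the Borel $\sigma$-algebra of $\HH$ is generated by the crossing events $\{Q\in\cdot\}$ for $Q\in\QQ^\infty$, and because — using that $t\mapsto\omega^\gamma_\eta(t)$ is stationary with the critical percolation marginal at every fixed time — every finite intersection of such crossing events is a.s.\ a continuity set for any subsequential limit (this is the fact behind Lemma~\ref{prop:ss-meas}; see \cite[Lemma~5.1]{ss-planar-perc}), it is enough to show that for every finite family $(Q_j,t_{i(j)})_{j\le k}$ with $Q_j\in\QQ^\infty$ the law of the crossing vector $\big(\1_{Q_j\in\omega^\gamma_\eta(t_{i(j)})}\big)_{j\le k}$ converges as $\eta\to0$. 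This is now a statement about probability measures on the finite set $\{0,1\}^k$, and I would prove it by showing these laws form a Cauchy sequence in total variation.

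The interpolation goes through the modified processes. Recall that $\wt\omega^{\infty,\gamma}_\eta(\cdot)=\omega^\gamma_\eta(\cdot)$ and that the $(\wt\omega^{C,\gamma}_\eta(\cdot))_{1\le C\le\infty}$ are coupled monotonically. Proposition~\ref{p.uniformicity C} with $C'=\infty$ gives, for every $\eta$, $\P\big(Q_j\in\wt\omega^{C,\gamma}_\eta(t_{i(j)})\,\Delta\,\omega^\gamma_\eta(t_{i(j)})\big)\le\Upsilon_{t_{i(j)},Q_j}(C)$; hence, with $\delta_C:=\sum_{j\le k}\Upsilon_{t_{i(j)},Q_j}(C)$, the laws of the crossing vectors of $\wt\omega^{C,\gamma}_\eta$ and of $\omega^\gamma_\eta$ differ by at most $\delta_C$ in total variation \emph{uniformly in $\eta$}, and $\delta_C\to0$ as $C\to\infty$. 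For fixed $C$, Proposition~\ref{p.Skorokhod mLDP} gives $\wt\omega^{C,\gamma}_\eta(\cdot)\to\wt\omega^{C,\gamma}_\infty(\cdot)$ in the Skorokhod topology, and I would upgrade this to convergence of the crossing vectors at the fixed times $t_1,\dots,t_m$: for rational times this is immediate from Remark~\ref{r.coupling}(vi) (convergence in $d^{\op{mod}}_\HH$ forces the crossing status of every $Q\in\QQ^\infty$ to stabilise), and for general times one first checks that each fixed $t_i$ is a.s.\ a continuity point of $\wt\omega^{C,\gamma}_\infty(\cdot)$. The latter holds because $\wt\omega^{C,\gamma}_\infty(\cdot)$ is stationary (a weak limit of the stationary $\wt\omega^{C,\gamma}_\eta(\cdot)$), and, by Proposition~\ref{p.tightness} (conditionally on $h$, using $\wt\mu^C_{\gamma h}\le\mu_{\gamma h}$) together with lower semicontinuity of the number of jumps under Skorokhod convergence, the expected number of times the crossing status of a fixed $Q\in\QQ^\infty$ changes on a bounded interval is finite; stationarity then forces the intensity measure of these jump times to be a multiple of Lebesgue measure, hence atomless, so each fixed $t_i$ is a.s.\ not a jump time. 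Combining with Skorokhod convergence and the continuity-set property for the critical scaling limit yields $\mathrm{law}\big(\1_{Q_j\in\wt\omega^{C,\gamma}_\eta(t_{i(j)})}\big)_j\to\mathrm{law}\big(\1_{Q_j\in\wt\omega^{C,\gamma}_\infty(t_{i(j)})}\big)_j$ in total variation as $\eta\to0$.

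The conclusion is a routine triangle inequality. Writing $\mu_\eta$ and $\nu^\eta_C$ for the laws of the crossing vectors of $\omega^\gamma_\eta$ and $\wt\omega^{C,\gamma}_\eta$, Proposition~\ref{p.uniformicity C} gives
\[
d_{\mathrm{TV}}(\mu_\eta,\mu_{\eta'})\ \le\ d_{\mathrm{TV}}(\mu_\eta,\nu^\eta_C)+d_{\mathrm{TV}}(\nu^\eta_C,\nu^{\eta'}_C)+d_{\mathrm{TV}}(\nu^{\eta'}_C,\mu_{\eta'})\ \le\ 2\delta_C+d_{\mathrm{TV}}(\nu^\eta_C,\nu^{\eta'}_C).
\]
Fixing $C$ and letting $\eta,\eta'\to0$, the middle term vanishes by the previous paragraph, so $\limsup_{\eta,\eta'\to0}d_{\mathrm{TV}}(\mu_\eta,\mu_{\eta'})\le2\delta_C$; letting $C\to\infty$ shows $(\mu_\eta)$ is Cauchy, hence convergent. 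As this holds for every finite family of quads and times, the finite-dimensional laws converge in $(\HH,d_\HH)$. The one genuinely delicate point is the fixed-time continuity of $\wt\omega^{C,\gamma}_\infty(\cdot)$ used in the second paragraph; everything else is soft, the substantive uniform-in-$\eta$ comparison of $\wt\omega^{C,\gamma}_\eta$ with $\omega^\gamma_\eta$ being already contained in Proposition~\ref{p.uniformicity C}.
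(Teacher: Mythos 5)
Your proof is correct and follows essentially the same route as the paper: interpolate through the truncated processes $\wt\omega^{C,\gamma}_\eta$, use Proposition~\ref{p.uniformicity C} (with $C'=\infty$) for the uniform-in-$\eta$ comparison with $\omega^\gamma_\eta$, use the Skorokhod convergence $\wt\omega^{C,\gamma}_\eta(\cdot)\to\wt\omega^{C,\gamma}_\infty(\cdot)$ for the fixed-$C$ middle term, and close with a $2\delta_C$ triangle inequality. The only substantive differences from the paper are cosmetic — you phrase the comparison in total variation on crossing vectors where the paper uses the metric $d_\HH^{\op{mod}}$ and a countable dense set of times, and the paper additionally extracts the explicit $C\to\infty$ limit $\omega^\gamma_\infty(q)$ via a Cauchy argument (which it later reuses in Lemma~\ref{l.cadlag modification}), whereas you only establish Cauchyness of the laws; your explicit stationarity-plus-lower-semicontinuity argument for fixed-time continuity is exactly what underlies Remark~\ref{r.coupling}(vi), which the paper cites without proof.
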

\begin{proof}
	We will only prove that $(\omega^\gamma_\eta(q))_{q\in\Q_+}$ converges, since the case of $\Q_+$ replaced by some other countable set can be treated in the same way. We will use the coupling of Remark \ref{r.coupling}. Let us first prove that for any fixed $q\in \Q_+$, the sequence $(\wt\omega_\infty^{C,\gamma}(q))_{C\in\N}$ is a Cauchy sequence as $C\rta\infty$. For $C<C'$,
	\begin{align*}
	\P\Big(d_\HH^{\op{mod}}(\wt\omega_\infty^{C,\gamma}(q),\wt\omega_\infty ^{C',\gamma}(q))<2^{-k}\Big)&\leq \sum_{\Quad\in \QQ^{k}} 	\P\left( \Quad\in \wt \omega_\infty^{C,\gamma}(q)\Delta \wt \omega_\infty^{C',\gamma}(q)\right)\\
	&\leq \lim_{\eta\to 0 } \sum_{\Quad\in \QQ^{k}} 	\P\left( \Quad\in \wt \omega_\eta^{C,\gamma}(q)\Delta \wt \omega_\eta^{C',\gamma}(q)\right)\\
	&\leq (\# \QQ_k) \sup_{\Quad \in \QQ_k}\Upsilon_{q,\Quad}(C)\to 0 \ \ \ \text{as } n,m\to \infty\,.
	\end{align*}
	In the second inequality we use (v) of Remark \ref{r.coupling},
	and in the third inequality we use Proposition \ref{p.uniformicity C}. 
	It follows that $(\wt\omega_\infty^{C,\gamma}(q))_{C\in\N}$ is a Cauchy sequence for $d_\HH^{\op{mod}}$, so it is a Cauchy sequence for $d_\HH$. By completeness of $(\HH,d_\HH)$, it has a limit in probability $\omega_\infty^{\gamma}(q)$ for the metric $d_{\HH}$.
	
	Let us continue using the same coupling, and show that the finite-dimensional distribution of $\omega_\eta^\gamma(\cdot)$ converge to those of $\omega_\infty^{\gamma}(\cdot)$. We will show convergence of $(\omega_\eta^\gamma(q))_{q\in\Q_+}$ converge to those of $(\omega_\infty^{\gamma}(q))_{q\in\Q_+}$; the case of $\Q_+$ replaced by some other countable set follows from the same argument. Note that for any $q\in \Q_+$, the triangle inequality gives 
	\begin{align}\label{e.3 terms mt}
	\begin{split}
	d_\HH(\wt\omega_\infty ^{\gamma}(q),\omega_\eta^{\gamma}(q))
	\leq&\, 
	d_\HH\left (
	\wt\omega_\infty ^{\gamma}(q), \wt\omega_\infty^{C,\gamma}(q)\right )
	+d_\HH\left (
	\wt\omega_\infty ^{C,\gamma}(q),\wt\omega_\eta^{C,\gamma}(q)\right )\\
	&+d_\HH\left (
	\wt\omega_\eta ^{C,\gamma}(q), \omega_\eta^{\gamma}(q)\right )\,.
	\end{split}
	\end{align}
	The first term does not depend on $\eta$ and converges to $0$ in probability as $C\to \infty$ by the previous paragraph. For any fixed $C\in \N$ the second term converges a.s.\ to $0$ as $\eta \to 0$ by (vi) of Remark \ref{r.coupling}. It remains to bound the third term. As before, we can bound
	\begin{align*}
	\P\left( d_\HH^{\op{mod}}\left (\omega_\eta^{\gamma}(q),\wt\omega_\eta ^{C,\gamma}(q)\right )\leq 2^{-k}\right)\leq (\# \QQ_k) \sup_{\Quad \in \QQ_k}\Upsilon_{q,\Quad}(C) \to 0 \ \ \ \text{as } C\to \infty\,.
	\end{align*}
	
	We conclude that for all $q\in \Q_+$ the following convergence holds in probability as $\eta\rta 0$ for $d_{\HH}$
	\[
	\omega_\eta ^\gamma(q)\to \omega_\infty^\gamma(q) \ \ \text{ as } \eta\to 0. 
	\]
	This implies that the finite-dimensional distribution of $\omega_\eta^\ga(\cdot)$ converge in probability (and therefore also in law) to those of $\omega_\infty^\gamma(\cdot)$. 
\end{proof}

\subsection{Liouville dynamical percolation is c\`adl\`ag}
Let us define 
\[\II:=\{f:[0,1)\to \{0,1\}: f\text{ c\`adl\`ag and } \lim_{t\uparrow 1} f(t) \text{ exists} 
\}.
\]
Let $\M([0,1))$ denote the set of all signed measures on $[0,1)$. Each function $f\in \II$ may be associated with a unique measure $\nu_f \in \M([0,1))$, namely the measure $\nu$ such that $\nu([0,t])=f(t)$. Note that this measure will have a point mass $\pm 1$ at each point where the value of $f$ changes from 0 to 1 or vice versa. This will allow us to work with the convergence of $\nu_{f_n}^+$ and $\nu_{f_n}^-$, the positive and negative part of the Jordan decomposition of $\nu_{f_n}$.

In the next lemma we consider a deterministic process $(\wh\varpi(t))_{t\in [0,1)}$ in $(\HH, d_\HH)$. We give a criterion which guarantees that $(\wh\varpi(t))_{t\in[0,1)}$ is c\`adl\`ag.
We identify $\HH$ with a subset of $\{0,1\}^{\QQ_\infty}$ by identifying $\omega\in\HH$ with $(f_\Quad(\omega))_{\Quad \in \QQ^\infty}$ for $f_\Quad(\omega)=\1_{\Quad \in \omega}$. This allows us to see a $\HH$-valued process $\omega(\cdot)$ as a sequence of functions $(f_\Quad(\omega(\cdot)))_{\Quad\in \QQ^\infty}$ from $[0,1)$ to $\{0,1\}$.
\begin{lemma} \label{l.cadlag}
	Let $(\varpi_n(t))_{t\in[0,1)}$ be a sequence of processes with value in $\HH$ such that for all $\Quad \in \QQ^\infty$, $f_\Quad(\varpi_n(\cdot))\in \II$. Define  $\nu^{\Quad}_n$ as the measure associated to $f_\Quad(\varpi_n(\cdot))$. Assume that for all $\Quad\in \QQ^\infty$, $(\nu^{\Quad}_n)^+$ and $(\nu^{\Quad}_n)^-$ converge weakly to certain measures $(\ol \nu^\Quad)^+$ and $(\ol \nu^\Quad)^-$. Furthermore, suppose there exists a dense set $S\subseteq (0,1)$ such that for all $t\in S$, $\varpi_n(t)$ converges to some $\wh \varpi(t)$ in $(\HH, d_\HH)$  and
	\[ S\cap \{t\in (0,1): t \text{ is in the support of $(\ol \nu^\Quad)^+$ or $(\ol \nu^\Quad)^-$ for some $\Quad \in \QQ^\infty$} \}=\emptyset.\]
	Then, $\wh \varpi(t)= \lim_{\substack{s\downarrow t \\ s\in S}} \varpi(s)$ is a c\`adl\`ag process in the space of Schramm-Smirnov and $\wh \varpi(s)= \varpi(s)$ for all $s\in S$.
\end{lemma}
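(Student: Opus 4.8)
The plan is to reconstruct the process $\wh\varpi$ from its values on the dense set $S$ and to verify the c\`adl\`ag property quad by quad. First I would fix a quad $\Quad\in\QQ^\infty$ and study the scalar process $f_\Quad(\varpi_n(\cdot))\in\II$. The associated signed measures $\nu^\Quad_n$ are supported on finitely many points (the jump times), with a point mass $+1$ or $-1$ at each; their Jordan decompositions are $(\nu^\Quad_n)^\pm$. By hypothesis these converge weakly to measures $(\ol\nu^\Quad)^\pm$, which are therefore nonnegative, and by weak convergence plus the fact that every $\nu^\Quad_n$ has integer masses one checks that $\ol\nu^\Quad:=(\ol\nu^\Quad)^+-(\ol\nu^\Quad)^-$ has atoms of integer mass as well, so it is locally a finite signed combination of unit atoms. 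The key point is that outside the (countable, closed in $(0,1)$) support of $(\ol\nu^\Quad)^+\cup(\ol\nu^\Quad)^-$, weak convergence of $(\nu^\Quad_n)^\pm$ forces $f_\Quad(\varpi_n(\cdot))$ to converge and to be eventually constant on small intervals; combined with the hypothesis that $S$ avoids all these supports, this shows that for $t\in S$ the limit $f_\Quad(\wh\varpi(t))$ exists (it equals $\lim_n f_\Quad(\varpi_n(t))$, consistent with $\varpi_n(t)\to\wh\varpi(t)$ in $d_\HH$).

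Next I would define, for general $t\in[0,1)$, the object $\wh\varpi(t):=\lim_{s\downarrow t,\,s\in S}\varpi(s)$ and argue the limit exists in $(\HH,d_\HH)$. For this it suffices (since the Borel $\sigma$-algebra of $d_\HH$ is generated by the events $\{\Quad\in\cdot\}$, $\Quad\in\QQ^\infty$, and $\HH$ is compact metrizable) to show that for each $\Quad$ the scalar function $s\mapsto f_\Quad(\wh\varpi(s))$, $s\in S$, has a right limit at $t$. But $S$ is dense, avoids the support $F_\Quad$ of $(\ol\nu^\Quad)^++(\ol\nu^\Quad)^-$, and on each component of $(0,1)\setminus F_\Quad$ the scalar limit function is constant (no atoms of the limiting jump measure there, and weak convergence of the positive/negative parts separately rules out compensating jumps accumulating there); hence $s\mapsto f_\Quad(\wh\varpi(s))$ is, on $S$, a locally constant function off a countable closed set $F_\Quad$, so it has left and right limits everywhere, and is right-continuous at points where we take the right limit. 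Taking the right limit in each coordinate simultaneously and using compactness of $\HH$ produces a well-defined $\wh\varpi(t)\in\HH$ with $\wh\varpi(s)=\varpi(s)$ for $s\in S$ (on $S$ we are off every $F_\Quad$, so no jump occurs to the immediate right, giving agreement).

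Then I would verify that $t\mapsto\wh\varpi(t)$ is c\`adl\`ag for $d_\HH$. Right-continuity is essentially by construction: $\wh\varpi(t)$ is defined as a right limit along $S$, and by the previous paragraph each coordinate process is locally constant to the right of $t$ off the countable set, so $\wh\varpi(s)\to\wh\varpi(t)$ as $s\downarrow t$; a short $\epsilon$-$\delta$ argument using the metric $d_\HH$ (only finitely many coordinates $\Quad\in\QQ^k$ for fixed $k$ matter up to $2^{-k}$, by the description of $d_\HH^{\op{mod}}$ and Lemma~\ref{prop:ss-meas}) upgrades coordinatewise convergence to $d_\HH$-convergence. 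Existence of left limits is the same argument with left limits of the scalar functions, which exist since each $f_\Quad(\wh\varpi(\cdot))$ is locally constant off $F_\Quad$ and $F_\Quad$ has no accumulation of atoms of $\ol\nu^\Quad$ from the left (again using weak convergence of $(\nu^\Quad_n)^\pm$ separately, so total variation on a small interval $[t-\delta,t)$ stays bounded and the function is eventually constant there). Finally $\wh\varpi(s)=\varpi(s)$ for $s\in S$ was already noted.

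\textbf{Main obstacle.} The delicate step is controlling the scalar processes $f_\Quad(\varpi_n(\cdot))$ near the support $F_\Quad$ of the limiting jump measure: a priori the $\varpi_n$ could have, near a point $t\in F_\Quad$, many rapidly alternating $0\leftrightarrow1$ jumps whose positive and negative parts nearly cancel, so that $\nu^\Quad_n\to\ol\nu^\Quad$ weakly while the paths oscillate wildly. The reason this does not happen is precisely that we are given weak convergence of $(\nu^\Quad_n)^+$ and $(\nu^\Quad_n)^-$ \emph{separately}, not just of $\nu^\Quad_n$: this pins down the total variation $|\nu^\Quad_n|\to(\ol\nu^\Quad)^++(\ol\nu^\Quad)^-$ on continuity intervals, so the number of jumps of $f_\Quad(\varpi_n(\cdot))$ in any interval compactly contained in $(0,1)\setminus F_\Quad$ is eventually zero, and the value there is determined by integrating $\nu^\Quad_n$ from a reference point in $S$. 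Making this ``eventually constant off $F_\Quad$, so has one-sided limits everywhere'' argument precise — including the bookkeeping that $F_\Quad$ is closed in $(0,1)$, countable, and that the reconstructed coordinatewise limits are mutually consistent so as to define a genuine element of $\HH$ — is the technical heart of the proof, but it is routine real analysis once the separate convergence of the Jordan parts is exploited.
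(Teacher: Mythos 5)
Your proposal follows essentially the same route as the paper's proof: reduce to the scalar coordinate processes $f_\Quad(\varpi_n(\cdot))$, use that $t$ outside the (finitely many) atoms of $(\ol\nu^\Quad)^\pm$ forces $f_\Quad(\varpi_n(t))\to\ol\nu_\Quad([0,t])$, note that each $t\mapsto\ol\nu_\Quad([0,t])$ is piecewise constant with finitely many jumps, and then upgrade quad-by-quad local constancy on $\QQ^k$ to a $d_\HH$-Cauchy/compactness criterion yielding right limits along $S$ and left limits. The only presentational difference is that you extract the right limit via compactness and separation of points rather than the paper's explicit Cauchy-modulus estimate, and you spend a paragraph on the (correct) observation that separate weak convergence of the Jordan parts is what prevents oscillatory cancellation; the underlying argument is the same.
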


\begin{proof}
Let us take $\Quad \in \QQ^\infty$ and note that for any $t\in (0,1)$ that is not in the support of $(\ol \nu^\Quad)^+$ or $(\ol \nu^\Quad)^-$, we have $f_\Quad(\varpi_n(t)) \to \ol \nu_\Quad([0,t])$. This implies that for all $s\in S$, $f_\Quad(\varpi(s)) = \ol \nu_\Quad([0,s])$. 

We show now that $\wh\varpi(t)$ exists and is right-continuous in $t$. To do that, it is enough to show that for any $t\in [0,1)$ and $k\in \N$ there exists a $\delta>0$ such that
\[
\sup\{d_\HH(\varpi(s_1),\varpi(s_2)): s_1,s_2\in [t,t+\delta)\cap S\}\leq 2^{-k}. 
\] 
This follows simply from the fact that $t\mapsto(\ol\nu^Q)^\pm([0,t))$ for $Q\in\QQ^\infty$ are c\`adl\`ag functions with finitely many jumps, which implies for all $k\in \N$, $\Quad \in \QQ^k$, and $t\in (0,1)$, there exists $\delta>0$ such that $f^\Quad(\wh \varpi(\cdot))$ is constant in $[t,t+\delta)\cap S$. 

To conclude we just need to prove that $\wh \varpi(\cdot)$ has a left limit. To do that,  it is enough to show that for any $t\in [0,1]$ and $k\in \N$ there exists a $\delta>0$ such that
\[\sup\{d_\HH(\varpi(s_1),\varpi(s_2)): s_1,s_2\in (t-\delta,t)\cap S\}\leq 2^{-k}.\]
This follows by a similar argument.
\end{proof}
\begin{remark}
	Note that the conditions of the lemma do not imply that $\varpi_n\rta\varpi$ in the Skorokhod topology. For example, assume $t\not\in S$, that $(t_n)_{n\in\N}$ and $(t'_n)_{n\in\N}$ are two sequences converging to $t$ such that $t_n<t'_n$ for all $n\in\N$, and that for two distinct quads $Q,Q'\in\QQ^\infty$ the measure $(\nu_n^Q)^+$ (resp.\ $(\nu_n^{Q'})^+$) has a point mass at $t_n$ (resp.\ $t'_n$) for all $n\in\N$. Then $\varpi_n(\cdot)$ does not converge in the Skorokhod topology, while (assuming all assumptions of the lemma are satisfied) the process $\wh\varpi(\cdot)$ is c\`adl\`ag such that both $(\nu^Q)^+$ and $(\nu^{Q'})^+$ have a point mass at $t$. Another example is the case when $f_Q=\1_{[t_n,t'_n)}$ for the same sequences $(t_n)_{n\in\N}$ and $(t'_n)_{n\in\N}$ and some quad $Q$.
\end{remark}

 The process $f_\eta^{\Quad}(\cdot):= \1_{\Quad \in \omega_\eta^\gamma(\cdot)}$ is c\`adl\`ag and, thus, belongs to $\II$. By Lemma \ref{prop3}, there exists a collection of random variables $(\omega_\infty^\gamma(q))_{q\in\Q_+}$ such that the following convergence holds in law 
 \eqb
 (\omega_\eta^\gamma(q))_{q\in\Q_+}\rta (\omega_\infty^\gamma(q))_{q\in\Q_+}\,.
 \label{eq6}
 \eqe
 We want to use these two results and Lemma \ref{l.cadlag} to argue that $(\omega_\infty^\gamma(q))_{q\in\Q_+}$ has an extension to $\R_+$ which is c\`adl\`ag.
 
 \begin{lemma}\label{l.cadlag modification}
 	Let $\gamma\in(0,\sqrt{3/2})$ and let $(\omega_\infty^\gamma(q))_{q\in\Q_+}$ be as in \eqref{eq6}. Then the following process is well-defined as a c\`adl\`ag function
 	\eqb
 	t\mapsto \lim_{q\downarrow t} \omega_\infty^\gamma(q)\,.
 	\label{eq7}
 	\eqe
 \end{lemma}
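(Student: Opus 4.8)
The plan is to verify the hypotheses of Lemma~\ref{l.cadlag} along a well-chosen subsequence of meshes. Throughout I would condition on a fixed (typical) instance of $h$, work on a single bounded interval $[0,N)$ with $N\in\N$ (the general case following by letting $N\to\infty$), and use that by Proposition~\ref{prop3}, $\omega^\gamma_\eta(q)\to\omega^\gamma_\infty(q)$ in probability for every $q\in\Q_+$. For $Q\in\QQ^\infty$ let $\nu^Q_\eta$ be the signed measure on $[0,N]$ associated with the c\`adl\`ag $\{0,1\}$-valued function $t\mapsto\1_{Q\in\omega^\gamma_\eta(t)}$ as in Lemma~\ref{l.cadlag}; its total variation is the number of jumps $\#\{t\in[0,N]:Q\in\omega^\gamma_\eta(t^-)\Delta\omega^\gamma_\eta(t^+)\}$, which by \eqref{e.expected value crossings 2} has expectation at most $N\,C(Q)$ given $h$, uniformly in $\eta$. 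First I would deduce from this bound, Markov's inequality, and compactness of $\{\mu\ge 0:\mu([0,N])\le M\}$ that the Jordan parts $(\nu^Q_\eta)^\pm$ are tight for each $Q$; together with \eqref{eq6} and compactness of $(\HH,d_\HH)$ this makes the family $\bigl((\omega^\gamma_\eta(q))_{q\in\Q_+},\,((\nu^Q_\eta)^\pm)_{Q\in\QQ^\infty}\bigr)$ tight, so I can extract $\eta_n\downarrow 0$ along which it converges jointly in law and pass (Skorokhod representation) to a space where the convergence is a.s., with $(\omega^\gamma_{\eta_n}(q))_q\to(\omega^\gamma_\infty(q))_q$ — the limit being forced by \eqref{eq6} — and $(\nu^Q_{\eta_n})^\pm\to(\ol\nu^Q)^\pm$ weakly for every $Q$.

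The key step, which I expect to be the main obstacle, is to show that each $(\ol\nu^Q)^\pm$ is a.s.\ a finite sum of unit atoms — equivalently, that the limiting crossing process $t\mapsto\1_{Q\in\omega^\gamma_\infty(t)}$ does not oscillate infinitely often in bounded time. Here I would use that $(\nu^Q_{\eta_n})^+$ is a sum of finitely many \emph{distinct unit} atoms, so its total mass $\|(\nu^Q_{\eta_n})^+\|$ is a non-negative \emph{integer}; since weak convergence on the compact interval $[0,N]$ preserves total mass, $\|(\nu^Q_{\eta_n})^+\|\to\|(\ol\nu^Q)^+\|$, and a convergent sequence of integers is eventually constant, equal to some $m^+_Q\in\Z_{\ge 0}$. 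Writing $(\nu^Q_{\eta_n})^+=\sum_{i=1}^{m^+_Q}\delta_{s^+_{n,i}}$ for large $n$ and invoking uniqueness of the weak limit forces $(\ol\nu^Q)^+$ to be a sum of $m^+_Q$ unit atoms (counted with multiplicity), and similarly for $(\ol\nu^Q)^-$. I would stress that it is \emph{integrality} of the jump count, not merely finiteness of its expectation, that makes this work: otherwise a weak limit of atomic measures of uniformly bounded mass could be diffuse. Consequently $\mathcal J:=\bigcup_{Q\in\QQ^\infty}\bigl(\mathrm{supp}(\ol\nu^Q)^+\cup\mathrm{supp}(\ol\nu^Q)^-\bigr)$ is a.s.\ countable. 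I would also check that $\mathcal J$ a.s.\ avoids $\Q_+$: for fixed $q\in\Q_+$, choosing continuity points $q\pm\epsilon$ of $|\ol\nu^Q|$ and combining the continuity-set portmanteau with Fatou and \eqref{e.expected value crossings 2} gives $\E[\,|\ol\nu^Q|(\{q\})\mid h\,]\le\E[\,|\ol\nu^Q|([q-\epsilon,q+\epsilon])\mid h\,]\le 2\epsilon\,C(Q)$; letting $\epsilon\to 0$ shows $q$ is a.s.\ not an atom of any $(\ol\nu^Q)^\pm$, and a union over the countably many $(q,Q)$ yields $\mathcal J\cap\Q_+=\emptyset$ a.s.

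To close, set $S:=(0,N)\setminus\mathcal J$, which is dense, contains $\Q_+\cap(0,N)$, and is disjoint from every $\mathrm{supp}(\ol\nu^Q)^\pm$. For $t\in S$ and $Q\in\QQ^\infty$, the set $[0,t]$ is a continuity set of $(\ol\nu^Q)^\pm$, so $\1_{Q\in\omega^\gamma_{\eta_n}(t)}=(\nu^Q_{\eta_n})^+([0,t])-(\nu^Q_{\eta_n})^-([0,t])$ converges for each $Q$; since $\omega^\gamma_{\eta_n}(t)$ has the (invariant) law of critical percolation on $\Tg_{\eta_n}$, whose scaling limit a.s.\ has every fixed quad as a continuity point, this coordinatewise convergence upgrades — exactly as in the use of Lemma~\ref{prop:ss-meas} elsewhere — to $\omega^\gamma_{\eta_n}(t)\to\ol\varpi(t)$ in $(\HH,d_\HH)$, with $\ol\varpi(q)=\omega^\gamma_\infty(q)$ for $q\in\Q_+\cap(0,N)$ by Proposition~\ref{prop3}. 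Now all hypotheses of Lemma~\ref{l.cadlag} hold for $\varpi_n(\cdot):=\omega^\gamma_{\eta_n}(\cdot)$ on $[0,N)$, so the lemma yields a c\`adl\`ag $\HH$-valued process $\wh\varpi(t)=\lim_{s\downarrow t,\,s\in S}\ol\varpi(s)$ with $\wh\varpi(q)=\omega^\gamma_\infty(q)$ for all $q\in\Q_+\cap(0,N)$. Since $\wh\varpi$ is right-continuous and $\Q_+\cap(0,N)\subseteq S$ is dense, $\lim_{q\downarrow t,\,q\in\Q_+}\omega^\gamma_\infty(q)=\wh\varpi(t)$ exists for every $t\in[0,N)$; letting $N\to\infty$ finishes the proof.

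The tightness argument, the portmanteau/Fatou estimates, and the coordinatewise-to-$d_\HH$ upgrade are routine within the Schramm--Smirnov framework; the genuine content is the atomicity step of the second paragraph, which is precisely where Proposition~\ref{p.tightness} is used in an essential way and which is also the reason (cf.\ Remark~\ref{rmk2}) that one only obtains a c\`adl\`ag modification rather than Skorokhod convergence of the trajectories.
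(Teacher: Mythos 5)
Your proof follows the same overall route as the paper: tightness of the jump measures $(\nu^Q_\eta)^\pm$ via Proposition~\ref{p.tightness}, a Skorokhod-representation coupling, verification of the hypotheses of Lemma~\ref{l.cadlag}, and an application of the covariance bound to show that a fixed rational is a.s.\ not a jump time. The paper's version is terser; it takes $S=\Q_+$ directly, so it never needs your detour through the larger set $S=(0,N)\setminus\mathcal J$ (nor the somewhat delicate coordinatewise-to-$d_\HH$ upgrade at irrational times, for which the invocation of Lemma~\ref{prop:ss-meas} is actually in the wrong direction — that lemma upgrades $d_\HH$ to $d_\HH^{\op{mod}}$, while you need compactness of $(\HH,d_\HH)$ plus uniqueness of subsequential limits). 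You could drop that whole construction and run Lemma~\ref{l.cadlag} with $S=\Q_+$ as the paper does. The genuine added value in your write-up is the atomicity paragraph: the paper's proof of Lemma~\ref{l.cadlag} asserts without comment that $t\mapsto(\ol\nu^Q)^\pm([0,t))$ has finitely many jumps, and your integrality argument (the prelimit jump counts are integers, weak convergence on a compact interval preserves total mass, so the masses eventually stabilize and the limit is a sum of that many unit atoms) is a clean, correct justification of that hidden step. One can alternatively observe that $\ol\nu^Q([0,\cdot])$ is a $\{0,1\}$-valued BV function and hence has finitely many jumps without needing atomicity of $(\ol\nu^Q)^\pm$ separately, but your route is equally valid and, if anything, more transparent about where Proposition~\ref{p.tightness} is being used in an essential way.
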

\begin{remark}
	In the remainder of the paper we let $(\omega_\infty^\gamma(t))_{t\in\R_+}$ denote the function in \eqref{eq7}.
\end{remark}
 \begin{proof}[Proof of Lemma \ref{l.cadlag modification}]
 	Let us take $S=\Q_+$ and treat
 	\begin{equation}\label{e.coupling final}
x_\eta:=\left ((\omega_\eta^\gamma(s))_{s\in S},(\nu^+_{f_\Quad(\omega_\eta^\gamma(\cdot))})_{\Quad \in \QQ^\infty},(\nu^-_{f_\Quad(\omega_\eta^\gamma(\cdot))})_{\Quad \in \QQ_\infty}\right )
 	\end{equation}
 	as a sequence in $\HH^S\times(\M([0,1)))^{\QQ_\infty}\times(\M([0,1)))^{\QQ_\infty} $ endowed with the product topology induced by $d_{\HH}$ and the weak topology. Note that the first coordinate is tight, as we have finite-dimensional convergence by Proposition \ref{prop3}. Furthermore, the second and the third terms are also tight thanks to Proposition \ref{p.tightness}. This implies that the law of $x_\eta$ is tight. By the Skorokhod embedding theorem, we have a coupling of (a subsequence of) $x_\eta$ such that for all $s\in S$ and $\Quad \in \QQ^\infty$, a.s.\ $\omega_\eta^\gamma(s)\to \omega_\infty^\gamma(s)$ for $d_\HH$,  $\nu^+_{f_\Quad(\omega_\eta^\gamma(\cdot))}\to (\nu^\Quad)^+$, and $ \nu^-_{f_\Quad(\omega_\eta^\gamma(\cdot))} \to (\nu^\Quad)^-$ for some random measures $(\nu^{\Quad})^+$ and $(\nu^{\Quad})^-$ in $\cM([0,1))$. Thanks to Lemma \ref{l.cadlag}, we see that it is enough to show that a.s.\ there is no point in $S$ in the support of $ (\nu^\Quad)^+$ or $ (\nu^\Quad)^-$. This follows, because for a given $\Quad \in \QQ$ and a given $s\in S$, the probability that $s$ is in the support of either $ (\nu^\Quad)^+$ or $ (\nu^\Quad)^-$ is $0$ as it can be computed from the Proposition \ref{p.tightness}.
 \end{proof}
 
 The following proposition implies the convergence of  $(\omega_\eta^\gamma(t))_{t\in\R_+}$ in the $L^1$ topology of Definition \ref{def:L1}, and therefore completes the proof of Theorem \ref{thm1}(ii).
 \begin{prop}
 	For any $T>0$, $\omega_\eta^{\gamma}(\cdot)$ converges in law  to $\omega_\infty^\gamma(\cdot)$ for the topology of  $L^1([0,T], (\HH,d_\HH))$.
 \end{prop}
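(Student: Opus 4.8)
The plan is to combine a tightness estimate in $L^1([0,T],(\HH,d_\HH))$ with the finite-dimensional convergence already obtained in Proposition~\ref{prop3}. The single ingredient that makes everything work is that, for each $\eta$, the process $\omega^\gamma_\eta(\cdot)$ is \emph{stationary in time}: its one-time marginal is uniform percolation, which is invariant for the dynamics. Using this, the key estimate is the existence of a modulus $\rho:[0,1]\to[0,1]$ with $\rho(0^+)=0$ such that
\[
\sup_{0<\eta\le 1}\ \E\big[d_\HH(\omega^\gamma_\eta(\delta),\omega^\gamma_\eta(0))\big]\ \le\ \rho(\delta)\qquad\text{for all }\delta\in(0,1].
\]
To prove it, recall that each $\QQ^k$ is a finite set and that, by the construction of the Schramm-Smirnov metric (see \cite[Section~2.2]{gps-near-crit} and the discussion around \eqref{e.distance}), together with Lemma~\ref{prop:ss-meas} applied to the critical configuration $\omega^\gamma_\eta(0)$, one can bound $\E[d_\HH(\omega^\gamma_\eta(\delta),\omega^\gamma_\eta(0))]$ by $\sum_{k}2^{-k}\,\P\big(\exists\,\Quad\in\QQ^k:\ \Quad\in\omega^\gamma_\eta(\delta)\,\Delta\,\omega^\gamma_\eta(0)\big)$ up to an error tending to $0$. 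Since $t\mapsto\1_{\Quad\in\omega^\gamma_\eta(t)}$ is c\`adl\`ag with finitely many jumps on $[0,\delta]$, the event $\{\Quad\in\omega^\gamma_\eta(\delta)\Delta\omega^\gamma_\eta(0)\}$ forces at least one switching time in $(0,\delta]$; hence Proposition~\ref{p.tightness} (using $\E[\mu_{\gamma h}(B^{\op h}_\eta(x))]\asymp\eta^2$ and the arm/boundary estimates in its proof) yields $\P(\Quad\in\omega^\gamma_\eta(\delta)\Delta\omega^\gamma_\eta(0))\le \delta\,C'(\Quad)$ with $C'(\Quad)<\infty$ uniform in $\eta$. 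Summing over the finitely many $\Quad\in\QQ^k$ and then over $k$, dominated convergence gives $\rho(\delta)\to 0$.

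Next I establish tightness of $(\mathrm{Law}(\omega^\gamma_\eta(\cdot)))_\eta$ in $L^1([0,T],(\HH,d_\HH))$. By Fubini and time-stationarity, for $0<\delta<T$,
\[
\E\Big[\int_0^{T-\delta}d_\HH\big(\omega^\gamma_\eta(t+\delta),\omega^\gamma_\eta(t)\big)\,dt\Big]=(T-\delta)\,\E\big[d_\HH(\omega^\gamma_\eta(\delta),\omega^\gamma_\eta(0))\big]\le T\,\rho(\delta).
\]
Since $(\HH,d_\HH)$ is compact, the Fr\'echet--Kolmogorov criterion for relative compactness in $L^1([0,T],(\HH,d_\HH))$ only requires the $L^1$-modulus of translations $\sup_f\int_0^{T-\delta}d_\HH(f(t+\delta),f(t))\,dt$ to vanish as $\delta\to 0$; applying Markov's inequality along a sequence $\delta_n\downarrow 0$ produces, for every $\varepsilon>0$, a compact $K_\varepsilon\subset L^1([0,T],(\HH,d_\HH))$ with $\inf_\eta\P(\omega^\gamma_\eta(\cdot)\in K_\varepsilon)\ge 1-\varepsilon$. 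Hence the family is tight.

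For the identification of the limit it suffices to prove $\E[F(\omega^\gamma_\eta(\cdot))]\to\E[F(\omega^\gamma_\infty(\cdot))]$ for $F$ ranging over the functionals $F(\omega(\cdot))=G\big(\int_0^T\phi_1(t)\Psi_1(\omega(t))\,dt,\dots,\int_0^T\phi_m(t)\Psi_m(\omega(t))\,dt\big)$, with $G$ bounded and Lipschitz, $\phi_i\in C([0,T])$ and $\Psi_i\in C(\HH)$: these are continuous on $L^1([0,T],(\HH,d_\HH))$ (by the subsequence principle, a.e.\ pointwise convergence and dominated convergence) and form a point-separating subalgebra of $C_b$ containing the constants, so, together with the tightness above, they are convergence-determining. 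Fix a countable dense $S\subset[0,T]$ containing $T$, and for a partition $0=q_0<\dots<q_N=T$ with $q_0,\dots,q_{N-1}\in S$ let $F^{(N)}$ be obtained from $F$ by replacing each integral with its Riemann sum $\sum_{k}\phi_i(q_k)\Psi_i(\omega(q_k))(q_{k+1}-q_k)$. Using the uniform modulus $\rho$ (via time-stationarity, as $\E[d_\HH(\omega^\gamma_\eta(t),\omega^\gamma_\eta(q_k))]=\E[d_\HH(\omega^\gamma_\eta(t-q_k),\omega^\gamma_\eta(0))]$ for $t\ge q_k$) together with uniform continuity of the $\Psi_i$ and $\phi_i$, one checks that $\sup_\eta\big|\E[F(\omega^\gamma_\eta(\cdot))]-\E[F^{(N)}(\omega^\gamma_\eta(\cdot))]\big|\to 0$ as the mesh tends to $0$; and since $\omega^\gamma_\infty(\cdot)$ is c\`adl\`ag (Lemma~\ref{l.cadlag modification}), hence Riemann integrable, also $\big|\E[F(\omega^\gamma_\infty(\cdot))]-\E[F^{(N)}(\omega^\gamma_\infty(\cdot))]\big|\to 0$. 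Finally $F^{(N)}$ is a bounded continuous function of $(\omega(q_0),\dots,\omega(q_N))$; since Proposition~\ref{prop3} applies to any countable set of times and every fixed time is a.s.\ a continuity point of $\omega^\gamma_\infty(\cdot)$ (by Proposition~\ref{p.tightness}, as in the proof of Lemma~\ref{l.cadlag modification}), we get $(\omega^\gamma_\eta(q_k))_{k\le N}\to(\omega^\gamma_\infty(q_k))_{k\le N}$ in law, hence $\E[F^{(N)}(\omega^\gamma_\eta(\cdot))]\to\E[F^{(N)}(\omega^\gamma_\infty(\cdot))]$ for each fixed mesh. Choosing the mesh fine enough that the two approximation errors are $<\varepsilon/3$ uniformly in $\eta$, and then $\eta$ small, gives $\E[F(\omega^\gamma_\eta(\cdot))]\to\E[F(\omega^\gamma_\infty(\cdot))]$, which completes the proof.

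The main obstacle is the uniform-in-$\eta$ bound $\E[d_\HH(\omega^\gamma_\eta(\delta),\omega^\gamma_\eta(0))]\le\rho(\delta)\to 0$ of the first paragraph: the crude estimate ``some Poisson clock rings in $[0,\delta]$'' is useless because the total clock rate on $\Tg_\eta$ diverges as $\eta\to 0$, so one really must use that a clock ring affects the crossing of a fixed macroscopic quad only through a pivotal site --- which is exactly the content of the expected number of switches computed in Proposition~\ref{p.tightness}, together with the (mild) bookkeeping relating $d_\HH$ to the crossings of the finitely many quads in each $\QQ^k$. Once that estimate is in hand, the remaining steps --- stationarity, Fubini, the $L^1$ compactness criterion, Riemann-sum approximation, and the finite-dimensional convergence of Proposition~\ref{prop3} --- are routine.
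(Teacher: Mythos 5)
Your proof is correct in overall structure but takes a genuinely different route from the paper, and one step needs repair. The paper works with the explicit coupling of Remark~\ref{r.coupling}: by Fubini it reduces the $L^1$ convergence to showing $\E[d_\HH(\omega^\gamma_\eta(t),\omega^\gamma_\infty(t))]\to 0$ for each fixed $t$, and then uses the triangle inequality through a nearby rational time $q$, bounding the three resulting terms via (a) c\`adl\`ag continuity of the limit, (b) the a.s.\ convergence built into the coupling, and (c) a uniform-in-$\eta$ bound on $\E[d_\HH(\omega^\gamma_\eta(q),\omega^\gamma_\eta(t))]$ from Proposition~\ref{p.tightness}. You instead prove tightness in $d_{L^1}$ directly (via a Kolmogorov--Riesz-type criterion for $L^1$ maps into the compact metric space $(\HH,d_\HH)$, which is sound but requires a bit of care since $\HH$ is not a Banach space) and then identify the limit by Riemann-sum approximation of a convergence-determining family of functionals, using Proposition~\ref{prop3}. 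This buys independence from the particular coupling at the cost of considerably more machinery.

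The one step you should fix is the first-paragraph claim that $\E[d_\HH(\omega^\gamma_\eta(\delta),\omega^\gamma_\eta(0))]$ can be bounded by $\sum_{k}2^{-k}\,\P(\exists\,\Quad\in\QQ^k:\ \Quad\in\omega^\gamma_\eta(\delta)\,\Delta\,\omega^\gamma_\eta(0))$ ``up to an error tending to $0$''. That sum is an upper bound for $\E[d_\HH^{\op{mod}}]$, not for $\E[d_\HH]$, and because $d_\HH^{\op{mod}}$ generates a strictly finer topology than $d_\HH$ and $(\HH,d_\HH^{\op{mod}})$ is not compact, there is no uniform comparison of the form $d_\HH\le F(d_\HH^{\op{mod}})$. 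Citing Lemma~\ref{prop:ss-meas} does not help, since that lemma upgrades $d_\HH$-convergence to $d_\HH^{\op{mod}}$-convergence for a fixed limit law; it is the wrong direction and not uniform in $\eta$. The estimate you actually need --- $\sup_\eta\E[d_\HH(\omega^\gamma_\eta(\delta),\omega^\gamma_\eta(0))]\to 0$ as $\delta\to 0$ --- is the same one the paper invokes with the terse justification ``this follows from Proposition~\ref{p.tightness}''. A clean way to prove it: argue by contradiction, pass to a weakly convergent subsequence of the joint laws of $(\omega^\gamma_\eta(0),\omega^\gamma_\eta(\delta_\eta))$ in the compact space $\HH^2$ (with $\delta_\eta\to 0$), and use Proposition~\ref{p.tightness} together with the fact that each fixed quad-crossing event is a.s.\ a continuity event for the continuum critical law (so the portmanteau theorem applies) to conclude the limit is supported on the diagonal; this contradicts a positive lower bound on $\E[d_\HH]$. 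With that patch, the rest of your argument (stationarity, Fubini, the $L^1$ compactness criterion, and the Riemann-sum identification built on Proposition~\ref{prop3} and Lemma~\ref{l.cadlag modification}) goes through.
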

 \begin{proof}
 	To prove this proposition, we will use the coupling of Remark \ref{r.coupling}. Recall that in this coupling $\omega_\eta^\gamma(q)\to \omega_\infty^\gamma(q)$ for the metric $d_\HH$ for all $q\in \Q_+$. Let us now measure the expected value of the $L^1$ distance between $\omega_\eta^\gamma(\cdot)$ and $\omega_\infty^\gamma(\cdot)$.   
 	\begin{align*}
 	\E\left[\int_0^T d_\HH(\omega_\eta^\gamma(t),\omega_\infty^\gamma(t))dt  \right] = \int_0^T \E\left[ d_\HH(\omega_\eta^\gamma(t),\omega_\infty^\gamma(t)) \right] dt\,.
 	\end{align*}
 	As $d_\HH$ is bounded, we only need to show that $\E\left[ d_\HH(\omega_\eta^\gamma(t),\omega_\infty^\gamma(t)) \right]$ converges to $0$ for each fixed $t\in [0,T]$. By the triangle inequality, for arbitrary $q\in\Q_+$,
 	\begin{align*}
 	d_\HH(\omega_\infty^\gamma(t),\omega_\eta^\gamma(t))
 	\leq
 	d_\HH(\omega_\infty^\gamma(t), \omega_\infty^\gamma(q))
 	+
 	d_\HH(\omega_\infty^\gamma(q), \omega_\eta^\gamma(q))
 	+
 	d_\HH(\omega_\eta^\gamma(q), \omega_\eta^\gamma(t))\,.
 	\end{align*} 
	Since $\omega_\infty^\gamma(\cdot)$ is c\`adl\`ag we can find $\delta>0$ such that for all $q\in[t,t+\delta]$ the expected value of the first term  is smaller than $\epsilon>0$. For any fixed $q$ the second term also converges to $0$ a.s.\ as $\omega_\eta^\gamma(q) \to \omega_\infty^\gamma(q)$ by the choice of coupling. Thus, to finish the proof we have to show that for sufficiently small $\delta'>0$ and all $q\in[t,t+\delta']$,
 	\[\sup_{\eta\in(0,1]} \E\left[ d_\HH(\omega_\eta^\gamma(q),\omega_\eta^\gamma(t))\right] <\epsilon.\]
 	This follows from Proposition \ref{p.tightness}.
 \end{proof}

\subsection{Markov property}
This section concerns the (conditional on $h$) Markov property of $\omega_\infty^\gamma(\cdot)$. Let us first remark that the process $\omega_\eta^\gamma(\cdot)$ is not a Markov process, as the past of the process gives us information about the underlying field $h$. However, when one conditions on $h$ then $\omega_\eta^\gamma(\cdot)$ is Markov. The following proposition says that same is true for $\omega_\infty ^\gamma(\cdot)$. The proof is identical to the proof of \cite[Theorem 11.1]{gps-near-crit} and is therefore omitted.
\begin{prop}[Markov property]
	For any $t>0$ the law of $(\omega^\ga_\infty (t))_{t\geq 0}$ given $h$ is that of a simple Markov process, reversible with respect to the law of $\omega^\ga_\infty(0)$.
\end{prop}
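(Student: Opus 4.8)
The plan is to transfer the (conditional on $h$) Markov property and reversibility from the discrete dynamics to the limit, following \cite[Section~11]{gps-near-crit}. First I would record the elementary discrete facts. Fix an instance of $h$. Conditionally on $h$, the process $\omega^\ga_\eta(\cdot)$ of Definition~\ref{d.dp} is a continuous-time Markov jump process on $\{-1,1\}^{\Tg_\eta\cap D}$ in which each site $x$ flips according to a Poisson clock of rate $r_\eta(x):=\mu_{\gamma h}(B^{\op h}_\eta(x))\,\alpha^\eta_4(\eta,1)^{-1}$ and is refreshed by an independent fair coin at each ring. Since these rates depend only on $h$ and not on the current configuration, detailed balance holds and the uniform (critical percolation) measure is reversible for this chain; as it is also the law of $\omega^\ga_\eta(0)$, the process is, conditionally on $h$, stationary and reversible. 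Write $(P^\eta_u)_{u\ge 0}$ for its transition semigroup. It then suffices to establish, at the level of the finite-dimensional distributions of $\omega^\ga_\infty(\cdot)$ and conditionally on $h$ (the extension to all times being then forced by the right-continuity from Lemma~\ref{l.cadlag modification}): (a) for every $0\le s_1<\dots<s_n$ the time-reversal identity $(\omega^\ga_\infty(s_1),\dots,\omega^\ga_\infty(s_n))\eqD(\omega^\ga_\infty(s_n),\dots,\omega^\ga_\infty(s_1))$; and (b) for rational $0\le s_1<\dots<s_n<t$ and bounded continuous $G\colon\HH^n\to\R$, $F\colon\HH\to\R$,
\begin{multline*}
\E\bigl[G(\omega^\ga_\infty(s_1),\dots,\omega^\ga_\infty(s_n))\,F(\omega^\ga_\infty(t))\bigm| h\bigr]\\
=\E\bigl[G(\omega^\ga_\infty(s_1),\dots,\omega^\ga_\infty(s_n))\,(P_{t-s_n}F)(\omega^\ga_\infty(s_n))\bigm| h\bigr]
\end{multline*}
for a suitable limiting kernel $(P_u)_{u\ge0}$. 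Combined with $\omega^\ga_\infty(t)\eqD\omega^\ga_\infty(0)$ for all $t$ (which follows from Proposition~\ref{prop3} and invariance of critical percolation under the discrete dynamics), items (a) and (b) give the simple Markov property and reversibility with respect to the law of $\omega^\ga_\infty(0)$.

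Item (a) is immediate: reversibility of the discrete chain gives the same identity for $\omega^\ga_\eta(\cdot)$ conditionally on $h$, and this passes to the limit by the finite-dimensional convergence of Proposition~\ref{prop3}, which in the coupling of Remark~\ref{r.coupling} holds for the fixed field $h$. For item (b) the real work is to identify $P_u$ and to pass the discrete Chapman--Kolmogorov identity $\E[G\,F(\omega^\ga_\eta(t))\mid h]=\E[G\,(P^\eta_{t-s_n}F)(\omega^\ga_\eta(s_n))\mid h]$ to the limit. Here I would follow the networks analysis of \cite[Sections~5--7 and~11]{gps-near-crit}, adapted to the inhomogeneous setting exactly as in Section~\ref{s.3} and, for $\gamma\in[2-\sqrt{5/2},\sqrt{3/2})$, Section~\ref{s.ConvLDP}. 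The key structural input is that the measurable rule producing the trajectory from the initial configuration and the driving Poisson point process of ($\eps$-important, then $\eps\to0$) pivotal switches has a composition (restart) property: the rule sending $(\omega^\ga_\infty(s),\{(x_i,t_i,\xi_i):t_i\in(s,s+u]\})$ to $(\omega^\ga_\infty(s+u))_{u\ge 0}$ is the same as the one sending $(\omega^\ga_\infty(0),\{(x_i,t_i,\xi_i):t_i\in(0,u]\})$ to $(\omega^\ga_\infty(u))_{u\ge0}$, and, conditionally on the driving intensity (which is measurable with respect to the configuration), the switch points with time coordinate in $(s,\infty)$ form an independent Poisson process. This forces the conditional law of $(\omega^\ga_\infty(s+u))_{u\ge0}$ given $\cF_s:=\sigma(\omega^\ga_\infty(u):u\le s)$ to depend only on $\omega^\ga_\infty(s)$, which identifies $P_u$; for $\gamma\ge 2-\sqrt{5/2}$ one additionally passes through the $C\to\infty$ limit, the relevant jump counts being controlled by Propositions~\ref{p.tightness} and~\ref{p.uniformicity C}.

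The main obstacle is this last passage to the limit: one must upgrade the finite-dimensional convergence so as to get $\E[G\,(P^\eta_{t-s_n}F)(\omega^\ga_\eta(s_n))\mid h]\to\E[G\,(P_{t-s_n}F)(\omega^\ga_\infty(s_n))\mid h]$, which requires some equicontinuity of the kernels $P^\eta_u$ on a rich enough class of test functions $F$ (e.g.\ those depending only on the crossing of finitely many quads in $\QQ^\infty$) so that $(P^\eta_uF)(\omega^\ga_\eta(s_n))\to(P_uF)(\omega^\ga_\infty(s_n))$ along the coupling of Remark~\ref{r.coupling}. This is precisely what the network machinery together with the stability estimates of \cite[Sections~7--9]{gps-near-crit} and their inhomogeneous analogues in Sections~\ref{s.3} and~\ref{s.ConvLDP} provide; since no genuinely new ingredient is needed, the argument is the one of \cite[Theorem~11.1]{gps-near-crit}.
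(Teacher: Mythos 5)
Your proposal is correct and takes essentially the same approach as the paper: the paper omits the proof entirely, stating it is identical to \cite[Theorem 11.1]{gps-near-crit}, and your sketch is a faithful reconstruction of that argument (discrete reversibility and Markov property conditionally on $h$, transfer to the limit via the coupling of Remark~\ref{r.coupling} and the network/stability machinery, with the extra $C\to\infty$ step for $\gamma\ge 2-\sqrt{5/2}$).
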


\section{Mixing of Liouville dynamical percolation}\label{s6}
In this section we discuss mixing properties of LDP. We study the covariance between the crossings of given finitely many quads at two different times for the limiting dynamic. These estimates are useful to prove the convergence of the finite-dimensional laws in the supercritical regime and to understand the mixing properties of the limiting dynamics in the subcritical regime.  

\subsection{Convergence of the quantum spectral measure}
In the following we are going to see that the modified Liouville spectral measure, defined in \eqref{e. liouville spectral sample}, converges in law as $\eta \to 0$. This allows us to use Lemma \ref{l.covariance} to understand the covariances of $\omega_\infty^\gamma(\cdot)$.
\begin{prop}\label{p.convergence_estimate}
	For $\eta>0$ and $\gamma\in(0,\sqrt{3/2})$, let $f_\eta$ be a Boolean function and let $\S_\eta$ be the corresponding spectral measure (see \eqref{e. identification}). Assume that $(\S_\eta,\1_{\S_\eta(D)\neq 0})$ converges in law to $(\S,\1_{\S(D)\neq 0})$, where we use the topology of weak convergence in the first coordinate. Furthermore, assume $\sup_{\eta\in(0,1]}\E[\EE_{d}(\S_\eta)]<\infty$ for some $d>\gamma^2/2$. Then 
	\begin{equation}
	\lim_{\eta \to 0} \Cov\left[f_\eta( \omega^\gamma_\eta(0)),f_\eta(\omega^\gamma_\eta(t))\mid h \right] = \E\left[\exp(-t\mu^\S_{\gamma h}(D))\1_{\S(D)\neq 0}\mid h\right]\,.
	\label{e. continuous covariance}
	\end{equation}
	Furthermore, for any $\gamma\in(0,2)$ such that $\mu^{\S_\eta}_{\gamma h}(D)$ converges in law to 0,
	\begin{equation}\label{e.to 0}
	\lim_{\eta \to 0} \E\left[f_\eta(\omega^\gamma_\eta(0))f_\eta(\omega^\gamma_\eta(t)) \right] =1\,.
	\end{equation}
\end{prop}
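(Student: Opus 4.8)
The plan is to start from the exact Fourier-side identities of Lemma~\ref{l.covariance} and push them through the scaling limit using the continuity of the LQG measure with respect to its base measure. The statement \eqref{e.to 0} is the easier one: taking expectations in the identity $\E[f_\eta(\omega^\gamma_\eta(0))f_\eta(\omega^\gamma_\eta(t))\mid h]=\E[e^{-t\mu^{\S_\eta}_{\gamma h}(D)}\mid h]$ recorded in \eqref{eq5} gives $\E[f_\eta(\omega^\gamma_\eta(0))f_\eta(\omega^\gamma_\eta(t))]=\E[e^{-t\mu^{\S_\eta}_{\gamma h}(D)}]$; by hypothesis $\mu^{\S_\eta}_{\gamma h}(D)\to 0$ in law, hence in probability since the limit is deterministic, and since $x\mapsto e^{-tx}$ is continuous and bounded by $1$ on $[0,\infty)$, bounded convergence gives $\E[e^{-t\mu^{\S_\eta}_{\gamma h}(D)}]\to 1$.

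For \eqref{e. continuous covariance} I would argue as follows. By \eqref{e.decreasingcovariancecond} it is equivalent to prove that $\E[e^{-t\mu^{\S_\eta}_{\gamma h}(D)}\1_{\S_\eta(D)\neq 0}\mid h]\to\E[e^{-t\mu^\S_{\gamma h}(D)}\1_{\S(D)\neq 0}\mid h]$ for a.e.\ $h$. Since $\S_\eta$ and $\S$ are independent of $h$, fix a generic instance of $h$ and work under the conditional law, under which $(\S_\eta,\1_{\S_\eta(D)\neq 0})$ still converges in law to $(\S,\1_{\S(D)\neq 0})$. Apply Skorokhod's representation theorem to realize this convergence almost surely, with $h$ kept fixed and carried along; the joint convergence in the hypothesis (rather than just convergence in law of $\S_\eta$ alone) is needed here because $\{\sigma:\sigma(D)\neq 0\}$ is not a weak continuity set. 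The energy bound $\sup_\eta\E[\EE_d(\S_\eta)]<\infty$ with $d>\gamma^2/2$ forces, by lower semicontinuity of the energy and Fatou, that $\EE_d(\S)<\infty$ a.s., so both $\text{dim}(\S_\eta)$ and $\text{dim}(\S)$ exceed $\gamma^2/2$ and the LQG measures $\mu^{\S_\eta}_{\gamma h}$ and $\mu^\S_{\gamma h}$ are non-degenerate. Then Proposition~\ref{p.convergence_of_LQG} yields $\mu^{\S_\eta}_{\gamma h}\to\mu^\S_{\gamma h}$, in particular $\mu^{\S_\eta}_{\gamma h}(D)\to\mu^\S_{\gamma h}(D)$; combined with the convergence of the indicators this gives $e^{-t\mu^{\S_\eta}_{\gamma h}(D)}\1_{\S_\eta(D)\neq 0}\to e^{-t\mu^\S_{\gamma h}(D)}\1_{\S(D)\neq 0}$ a.s., and since the left side is bounded by $1$, bounded convergence (under the conditional law given $h$) finishes the proof.

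The step I expect to be the main obstacle is exactly the passage from weak convergence of the base measures $\S_\eta$ to convergence of the total masses $\mu^{\S_\eta}_{\gamma h}(D)$ of the associated LQG measures, i.e.\ the invocation of Proposition~\ref{p.convergence_of_LQG}; this is a nontrivial result proved separately in Appendix~\ref{app:lqg}, and one must also check that no LQG mass escapes to $\partial D$ in the limit, which is harmless here because the spectral measure of a crossing event for quads compactly contained in $D$ is supported away from $\partial D$. A secondary point, already anticipated in the statement, is that the event $\{\S(D)\neq 0\}$ has to be followed jointly with $\S_\eta$ along the convergence, which is why the hypothesis is phrased in terms of the pair $(\S_\eta,\1_{\S_\eta(D)\neq 0})$.
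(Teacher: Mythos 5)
Your proof is correct and follows essentially the same route as the paper: reduce via Lemma~\ref{l.covariance} to convergence of the Laplace functionals, apply a Skorokhod representation with $h$ sampled independently of (and on the same space as) the $\S_\eta$, invoke Proposition~\ref{p.convergence_of_LQG} to pass the weak convergence of base measures through the LQG construction, and finish by bounded convergence. The paper states this in three lines; your elaborations — lower semicontinuity of the $d$-energy plus Fatou to control $\EE_d(\S)$, the observation that $\{\sigma:\sigma(D)\neq 0\}$ is not a weak-continuity set so the joint convergence hypothesis is genuinely needed, and the remark that $\S(\partial D)=0$ must be checked before applying Proposition~\ref{p.convergence_of_LQG} — are all valid supporting details that the paper leaves implicit.
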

\begin{proof}
	For the first part, thanks to Skorokhod embedding we can assume that $(\S_\eta,\1_{\S_\eta=\emptyset})\rta(\S,\1_{\S=\emptyset})$ a.s. Let $h$ be the same field for all $\eta$. Thanks to Lemma \ref{l.covariance} and bounded convergence, it is enough to prove that the following convergence holds in probability 
	\[\lim_{\eta \to 0} \mu_{\gamma h}^{\S_\eta} (D) = \mu_{\gamma h}^\S(D).\]
	This follows from Proposition \ref{p.convergence_of_LQG}.
	
	For the second part, we note that by \eqref{eq5} and bounded convergence, 
	\begin{equation}
	\lim_{\eta \to 0} \E\left[f(\omega^\ga_\eta(0))f(\omega^\ga_\eta(t)) \right] =\lim_{\eta \to 0}\E\left[\exp(-t\mu^{\S_\eta}_{\gamma h}(D)) \right]\to 1\,.
\end{equation}
\end{proof}

The importance of the latter proposition is that it can be applied for crossings.
\begin{corollary}\label{c. convergence crossing}
	Let $\gamma\in(0,\sqrt{3/2})$, let $Q_1,...,Q_n \in \QQ^k$ be a collection of quads, and
	let $f:\HH\to\{0,1 \}$ be the function which says whether all quads $Q_1,...,Q_n$ are crossed. 
	Let $\S_\eta$ be the spectral measure associated with $f$ on the lattice $\Tg_\eta$, and 
	let $\S$ be the weak limit in law of $\S_\eta$ as in Proposition \ref{prop:spectralsample}. Then, almost surely
	\begin{equation}
	\Cov\left[f(\omega^\gamma_\infty(0)),f(\omega^\gamma_\infty(t))\mid h\right]= \E\left[\exp(-t\mu_{\gamma h}^\S(D))\1_{\S(D)\neq 0}\mid h \right]\stackrel{t\to \infty}{\longrightarrow} 0\,.
	\end{equation}
\end{corollary}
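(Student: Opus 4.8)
The plan is to read the stated identity off the $\eta\to 0$ limit of the discrete covariance formula of Lemma~\ref{l.covariance} --- the convergence of the left side coming from the convergence of $\omega^\gamma_\eta(\cdot)$ and the convergence of the right side from Proposition~\ref{p.convergence_estimate} --- and then to force the resulting quantity to vanish as $t\to\infty$ using that the limiting Liouville spectral measure $\mu^{\S}_{\gamma h}$ is a.s.\ non-degenerate on $\{\S(D)\neq 0\}$.

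\emph{Passing to the limit.} By Lemma~\ref{l.covariance}, for every $\eta>0$ and a.e.\ $h$ one has $\Cov[f(\omega^\gamma_\eta(0)),f(\omega^\gamma_\eta(t))\mid h]=\E[e^{-t\mu^{\S_\eta}_{\gamma h}(D)}\1_{\S_\eta(D)\neq 0}\mid h]$. I would first check that Proposition~\ref{p.convergence_estimate} applies: Proposition~\ref{pr.convS} (and its proof, where the lower tail bound of Theorem~\ref{t. spectral tightness} is used) gives the joint convergence in law $(\S_\eta,\1_{\S_\eta(D)\neq 0})\to(\S,\1_{\S(D)\neq 0})$, and Appendix~\ref{app:spectralsample} (extending \cite{gps-fourier}) gives $\sup_{\eta\in(0,1]}\E[\EE_d(\S_\eta)]<\infty$ for every $d<3/4$; since $\gamma<\sqrt{3/2}$ we may fix $d\in(\gamma^2/2,3/4)$, so all hypotheses hold and \eqref{e. continuous covariance} yields $\lim_{\eta\to 0}\Cov[f(\omega^\gamma_\eta(0)),f(\omega^\gamma_\eta(t))\mid h]=\E[e^{-t\mu^{\S}_{\gamma h}(D)}\1_{\S(D)\neq 0}\mid h]$ for a.e.\ $h$. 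Next I would identify this limit with the continuum covariance. Working conditionally on $h$ (the convergence results of Sections~\ref{s.3}--\ref{s.ConvLDP} and Proposition~\ref{prop3} being available quenched, cf.\ the coupling of Remark~\ref{r.coupling} where $h$ is frozen), the finite-dimensional laws of $\omega^\gamma_\eta(\cdot)$ converge to those of $\omega^\gamma_\infty(\cdot)$; moreover, since conditionally on $h$ the limiting dynamics is stationary with the percolation scaling limit as invariant law (Theorem~\ref{thm1}), $\omega^\gamma_\infty(0)$ and $\omega^\gamma_\infty(t)$ each have that law, so the map $\omega\mapsto(\1_{Q_1\in\omega},\dots,\1_{Q_n\in\omega})$ --- recall $Q_i\in\QQ^k$ --- is a.s.\ continuous at each of them for $d_\HH$ (see \cite{ss-planar-perc,gps-near-crit}). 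Hence $f(\omega^\gamma_\eta(0))f(\omega^\gamma_\eta(t))\to f(\omega^\gamma_\infty(0))f(\omega^\gamma_\infty(t))$ in law, and by boundedness $\Cov[f(\omega^\gamma_\eta(0)),f(\omega^\gamma_\eta(t))\mid h]\to\Cov[f(\omega^\gamma_\infty(0)),f(\omega^\gamma_\infty(t))\mid h]$. This proves the identity for a.e.\ $h$ and each fixed $t$; it extends a.s.\ to all $t\ge 0$ simultaneously by approximating $t$ from above by rationals and using right-continuity in $t$ of both sides (dominated convergence on the right; the continuity-point property of $\omega^\gamma_\infty(s)$ for each fixed $s$ on the left).

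\emph{Vanishing as $t\to\infty$.} It remains to show $\E[e^{-t\mu^{\S}_{\gamma h}(D)}\1_{\S(D)\neq 0}\mid h]\to 0$. As the integrand is bounded by $1$, by dominated convergence it suffices that, a.s., $\mu^{\S}_{\gamma h}(D)>0$ on $\{\S(D)\neq 0\}$. By Fatou applied to the uniform bound above, $\E[\EE_d(\S)]<\infty$, hence $\EE_d(\S)<\infty$ a.s.\ for every $d<3/4$ (cf.\ \eqref{eq11}), so $\dim(\S)\ge 3/4$ a.s.\ on $\{\S(D)\neq 0\}$. Since $\S$ is independent of $h$ and $\gamma<\sqrt{3/2}=\sqrt{2\cdot(3/4)}\le\sqrt{2\,\dim(\S)}$, conditioning on $\S$ and invoking \eqref{p.nontrivial} (Kahane \cite{kahane}) gives $\mu^{\S}_{\gamma h}(D)>0$ a.s.\ on $\{\S(D)\neq 0\}$, and by Fubini this holds for a.e.\ $h$. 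Thus $e^{-t\mu^{\S}_{\gamma h}(D)}\1_{\S(D)\neq 0}\to 0$ pointwise a.s.\ as $t\to\infty$, and dominated convergence (conditionally on $h$) finishes the argument.

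\emph{Main obstacle.} The crucial step is showing $\mu^{\S}_{\gamma h}(D)>0$ on $\{\S(D)\neq 0\}$: one must know that $\S$ retains the $3/4$-dimensionality of the discrete spectral samples --- which is exactly where the multi-quad spectral estimates of Appendix~\ref{app:spectralsample} are needed, the single-quad case \cite{gps-fourier} not sufficing --- and then pass from the uniform, annealed energy bound to an a.s.\ lower bound on $\dim(\S)$ and, quenched in $h$, to non-degeneracy of the Gaussian multiplicative chaos $\mu^{\S}_{\gamma h}$. A lesser subtlety is interchanging $\lim_{\eta\to 0}$ with the conditioning on $h$ in the first step, handled by staying inside the coupling of Remark~\ref{r.coupling} where $h$ is held fixed.
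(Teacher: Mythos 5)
Your proposal is correct and follows essentially the same route as the paper's proof: pass the discrete covariance identity of Lemma~\ref{l.covariance} to the limit via Proposition~\ref{p.convergence_estimate} (with the joint convergence of $(\S_\eta,\1_{\S_\eta(D)\neq 0})$ supplied by Proposition~\ref{prop:spectralsample} and the lower bound in Theorem~\ref{t. spectral tightness}), identify the left side as the continuum covariance, and conclude from a.s.\ non-degeneracy of $\mu^{\S}_{\gamma h}(D)$ on $\{\S(D)\neq 0\}$. The only genuine difference is that the paper simply cites \eqref{p.nontrivial} for the last step while you fill in the needed verification that $\dim(\S)\ge 3/4$ a.s.\ (via Fatou and the uniform energy bounds of Appendix~\ref{app:spectralsample}) so that the hypothesis $\gamma<\sqrt{2\dim(\S)}$ is met, and that you identify the continuum covariance using a.s.\ continuity of quad-crossing indicators at the percolation scaling limit rather than invoking Proposition~\ref{p.tightness} as the paper does; both routes give the same in-law convergence of $(f(\omega^\gamma_\eta(0)),f(\omega^\gamma_\eta(t)))$.
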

\begin{proof}	
	By Proposition \ref{prop:spectralsample}, we know that the measures $\S_\eta$ converges in law for the weak topology to some measure $\S$, and by the lower bound in Theorem \ref{t. spectral tightness} we see that $(\S_\eta,\1_{\S_\eta(D)\neq 0})$ converges in law to $(\S,\1_{\S(D)\neq 0})$.
	\begin{equation*}
	\lim_{\eta \to 0}\Cov\left[f(\omega^\gamma_\eta(0)),f(\omega^\gamma_\eta (t))\mid h\right]= \E\left[\exp(-t\mu_{\gamma h}^\S(D))\1_{\S(D)\neq 0} \mid h\right]\,.
	\end{equation*}
	Note that thanks to Proposition \ref{p.tightness}, $(f(\omega^\gamma_\eta(0)),f(\omega^\gamma_\eta(t)))\to (f(\omega^\gamma_\infty(0)),f(\omega^\gamma_\infty(t)))$  in law. Therefore
	\begin{equation}\label{e. covariance crossing}
	\Cov\left[f(\omega^\gamma_\infty(0))f(\omega^\gamma_\infty (t))\mid h\right]= \E\left[\exp(-t\mu_{\gamma h}^\S(D))\1_{\S(D)\neq 0} \mid h\right]\,.
	\end{equation}
	By \eqref{p.nontrivial}, $\mu_{\gamma h}^\S(\D)> 0$ a.s.\ if $\S(D)\neq 0$. Thus, the right  side of \eqref{e. covariance crossing} converges to $0$ as $t\to \infty$. 
\end{proof}

\subsection{Mixing properties of the subcritical regime}

After understanding the correlations of quad crossings, we can obtain information about the mixing in the subcritical regime. The following proposition gives the non-quantitative mixing results of Theorem \ref{p.quantitative decorrelation} for the case where the event $C(t)$ can be expressed in terms of a finite number of quad crossings. The proof combines Corollary \ref{c. convergence crossing} and the inclusion-exclusion principle.
\begin{prop}\label{prop1}
	Consider Liouville dynamical percolation $(\omega^\gamma_\infty(t))_{t\geq 0}$ of parameter $\gamma\in(0,\sqrt{3/2})$. Let $k$ be a natural number, and let $Q_1,\dots,Q_k$ be quads. For some $j\leq k$ and any $t\geq 0$ let $A(t)$ be the event that $Q_1,\dots,Q_j$ are \emph{not} crossed at time $t$, and that $Q_{j+1},\dots,Q_k$ are crossed at time $t$. Then for any event $B(0)$ measurable with respect to $\omega^\ga_\infty(0)$, we have that a.s. 
	\eqb
	\lim_{t\rta\infty}\P[ A(t); B(0) \mid h]= \P[A(0)]\P[B(0)]\,.
	\label{eq:decorr}
	\eqe
\end{prop}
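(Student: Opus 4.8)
The plan is to reduce, via inclusion--exclusion, to the quenched decorrelation of a single ``all quads crossed'' event — which is exactly Corollary~\ref{c. convergence crossing} — and to absorb the arbitrary event $B(0)$ by a Cauchy--Schwarz argument for the conditional covariance form. First I would record two elementary facts: $\omega^\gamma_\infty(0)$ is independent of $h$, and conditionally on $h$ the process $\omega^\gamma_\infty(\cdot)$ is stationary with one-time marginal equal to the law of critical percolation (both being inherited from the discrete dynamics and their scaling limits, together with the Markov/reversibility proposition). Consequently $\P[B(0)\mid h]=\P[B(0)]$ and $\P[A(t)\mid h]=\P[A(t)]=\P[A(0)]$ for every $t$, so that
\[
\P[A(t);B(0)\mid h]=\Cov\!\left[\1_{B(0)},\1_{A(t)}\mid h\right]+\P[A(0)]\P[B(0)],
\]
and it suffices to prove that $\Cov[\1_{B(0)},\1_{A(t)}\mid h]\to 0$ as $t\to\infty$, almost surely.

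Next I would expand $\1_{A(t)}=\prod_{i\le j}\bigl(1-\1_{Q_i\in\omega^\gamma_\infty(t)}\bigr)\prod_{j<i\le k}\1_{Q_i\in\omega^\gamma_\infty(t)}$; inclusion--exclusion on the first product gives $\1_{A(t)}=\sum_{J\subseteq\{1,\dots,j\}}(-1)^{|J|}g_J(\omega^\gamma_\infty(t))$, where $g_J$ is the indicator that all quads $Q_i$ with $i\in J\cup\{j+1,\dots,k\}$ are crossed. (We may assume the $Q_i$ are polygonal, so that Corollary~\ref{c. convergence crossing} applies to each such family; the general case then follows by approximating a general quad-crossing event.) Hence it is enough to show, for each of the finitely many sets $J$, that $\Cov[\1_{B(0)},g_J(\omega^\gamma_\infty(t))\mid h]\to 0$ a.s.\ as $t\to\infty$.

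The heart of the argument is a Cauchy--Schwarz bound. Conditionally on $h$, $\omega^\gamma_\infty(\cdot)$ is a reversible Markov process with stationary law $\nu$ equal to the law of $\omega^\gamma_\infty(0)$; writing $P_t$ for its transition semigroup, which is self-adjoint on $L^2(\nu)$, and $\bar\phi:=\int\phi\,d\nu$, one has for bounded measurable $\phi,\psi$ the identity $\Cov[\phi(\omega^\gamma_\infty(0)),\psi(\omega^\gamma_\infty(t))\mid h]=\langle P_{t/2}(\phi-\bar\phi),P_{t/2}(\psi-\bar\psi)\rangle_\nu$, and therefore
\[
\bigl|\Cov[\phi(\omega^\gamma_\infty(0)),\psi(\omega^\gamma_\infty(t))\mid h]\bigr|
\le \Cov[\phi(\omega^\gamma_\infty(0)),\phi(\omega^\gamma_\infty(t))\mid h]^{1/2}\;\Cov[\psi(\omega^\gamma_\infty(0)),\psi(\omega^\gamma_\infty(t))\mid h]^{1/2};
\]
alternatively this can be obtained by passing to the limit $\eta\to 0$ in the discrete inequality \eqref{eq:fg}. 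Applying it with $\phi=\1_{B(0)}$ and $\psi=g_J$, the first factor is at most $\Var[\1_{B(0)}\mid h]^{1/2}\le 1$, while the second factor equals $\Cov[g_J(\omega^\gamma_\infty(0)),g_J(\omega^\gamma_\infty(t))\mid h]^{1/2}$, which by Corollary~\ref{c. convergence crossing} (applied to the family $\{Q_i:i\in J\cup\{j+1,\dots,k\}\}$) tends to $0$ a.s.\ as $t\to\infty$. Intersecting the almost-sure events over the finitely many $J$ yields $\Cov[\1_{B(0)},\1_{A(t)}\mid h]\to 0$ a.s., and hence \eqref{eq:decorr}.

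The only genuinely substantive point is the Cauchy--Schwarz step, i.e.\ the positive semi-definiteness of $(\phi,\psi)\mapsto\Cov[\phi(\omega^\gamma_\infty(0)),\psi(\omega^\gamma_\infty(t))\mid h]$: this is precisely what lets us pair the arbitrary event $B(0)$ against a single quad-crossing event and so avoids any joint spectral analysis of two distinct quad collections. It relies on the quenched reversibility/Markov property of cLDP established above (or, equivalently, on taking limits of the discrete spectral identity of Lemma~\ref{l.covariance}); the remaining work is just inclusion--exclusion bookkeeping and the already-proven quenched decorrelation of a fixed finite family of quad crossings.
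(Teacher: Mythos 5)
Your proof is correct and follows essentially the same route as the paper: reduce via inclusion--exclusion to the decorrelation of ``all quads crossed'' events (Corollary~\ref{c. convergence crossing}), then use the quenched Cauchy--Schwarz inequality for the covariance form to absorb the arbitrary event $B(0)$. The paper organizes the inclusion--exclusion as an induction on the lexicographically ordered pair $(k,j)$ rather than summing over subsets $J$ directly, and it invokes the discrete inequality \eqref{eq:fg} where you justify the continuum Cauchy--Schwarz via the reversible semigroup $P_{t/2}$, but these are presentational differences and the substance is identical.
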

\begin{proof}
	Corollary \ref{c. convergence crossing} gives that if $j=0$ then
	\eqbn
	\P[ A (t); A(0) \mid h]\rta \P[A(0)]^2\,.
	\eqen
	Applying \eqref{eq:fg} we get that \eqref{eq:decorr} holds for $j=0$.
	
	Consider the lexicographical ordering on pairs $(k,j)$ with $k\in\N$, $j\in\N\cup\{0 \}$, and $j\leq k$. We will prove \eqref{eq:decorr} by induction on $(k,j)$ with this ordering. The case $(k,j)=(1,0)$ is immediate since \eqref{eq:decorr} holds for $j=0$. Consider some $(k,j)$, and assume \eqref{eq:decorr} has be proved for all $(k',j')<(k,j)$. To conclude the proof by induction it is sufficient to argue that \eqref{eq:decorr} also holds for $(k,j)$. If $j=0$ this is immediate by the previous paragraph, so we assume that $j>0$.
	
	Let $\wt A(t)$ be the event that $Q_2,\dots,Q_j$ are \emph{not} crossed at time $t$, and that $Q_{j+1},\dots,Q_k$ are crossed at time $t$. Let $A'(t)$ be the event that $Q_1$ is \emph{not} crossed at time $t$. Then $A(t)=\wt A(t)\cap A'(t)$. Using this identity and the induction hypothesis for $(k-1,j-1)$ and $(k,j-1)$, we get
	\eqbn
	\begin{split}
		\P[ A(t); B(0) \mid h ]
		&= \P[\wt A(t); B(0) \mid h] - \P[\wt A(t);(A'(t))^c; B(0) \mid h]\\
		&\stackrel{t\to \infty}{\longrightarrow} \P[\wt A(t)]\cdot \P[B(0)]- 
		\P[\wt A(t);(A'(t))^c]\cdot
		\P[B(0)]\\
		&=\P[A(t)]\cdot\P[B(0)],
	\end{split}
	\eqen
	which concludes the proof by induction.
\end{proof} 

\begin{proof}[Proof of Theorem \ref{p.quantitative decorrelation}(i)]
	For any $t\geq 0$, the set of the events $A(t)$ from Proposition \ref{prop1} generates the same $\sigma$-algebra as $\omega^\ga_\infty(t)$ viewed as an element of $(\HH,d_{\HH})$ (see \cite[Theorem 1.13]{ss-planar-perc}). Therefore, by \cite[Theorem D, Section 13]{halmos-book}, given any $\eta>0$ we can find an event $A(t)$ as in Proposition \ref{prop1} such that $\P[ A(t)\Delta C(t) ]<\eta$. It follows that
	\eqbn
	\begin{split}
		&\limsup_{t\rta\infty}|\P[ C(t); B(0) \mid h ]-\P[C(t) ]\cdot\P[B(0)]|\\
		&\qquad\leq
		\limsup_{t\rta\infty}|\P[ A(t); B(0) \mid h ]-\P[A(t) ]\cdot\P[B(0)]|+2\eta= 2\eta\,.
	\end{split}
	\eqen 
	Since $\eta$ was arbitrary this concludes the proof.
\end{proof}

\section{Convergence in the supercritical regime}\label{s7}
	In this section we will prove Theorem \ref{thm1} for the supercritical case, i.e., $\gamma\in(\sqrt{3/2},2)$. We start by proving convergence of the finite-dimensional distribution, which is immediate by the following proposition. 
	\begin{prop}\label{p.supercritical is constant}
		Let $\gamma\in(\sqrt{3/2},2)$. Then for any $t>0$ and $\epsilon>0$,
		\[\P\left (d_\HH^{\op{mod}}(\omega^\gamma_\eta(0),\omega^\gamma_\eta(t))\leq \epsilon\right )\to 1 \ \ \text{ as }\eta \to 0.\]
		Thus, the finite-dimensional distribution of $\omega_\eta^\gamma(\cdot)$ converge to those of the constant process $\omega_\infty^\gamma(t)\equiv \omega^\ga_\infty(0)$. 
	\end{prop}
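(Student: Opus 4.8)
The plan is to reduce the statement to showing that for a fixed rectangular quad $\Quad$, the probability $\P(\Quad \in \omega^\gamma_\eta(0)\Delta\omega^\gamma_\eta(t))$ tends to $0$ as $\eta\to 0$; given this for all $\Quad\in\QQ^k$, a union bound over the finitely many quads in $\QQ^k$ gives $\P(d_\HH^{\op{mod}}(\omega^\gamma_\eta(0),\omega^\gamma_\eta(t))\le 2^{-k})\to 1$, and since $k$ is arbitrary this yields the first claim; the statement about finite-dimensional distributions then follows by applying this to all pairs of times in a finite collection (the initial configuration $\omega^\gamma_\eta(0)$ already converges in law to $\omega^\gamma_\infty(0)$ by Smirnov's theorem, and we are asserting every later coordinate is asymptotically equal to it). So the whole content is in controlling the single-quad discrepancy.

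To control $\P(\Quad\in\omega^\gamma_\eta(0)\Delta\omega^\gamma_\eta(t))$, I would use the spectral formalism of Section~\ref{sec:fourier}. Writing $f=f_\Quad=\1_{\Quad\in\cdot}$ for the crossing indicator and $\S_\eta$ for its spectral measure, one has from the identity \eqref{e.decreasingcovariance} that
\[
\P(\Quad\in\omega^\gamma_\eta(0)\Delta\omega^\gamma_\eta(t))
= \tfrac12\bigl(1-\E[f(\omega^\gamma_\eta(0))f(\omega^\gamma_\eta(t))]\bigr)
= \tfrac12\,\E\bigl[\bigl(1-e^{-t\mu^{\S_\eta}_{\gamma h}(D)}\bigr)\1_{\S_\eta(D)\ne 0}\bigr],
\]
so it suffices to show that $\mu^{\S_\eta}_{\gamma h}(D)\to 0$ in law (equivalently, in probability after passing to a Skorokhod coupling), because then $(1-e^{-t\mu^{\S_\eta}_{\gamma h}(D)})\1_{\S_\eta(D)\ne0}\le \min(1, t\,\mu^{\S_\eta}_{\gamma h}(D))\to 0$ and bounded convergence applies. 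This is precisely the hypothesis appearing in the second half of Proposition~\ref{p.convergence_estimate} and in \eqref{e.to 0}, so once we establish $\mu^{\S_\eta}_{\gamma h}(D)\xrightarrow{\text{law}}0$ the proposition follows immediately.

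The heart of the matter — and the step I expect to be the main obstacle — is therefore proving that the \emph{Liouville spectral mass} $\mu^{\S_\eta}_{\gamma h}(D)$ converges in probability to $0$ when $\gamma>\sqrt{3/2}$. Here is how I would argue it. By the results of \cite{gps-fourier} (uniqueness of the limit being Proposition~\ref{prop:spectralsample}), $\S_\eta$ converges in law, viewed simultaneously as a measure and as a random compact set, to a limit $\S$ supported on a set of Hausdorff dimension $3/4$; moreover the total mass $\S_\eta(D)$ has uniformly bounded expectation. The key point is that a $\gamma$-LQG measure is supported on the $\gamma$-thick points of $h$, while the limiting spectral set, having dimension $3/4$, a.s.\ contains no $\gamma$-thick points once $\gamma^2/2 > 3/4$, i.e.\ $\gamma>\sqrt{3/2}$ — so the base measure $\S$ and the thick-point set of $h$ are mutually singular and the LQG measure $\mu^\S_{\gamma h}$ is identically zero. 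Concretely, I would run a first-moment / energy argument: truncate to the moderate points $\M_C$ of \eqref{e.MC intro} (with $\varrho$ small), bound $\E[\mu_{\gamma h}^{\S_\eta}(D\cap\M_C)]$ by combining the moment estimate \eqref{e. momentsagain?} for $\mu_{\gamma h}$ on dyadic boxes with the known first-moment bound on $\S_\eta$ (the number of spectral points in a box of scale $2^{-n}$), and check that for $\gamma>\sqrt{3/2}$ the resulting exponent is strictly negative so that $\E[\mu^{\S_\eta}_{\gamma h}(D\cap\M_C)]\to 0$ uniformly in $\eta$; then control the complement $D\setminus\M_C$ via Lemma~\ref{prop:thickpt} (or rather its spectral analogue, bounding $\mu^{\S_\eta}_{\gamma h}(D\setminus\M_C)$ by the same type of estimate, noting $\S_\eta(D\setminus\M_C)$ is small because the thick behaviour needed to escape $\M_C$ is incompatible with carrying spectral mass in the relevant regime), letting $C\to\infty$ at the end. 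Combining the two pieces gives $\mu^{\S_\eta}_{\gamma h}(D)\to 0$ in $L^1$, hence in probability, which completes the proof.
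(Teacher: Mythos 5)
Your overall architecture matches the paper's proof exactly: reduce $\P\bigl(d_\HH^{\op{mod}}(\omega^\gamma_\eta(0),\omega^\gamma_\eta(t))>2^{-k}\bigr)$ via a union bound over the finitely many quads in $\QQ^k$ to a single-quad discrepancy, rewrite $\P(\Quad\in\omega^\gamma_\eta(0)\Delta\omega^\gamma_\eta(t))$ via the spectral identity, and reduce to showing $\mu^{\S_\eta}_{\gamma h}(D)\to 0$ in probability. The paper does precisely this, invoking \eqref{e.to 0} from Proposition~\ref{p.convergence_estimate}, and then cites Proposition~\ref{p.convergence_to_0} (proved in Appendix~B.2) for the convergence of the Liouville spectral mass to zero, noting that $\S_\eta$ has the form \eqref{e.sigma} with $\beta=5/4+o(1)$ and that $2-\beta<\gamma^2/2$ becomes exactly $\gamma>\sqrt{3/2}$.

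However, your proposed mechanism for the key step is flawed, and this is where the real content of the proposition lies. You suggest splitting $\mu^{\S_\eta}_{\gamma h}(D)$ via the moderate points $\M_C$ (with $\varrho$ \emph{small}), bounding the part on $\M_C$ by a moment computation and the part on $D\setminus\M_C$ via Lemma~\ref{prop:thickpt}. Two things go wrong. First, Lemma~\ref{prop:thickpt} requires $\gamma<\sqrt{3/2}$ and is therefore unavailable; worse, its conclusion is actually false for $\gamma>\sqrt{3/2}$, since for small $\varrho$ the $\gamma$-thick points of $h$ (which carry all of $\mu_{\gamma h}$) a.s.\ escape $\M_C$ once $2-\gamma^2/2<5/4+\varrho$, so $\mu_{\gamma h}(D\setminus\M_C)$ is the \emph{entire} LQG mass rather than something small. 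Second, your claim that "$\S_\eta(D\setminus\M_C)$ is small because the thick behaviour needed to escape $\M_C$ is incompatible with carrying spectral mass" does not hold: $\S_\eta$ is independent of $h$, so the spectral sample has no reason to avoid the thick points of $h$, and indeed $\S_\eta(D\setminus\M_C)$ is typically of order one. The paper's Proposition~\ref{p.convergence_to_0} uses a genuinely different truncation: it slices at the single scale $\eta$ by the event $\{z:\mu_{\gamma h}(B^{\op h}_\eta(z))<\eta^{2-\gamma^2/2+\delta}\}$, calibrated to the LQG exponent rather than the four-arm exponent; on that event a $(1-p)$-moment bound using \eqref{e. momentsagain?} gives a positive power of $\eta$, and on the complement a Markov bound plus a union bound over the $\asymp\eta^{-3/4+o(1)}$ spectral sites gives $\eta^{\gamma^2/2-3/4-\delta+o(1)}\to 0$, which is where $\gamma>\sqrt{3/2}$ enters. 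This single-scale slicing, rather than the all-scale $\M_C$ condition, is what makes the argument close.

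Two minor remarks: convergence in law to the constant $0$ is already equivalent to convergence in probability, so the appeal to a Skorokhod coupling is superfluous; and the indicator $\1_{\S_\eta(D)\neq 0}$ in your covariance display is harmless but unnecessary since $1-e^{-t\mu^{\S_\eta}_{\gamma h}(D)}$ vanishes on the complementary event anyway.
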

	\begin{proof}
		Note that when $\gamma\in(\sqrt{3/2},2)$, Proposition \ref{p.convergence_to_0} implies that if $\S_\eta$ is the spectral measure for the crossing of a quad $\Quad$, then $\mu_{\gamma h}^{\S_\eta} \to 0$ in law as $\eta\rta 0$. Thus, if $f_\Quad$ is the Boolean function encoding the crossing of $\Quad$,	
		\begin{align*}
		\P\left (d_\HH^{\op{mod}}(\omega^\ga_\eta(0),\omega^\ga_\eta(t))>2^{-k}\right )
		&\leq \sum_{\Quad \in \QQ^k} \P(\Quad \in \omega_\eta^\ga(0)\Delta \omega_\eta^\ga(t_j))\\
		&\leq (\# \QQ^k)\cdot \sup_{Q\in\cQ^\infty}(1-\E\left[f_\Quad(\omega^\ga_\eta(0))f_\Quad(\omega^\ga_\eta(t))\right] )\to 0.
		\end{align*}
		as $\eta \to 0$, thanks to \eqref{e.to 0}.
	\end{proof}

	We can now finish the proof of Theorem \ref{thm1}.
	\begin{proof}[Proof of Theorem \ref{thm1}(iii)] Let us take a coupling of $\omega_\eta^\gamma(\cdot)$ such that $\omega_\eta^\gamma(0)$ converges in probability to $\omega_\infty^\gamma(0)$ for $d_{\HH}$. Now, Proposition \ref{p.supercritical is constant} implies that the finite-dimensional distributions of $\omega_\eta^{\gamma}(\cdot)$ converges as $\eta \to 0$ to those of the constant process $\omega_\infty^\gamma(0)$. To show convergence in $L^1([0,T], (\HH,d_\HH))$, we study the expected value of the $L^1$ distance between $\omega_\eta^{\gamma}(\cdot)$ and $\omega_\infty^\gamma(0)$.
		\begin{align}
		\E\left[\int_0^T d_\HH(\omega_\eta^{\gamma}(t),\omega_\infty^{\gamma}(0))\,dt \right]= \int_0^T \E[d_\HH(\omega_\eta^{\gamma}(t),\omega_\infty^{\gamma}(0))]\,dt\,.
		\label{eq1}
		\end{align}
		For any $t\in [0,T]$, 
		\[
		d_\HH(\omega_\eta^{\gamma}(t),\omega_\infty^{\gamma}(0))
		\leq d_\HH(\omega_\eta^{\gamma}(t),\omega_\eta^{\gamma}(0))
		+
		d_\HH(\omega_\eta^{\gamma}(0),\omega_\infty^{\gamma}(0))\,. 
		\]
		The first term on the right side converges to $0$ in probability thanks to Proposition \ref{p.supercritical is constant}, and the second term converges to 0 in probability because $\omega_\eta^\gamma(0)$ converges to $\omega_\infty^\gamma(0)$. Since $t$ was arbitrary, this implies that the right side of \eqref{eq1} converges to 0 as $\eta\rta 0$.
	\end{proof}

\section{Quantitative decorrelation bounds}\label{s8}

In this section, we obtain explicit decorrelation bounds in the case $\gamma\in(0,\sqrt{3/4})$. We use Proposition \ref{p.mass} to obtain a quantitative estimates on the decorrelation of the crossings for cLDP. In Proposition \ref{p.quantitative} right below we have rewritten Proposition \ref{p.quantitative decorrelation} to be more explicit.
	The following lemma will be used in the proof.
	\begin{lemma}
		Let $Q$ be a rectangular quad and consider critical site percolation on $\Tg_\eta$ for some $\eta>0$. Let $\S_\eta$ denote the spectral measure associated with the crossing of $Q$. Then for any $d\in(0,3/4)$,
		\eqb
		\sup_{\eta\in(0,1)}\E[ \cE_d(\S_\eta) ]<\infty.
		\label{eq12}
		\eqe
		\label{prop22}
	\end{lemma}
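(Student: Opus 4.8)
The plan is to bound $\E[\cE_d(\S_\eta)]$ uniformly in $\eta$ by a scale‑by‑scale (dyadic) analysis of the double integral defining the energy. Writing $\S_\eta$ as the rescaled counting measure on the spectral sample (see \eqref{e. identification}),
\begin{equation*}
\cE_d(\S_\eta)=\alpha^\eta_4(\eta,1)^{-2}\sum_{x,y\in\S_\eta}\ \iint_{B^{\op h}_\eta(x)\times B^{\op h}_\eta(y)}\frac{d^2z\,d^2w}{|z-w|^d}\,,
\end{equation*}
and since $d<2$ the inner integral is at most $C\eta^4(|x-y|\vee\eta)^{-d}$. Using $\tfrac1{|z-w|^d}\asymp\sum_{j\ge j_0}2^{jd}\mathbf 1_{|z-w|\le 2^{-j}}$ (with $j_0$ fixed by $\diam(D)$), we get $\cE_d(\S_\eta)\le C\sum_{j\ge j_0}2^{jd}\,(\S_\eta\otimes\S_\eta)(\{|z-w|\le 2^{-j}\})$. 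For $r=2^{-j}\ge\eta$, covering $D$ by $O(r^{-2})$ boxes $B$ of side $2r$ gives $(\S_\eta\otimes\S_\eta)(\{|z-w|\le r\})\le C\sum_B\S_\eta(B)^2$; for $r<\eta$ one uses instead $(\S_\eta\otimes\S_\eta)(\{|z-w|\le r\})\le C\,\alpha^\eta_4(\eta,1)^{-1}\S_\eta(D)\,r^2$, which holds because the density of $\S_\eta$ is $\le\alpha^\eta_4(\eta,1)^{-1}$. Thus \eqref{eq12} will follow once we establish
\begin{equation}
\sup_{\eta\in(0,1)}\E\Big[\sum_B\S_\eta(B)^2\Big]\le C\,\frac{r^2}{\alpha^\eta_4(r,1)},\qquad r\in[\eta,1]\text{ dyadic},
\tag{$\star$}
\end{equation}
the sum being over a tiling of $D$ by boxes of side $2r$. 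Indeed, granting $(\star)$ and $\alpha^\eta_4(r,1)=r^{5/4+o(1)}$, the generic dyadic term of $\E[\cE_d(\S_\eta)]$ is $\le C\,2^{jd}\cdot2^{-j(3/4+o(1))}=C\,2^{j(d-3/4+o(1))}$, which is summable uniformly in $\eta$ precisely because $d<3/4$ (for $j$ beyond a fixed threshold the $o(1)$ correction is $<3/4-d$, and the finitely many remaining terms are $O(1)$); the scales $r<\eta$ together contribute $\le C\,\eta^{-d}\cdot\alpha^\eta_4(\eta,1)^{-1}\eta^2\,\E[\S_\eta(D)]=C\,\eta^{3/4-d+o(1)}\E[\S_\eta(D)]\to0$, using $\E[\S_\eta(D)]=O(1)$.

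It remains to prove $(\star)$, and this is where the spectral‑sample estimates of \cite{gps-fourier} for the crossing of a rectangular quad enter. For a box $B=B_r(x)$ well inside $D$ one has $\S_\eta(B)\asymp\alpha^\eta_4(\eta,1)^{-1}\eta^2\,|\S_\eta\cap B|$ up to negligible effects from hexagons straddling $\partial B$, so $(\star)$ reduces to the second‑moment estimate for the spectral sample restricted to a box,
\begin{equation*}
\E\big[|\S_\eta\cap B_r(x)|^2\big]\le C\,r^4\eta^{-4}\,\alpha^\eta_4(\eta,r)^2\,\alpha^\eta_4(r,1),\qquad r\in[\eta,1],
\end{equation*}
which, combined with quasi‑multiplicativity $\alpha^\eta_4(\eta,r)\,\alpha^\eta_4(r,1)\asymp\alpha^\eta_4(\eta,1)$, turns the right side into $C\,r^4/\alpha^\eta_4(r,1)$; summing over the $O(r^{-2})$ boxes yields $(\star)$. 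This second‑moment estimate packages together the lower‑tail bound $\P[\S_\eta\cap B_r(x)\neq\emptyset]\asymp\alpha^\eta_4(r,1)$ and the concentration of $|\S_\eta\cap B_r(x)|$ conditioned on being nonempty, both of which are (essentially) main results of \cite{gps-fourier} in the rectangular‑quad case; for $r=\eta$ it degenerates to the first‑moment identity $\P[x\in\S_\eta]=\op{Inf}_x(f)\le C\,\alpha^\eta_4(\eta,1)$. Finally, the pieces of $\E[\S_\eta(B)^2]$ coming from boxes $B$ near the boundary or near the four $90^\circ$ corners of $Q$ are of strictly smaller order: there the relevant exponents (half‑plane $3$‑arm, respectively the corner exponents) exceed the bulk $4$‑arm exponent $5/4$, so these contributions only improve the bounds above.

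The main obstacle is thus $(\star)$, i.e.\ the second‑moment/clustering estimate for the spectral sample of a quad crossing; invoking the machinery of \cite{gps-fourier} (convergence and dimension $3/4$ of the spectral sample, its lower tails, and concentration of its mass in a box) for the rectangular‑quad case is the only genuinely non‑elementary step, the rest being bookkeeping with the $4$‑arm exponent and quasi‑multiplicativity. Alternatively one can carry out the same computation through a pointwise pair‑correlation bound $\P[x,y\in\S_\eta]\le C\,\alpha^\eta_4(\eta,|x-y|)^2\,\alpha^\eta_4(|x-y|,1)$, which by quasi‑multiplicativity equals $C\,\alpha^\eta_4(\eta,1)\,\alpha^\eta_4(\eta,|x-y|)$; this too follows from \cite{gps-fourier}.
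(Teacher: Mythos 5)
Your proof is correct, and the ``alternative'' route you sketch in the final paragraph is precisely the paper's proof: the paper observes that $\S_\eta$ has the same one- and two-point functions as the pivotal points (KKL and \cite[Section 1.1]{gps-fourier}), which gives the pair-correlation bound $\P[x,y\in\S_\eta]\lesssim \alpha_4(\eta,1)\,\alpha_4(\eta,|x-y|)$, and then cites the analogous energy bound for the pivotal measure (Proposition~\ref{prop:energy}, which is the gps-fourier quasi-multiplicativity computation) together with a boundary/corner arm-exponent analysis to conclude. Your main route through $(\star)$ and box second moments is a valid repackaging of the same facts, and the scale-by-scale bookkeeping is carried out correctly (including the $r<\eta$ tail using the density cap $\alpha_4(\eta,1)^{-1}$), but it is somewhat circuitous: $(\star)$ itself is derived from the pair-correlation bound, so one may as well integrate that bound directly against $|x-y|^{-d}$. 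One point to flag: your attribution of the box second-moment estimate $\E[|\S_\eta\cap B|^2]\le C r^4\eta^{-4}\alpha_4(\eta,r)^2\alpha_4(r,1)$ to the lower-tail bound $\P[\S_\eta\cap B\neq\emptyset]\asymp\alpha_4(r,1)$ plus conditional concentration invokes far heavier machinery than is needed -- those are among the hardest results of \cite{gps-fourier} -- whereas the \emph{upper} bound on the second moment (which is all that is required here) follows immediately from the elementary two-point function identity by summing over pairs, exactly as the paper does. It would be cleaner (and more faithful to the actual logical dependencies) to phrase the whole argument through the pair-correlation bound from the outset.
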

	\begin{proof} 
		Consider the double integral over $x,y$ in \eqref{eq11}. Recall that the spectral sample has the same one- and two-point functions as the pivotal points (see e.g.\ \cite{kkl-boolean} and \cite[Section 1.1]{gps-fourier}). By 
		Proposition \ref{prop:energy}, \eqref{eq12} holds if we restrict the integral to points where $x,y$ are bounded away from the boundary. By an analysis of boundary and corner arm exponents, the integral is also finite if the points are near the boundary. See the last two paragraphs in proof of Proposition \ref{p.tightness} for a similar estimate.
	\end{proof}
\begin{prop}\label{p.quantitative}
Let us work in the context of Theorem \ref{p.quantitative decorrelation}. For all $\gamma\in(0,\sqrt{3/4})$ and  any $\xi<2\theta/5$  (recall from Proposition \ref{p.mass} that $\theta=\theta(d,\gamma):= \frac {d - \gamma^2} {d + \gamma^2}$), we have that
\[(\P[A(0)A(t)]-\P[A(0)]^2)t^{\xi}\to 0,\]
and 
\begin{equation}\label{e.decorrelation 2}
(\P[B(0)A(t)]-\P[B(0)]\P[A(0)])t^{\xi/2}\to 0\,.
\end{equation}
Furthermore, for  all $\xi<2\theta/5$ and almost surely in $h$, we have the following quenched decorrelation bound
\begin{align*}
&(\P[A(0)A(t)\mid h]-\P[A(0)]^2)t^\xi\to 0 \; \; \ \ \ \ \ \ \text{ and }\\
& (\P[g(0)A(t)\mid h]-\P[g(0)]\P[A(0)])t^{\xi/2}\to 0\,.
\end{align*}
\end{prop}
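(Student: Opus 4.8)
The plan is to prove the annealed statements first using the key covariance identity together with the quantitative moment bound from \cite{GHSS}, and then bootstrap to the quenched statements using monotonicity of the covariance in $t$. Recall the identity \eqref{e.decreasingcovariance} in the scaling limit: by Corollary \ref{c. convergence crossing}, for the crossing event $A$ of a rectangular quad $Q$ we have $\Cov[\1_{A(0)},\1_{A(t)}] = \E[e^{-t\mu_{\gamma h}^{\S}(D)}\1_{\S(D)\neq 0}]$, where $\S$ is the scaling limit of the spectral measure associated with the crossing of $Q$. By Lemma \ref{prop22}, $\sup_{\eta\in(0,1)}\E[\cE_d(\S_\eta)]<\infty$ for every $d\in(0,3/4)$, and since $\cE_d$ is lower semicontinuous under weak convergence, $\E[\cE_d(\S)]<\infty$ as well; in particular $\dim(\S)\geq 3/4$ a.s., so $\gamma<\sqrt{3/4}<\sqrt{2\dim(\S)}$ and the Liouville measure $\mu_{\gamma h}^{\S}$ is well-defined and positive a.s.\ on $\{\S(D)\neq 0\}$ by \eqref{p.nontrivial}.

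First I would prove the annealed bound $\Cov[\1_{A(0)},\1_{A(t)}]t^\xi\to 0$ for $\xi<2\theta/5$ with $\theta=\theta(3/4,\gamma)$. Conditioning on $\S$ (treating $h$ as the Liouville field and $\S$ as its base measure), Proposition \ref{p.mass} gives, for $t\geq t_0 := K[\cE_d(\S)/\S(D)]^{1/\theta}$, the bound $\E[e^{-t\mu_{\gamma h}^{\S}(D)}\mid \S] \leq K/(\S(D)t^\theta)$; for $t<t_0$ we bound the exponential by $1$. Taking expectations,
\begin{equation*}
\Cov[\1_{A(0)},\1_{A(t)}] \leq \E\left[\1_{\S(D)\neq 0}\left(\frac{K}{\S(D)t^\theta}\wedge \1_{t<t_0(\S)}\right)\right]\,.
\end{equation*}
Splitting according to whether $t_0(\S)\leq t^{1/2}$ or not (any exponent strictly between $0$ and $1$ works), the first part contributes $O(t^{-\theta}\E[\S(D)^{-1}\1_{\S(D)\neq 0, t_0\leq t^{1/2}}])$ and the second part is bounded by $\P[t_0(\S)> t^{1/2}] = \P[\cE_d(\S)/\S(D) > (t^{1/2}/K)^{\theta}]$. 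To control these I need moment bounds on $\cE_d(\S)$ and on $\S(D)^{-1}$; the former follows from Lemma \ref{prop22} (giving the first moment, enough for a Markov bound on $\P[t_0>t^{1/2}]$), and the latter from the lower-tail estimate for the spectral sample of a single quad, i.e., that $\P[\S(D)\leq \epsilon] \lesssim \epsilon^{a}$ for some $a>0$ (this is the ``lower bound in Theorem \ref{t. spectral tightness}'' / the lower tail estimate of \cite{gps-fourier}, which by scaling invariance transfers to $\S$). Optimizing the split yields a polynomial rate; chasing the exponents through reproduces $\xi$ arbitrarily close to $2\theta/5$ (the $2/5$ being the ratio coming from $\theta$ appearing against the $t^{1/2}$-type split and the exponents in the spectral lower tail, as in \cite[Section 12]{gps-near-crit}). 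The bound \eqref{e.decorrelation 2} for general $B(0)$ then follows from the Cauchy–Schwarz inequality \eqref{eq:fg2}, which costs a square root and hence gives rate $t^{\xi/2}$.

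Next I would deduce the quenched statements. Fix the field $h$. By \eqref{e.decreasingcovariancecond} (passed to the limit as in Corollary \ref{c. convergence crossing}), $t\mapsto \Cov[\1_{A(0)},\1_{A(t)}\mid h] = \E[e^{-t\mu_{\gamma h}^{\S}(D)}\1_{\S(D)\neq 0}\mid h]$ is non-increasing in $t$ (and non-negative, being an inner product of a function with itself). The annealed bound gives $\E[\,\Cov[\1_{A(0)},\1_{A(t)}\mid h]\,] = \Cov[\1_{A(0)},\1_{A(t)}] = o(t^{-\xi})$. Evaluating along a geometric sequence $t_n = 2^n$ and using Markov's inequality plus Borel–Cantelli, one gets that for any $\xi'<\xi$, a.s.\ $\Cov[\1_{A(0)},\1_{A(t_n)}\mid h]\,t_n^{\xi'}\to 0$; monotonicity in $t$ then fills in the gaps between consecutive $t_n$ (since for $t\in[t_n,t_{n+1}]$ the covariance is $\leq$ its value at $t_n$ and $t^{\xi'}\leq t_{n+1}^{\xi'} = 2^{\xi'}t_n^{\xi'}$), giving the quenched bound with any $\xi'<\xi$, hence for all $\xi<2\theta/5$. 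For the general $B(0)$ (written $g(0)$ in the statement) case, I would apply the quenched Cauchy–Schwarz inequality \eqref{eq:fg} to get $\Cov[\1_{B(0)},\1_{A(t)}\mid h]\leq \Cov[\1_{A(0)},\1_{A(t)}\mid h]^{1/2}$, so the quenched $t^{\xi/2}$ bound follows from the quenched $t^\xi$ bound just established; a subtlety is that \eqref{eq:fg} is stated for the measure $\wt\omega^{C,\gamma}$, so I would either run the argument for the truncated process and take $C\to\infty$ using Proposition \ref{p.uniformicity C} and Lemma \ref{prop:thickpt}, or directly verify that $(f,g)\mapsto \Cov[f(\omega^\gamma_\infty(0)),g(\omega^\gamma_\infty(t))\mid h]$ is a positive semidefinite bilinear form (modulo constants), which it is by the spectral expansion. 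The main obstacle I anticipate is the boundary/corner analysis needed to make Lemma \ref{prop22} and the spectral lower-tail estimate apply uniformly near the boundary of the rectangular quad $Q$ (including handling the multi-arm exponents at the corners), together with verifying that $\dim(\S)\geq 3/4$ survives the passage to the weak limit — both are ``soft'' in spirit but require care, and are essentially the technical content imported from \cite{gps-fourier} and Appendix \ref{app:spectralsample}.
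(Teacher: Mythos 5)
Your overall strategy — pass the covariance through the spectral identity, apply the annealed quantitative bound from \cite{GHSS}, then bootstrap to the quenched statement via monotonicity plus Borel--Cantelli along $t_n=2^n$ — is the same as the paper's, and the quenched deduction is essentially identical to the paper's final Claim. The gap is in the annealed optimization. You split according to $t_0(\S)\lessgtr t^{1/2}$ and then write the first contribution as $O\bigl(t^{-\theta}\,\E[\S(D)^{-1}\1_{\S(D)\neq 0,\,t_0\leq t^{1/2}}]\bigr)$, claiming this is controlled by the lower-tail estimate $\P[\S(D)\leq\epsilon]\lesssim\epsilon^{a}$. But the available spectral lower tail has $a=2/3<1$ (this is what the paper uses when it writes $O(1)t^{-2(\theta-a)/3}$), and $\P[X\leq\epsilon]\lesssim\epsilon^a$ only gives $\E[X^{-p}]<\infty$ for $p<a$; so $\E[\S(D)^{-1}]=\infty$ and this step fails as written. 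The conditioning $\{t_0\leq t^{1/2}\}$ does not help, since it yields only an \emph{upper} bound on $\S(D)$, not a lower one.

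The paper sidesteps exactly this by never taking an expectation of $\S(D)^{-1}$: it keeps the $\wedge 1$ truncation inside the second expectation and converts everything into probability bounds. Concretely, from
\[
\Cov[A(0),A(t)]\leq \P\Bigl(K\bigl[\EE_d(\S)/\S(D)\bigr]^{1/\theta}\geq t\Bigr)+\E\Bigl[\tfrac{K}{\S(D)t^\theta}\wedge 1\Bigr],
\]
it bounds the first probability by $\P(\EE_d(\S)\geq t^a)+\P(\S(D)\leq K^\theta t^{a-\theta})\leq \E[\EE_d(\S)]\,t^{-a}+O(1)\,t^{-2(\theta-a)/3}$ (Markov on $\EE_d$, lower tail on $\S(D)$), and the truncated expectation by $t^{-b}+\P(\S(D)\leq K t^{b-\theta})\leq t^{-b}+O(1)\,t^{-2(\theta-b)/3}$; optimizing with $a=b=2\theta/5$ gives the rate $t^{-2\theta/5}$. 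To salvage your route you would need to reinstate the $\wedge 1$ truncation inside the expectation and then trade it for a lower-tail probability exactly as above, which collapses back to the paper's argument. The remaining ingredients in your sketch — lower semicontinuity of $\EE_d$ to carry Lemma \ref{prop22} to the limit, Cauchy--Schwarz in the form \eqref{eq:fg2}/\eqref{eq:fg} for the $B(0)$ case, and the $C\to\infty$ remark — are all sound.
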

\begin{proof}
We can use Corollary \ref{c. convergence crossing}
 and  Proposition \ref{p.mass} to get
\begin{align}
\nonumber\Cov[A(0),A(t)]
&= \E\left[\exp(-t\mu_{\gamma h}^\S(D)) \1_{\S(D)\neq 0} \right] \\
\label{e. quantitative bound explicit}
&\leq   \P\left( K\left[\frac{\EE_d(\S)}{ \S(D)}\right]^{1/\theta}\geq  t \right) + \E\left[ \frac{K}{\S(D) t^{\theta}}\wedge 1 \right].
\end{align}

Let us control the first term. Take $a\in \R$, and upper bound the first term on the right side of \eqref{e. quantitative bound explicit} by
\begin{align*}
\P\left(\EE_d(\S)\geq t^{a} \right) + \P\left(\S(D)\leq K^\theta t^{a-\theta} \right) \leq \E\left[\EE_d(\S) \right] t^{-a}+O(1)t^{-2(\theta -a )/3}\,.
\end{align*}
Note that the first term on the right side is finite by Lemma \ref{prop22}. 

Now for any $b>0$, we can bound the second term on the right side of \eqref{e. quantitative bound explicit} by
\begin{align*}
t^{-b}+ \P\left[\S(D)t^\theta \leq t^{b} K \right]\leq t^{-b} + O(1)t^{-2(\theta-b)/3}.
\end{align*}
By taking $a=b=2\theta/5$, we obtain 
\begin{align*}
\Cov[A(0),A(t)]\leq O(1) \, t^{-\frac{2\theta}{5}}.
\end{align*}
Equation \eqref{e.decorrelation 2} is obtained by using \eqref{eq:fg2} and this result.

Finally, to conclude the proof, we obtain the quenched results thanks to the claim below. \end{proof}

\begin{claim}
	Let $X_t$ be a random decreasing process such that $\E\left[ X_t \right]t^{\xi}\to 0$ as $t \to \infty$. Then a.s.\ for all $\xi'<\xi$, $X_t t^{\xi'}\to 0$ as $t \to \infty$.
\end{claim}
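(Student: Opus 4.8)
The plan is to prove the Claim by a routine dyadic discretization combined with the Borel--Cantelli lemma, using crucially that $t\mapsto X_t$ is monotone so that its behaviour is controlled by its values along a countable set of times. First note that in every application $X_t\ge 0$ (it is a conditional covariance of the form $\E[\exp(-t\mu^\S_{\gamma h}(D))\1_{\S(D)\ne 0}\mid h]$), so we may and do assume $X_t\ge 0$ and $X_t$ nonincreasing. Fix for the moment a single $\xi'<\xi$. Along the dyadic times $t_n:=2^n$, Markov's inequality gives, for any $\delta>0$,
\[
\P\bigl(X_{2^n}\,2^{n\xi'}>\delta\bigr)\le \delta^{-1}\,\E[X_{2^n}]\,2^{n\xi'}
=\delta^{-1}\bigl(\E[X_{2^n}]\,2^{n\xi}\bigr)\,2^{-n(\xi-\xi')}.
\]
Since $\E[X_{2^n}]\,2^{n\xi}\to 0$, this quantity is bounded by some $M<\infty$, so the right-hand side is at most $\delta^{-1}M\,2^{-n(\xi-\xi')}$, which is summable in $n$ because $\xi-\xi'>0$. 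By Borel--Cantelli, for each fixed $\delta>0$ we have $X_{2^n}\,2^{n\xi'}\le\delta$ for all large $n$ almost surely; applying this to $\delta=1/k$, $k\in\N$, and intersecting over $k$, we get $X_{2^n}\,2^{n\xi'}\to 0$ a.s.

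Next I would pass from the dyadic subsequence to all $t$ using monotonicity: for $t\in[2^n,2^{n+1}]$ we have $X_t\le X_{2^n}$ and $t^{\xi'}\le 2^{(n+1)\xi'}$, hence $X_t\,t^{\xi'}\le 2^{\xi'}\,X_{2^n}\,2^{n\xi'}\to 0$. This yields $X_t\,t^{\xi'}\to 0$ a.s.\ for the fixed $\xi'$. (No joint measurability of the process is needed here, since the conclusion is deduced from the countably many random variables $X_{2^n}$ together with monotonicity.)

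Finally, to obtain the statement for all $\xi'<\xi$ simultaneously, choose a sequence $\xi'_k\uparrow\xi$ with $\xi'_k<\xi$ for all $k$, let $\Omega_k$ be the probability-one event on which $X_t\,t^{\xi'_k}\to 0$, and work on $\bigcap_k\Omega_k$, which again has probability one. Given any $\xi'<\xi$, pick $k$ with $\xi'_k>\xi'$; then for $t\ge 1$ we have $t^{\xi'}\le t^{\xi'_k}$, so $X_t\,t^{\xi'}\le X_t\,t^{\xi'_k}\to 0$. This completes the proof. The only mild point to check is that $X_t$ is (almost surely) nonnegative and nonincreasing, which is exactly how the Claim is used in this paper via Corollary~\ref{c. convergence crossing} and Proposition~\ref{p.mass}; there is no serious obstacle in the argument.
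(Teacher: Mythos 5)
Your proof is correct and follows essentially the same route as the paper's: dyadic discretization along $t_n=2^n$, Markov's inequality, Borel--Cantelli, monotonicity to pass from the dyadic grid to all $t$, and intersection over a countable family of exponents. The only cosmetic difference is where the slack is placed (the paper compares $X_{2^n}2^{n\xi}$ to a threshold $2^{n\delta}$, while you fix $\xi'<\xi$ and compare $X_{2^n}2^{n\xi'}$ to a constant $\delta$); both are equivalent. Your proof also correctly spells out two small points the paper leaves implicit, namely the nonnegativity of $X_t$ needed for Markov and the intersection over $\xi'_k\uparrow\xi$ needed to get the quantifier ``for all $\xi'<\xi$'' a.s.
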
 
Before proving the claim let us note that this is exactly what is needed as $\P[A(0)A(t)\mid h]-\P[A(0)]^2$ is decreasing in $t$ and its expected value is $\P[A(0)A(t)]-\P[A(0)]^2$. Note that the second equation also follows the same argument.
\begin{proof}
	Let us note that as $X_t$ is decreasing it is enough to prove the claim for the sequence $t_n=2^{n}$. First let us take $\delta>0$ and use Markov's inequality to see that for $n$ sufficiently large,
	\begin{align*}
	\P[X_t t^{\xi}>t^{\delta}]\leq \E\left[X_t t^{\xi}\right] t^{-\delta} \leq 2^{-n\delta}.
	\end{align*}
	We conclude by applying the Borel-Cantelli lemma.
\end{proof}

\appendix
\addcontentsline{toc}{section}{Appendices}
\section*{Appendix}
\section{Limit properties of LQG measures}\label{app:lqg}

\subsection{Continuity of LQG measures}
Let $\gamma\geq 0$, let $\sigma^{n}$ be a sequence of random measures in a bounded domain $D\subset\C$ converging in probability for the Prokhorov topology to a measure $\sigma$ with finite total mass, and let $\mu^{n}_{\gamma h}=\mu^{\sigma^n}_{\gamma h}$ be the sequence of $\gamma$-LQG measures of $h$ with respect to $\sigma_n$. The goal of this section is to give a sufficient condition for $\mu^{n}_{\gamma h}$ to converge to $\mu_{\gamma h}^{\sigma}$, the $\gamma$-LQG of $h$ with respect to $\sigma$. 

We will use several estimates from \cite{berestycki-elem}, where it was proved that $\mu^\sigma_{\gamma h}$ is the limit of $\mu^\sigma_{\gamma h_\eps}$ in $L^1$ for $h_\eps$ a smooth approximation to $h$ (see \eqref{e. def GMC}). Many notations will be borrowed from that paper, and it is advisable that the reader is familiar with that paper before reading the proof. 
\begin{prop}\label{p.convergence_of_LQG}
	Take $d >0$ and assume that $\sup_n \E[\EE_{d}(\sigma^{n})]<\infty$. Then, for all $\gamma^2<2d$ and deterministic sets $\Os\subseteq \C$ such that $\sigma (\partial\Os)=0$ a.s., the LQG measures considered above are well-defined and we have that $\mu^{n}_{\gamma h}(\Os)\to \mu_{\gamma h}^{\sigma}(\Os)$ in $L^1$. Furthermore, $\mu^{n}_{\gamma h}|_{\Os}$ converges in probability to  $\mu_{\gamma h}^{\sigma}|_{\Os}$ in the topology of weak convergence of measures on $\Os$. If $\gamma^2<d$ and $\sigma^n\rta\sigma$ in $L^2$, then $\mu^{n}_{\gamma h}(\Os)\to \mu_{\gamma h}^{\sigma}(\Os)$ in $L^2$.
\end{prop}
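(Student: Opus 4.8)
Since $h$ is independent of all the $\sigma^n$ and of $\sigma$, we argue conditionally on $h$ and then integrate; the strategy is the usual mollification-plus-triangle-inequality argument, the point being to keep every estimate uniform in $n$ by expressing it through $\EE_d(\sigma^n)$, whose expectation is bounded by hypothesis. Fix a bounded continuous $\phi$ on $\ol D$; taking $\phi\equiv\1_\Os$ (whose discontinuity set lies in $\partial\Os$, a $\sigma$-null set a.s.) recovers the total-mass statement. For $\eps>0$ and $\sigma'\in\{\sigma^n,\sigma\}$ put $g_\eps(z):=\eps^{\gamma^2/2}e^{\gamma h_\eps(z)}$ and $\mu^{\sigma',\eps}_{\gamma h}(d^2z):=g_\eps(z)\,\sigma'(d^2z)$, so that $\mu^{\sigma'}_{\gamma h}=\lim_{\eps\to0}\mu^{\sigma',\eps}_{\gamma h}$ by \cite{berestycki-elem} (this also gives that the limiting measures are well-defined for $\gamma^2<2d$, using $\EE_d(\sigma^n)<\infty$, $\EE_d(\sigma)<\infty$ a.s.). Then decompose
\begin{align*}
\Big|\int\phi\,d\mu^{\sigma^n}_{\gamma h}-\int\phi\,d\mu^{\sigma}_{\gamma h}\Big|
\le{}&\Big|\int\phi\,d\mu^{\sigma^n}_{\gamma h}-\int\phi g_\eps\,d\sigma^n\Big|
+\Big|\int\phi g_\eps\,d\sigma^n-\int\phi g_\eps\,d\sigma\Big|\\
&{}+\Big|\int\phi g_\eps\,d\sigma-\int\phi\,d\mu^{\sigma}_{\gamma h}\Big|\,,
\end{align*}
and make the outer two terms small uniformly in $n$ (and in $\eps$ small), the middle one small for each fixed $\eps$.

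\textbf{Step 1 (uniform mollification error).} The key claim is that there is $\rho(\eps)\downarrow0$, depending only on the law of $h$ and on $\phi$, such that for every deterministic measure $\sigma'$ on $\ol D$ of finite $d$-energy,
$$\E\Big[\,\Big|\int\phi\,d\mu^{\sigma'}_{\gamma h}-\int\phi\,g_\eps\,d\sigma'\Big|\,\Big]\le\rho(\eps)\,\big(\EE_d(\sigma')+\sigma'(D)\big)\,.$$
When $\gamma^2<d$ this is a direct second-moment computation: $\int\phi g_\eps\,d\sigma'$ is Cauchy in $L^2$ as $\eps\to0$, the square of an increment being bounded by $\iint(|x-y|\vee\eps)^{-\gamma^2}\sigma'(d^2x)\sigma'(d^2y)$ times a factor tending to $0$; on the bounded domain $D$ the far-diagonal part is $\le C|x-y|^{-d}$ and the near-diagonal part is $\lesssim\eps^{d-\gamma^2}\EE_d(\sigma')$. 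For the full range $\gamma^2<2d$ one runs instead Berestycki's truncation (Girsanov) argument — restricting to the good event that $h_\eps(x)\le\alpha\log\eps^{-1}$ for a suitable $\alpha<\sqrt{2d}$, proving $L^2$ convergence of the truncated measures, and bounding the $L^1$ cost of removing the truncation — and observes that every estimate there depends on $\sigma'$ only through $\EE_d(\sigma')$, through $\EE_{d'}(\sigma')$ for $d'<d$ (which is $\le C\,\EE_d(\sigma')$ on a bounded domain), and through $\sigma'(D)$. Applying the claim conditionally on $\sigma^n$, resp.\ on $\sigma$, and taking expectations bounds the first and third terms by $\rho(\eps)\sup_n\E[\EE_d(\sigma^n)+\sigma^n(D)]$ and $\rho(\eps)\E[\EE_d(\sigma)+\sigma(D)]$; the sup is finite by hypothesis, and $\E[\EE_d(\sigma)]<\infty$ by Fatou together with lower semicontinuity of $\sigma'\mapsto\EE_d(\sigma')$ under weak convergence (apply Portmanteau to the bounded l.s.c.\ functions $\min(|x-y|^{-d},M)$ and let $M\to\infty$, along an a.s.\ weakly convergent subsequence of $\sigma^n$).

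\textbf{Step 2 (middle term) and conclusion.} For fixed $\eps$, $g_\eps$ is a.s.\ continuous and bounded on $\ol D$, so $\int\phi g_\eps\,d\sigma^n$ is the integral of a fixed (given $h$) bounded continuous function against $\sigma^n$; since $\sigma^n\to\sigma$ in probability for the Prokhorov topology, every subsequence has a further subsequence converging weakly a.s., along which $\int\phi g_\eps\,d\sigma^{n_k}\to\int\phi g_\eps\,d\sigma$ a.s., whence convergence in probability for each fixed $\eps$. Combining with Step 1 via Markov's inequality gives, for every $\delta>0$, $\limsup_n\P(|\int\phi\,d\mu^{\sigma^n}_{\gamma h}-\int\phi\,d\mu^{\sigma}_{\gamma h}|>\delta)\le C\rho(\eps)/\delta$, and letting $\eps\to0$ yields convergence in probability. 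To upgrade to $L^1$ when $\phi=\1_\Os$, note $\{\mu^{\sigma^n}_{\gamma h}(\Os)\}_n$ is bounded in $L^q$ for some $q>1$: picking $q\in(1,2d/\gamma^2)$, Berestycki's $q$-th moment bound controls $\E[\mu^{\sigma^n}_{\gamma h}(\Os)^q\mid\sigma^n]$ by $\EE_{d'}(\sigma^n)$ for a suitable $d'<d$, which has bounded expectation; uniform integrability then upgrades convergence in probability to $L^1$. Weak convergence of $\mu^{\sigma^n}_{\gamma h}|_\Os$ to $\mu^{\sigma}_{\gamma h}|_\Os$ in probability follows by applying the $L^1$ statement to every set in a fixed countable family of $\sigma$-continuity subsets of $\Os$ that is convergence-determining (finite unions of balls with rational data, discarding the countably many radii with $\sigma(\partial\cdot)>0$) plus diagonal extraction of a.s.\ weakly convergent subsequences. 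Finally, if $\gamma^2<d$ and $\sigma^n\to\sigma$ in $L^2$, the $L^2$ version of Step 1 makes the outer terms small in $L^2$ uniformly in $n$, and the middle term converges in $L^2$ for fixed $\eps$ since $\int\phi g_\eps\,d\sigma^n\to\int\phi g_\eps\,d\sigma$ in $L^2$; hence $\mu^{\sigma^n}_{\gamma h}(\Os)\to\mu^{\sigma}_{\gamma h}(\Os)$ in $L^2$.

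\textbf{Main difficulty.} The only genuinely non-routine point is Step 1 in the regime $d\le\gamma^2<2d$: one must revisit Berestycki's truncation argument and check that all of its error estimates are bounded by the $d$-energy (and total mass) of the base measure, rather than by some finer quantity that need not be uniformly bounded in $n$; granting this, the rest is bookkeeping around the triangle inequality.
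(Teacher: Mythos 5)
Your proof is correct and takes essentially the same approach as the paper's: the same triangle-inequality split into two mollification-error terms plus a fixed-$\eps$ middle term, the same observation that Berestycki's good-event truncation yields a mollification bound depending on the base measure only through $\EE_d(\sigma')$ and $\sigma'(D)$ (hence uniform in $n$ under the hypothesis), and the same bifurcation between a direct $L^2$ argument for $\gamma^2<d$ and the truncation argument for the full range $\gamma^2<2d$. The only cosmetic difference is bookkeeping: you establish convergence in probability and then upgrade via uniform $L^q$-boundedness, whereas the paper bounds all three terms directly in $L^1$; both routes use the same ingredients.
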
 
\begin{proof}
	Fix $\gamma^2<2d$. For simplicity we write $\mu^{n}_{\gamma h}$ as $\mu^{n}$. For some smooth approximation $h_\ep$ to $h$ (e.g.\ the circle average approximation) we write
	$\mu^{n}_{\gamma h_\ep}$ as $\mu^{n}_\ep$. 
	By the triangle inequality, for $n\in\N$,
	\begin{equation*}
	\E|\mu^{n}(\Os)- \mu(\Os)|
	\leq 
	\E|\mu^{n}(\Os)- \mu^n_{\eps}(\Os)|
	+
	\E|\mu^{n}_{\eps}(\Os)- \mu_{\eps}(\Os)|
	+
	\E|\mu_{\eps}(\Os)- \mu(\Os)|\,.
	\end{equation*}
	Since $h_\eps$ is smooth, $\sigma (\partial\Os)=0$, and $\sigma^n\rta\sigma$ in probability, we get that $\mu^{n}_{\eps}(\Os)\rta \mu_{\eps}(\Os)$ in probability. Using $\sup_n \E[\EE_{d}(\sigma^{n})]<\infty$, this gives that
	the second term on the right side converges to 0 as $n\rta\infty$ for any fixed $\eps$. The third term converges to 0 as $\ep\rta 0$ by e.g.\ the main result of \cite{berestycki-elem}.  Therefore, to show that $\mu^{n}_{\gamma h}(\Os)\to \mu_{\gamma h}^{\sigma}(\Os)$ in $L^1$ it is sufficient to handle the first term, i.e., to show that
	\begin{linenomath*}
		\begin{align*}
		\lim_{\eps\rta 0}\sup_{n\in \N} \E\left[|\mu^{n}(\Os)-\mu^{n}_\epsilon(\Os)| \right]=0.
		\end{align*}
	\end{linenomath*}
	This result follows from a close inspection of \cite{berestycki-elem}.
	
	For some $\eps_0\leq 1$ to be determined right below, define the following event $G_\eps^\alpha(x)$, which, roughly speaking, says that the field $h$ is not too large close to $x$  
	\eqbn
	G_\eps^\alpha(x) = \{ h_r(x) \leq \alpha\log(1/r) \text{\,\,for\,\,all\,\,}
	r\in[\eps,\eps_0]
	 \}\,.
	\eqen
	Then define
	\[I_{\epsilon}^n:=\int_{\Os} \1_{(G_\epsilon^a)^c(x)} \mu^n_\epsilon(d^2x), \ \ \ J_\epsilon^n:=\int_{\Os} \1_{G_\epsilon^a(x)} \mu^n_\epsilon(d^2x),\]
	and note that $\mu_\ep^n(\Os)=I_\eps^n+J_\eps^n$.
	By \cite[Lemma 3.2]{berestycki-elem}, for all $\eta>0$ there exists $\epsilon_0>0$ such that $\sup_{n\in \N} \E\left[ I_\epsilon^n\right]\leq \eta$ for all $\ep\in(0,\ep_0)$; we fix $\eps>0$ such that this condition is satisfied. It is sufficient to show the following
	\eqbn
	\lim_{\ep,\ep'\rta 0}\sup_{n\in\N}\E[(J_\ep^n-J_{\ep'}^n)^2]=0\,.
	\eqen
	We will prove this by showing the existence of a function $F:\Os\times\Os\to\R$ such that uniformly in $n$,
	$$
	\E\left[(J_\epsilon^n)^2\right],\,\E\left[J_\epsilon^n J_{\ep'}^n\right]\rta\iint_{\Os\times\Os} F(x,y)\,\sigma^n(d^2x)\sigma^n(d^2y)
	\quad\text{\,as\,\,}\ep,\ep'\rta 0.
	$$
	We just treat  $\E\left[(J_\epsilon^n)^2\right]$, since $\E\left[J_\epsilon^n J_{\ep'}^n\right]$ is treated in the same way. We have
	\eqb
	\begin{split}
		\E\left[(J_\epsilon^n)^2\right]
		=&\,\iint_{|x-y|\leq \delta} e^{\gamma^2\E\left[h_\epsilon(x)h_\epsilon(y)\right]  }\wt \P(G_\epsilon(x)\cap G_\epsilon(y))\,\sigma^n(d^2x)\sigma^n(d^2y)\\
		&+
		\iint_{|x-y|\geq \delta} e^{\gamma^2\E\left[h_\epsilon(x)h_\epsilon(y)\right]  }\wt \P(G_\epsilon(x)\cap G_\epsilon(y))\,\sigma^n(d^2x)\sigma^n(d^2y),
	\end{split}
	\label{eq2}
	\eqe
	where $\wt \P$ is a certain probability measure absolutely continuous with respect to $\P$ (defined above \cite[equation (3.8)]{berestycki-elem}). Now  \cite[equation (3.12)]{berestycki-elem} shows that we can find $\beta<d$ (which corresponds to choosing a nice $\alpha>0$ in \cite{berestycki-elem}) such that for all $n\in \N$, the first term on the right side of \eqref{eq2} is smaller than a constant depending only on the correlation kernel of $h$ times
	\begin{linenomath}
		\begin{align*}
		\iint_{|x-y|\leq \delta} |x-y|^{-\beta} \sigma^n(d^2x)\sigma^n(d^2y)\leq \delta^{\ol\beta-\beta} \sup_{n} \EE_{\ol\beta}(\sigma^n),
		\end{align*}
	\end{linenomath}
	where $\ol\beta$ is chosen such that $\ol\beta\in(\beta,d)$.
	Given $\eta>0$, let us choose $\delta>0$ such that the first term on the right side of \eqref{eq2} is smaller than $\eta$. Now, as in \cite[Lemma 4.1]{berestycki-elem}, when $|x-y|\geq \delta$, we have that $e^{\gamma^2\E\left[h_\epsilon(x)h_\epsilon(y)\right]  }\wt \P(G_\epsilon(x)\cap G_\epsilon(y))$ converges in the topology of uniform convergence to a function $F(x,y)$. Thus, uniformly in $n$, the second term on the right side of \eqref{eq2} converges to $\iint_{|x-y|\geq \delta}F(x,y)\sigma^n(d^2x)\sigma^n(d^2y)$. Since $\eta$ was arbitrary, this concludes the proof that $\mu^{n}_{\gamma h}(\Os)\to \mu_{\gamma h}^{\sigma}(\Os)$ in $L^1$.
	
	The next assertion of the lemma is that $\mu^{n}_{\gamma h}|_{\Os}$ converges in probability to  $\mu_{\gamma h}^{\sigma}|_{\Os}$ in the topology of weak convergence of measures on $\Os$. The proof can be carried out exactly as in \cite{berestycki-elem} and is therefore omitted.
	
	To conclude the proof of the lemma, we will argue that we also have $L^2$ convergence if $\gamma^2<d$ and $\sigma^n\rta\sigma$ in $L^2$. We have 
	\begin{equation*}
	\E|\mu^{n}(\Os)- \mu(\Os)|^2
	\leq 
	3\E|\mu^{n}(\Os)- \mu^n_{\eps}(\Os)|^2
	+
	3\E|\mu^{n}_{\eps}(\Os)- \mu_{\eps}(\Os)|^2
	+
	3\E|\mu_{\eps}(\Os)- \mu(\Os)|^2\,.
	\end{equation*}
	The second term on the right side converges to 0 since $h_\eps$ is smooth and by the assumption that $\sigma^n\rta\sigma$ in $L^2$. The third term on the right side converges to 0 by e.g.\ \cite{berestycki-elem}. The proof that the first term converges to 0 can be done as in the $L^1$ case, except that we may choose $\alpha>2$, which implies that $G_\eps^\alpha(x)$ does not occur for any $x$ a.s., so $I^n_\eps=0$ a.s.
\end{proof}

It is possible to bound uniformly the expected energy of the measure $\lambda^\epsilon$ and its approximations.
\begin{prop} For any $\epsilon>0$ and $d<3/4$,
	\[\sup_{\eta\in(0,1]}\E\left[\EE_d(\lambda^\epsilon(\omega_\eta)) \right]<\infty. \]
	\label{prop:energy}
\end{prop}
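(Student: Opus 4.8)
The plan is to expand the expected $d$-energy as a double sum over the vertices of $\Tg_\eta$ and to estimate it by means of the two-point function of the four-arm event, in the spirit of \cite{gps-pivotal} and of the boundary analysis in the proof of Proposition~\ref{p.tightness}. Recall that $\lambda^\epsilon(\omega_\eta)$ is the measure $\lambda^\epsilon_\eta$ of \eqref{e.lambdaeps}, and that we may assume $\eta<\eps$ (for fixed $\eps$ the supremum over $\eta\in[\eps,1]$ is trivially finite). By the definition \eqref{eq11} of $\EE_d$ and Fubini, since $\alpha^\eta_4(\eta,1)$ is deterministic,
\[
\E[\EE_d(\lambda^\epsilon_\eta)]
= \alpha^\eta_4(\eta,1)^{-2}\sum_{x,y\in\Tg_\eta\cap D}\Big(\int_{B^{\op h}_\eta(x)}\!\int_{B^{\op h}_\eta(y)}\frac{d^2z\,d^2w}{|z-w|^d}\Big)\,\P\big[x,y\text{ both }\eps\text{-important}\big].
\]
First I would record the elementary estimate $\int_{B^{\op h}_\eta(x)}\int_{B^{\op h}_\eta(y)}|z-w|^{-d}\,d^2z\,d^2w\le C\eta^4(|x-y|\vee\eta)^{-d}$, which holds because $d<2$ and the hexagons have diameter $\asymp\eta$ and area $\asymp\eta^2$. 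Grouping the pairs $(x,y)$ by the dyadic scale of $|x-y|$ (there are $\lesssim\eta^{-4}2^{-2k}$ pairs with $|x-y|\asymp 2^{-k}$, for $0\le k\le N:=\lceil\log_2(1/\eta)\rceil$) this yields
\[
\E[\EE_d(\lambda^\epsilon_\eta)]\le C\,\alpha^\eta_4(\eta,1)^{-2}\sum_{k=0}^{N}2^{-(2-d)k}\sup_{|x-y|\asymp 2^{-k}}\P\big[x,y\text{ both }\eps\text{-important}\big].
\]

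The key input is the two-point estimate for the four-arm event, which I would deduce from quasi-multiplicativity \cite{gps-pivotal}. If $|x-y|\ge\eps$ then $\eps$-importance of $x$ and of $y$ are determined by the colours in two disjoint boxes of side $\asymp\eps$, so by independence $\P[x,y\text{ both }\eps\text{-important}]\lesssim\alpha^\eta_4(\eta,\eps)^2\asymp\alpha^\eta_4(\eta,1)^2\alpha^\eta_4(\eps,1)^{-2}$. If $\eta\le r:=|x-y|\le\eps$ then $\eps$-importance of $x$ forces four-arm events in the annulus around $x$ between radii $\eta$ and $r/4$ and in the annulus around $x$ between radii $4r$ and $\eps/2$, while $\eps$-importance of $y$ forces a four-arm event in the annulus around $y$ between radii $\eta$ and $r/4$; these three annuli are pairwise disjoint, so independence and quasi-multiplicativity give
\[
\P\big[x,y\text{ both }\eps\text{-important}\big]\lesssim\alpha^\eta_4(\eta,r)^2\alpha^\eta_4(r,\eps)\asymp\frac{\alpha^\eta_4(\eta,1)^2}{\alpha^\eta_4(r,1)\,\alpha^\eta_4(\eps,1)}.
\]
Inserting these two bounds and splitting the sum at $k_\eps:=\lceil\log_2(1/\eps)\rceil$, the range $k\le k_\eps$ contributes $\lesssim\alpha^\eta_4(\eps,1)^{-2}\sum_{k\le k_\eps}2^{-(2-d)k}\lesssim\alpha^\eta_4(\eps,1)^{-2}$ (a convergent geometric series since $d<2$), while the range $k_\eps<k\le N$ contributes $\lesssim\alpha^\eta_4(\eps,1)^{-1}\sum_{k>k_\eps}2^{-(2-d)k}\alpha^\eta_4(2^{-k},1)^{-1}$. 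Using the uniform lower bound $\alpha^\eta_4(r,1)\ge c_\delta r^{5/4+\delta}$, valid for every $\delta>0$ and all $0<\eta\le r\le1$ (a consequence of $\alpha^\eta_4(r,1)=r^{5/4+o(1)}$ \cite{smirnov-werner-percolation} and quasi-multiplicativity), this last sum is at most $c_\delta^{-1}\sum_{k>k_\eps}2^{k(d-3/4+\delta)}$, which converges to a finite quantity depending only on $d$ and $\eps$ as soon as $\delta<3/4-d$. Since $\sup_{\eta\in(0,\eps)}\alpha^\eta_4(\eps,1)^{-1}<\infty$ for fixed $\eps$, combining the two ranges yields $\sup_{\eta\in(0,1]}\E[\EE_d(\lambda^\epsilon_\eta)]<\infty$; the hypothesis $d<3/4$ is used exactly once, in the choice of $\delta$.

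The step I expect to be the main obstacle is the two-point estimate itself: one must verify carefully that the three four-arm events entering it are supported on pairwise disjoint sets of sites, so that genuine independence (rather than just FKG) applies, and one must treat separately the pairs $x,y$ lying near $\partial D$, where some of the four arms are replaced by two- or three-arm events at boundaries or corners. This boundary analysis is carried out exactly as in the last two paragraphs of the proof of Proposition~\ref{p.tightness}, and since it only decreases the relevant probabilities it does not change the final conclusion; the rest of the argument is bookkeeping with quasi-multiplicativity and summation of geometric series.
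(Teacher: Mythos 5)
Your proof is correct and takes essentially the same route as the paper, which simply defers to the argument of \cite[Lemma 4.5]{gps-fourier}: expand the expected $d$-energy as a double sum over sites, bound the two-point function of $\eps$-pivotal sites by independence on disjoint annuli together with quasi-multiplicativity, and sum over dyadic scales, the hypothesis $d<3/4$ entering only to make the small-scale geometric series $\sum_k 2^{k(d-5/4+o(1))}$ convergent. The one small imprecision is your justification in the range $|x-y|\ge\eps$ (the two $3\eps$-boxes from Definition \ref{d. important} need not be disjoint when $\eps\le|x-y|\lesssim\eps$); the correct and equally easy statement is that the balls $B(x,|x-y|/3)$ and $B(y,|x-y|/3)$ are always disjoint, so $\P[x,y\text{ both }\eps\text{-important}]\lesssim\alpha_4^\eta(\eta,|x-y|/3)^2\asymp\alpha_4^\eta(\eta,\eps)^2$ in that range, which gives the same conclusion.
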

\begin{proof}
	This follows by the argument in the proof of \cite[Lemma 4.5]{gps-fourier}, where it is proved via quasi-multiplicativity that $\E[\lambda^\epsilon(\omega_\eta)^2]<\infty$.
\end{proof}

\subsection{Convergence to 0 of Liouville measures}\label{ss. convergence to 0}

Let $\beta>0$ and assume that $\sigma_\eta$ is a sequence of measures that can be written as
\begin{equation}\label{e.sigma}
\sigma_\eta(d^2z) = C_\eta\eta^{-\beta} \sum_{x\in I_\eta\subseteq \Tg_\eta} \1_{z\in B^{\op h}_\eta(x)}\,d^2z,
\end{equation}
where $C_\eta=\eta^{o(1)}$ is a deterministic sequence and $I_\eta$ is a (possibly random) set independent of $h$. 
Furthermore, assume that the expected total mass of the measure is bounded uniformly in $\eta$, i.e.,
\begin{equation}\label{e. size condition}
\sup_{\eta>0}C_\eta \eta^{2-\beta}\sum_{z\in \Tg_\eta}\P[z\in I_\eta]<\infty\,.
\end{equation}

We are interested in seeing when the Liouville measure associated to this measure converges to $0$. In particular, we are interested in proving the following proposition.
\begin{prop}\label{p.convergence_to_0}
	Assume $0\leq 2-\beta <\gamma^2/2$. Then the sequence of Liouville measures $\mu_{\gamma h}^{\sigma_\eta}$ converges to $0$ in probability.
\end{prop}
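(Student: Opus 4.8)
The plan is to prove that the total mass $\mu_{\gamma h}^{\sigma_\eta}(D)$ tends to $0$ in probability; since the $\mu_{\gamma h}^{\sigma_\eta}$ are positive measures on $\ol D$, this immediately gives convergence of $\mu_{\gamma h}^{\sigma_\eta}$ to the zero measure (e.g.\ in the weak topology). Fix $\eta>0$ for the moment. The hexagons $B^{\op h}_\eta(x)$, $x\in\Tg_\eta$, are pairwise disjoint, so the measure $\sigma_\eta$ in \eqref{e.sigma} is absolutely continuous with bounded density $f_\eta:=C_\eta\eta^{-\beta}\sum_{x\in I_\eta}\1_{B^{\op h}_\eta(\cdot)}$ relative to Lebesgue measure; since $\gamma<2$, $\mu_{\gamma h}$ is well defined and by \eqref{e. def GMC} we have $\mu_{\gamma h}^{\sigma_\eta}=f_\eta\,\mu_{\gamma h}$, so
\[
\mu_{\gamma h}^{\sigma_\eta}(D)=C_\eta\eta^{-\beta}\sum_{x\in I_\eta}\mu_{\gamma h}\big(B^{\op h}_\eta(x)\cap D\big).
\]
As $D$ is bounded, only finitely many $x$ contribute, and since $I_\eta$ is independent of $h$ we may condition on $I_\eta$ and work with the right-hand side.

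The heart of the argument is a fractional-moment estimate. Let $q\in(0,1)$; note $q<1<4/\gamma^2$ because $\gamma<2$. Using subadditivity of $t\mapsto t^q$ on $[0,\infty)$ and then the independence of $I_\eta$ and $h$,
\[
\E\big[\mu_{\gamma h}^{\sigma_\eta}(D)^q\big]\le (C_\eta\eta^{-\beta})^q\sum_{x\in\Tg_\eta}\P[x\in I_\eta]\,\E\big[\mu_{\gamma h}(B^{\op h}_\eta(x)\cap D)^q\big].
\]
By the moment bound \eqref{e. momentsagain?} there is a constant $C$, uniform in $x$ (a point needing care near $\partial D$), with $\E[\mu_{\gamma h}(B^{\op h}_\eta(x))^q]\le C\eta^{a(q)}$ where $a(q):=(2+\gamma^2/2)q-\gamma^2q^2/2$; and by the total-mass assumption \eqref{e. size condition}, $\sum_x\P[x\in I_\eta]\le K\,C_\eta^{-1}\eta^{\beta-2}$. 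Combining these and using $C_\eta=\eta^{o(1)}$,
\[
\E\big[\mu_{\gamma h}^{\sigma_\eta}(D)^q\big]\le CK\,C_\eta^{q-1}\,\eta^{\,a(q)-\beta q+\beta-2}=\eta^{\,e(q)+o(1)},\qquad e(q):=a(q)-\beta q+\beta-2.
\]

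Finally one optimizes over $q$. Explicitly $e(q)=-\tfrac{\gamma^2}{2}q^2+(2+\tfrac{\gamma^2}{2}-\beta)q+(\beta-2)$, a downward parabola with $e(1)=0$ and $e'(1)=2-\tfrac{\gamma^2}{2}-\beta$, so the standing hypothesis $2-\beta<\gamma^2/2$ is exactly the statement $e'(1)<0$; equivalently the roots of $e$ are $q_\ast:=2(2-\beta)/\gamma^2$ and $1$, with $0\le q_\ast<1$, hence $e(q)>0$ for every $q\in(q_\ast,1)$. Choosing such a $q$ gives $\E[\mu_{\gamma h}^{\sigma_\eta}(D)^q]\le \eta^{e(q)+o(1)}\to 0$ as $\eta\to 0$, and Markov's inequality then yields $\mu_{\gamma h}^{\sigma_\eta}(D)\to 0$ in probability, completing the proof. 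The only genuinely delicate step is the uniformity (up to $\partial D$) of the bound $\E[\mu_{\gamma h}(B^{\op h}_\eta(x))^q]\le C\eta^{a(q)}$; this is handled exactly as the boundary estimates in \cite{berestycki-elem}, or one may first use \eqref{e. size condition} together with the crude bound $\E[\mu_{\gamma h}(B^{\op h}_\eta(x))]=O(\eta^2)$ to show that the hexagons within distance $\delta$ of $\partial D$ carry expected mass $O(\delta)$ and let $\delta\downarrow0$. Everything else is a routine first-moment computation.
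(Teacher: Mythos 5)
Your proof is correct and takes a genuinely different route from the paper's. The paper splits the total mass $\mu_{\gamma h}^{\sigma_\eta}(D)$ according to whether each hexagon is ``moderate'' (meaning $\mu_{\gamma h}(B^{\op h}_\eta(z))<\eta^{2-\gamma^2/2+\delta}$) or ``thick'': the moderate part is controlled by writing $a\le a^{1-p}(\eta^{2-\gamma^2/2+\delta})^p$ and taking a first moment, which reduces to the fractional moment $\E[\mu_{\gamma h}(B^{\op h}_\eta(z))^{1-p}]$; the thick part is killed by a union bound plus Markov on $\P(I_\eta\setminus A_\eta\neq\emptyset)$; and the hypothesis $2-\beta<\gamma^2/2$ enters when checking that a single $\delta>0$ makes both bounds decay. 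You dispense with the split entirely: subadditivity of $t\mapsto t^q$ for $q\in(0,1)$ applied to the full sum reduces the $q$-th moment of $\mu_{\gamma h}^{\sigma_\eta}(D)$ to the same per-hexagon estimate \eqref{e. momentsagain?} and total-mass hypothesis \eqref{e. size condition}, and the hypothesis $0\le 2-\beta<\gamma^2/2$ becomes precisely the statement that the two roots $q_*=2(2-\beta)/\gamma^2$ and $1$ of the exponent $e(q)$ leave a nonempty window $(q_*,1)\subset[0,1)$ from which to pick $q$. Both arguments rest on the same moment bound from \cite{aru17} and share the same uniformity-in-$x$ caveat near $\partial D$, which you correctly flag; your version is shorter, eliminates the auxiliary threshold $\delta$ and the event decomposition, and identifies the critical value $2-\beta=\gamma^2/2$ transparently as the point where $e'(1)$ changes sign.
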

\begin{proof}
	We just need to prove that $\mu_{\gamma h}^{\sigma_\eta}(D)\to 0$ in probability. To do that, let us define
	\[A_\eta:=\{z\in D:  \mu_{\gamma h}(B^{\op h}_\eta(z))<\eta^{2-\gamma^2/2+\delta}\}\]
	for some $\delta>0$ to be determined. We have
	\begin{equation}\label{e.musigmaeta}
	\mu_{\gamma h}^{\sigma_\eta}(D)
	=C_\eta\eta^{-\beta} \sum_{z\in I_\eta\subseteq \Tg_\eta} \1_{z\in A_\eta} \mu_{\gamma h}(B^{\op h}_\eta(z))+C_\eta\eta^{-\beta} \sum_{z\in I_\eta\subseteq \Tg_\eta} \1_{z\notin A_\eta} \mu_{\gamma h}(B^{\op h}_\eta(z))\,.
	\end{equation}
	First we show that the first term on the right side of \eqref{e.musigmaeta} converges to $0$ in $L^1$. To do that, let us recall \cite[Proposition 4.1 and Corollary 6.2]{aru17}, which say that for any log-correlated field and any $q<4/\gamma^2$,
	\begin{align}\label{e. moments}
	\E\left[\mu_{\gamma h}(B^{\op h}_r(z))^q \right]\leq r^{ -\gamma^2 q^2/2 + (2+\gamma^2/2)q +O(1)},
	\end{align}
	where the $O(1)$ is uniform in $z$. Thus, for any $p>0$ the expected value of the first term on the right side of \eqref{e.musigmaeta} is upper bounded by
	\begin{align*}
	  &\E\left[	C_\eta\eta^{-\beta} \sum_{z\in I_\eta\subseteq \Tg_\eta}  \mu_{\gamma h}(B^{\op h}_\eta(z))^{1-p} \eta^{(2-\gamma^2/2+\delta)p}\right]\\
	&\qquad\leq \eta^{-\gamma^2(1-p)^2/2+(2+\gamma^2/2)(1-p)+o(1)}\eta^{(2-\gamma^2/2+\delta)p} \eta^{-\beta}\sum_{z\in \Tg}\P[z\in I_{\eta}]\\
	&\qquad=\eta^{-\gamma^2p^2/2+\delta p+O(1)}.
	\end{align*}
	Therefore, for any $\delta>0$ we can find $p>0$ sufficiently small such that the first term on the right side of \eqref{e.musigmaeta} goes to $0$ in probability.
	
	Now, we will show that the second term on the right side of \eqref{e.musigmaeta} converges to $0$ in probability. By Markov's inequality, 
	\begin{align*}
	\P(\mu_{\gamma h}(B^{\op h}_\eta(z))>\eta^{2-\gamma^2/2+\delta})
	\leq \eta^{ 2 - \left (2+\delta-\frac{\gamma^2}{2}\right )+O(1)}
	=\eta^{\frac{\gamma^2}{2}-\delta+O(1)},
	\end{align*}
	where again, the $O(1)$ can be taken uniformly for all $z\in I_\eta$. Therefore we can upper bound the probability that the second term on the right side of \eqref{e.musigmaeta} is bigger than $0$ by
	\begin{align*}
	\P(I_\eta \backslash A_\eta\neq\emptyset)
	=\E\left[ \E\left[ \sum_{z\in I_\eta} \1_{A_{z,\eta}} \,\Big|\, I_\eta \right] \right]
	\leq\E\left[|I_\eta| \right] \sup_{z}\P(A_{z,\eta}^c)
	\leq \eta^{-2+\beta}\eta^{\frac{\gamma^2}{2}-\delta+O(1)}\stackrel{\eta \to 0}{\longrightarrow} 0,
	\end{align*}
	where we have taken $2\delta=\gamma^2/2-2+\beta>0$. This is enough to conclude.
\end{proof}

\subsection{Convergence of the modified Liouville measure}
As in the section before, we work with measures of the type \eqref{e.sigma}. However, we now assume that $\gamma\in(0,\sqrt{3/2})$. We add the assumption that $\sigma_\eta\to \sigma$ a.s, and that $\sup_\eta \E[\EE_d(\sigma_\eta)]<\infty$ for a fixed $d>\gamma^2/2$. Let us note that Proposition \ref{p.convergence_of_LQG}  implies that $\mu_{\gamma h}^{\sigma_\eta}\to \mu_{\gamma h}^\sigma$ in probability for the weak topology. The issue we address in this section is the convergence of the measure
\begin{equation*}
\wt\mu_{\gamma h}^{C,\sigma_\eta}(d^2z):= C_\eta\eta^{-\beta}\sum_{x\in I_\eta} \1_{z\in B^{\op h}_\eta(x)\cap \M_C} \mu_{\gamma h}(d^2z)\,.
\end{equation*}
To do this, let us introduce the following set, where $\varrho$ is as in \eqref{e.MC intro},
\[\M^r_{C}:=\{x \in D: \mu_{\gamma h} (B^{\op h}_{2^{-n}}(x))<C\alpha^{2^{-n}}_4(2^{-n},1)(2^{-n})^{\varrho}, \text{ for all } n\leq \lfloor\log_2(r)\rfloor \},\]
and show the following lemmas.
\begin{lemma}\label{l.continuity mes}
	A.s. for any $n\in \N$,  the function $x\mapsto \mu_{\gamma h} (B^{\op h}_{2^{-n}}(x))$ is continuous.
\end{lemma}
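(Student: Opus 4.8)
The plan is to show continuity of $x \mapsto \mu_{\gamma h}(B^{\op h}_{2^{-n}}(x))$ by establishing that the boundary of the hexagon $B^{\op h}_{2^{-n}}(x)$ carries zero $\mu_{\gamma h}$-mass almost surely, uniformly enough to pass a limit. Concretely, fix $n$ and a convergent sequence $x_k \to x$ in $D$. The sets $B^{\op h}_{2^{-n}}(x_k)$ are just translates of a fixed hexagon, so $\1_{z \in B^{\op h}_{2^{-n}}(x_k)} \to \1_{z \in B^{\op h}_{2^{-n}}(x)}$ for every $z$ not on the boundary $\partial B^{\op h}_{2^{-n}}(x)$. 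Hence by dominated convergence (dominating by $\mu_{\gamma h}$ restricted to a fixed compact neighborhood, which has finite mass by \eqref{p.nontrivial} and local finiteness), we get $\mu_{\gamma h}(B^{\op h}_{2^{-n}}(x_k)) \to \mu_{\gamma h}(B^{\op h}_{2^{-n}}(x))$ provided $\mu_{\gamma h}(\partial B^{\op h}_{2^{-n}}(x)) = 0$.

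So the crux reduces to: almost surely, for all $n \in \N$ and all $x \in D$, one has $\mu_{\gamma h}(\partial B^{\op h}_{2^{-n}}(x)) = 0$. First I would reduce this to a statement about line segments, since the boundary of a hexagon is a finite union of segments; translating, it suffices to show that almost surely $\mu_{\gamma h}$ assigns zero mass to every line segment in $D$ of a fixed direction. For this I would use the regularity estimate from Lemma \ref{prop7}: there is a random $C > 0$ and (taking $\delta$ small) a deterministic exponent $\beta_\gamma - \delta > 0$ such that $\mu_{\gamma h}(B^{\op h}_r(z)) \le C r^{\beta_\gamma - \delta}$ for all $r \in (0,1)$ and $z \in D$. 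Covering a segment of length $\ell$ by $O(\ell/r)$ balls of radius $r$ gives $\mu_{\gamma h}(\text{segment-}r\text{-neighborhood}) \le C' \ell r^{\beta_\gamma - \delta - 1}$; since $\beta_\gamma = 2 - 2\gamma + \gamma^2/2 > 1$ for $\gamma < 2 - \sqrt{2}$, and more to the point one can take $\delta$ small enough that $\beta_\gamma - \delta - 1 > 0$ whenever $\beta_\gamma > 1$, letting $r \to 0$ shows the segment has measure $0$. (When $\beta_\gamma \le 1$ one instead covers the segment by balls and uses that the $(\beta_\gamma - \delta)$-dimensional content of a segment with the scaling in the exponent still vanishes; since the statement only needs to hold for the $\gamma$ in the relevant regime, I would invoke Lemma \ref{prop7} in whatever form gives a positive power of $r$ after summing, using quasi-multiplicativity of $\alpha_4^\eta$ as in Lemma \ref{l.bound measure MC} if needed.)

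Then I would assemble: almost surely the bound of Lemma \ref{prop7} holds with some random finite $C$; on this full-measure event, every line segment in $D$ in each of the three hexagon-edge directions has zero $\mu_{\gamma h}$-mass, hence every hexagon boundary $\partial B^{\op h}_{2^{-n}}(x)$ does too, simultaneously for all $n$ and all $x$ (the bound is uniform in $z$, so no further exceptional set is introduced). On that same event the dominated-convergence argument above applies, giving continuity of $x \mapsto \mu_{\gamma h}(B^{\op h}_{2^{-n}}(x))$ for every $n$.

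The main obstacle is the first step — controlling $\mu_{\gamma h}$ of the hexagon boundaries — and in particular getting a bound that is uniform over $x$ so that the null event does not depend on $x$; this is exactly what Lemma \ref{prop7} provides (its constant $C$ is random but the estimate is uniform in $z$), so the obstacle is essentially already handled by the earlier lemma. The remaining work, passing from the ball estimate to a segment/boundary estimate via a covering argument and then to continuity via dominated convergence, is routine.
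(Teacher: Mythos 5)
Your reduction — continuity follows from dominated convergence once one shows a.s.\ (simultaneously for all $n$ and $x$) that $\mu_{\gamma h}(\partial B^{\op h}_{2^{-n}}(x))=0$, which in turn reduces to showing a.s.\ every line segment in one of the three hexagon-edge directions carries zero mass — matches the paper's. The gap is in how you prove the zero-mass statement, and it is a real one.

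You invoke Lemma~\ref{prop7} to get, a.s., $\mu_{\gamma h}(B^{\op h}_r(z))\le C r^{\beta_\gamma-\delta}$ uniformly in $z$, and then cover a segment of length $\ell$ by $O(\ell/r)$ balls to get $\mu_{\gamma h}(\mathrm{segment})\le C'\ell\, r^{\beta_\gamma-\delta-1}$. This only goes to zero when $\beta_\gamma>1$, i.e.\ $\gamma<2-\sqrt{2}\approx 0.586$. But the lemma is needed for $\gamma$ up to $\sqrt{3/2}\approx1.22$ (it feeds into Lemma~\ref{l.boundary 0} and Proposition~\ref{p.convergence_of_modified_LQG}, which are stated in the full subcritical range). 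For $\gamma\in(2-\sqrt{2},\sqrt{3/2})$ one has $\beta_\gamma<1$ and the bound diverges as $r\to0$. Your hedge (``invoke Lemma~\ref{prop7} in whatever form gives a positive power of $r$'') does not help: $\beta_\gamma$ is the maximal-thickness exponent and is optimal for a.s.\ uniform-in-$z$ ball estimates, so no variant of a deterministic covering argument built on an a.s.\ uniform bound can close the gap — the covering argument is inherently pessimistic in assuming every ball achieves the worst-case thick-point mass. Quasi-multiplicativity of $\alpha_4^\eta$ is not relevant here since the only $\alpha_4$ in Lemma~\ref{prop7} is in the reformulation, not in the key exponent $\beta_\gamma$.

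The paper's proof avoids this by not going through a pointwise a.s.\ bound at all. It estimates the $(1+r)$-moment $\E[\mu_{\gamma h}(I)^{1+r}]$ of a thin $2^{-n}\times 1$ rectangle $I$ directly: it cuts $I$ into $\approx 2^{n/2}$ smaller pieces, reassembles them into an approximately square region supporting a new log-correlated field $\wt h$, compares to a slightly fatter log-correlated field $\check h$ via Kahane's convexity inequality, and gets $\E[\mu_{\gamma h}(I)^{1+r}]\asymp 2^{n(1+r)/2}$. Chebyshev then gives $\P[\mu_{\gamma h}(I)>s]<o_n(1)2^{-n}s^{-1}$, and a union bound over the $2^n$ rectangles of width $2^{-n}$ shows $\P[\sup_I \mu_{\gamma h}(I)>s]\to 0$ as $n\to\infty$, from which the zero-mass claim for lines follows for every $s>0$. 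This moment/Kahane argument works for all $\gamma\in(0,2)$ and is the genuine content of the lemma; the covering argument you propose would need to be replaced by it (or something equivalent) for $\gamma\ge 2-\sqrt{2}$.
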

\begin{proof}
	To see this let us define
	\[f(r):=\sup_{x\in D}\mu_{\gamma h} (\partial B^{\op h}_r(x)\cap D).\]
	Note that the lemma follows from just showing that $\P(f(2^{-n})=0)=1$ for all $n\in \N$.
	
	First, let us see that $f(r)$ is a measurable function of $h$. This follows because
	\[f(r)= \inf_{\epsilon>0} \sup_{x\in \Q^2\cap D}\mu_{\gamma h} ( (B^{\op h}_{r+\epsilon}(x)\backslash B^{\op h}_{r-\epsilon}(x))\cap D).\]
	
	The edges of the hexagonal lattice dual to $\Tg_\eta$ have angle with the $y$-axis equal to $0$, $2\pi/3$, or $4\pi/3$. Therefore, to conclude it is sufficient to show that a.s.\ no line in one of these three directions has positive mass. We will show this for lines parallel to the $y$-axis, but the two other directions can be treated by the exact same argument. 
	
	For simplicity we assume that $D\subset [0,1]^2$; the exact same argument works for $D$ contained in a larger square. For each $n\in\N$ let $\cI_n$ be a collection of $2^n$ rectangles with disjoint interior contained in $[0,1]^2$ of the form $[k2^{-n},(k+1)2^{-n}]\times[0,1]$. By a union bound, in order to conclude it is sufficient to show that for any $s>0$ and for all $I\in\cI_n$,
	\eqb
	\P[ \mu_{\gamma h}(I)>s ] < o_n(1)2^{-n}s^{-1},
	\label{eq8}
	\eqe
	where the $o_n(1)$ is uniform in $I$. Let $\ell=\lceil 2^{n/2}\rceil$, and divide $I$ into $\ell$ disjoint rectangles of width $2^n$ and height $2^n/\ell$. Define a new log correlated field $\wt h$ in $\wt D_n=[0,\ell]\times[0,2^n/\ell]$ as follows. Divide $\wt D_n$ into $\ell$ disjoint rectangles of width $2^n$ and height $2^n/\ell$, and let $\wt\cI_n$ denote this collection of rectangles. For some arbitrary enumeration of $\wt\cI_n$ and $\cI_n$ and $j=1,\dots,\ell$, set $\wt h$ restricted to the $j$th rectangle of $\wt\cI_n$ equal to $h$ restricted to the $j$th rectangle of $\cI_n$. Let $\check h$ be equal to $1+r$ times a log-correlated field in $[0,1]^2$ of the form \eqref{eq:kernel} which is independent of $n$, where $r>0$ is some small parameter to be determined; then the covariance kernel of $\check h$ is equal to $-(1+r)^2\log|x-y|+(1+r)^2g(x,y)$. For sufficiently large $n$, the covariance kernel of $\wt h$ will be pointwise smaller than the covariance kernel of $\check h|_{\wt D_n}$, so the by Kahane's convexity inequality \cite{kahane} (see also \cite{rhodes-vargas-review,aru17}), we have $\E[ \mu_{\gamma h}(I)^{1+r} ]=\E[ \mu_{\gamma \wt h}(\wt D_n)^{1+r} ]\leq \E[ \mu_{\gamma \check h}(\wt D_n)^{1+r} ]$. Proceeding as in e.g.\ \cite[Corollary 6.5]{aru17} we have $\E[ \mu_{\gamma \check h}(\wt D_n)^{1+r}]\asymp 2^{n(1+r)/2}$ for $r$ sufficiently small, so we get \eqref{eq8} by an application of Chebyshev's inequality.
\end{proof}

\begin{lemma}\label{l.boundary 0}
	For any $r>0$, we have that $\mu_{\gamma h}^\sigma(\partial \M_C^r) =0$ a.s.
\end{lemma}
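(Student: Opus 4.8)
The idea is to reduce the statement to a claim about level sets of the continuous functions $x\mapsto\phi_n(x):=\mu_{\gamma h}(B^{\op h}_{2^{-n}}(x)\cap D)$, and then to kill those level sets via a Girsanov (rooted measure) computation.

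\emph{Step 1: reduction to level sets.} By construction $\M_C^r=\bigcap_n U_n$ is a \emph{finite} intersection, where the intersection runs over the finitely many $n$ occurring in the definition of $\M_C^r$ and $U_n:=\{x\in D:\phi_n(x)<c_n\}$ with $c_n:=C\alpha^{2^{-n}}_4(2^{-n},1)\,2^{-n\varrho}$ a deterministic constant. By Lemma \ref{l.continuity mes} each $\phi_n$ is a.s.\ continuous, so each $U_n$ is a.s.\ open and $\M_C^r$ is a.s.\ open. If $x\in\partial\M_C^r$ then, taking limits of points of $\M_C^r$, $\phi_n(x)\le c_n$ for every relevant $n$, while $\phi_n(x)\ge c_n$ for at least one $n$ since $x\notin\M_C^r$; hence a.s.
\[
\partial\M_C^r\ \subseteq\ \bigcup_{n}\{x\in D:\phi_n(x)=c_n\}.
\]
(Here $\mu^\sigma_{\gamma h}$ is well defined under our standing assumptions, $\gamma^2/2<d\le\dim(\sigma)$ a.s.) So it suffices to prove that for each fixed $n$ one has $\mu^\sigma_{\gamma h}(\{\phi_n=c_n\})=0$ a.s.

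\emph{Step 2: Girsanov.} Since $\sigma$ is independent of $h$, condition on $\sigma$, so that $\mu^\sigma_{\gamma h}$ is a GMC with deterministic base measure while the set $\{\phi_n=c_n\}$ is $h$-measurable. Applying the standard Cameron--Martin/rooted-measure identity for GMC (as in \cite{berestycki-elem}), for any bounded measurable $h$-functional $G$,
\[
\E\Big[\int_D G(h)\,\mu^\sigma_{\gamma h}(d^2x)\,\Big|\,\sigma\Big]=\int_D C_g(x)\,\E\big[G(h+\gamma K(x,\cdot))\big]\,\sigma(d^2x),
\]
where $K(x,y)=-\log|x-y|+g(x,y)$ is the covariance kernel and $C_g>0$ is a bounded density. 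Taking $G$ corresponding to the indicator $\1_{\phi_n(x)=c_n}$ and noting that the shift $h\mapsto h+\gamma K(x,\cdot)$ turns $\phi_n(x)$ into
\[
\Phi_{n,x}:=\int_{B^{\op h}_{2^{-n}}(x)}e^{\gamma^2 K(x,z)}\,\mu_{\gamma h}(d^2z)\ =\ \mu^{\tilde\sigma_x}_{\gamma h}\big(B^{\op h}_{2^{-n}}(x)\big),\qquad \tilde\sigma_x(d^2z):=e^{\gamma^2 K(x,z)}\,d^2z,
\]
we get $\E[\mu^\sigma_{\gamma h}(\{\phi_n=c_n\})\mid\sigma]=\int_D C_g(x)\,\P(\Phi_{n,x}=c_n)\,\sigma(d^2x)$. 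Here $\Phi_{n,x}$ is a.s.\ finite, because $\E[\int_{B^{\op h}_{2^{-n}}(x)}|x-z|^{-\gamma^2}\mu_{\gamma h}(d^2z)]=\int_{B^{\op h}_{2^{-n}}(x)}|x-z|^{-\gamma^2}d^2z<\infty$ (as $\gamma^2<2$), and a.s.\ positive; moreover $\dim(\tilde\sigma_x)=\min(2,4-2\gamma^2)>\gamma^2/2$ for $\gamma<\sqrt{3/2}$, a short energy computation, so $\Phi_{n,x}$ is a bona fide GMC mass of the deterministic measure $\tilde\sigma_x$.

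\emph{Step 3: the level is a.s.\ not charged.} It remains to show $\P(\Phi_{n,x}=c)=0$ for every $c$, i.e.\ atomlessness of a GMC mass. This is standard: conditioning on the field at scales coarser than $2^{-K}$ writes $\Phi_{n,x}$ as an a.s.\ positive weight integrated against an independent GMC at finer scales, and the (exact or approximate) self-similarity of GMC then yields, by a routine cascade argument (cf.\ \cite{kahane}), that $\Phi_{n,x}$ has no atom in $(0,\infty)$; it has no atom at $0$ since $\Phi_{n,x}>0$ a.s. Hence $\E[\mu^\sigma_{\gamma h}(\{\phi_n=c_n\})\mid\sigma]=0$, so $\mu^\sigma_{\gamma h}(\{\phi_n=c_n\})=0$ a.s., and summing over the finitely many $n$ gives $\mu^\sigma_{\gamma h}(\partial\M_C^r)=0$ a.s. (Alternatively, since $\varrho>0$ is a free parameter, constrained only by $\varrho<\varrho_0$, and $c_n$ depends injectively on $\varrho$ while only countably many index sets can occur as $r$ ranges over $(0,\infty)$, one may instead simply choose $\varrho$ in the full-measure set of parameters for which $c_n(\varrho)$ avoids the a.s.\ countable atom set of the pushforward of $\mu^\sigma_{\gamma h}$ under $\phi_n$, for all relevant $n$; this dispenses with atomlessness of GMC masses altogether.) The only real obstacle is this last step: since $\mu^\sigma_{\gamma h}$ is mutually singular with Lebesgue measure, one genuinely needs either the atomlessness of GMC masses or the flexibility in the choice of $\varrho$ to rule out that the specific deterministic value $c_n$ is an exceptional level.
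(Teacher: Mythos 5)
Your proof is correct and agrees with the paper's up through the reduction to the level sets $\{x:\phi_n(x)=c_n\}$ via the continuity of $\phi_n$, but the two arguments diverge at the key point of why a level set is not charged. Your Step~2 usefully spells out the rooted-measure / Cameron--Martin identity; the paper compresses this into a terse ``Thanks to Fubini's theorem,'' which is really the same statement, since one needs to check the zero-probability claim under the $x$-rooted law $h\mapsto h+\gamma K(x,\cdot)$ rather than under $\P$ itself. For the atomlessness step, however, the paper takes a different and in some ways cleaner route: it writes $h=\alpha g+h'$ with $\alpha$ a standard Gaussian independent of the log-correlated field $h'$ and $g$ a continuous deterministic function (via \cite[Lemma 5.1]{berestycki-elem}), so that conditionally on $h'$ the mass $\mu_{\gamma h}(B_{2^{-n}}^{\op h}(x))=\int e^{\gamma a g(z)}\,\mu_{\gamma h'}(dz)$ is a non-constant real-analytic function of $a$; since the level sets of a non-constant analytic function have no accumulation points and $\alpha$ has a density, the mass hits any fixed value with probability zero. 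This argument is robust under deterministic shifts of $h$, so it applies directly to your $\Phi_{n,x}$ as well. Your ``routine cascade argument'' is, by contrast, not actually routine for a fixed ball of a general log-correlated field and would need to be carried out in detail to constitute a proof — the cleanest way to do so would in effect reproduce the paper's one-parameter Gaussian perturbation. Your alternative argument via the flexibility of $\varrho$ is legitimate and elegant (Fubini on the product of the probability space with $\varrho$-space, using injectivity of $\varrho\mapsto c_n(\varrho)$ and the a.s.\ countability of the atom set of the pushforward $\phi_n\#\mu^\sigma_{\gamma h}$), and would let one dispense with atomlessness entirely at the cost of restricting the admissible $\varrho$; the paper prefers to keep $\varrho$ unconstrained by this consideration and prove atomlessness outright.
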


\begin{proof}
	Let us define the set 
\begin{equation*}
		E_{C}^n:=\{x \in D: \mu_{\gamma h} (B^{\op h}_{2^{-n}}(x))=C\alpha^{2^{-n}}_4(2^{-n},1)2^{-n\varrho} \},
\end{equation*}
	and note that Lemma \ref{l.continuity mes} implies that $\partial \mu_{\gamma h}^\sigma(\partial \M_C^r) \subseteq \bigcup E_{C}^n$. Thus, it is enough to show that $\mu_{\gamma h}^\sigma(E_{C}^n)=0$. Thanks to Fubini's theorem, it is sufficient to show that for any fixed $x\in D$, a.s.,
	\eqb
	\P[\mu_{\gamma h} (B^{\op h}_{2^{-n}}(x))=C\alpha^{2^{-n}}_4(2^{-n},1)2^{-n\varrho}]=0.
	\label{eq10}
	\eqe
	By the proof of \cite[Lemma 5.1]{berestycki-elem} we can write $h$ on the form $h=\alpha g+h'$, where $g$ is a deterministic continuous function, $\alpha$ is a standard normal random variable, and $h'$ is a random log-correlated field independent of $\alpha$. We may assume that $g$ is not identically equal to zero in $B^{\op h}_{2^{-n}}(x)$. Condition on $h'$ and define the following random function
	$$
	G_{h'}(a) = \mu_{\gamma (ag+h')} (B^{\op h}_{2^{-n}}(x))
	=\int_{B^{\op h}_{2^{-n}}(x)} e^{\gamma a g(z)} d\mu_{\gamma h'}(z).
	$$
	By expanding $e^{\gamma a g(z)}$ pointwise as a power series in $a$, we get that, conditioned on $h'$, the function $a\mapsto G_{h'}(a)$ is real analytic. By calculating the second derivative of $a\mapsto G_{h'}(a)$, we see that the function is not constant. For any constant $c$, the set of points at which a non-constant analytic function is equal to $c$ cannot have any accumulation points; otherwise all derivatives of the function would be zero at this accumulation point, and the function would be constant. In particular, the set of points at which the function is equal to $c$ has zero Lebesgue measure. Since $G_{h'}(\alpha)\eqD \mu_{\gamma h} (B^{\op h}_{2^{-n}}(x))$ and $\alpha$ is a standard normal independent of $h'$, this implies \eqref{eq10}. 
\end{proof}

Let us use this lemma to prove the following proposition.
\begin{prop}\label{p.convergence_of_modified_LQG}
	For all $\gamma\in(0,\sqrt{3/2})$ and all open $\Os\subset D$ such that $\sigma (\Os)<\infty$ and $\sigma (\partial\Os)=0$ a.s., we have that  
	$\wt \mu_{\gamma h}^{C,\sigma_\eta}(\Os)
	=\mu_{\gamma h}^{\sigma_\eta}(\Os\cap \M_C)
	\to\mu_{\gamma h}^{\sigma}(\Os\cap\M_C)
	=\wt\mu_{\gamma h}^{C,\sigma}(\Os)$ in $L^1$ as $\eta \to 0$.
	Furthermore $\wt\mu^{C,\sigma_\eta}_{\gamma h}|_{\Os}$ converges in probability to  $\wt\mu_{\gamma h}^{C,\sigma}|_{\Os}$ in the topology of weak convergence of measures on $\Os$.
\end{prop}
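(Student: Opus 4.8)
The plan is to approximate the random set $\M_C$ by its partial intersections $\M^r_C$, which constrain only finitely many scales and are therefore amenable to Proposition \ref{p.convergence_of_LQG}. Fix $C$ and $\Os$ as in the statement, and for $r\in(0,1)$ set $\Os_r:=\Os\cap\M^r_C$, so that $\Os\cap\M_C=\bigcap_r\Os_r$ and $\Os\cap\M_C\subseteq\Os_r$. Then $\mu_{\gamma h}^{\sigma_\eta}(\Os\cap\M_C)=\mu_{\gamma h}^{\sigma_\eta}(\Os_r)-\mu_{\gamma h}^{\sigma_\eta}(\Os_r\setminus\M_C)$, and similarly for $\sigma$, so by the triangle inequality, for every such $r$,
\begin{equation*}
\E|\wt\mu_{\gamma h}^{C,\sigma_\eta}(\Os)-\wt\mu_{\gamma h}^{C,\sigma}(\Os)|\le \E|\mu_{\gamma h}^{\sigma_\eta}(\Os_r)-\mu_{\gamma h}^{\sigma}(\Os_r)|+\E[\mu_{\gamma h}^{\sigma_\eta}(\Os_r\setminus\M_C)]+\E[\mu_{\gamma h}^{\sigma}(\Os_r\setminus\M_C)].
\end{equation*}
Hence it suffices to show (a) for each fixed $r$, the first term on the right tends to $0$ as $\eta\to0$; and (b) $\sup_{\eta\in(0,1]}\E[\mu_{\gamma h}^{\sigma_\eta}(\Os_r\setminus\M_C)]\to0$ and $\E[\mu_{\gamma h}^{\sigma}(\Os_r\setminus\M_C)]\to0$ as $r\to0$. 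The $L^1$ statement of the proposition then follows by sending first $\eta\to0$ and then $r\to0$.

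For (a): by Lemma \ref{l.continuity mes} the set $\M^r_C$ is a.s.\ open, being a finite intersection of sublevel sets of the continuous maps $x\mapsto\mu_{\gamma h}(B^{\op h}_{2^{-n}}(x))$; hence $\Os_r$ is a.s.\ open, with $\partial\Os_r\subseteq\partial\Os\cup\partial\M^r_C$. Since $\M^r_C$ is $h$-measurable while $(\sigma_\eta)_\eta$ and $\sigma$ are independent of $h$, conditionally on $h$ the set $\Os_r$ is deterministic and $\sigma_\eta\to\sigma$ still holds a.s. We have $\sigma(\partial\Os)=0$ a.s.\ by hypothesis, $\mu_{\gamma h}^\sigma(\partial\M^r_C)=0$ a.s.\ by Lemma \ref{l.boundary 0}, and $\sigma(\partial\M^r_C)=0$ a.s.\ by the argument of Lemma \ref{l.boundary 0} run with the ($h$-independent) measure $\sigma$ in place of $\mu_{\gamma h}^\sigma$, since that argument only uses the pointwise fact that $\P[\mu_{\gamma h}(B^{\op h}_{2^{-n}}(x))=c]=0$ for each fixed $x$ and $c$. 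An inspection of the proof of Proposition \ref{p.convergence_of_LQG} (whose hypothesis $\sup_\eta\E[\EE_d(\sigma_\eta)]<\infty$ with $\gamma^2<2d$ is our standing assumption) shows that it applies verbatim to the conditionally deterministic open set $\Os_r$, giving $\mu_{\gamma h}^{\sigma_\eta}(\Os_r)\to\mu_{\gamma h}^\sigma(\Os_r)$ in probability. Finally $0\le\mu_{\gamma h}^{\sigma_\eta}(\Os_r)\le\mu_{\gamma h}^{\sigma_\eta}(\Os)$ and $\mu_{\gamma h}^{\sigma_\eta}(\Os)\to\mu_{\gamma h}^\sigma(\Os)$ in $L^1$ (Proposition \ref{p.convergence_of_LQG} again), so $\{\mu_{\gamma h}^{\sigma_\eta}(\Os_r)\}_\eta$ is uniformly integrable and the convergence upgrades to $L^1$.

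For (b): the bound for the $\sigma$-term is immediate from dominated convergence, since $\1_{\Os_r\setminus\M_C}\downarrow0$ pointwise as $r\to0$ and $\E[\mu_{\gamma h}^\sigma(D)]\asymp\E[\sigma(D)]<\infty$. The uniform-in-$\eta$ bound for the $\sigma_\eta$-term is the heart of the matter. With $\M_{C,n}$ and $p>0$ as in the proof of Lemma \ref{prop:thickpt} and $m=m(r)\to\infty$ as $r\to0$, one has $\Os_r\setminus\M_C\subseteq\bigcup_{n>m}\M_{C,n}^c$, so it is enough to produce a summable sequence $(a_n)_{n\ge1}$ with $\sup_{\eta\in(0,1]}\E[\mu_{\gamma h}^{\sigma_\eta}(\M_{C,n}^c)]\le a_n$. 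For this one combines the pointwise bound $\1_{\{z\notin\M_{C,n}\}}\le(\mu_{\gamma h}(B^{\op h}_{2^{-n}}(z))/(C\alpha^{2^{-n}}_4(2^{-n},1)2^{-n\varrho}))^p$, the independence $\sigma_\eta\perp h$, and a Cameron--Martin shift of $h$ along $\sigma_\eta$, reducing $\E[\mu_{\gamma h}^{\sigma_\eta}(\M_{C,n}^c)]$ to $\sigma_\eta$-integrals of multi-scale GMC moments controlled via \eqref{e. moments}; the inner scale $\eta$ contributes only a factor $(2^n\eta)^{\kappa}$ with $\kappa>0$, which is harmless because $2^n\eta\le1$, while $\sum_xC_\eta\eta^{-\beta}\P[x\in I_\eta]\asymp\eta^{-2}\E[\sigma_\eta(D)]$ is bounded uniformly in $\eta$ by \eqref{e. size condition}. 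Plugging in $\alpha^{2^{-n}}_4(2^{-n},1)=2^{-n(5/4+o(1))}$ and optimizing, one checks that $a_n$ decays geometrically in $n$ for suitably small $\varrho,p>0$ exactly when $\gamma^2<3/2$ — the same threshold and the same mechanism as in Lemma \ref{prop:thickpt}. This proves (b), hence the $L^1$ convergence. Applying the $L^1$ statement to every open $U\subseteq\Os$ with $\sigma(\partial U)=0$ a.s.\ (a convergence-determining class) then yields $\wt\mu^{C,\sigma_\eta}_{\gamma h}|_\Os\to\wt\mu_{\gamma h}^{C,\sigma}|_\Os$ in probability for the weak topology, exactly as in the corresponding step of the proof of Proposition \ref{p.convergence_of_LQG}.

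The main obstacle is step (b): setting up the Cameron--Martin shift and the multi-scale GMC moment estimate carefully, and verifying that the resulting series converges \emph{uniformly} in $\eta$ — this is precisely where the hypothesis $\gamma<\sqrt{3/2}$ enters, mirroring its role in Lemma \ref{prop:thickpt}. The remaining ingredients — the openness of $\M^r_C$, the $\sigma$-negligibility of $\partial\M^r_C$, and the routine extension of Proposition \ref{p.convergence_of_LQG} to a random open set independent of the base measures — present no real difficulty.
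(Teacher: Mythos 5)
Your decomposition via $\M^r_C$, and the use of Lemma~\ref{l.continuity mes}, Lemma~\ref{l.boundary 0}, and dominated convergence for the $\sigma$-term, are exactly the ingredients the paper uses, so the architecture is right. But there are two places where the proposal diverges from the paper and runs into trouble. First, for part~(a) you propose to condition on $h$ so that $\Os_r=\Os\cap\M^r_C$ becomes deterministic and then apply Proposition~\ref{p.convergence_of_LQG} ``verbatim.'' This does not work as stated: the GMC measures $\mu^{\sigma_\eta}_{\gamma h}$ and $\mu^\sigma_{\gamma h}$ are themselves functions of $h$, so once you condition on $h$ they are deterministic too, and the $L^1$/in-probability conclusions of Proposition~\ref{p.convergence_of_LQG} --- whose proof relies on GMC moment computations over the $h$-randomness --- are no longer available. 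The paper instead uses the \emph{last} assertion of Proposition~\ref{p.convergence_of_LQG} (weak convergence in probability of $\mu^{\sigma_\eta}_{\gamma h}|_\Os\to\mu^\sigma_{\gamma h}|_\Os$, with no conditioning) together with Lemma~\ref{l.boundary 0}, so that a portmanteau-type argument yields convergence of the masses of $\M^r_C\cap\Os$; this also delivers the weak-convergence statement of the proposition directly, without the convergence-determining-class step you append at the end.

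Second, for part~(b), which you flag as ``the heart of the matter,'' you set up a multi-scale scheme summing $\E[\mu_{\gamma h}^{\sigma_\eta}(\M_{C,n}^c)]$ over $n>m(r)$ via Cameron--Martin shifts and the moment bound \eqref{e. moments}, and you assert that $\gamma<\sqrt{3/2}$ is what makes this sum converge. This is much heavier than what is needed, and the attribution of the threshold is off. The paper simply observes that the term in question equals
\[
\mu_{\gamma h}^{\sigma_\eta}\bigl((\M^r_C\setminus\M_C)\cap\Os\bigr)
=C_\eta\eta^{-\beta}\sum_{x\in I_\eta}\mu_{\gamma h}\bigl(B^{\op h}_\eta(x)\cap(\M^r_C\setminus\M_C)\cap\Os\bigr),
\]
whose expectation is bounded, uniformly in $\eta$, by a constant times $\E[\mu_{\gamma h}(\M^r_C\setminus\M_C)]$ (using $\sigma_\eta\perp h$); the latter is independent of $\eta$ and tends to $0$ as $r\downarrow 0$ by dominated convergence, since $\M^r_C\downarrow\M_C$. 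No Cameron--Martin shift, no scale-by-scale estimate, and no direct use of $\gamma<\sqrt{3/2}$ is involved here: that hypothesis enters only through the choice of $\varrho$ in the definition of $\M_C$ (Lemma~\ref{prop:thickpt}), not through a summability argument inside this proposition. So your sketch of the hard term is both incomplete (the multi-scale computation is not actually carried out) and unnecessary: a one-line replacement was available.
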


\begin{proof}
	Let us start by fixing $r>0$ and upper bounding $|  \mu_{\gamma h}^{\sigma}(\M_C\cap \Os)-\wt \mu_{\gamma h}^{C,\sigma_\eta}(\Os)|$ by 
	\begin{equation}
	\begin{split}
	|\mu_{\gamma h}^{\sigma}(\M_C\cap \Os)
	-\mu_{\gamma h}^{\sigma}(\M_C^r\cap \Os)|
	&+
	| \mu_{\gamma h}^{\sigma}(\M_C^r\cap \Os)
	-\mu_{\gamma h}^{\sigma_\eta}(\M_C^r\cap \Os)|\\
	&+
	|\mu_{\gamma h}^{\sigma_\eta}(\M_C^r\cap \Os)
	-\wt \mu_{\gamma h}^{C,\sigma_\eta}(\Os)|\,.
	\end{split}
	\end{equation}
	Let us first note that as $r\to 0$ the first term converges to $0$ a.s.\ thanks to the fact that $\M_C^r\downarrow \M_C$ as $r\downarrow 0$. For the second term, we need to show is that for fixed $r>0$,  $\wt \mu_{\gamma h}^{C,\sigma_\eta}( \M_C^r\cap \Os)=\mu_{\gamma h}^{\sigma_\eta}(\M_C^r\cap \Os)\to \mu_{\gamma h}^{\sigma}(\M_C^r\cap \Os)$ as $\eta \to 0$. This is true thanks to the last assertion of Proposition \ref{p.convergence_of_LQG} and Lemma \ref{l.boundary 0}. For the last term, we use that $\M_C^r$ is decreasing in $r$ to show that it is equal to
	\begin{equation*}
	C_\eta \eta^{-\beta}\sum_{x\in I_\eta}\mu_{\gamma h}(B^{\op h}_\eta(x)\cap (\M_C^r\backslash \M_C)\cap \Os)\,.
	\end{equation*}
	Thus, its expected value is bounded by a constant times
	\begin{equation}
	\E\left[ \mu_{\gamma h}(\M_C^r\backslash \M_C)\right]\,.
	\end{equation}
	Note that this term is independent of $\eta$, and that $\M_C^r\downarrow \M_C$ as $r\downarrow 0$. Thus, we can use dominated convergence to show that this term converges to $0$ uniformly in $\eta$.
	
	The last assertion is proved similarly as the last assertion of Proposition \ref{p.convergence_of_LQG}.
\end{proof}

\section{Size of the spectral sample for multiple quad crossings}
\label{app:spectralsample}

Let $\qs$ be a collection of finitely many quads. For $R>1$ let $R\qs$ denote the same set of quads rescaled by $R$, i.e., 
$$
R\qs := \{RQ\,:\,Q\in\qs \}.
$$
Let $\Tg$ denote the triangular lattice where adjacent vertices have distance 1. For an instance $\omega$ of critical site percolation on $\Tg$ let $f(\omega)=f_{R\qs}(\omega)$ be the indicator function describing whether all the quads of $R\qs$ have an open crossing. Throughout this section we do \emph{not} rescale the triangular lattice; to be consistent with \cite{gps-fourier} we instead rescale the quads by $R$. Several observables throughout the section will depend on $R$, and by simplicity we will often omit the $R$ dependence in notations.

For any set $V\subset\C$ let $\cA_\square(V,\qs)$ denote the event that the vertices in $V\cap\Tg$ are pivotal for $\qs$, i.e., there exists a percolation configuration $\omega'$ such that $\omega|_{\Tg\setminus V}=\omega'|_{\Tg\setminus V}$ and $f(\omega)\neq f(\omega')$. Let $\cQ^o\subset \C$ denote the union of the complementary components $V$ of the quad boundaries which are such that $\P[\cA_\square(RV,R\qs)]>0$ for sufficiently large $R$. 
We assume throughout this and the next section that $\qs^o$ has finitely many connected components and the boundaries of the quads are piecewise smooth.\footnote{Notice that $\qs^o$ is always non-empty: There exist configurations where all the quads are crossed (e.g.\ if all sites of $\Tg$ are open) and there exist configurations where the quads are not all crossed (e.g.\ if all sites of $\Tg$ are closed). By moving from one configuration to the other by changing the sites one by one, we see that there exist configurations where we have a pivotal point.} Let $\cI$ denote the sites of $\Tg$ which are contained in at least one quad in $R\qs$. Throughout the section we let $\alpha_4(R)$ be defined by $\alpha_4(R)=\alpha^\eta_4(8,R)$ for $\eta=1$, where the right side is defined as in Section \ref{sec:gps-dp}. For $r<R$ we write $\alpha_4(r,R)$ instead of $\alpha^1_4(r,R)$ since we work with lattice $\eta=1$ throughout the appendix.

The following is the main result of this appendix. In other words, we prove that the size of the spectral sample $\S$ is of order $R^2\alpha_4(R)$. Note that the theorem was proved in \cite{gps-fourier} for the case where $\qs$ consists of a single quad.
\begin{theorem}
	\eqbn
	\lim_{s\rta\infty} \inf_{R>1} \P\left[ 
	|\scr S_f| \in [s^{-1}R^2\alpha_4(R),s R^2\alpha_4(R))\cup\{0 \}
	\right] = 1,
	\eqen
	where $|\cdot|$ denotes cardinality.
	\label{t. spectral tightness}
\end{theorem}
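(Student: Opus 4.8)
The plan is to mirror the strategy of \cite{gps-fourier} for a single quad, tracking how each step must be modified to handle the union of several quads. Throughout, the key structural fact is that the spectral sample $\scr S_f$ has the same one- and two-point functions as the set of pivotal points of $f$: $\P[x\in\scr S_f]\asymp \P[x\text{ is pivotal for }R\qs]$ and $\P[x,y\in\scr S_f]\asymp \P[x\text{ and }y\text{ are both pivotal}]$, up to constants uniform in $R$ (see \cite{kkl-boolean} and \cite[Section 1.1]{gps-fourier}). So the first step is to record the corresponding first and second moment estimates for $|\scr S_f|$. For the first moment, one sums $\P[x\text{ pivotal}]$ over $x\in\cI$; away from the quad boundaries the standard gluing/quasi-multiplicativity argument gives $\P[x\text{ pivotal}]\asymp \alpha_4(R)$ (with the implied constants depending on $\qs$ but not $R$, since a pivotal point must have a four-arm event to distance of order $R$), yielding a contribution of order $R^2\alpha_4(R)$; near the boundaries and corners one uses three-arm and two-arm half-plane/corner exponents exactly as in the last two paragraphs of the proof of Proposition \ref{p.tightness} to see the boundary contribution is lower order. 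Hence $\E[|\scr S_f|]\asymp R^2\alpha_4(R)$. For the second moment, one needs $\E[|\scr S_f|^2]\leq C (R^2\alpha_4(R))^2$ uniformly in $R$, which follows from the two-point pivotal estimate $\P[x,y\text{ both pivotal}]\asymp \alpha_4(|x-y|)^{-1}\alpha_4(\max(|x-y|,1),R)^2 \cdot(\text{local factors})$ and a dyadic summation over the separation $|x-y|$, again using quasi-multiplicativity; the boundary interactions are handled as before.

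Given the matching first and second moments, the upper bound $\P[|\scr S_f|\geq s R^2\alpha_4(R)]\to 0$ uniformly in $R$ as $s\to\infty$ is immediate from Markov's inequality. The lower bound is the delicate half: one must show that, conditionally on $\scr S_f\neq\emptyset$, it is very unlikely that $|\scr S_f|$ is much smaller than $R^2\alpha_4(R)$. This is precisely where the real content of \cite{gps-fourier} lies, and my plan is to adapt their argument essentially verbatim. The core input is a bound on the lower tail of the spectral sample restricted to a box: for a box $B$ of side $\rho$ (with $\rho$ of order $R$, after rescaling, but also for mesoscopic $\rho$), $\P[0<|\scr S_f\cap B|< \lambda \rho^2\alpha_4(\rho)]\leq g(\lambda)$ where $g(\lambda)\to 0$ as $\lambda\to 0$, uniformly in the relevant scales. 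In \cite{gps-fourier} this is obtained via the second-moment method applied to $\scr S_f$ inside $B$ together with the "clustering" estimates controlling $\E[|\scr S_f\cap B|\,\big|\, \scr S_f\cap B\neq\emptyset]$, and crucially the negative-association-type / quasi-multiplicativity arguments that show the spectral sample does not concentrate on a sparse set. The only genuinely new point for multiple quads is to check that the event $\{\scr S_f\neq\emptyset\}$ can be localized: if $f$ depends on crossings of several quads, then $\scr S_f\neq\emptyset$ iff $\scr S_f$ hits the relevant region, and one should pick a box $B\subset\qs^o$ (inside a single connected pivotal component) on which, after conditioning on a favourable boundary configuration making that component's crossing pivotal for the whole collection, the problem reduces to the single-quad lower-tail estimate. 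Since $\qs^o$ has finitely many components and the quad boundaries are piecewise smooth, there are only finitely many such reductions to perform.

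The main obstacle, as indicated, is the lower bound, and within it the step of showing that on the event $\scr S_f\neq \emptyset$ the spectral mass is with high probability at least a small constant times $R^2\alpha_4(R)$. Concretely, one wants: for every $\varepsilon>0$ there is $\delta>0$ such that $\inf_{R>1}\P[\,|\scr S_f|\geq \delta R^2\alpha_4(R)\,\mid\, \scr S_f\neq\emptyset\,]\geq 1-\varepsilon$. The technical heart is the multi-scale/clustering analysis of \cite{gps-fourier} (their Sections on the lower tail, e.g. the bound $\P[|\scr S_f\cap B|>0,\ |\scr S_f\cap B|<\lambda\,|B|\alpha_4(|B|)]\lesssim \lambda^{c}$ for some $c>0$), which must be re-run with $f=f_{R\qs}$; the only substantive change is bookkeeping the boundary/corner arm exponents of the several quads rather than one, and making sure all estimates are uniform in $R$. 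I would therefore structure the proof as: (1) state the one- and two-point function comparison with pivotals; (2) prove $\E[|\scr S_f|]\asymp R^2\alpha_4(R)$ and $\E[|\scr S_f|^2]\lesssim (R^2\alpha_4(R))^2$, with explicit treatment of boundaries and $270^\circ$ corners as in Proposition \ref{p.tightness}; (3) deduce the upper bound by Markov; (4) localize $\{\scr S_f\neq\emptyset\}$ to a box inside one component of $\qs^o$ and quote/adapt the lower-tail estimate of \cite{gps-fourier} to conclude the lower bound; (5) combine (3) and (4) to get the stated two-sided statement. The cleanest exposition is to isolate step (4) as a lemma whose proof is "the same as \cite[the relevant lemmas]{gps-fourier} with the modifications in (2)", since reproducing that argument in full would essentially duplicate a large part of \cite{gps-fourier}.
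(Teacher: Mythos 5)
Your proposal follows the same overall strategy as the paper's proof, which is to transport the lower-tail/upper-tail argument of \cite{gps-fourier} to the multi-quad function $f=f_{R\qs}$. The upper bound via the first- and second-moment comparison with pivotal points and Markov is correct, and your localization step is exactly the paper's Proposition \ref{prop10}, which shows that $\P[\scr S_f\subset RU]$ can be made close to $1$ for a suitable open $U\subset\ol U\subset \qs^o$. You are also right that the lower bound is the whole content.

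Where you go astray is in the claim that ``the only genuinely new point for multiple quads is to check that the event $\{\scr S_f\neq\emptyset\}$ can be localized,'' and in the accompanying shortcut of ``conditioning on a favourable boundary configuration making that component's crossing pivotal for the whole collection, so the problem reduces to the single-quad lower-tail estimate.'' This conditioning does not interact well with the Fourier-analytic definition of the spectral sample: $\P[\scr S_f=S]=\wh f(S)^2$ is a global quantity and does not decompose by conditioning $f$ on its input outside a box. The GPS lower-tail argument instead works through the conditional pivotality quantities $\Lambda(B,W)=\P[\Lambda_B\mid\mcl F_{W^c}]$ and the first-moment inequality $\P[x\in\scr S,\ \scr S\cap W=\emptyset]\geq c_1\,\E[\Lambda(B,W)^2]\,\alpha_4(r)$. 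Establishing this inequality for the multi-quad $f$ is genuinely new work: the quasi-multiplicativity/gluing argument at the heart of it must be re-proved because the geometry of ``$B$ is pivotal for the collection $\qs$'' is different from ``$B$ is pivotal for a single quad.'' The paper does this in Proposition \ref{prop19}, via the explicit geometric construction of Lemma \ref{prop18} (a family of auxiliary quads $q_1,\dots,q_4,\wh q_1,\wh q_3$ and $A(Q)$ inside each component of $\qs^o$ that allow the gluing to go through), and it is this that replaces your proposed conditioning shortcut. You should also state the monotonicity of $f$ up front: as the paper notes in Remark \ref{r.monot}, the entire GPS machinery would collapse without it, which is why they only handle intersections of crossing events at this level and treat more general events by inclusion-exclusion afterwards.

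With these corrections, the plan is sound: the theorem follows from a localized lower-tail bound (the paper's Theorem \ref{prop11}, which in turn needs the multi-quad versions of the ``few boxes'' and ``partial independence'' inputs, Propositions \ref{prop14} and \ref{prop20}) together with the localization $\P[\scr S_f\subset RU]>1-\delta$ (Proposition \ref{prop10}), exactly as in the proof of \cite[Theorem~7.4]{gps-fourier}.
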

\begin{proof}
Our proof follows very closely the strategy from \cite{gps-fourier}. The main focus here is to extend the key arguments from that paper to the present multi-quads setting. In particular, the  theorem will follow from Theorem \ref{prop11} and Proposition \ref{prop10} below by exactly the same argument as in the proof of \cite[Theorem 7.4]{gps-fourier}.
\end{proof}

\begin{theorem}
	Let $U\subset\qs^o$ be open, and let $U'\subset\ol{U'}\subset U$. Then, for some constants $\ol r=\ol r(U',U,\qs)>0$ and $q(U',U,\qs)>0$, 
	for any $r\in [\ol r , R\diam(U)]$,
	\begin{align}
	\label{e.loc}
	\Pb{0<|\Spec_{f}\cap RU|\le r^2\,\alpha_4(r),\,\Spec_{f}\cap RU\subset RU'}\qquad\nonumber\\
	\le q(U',U,\qs)\, 
	\frac{R^2\,\alpha_4(R)^2}{r^2\,\alpha_4(r)^2}\,.
	\end{align} 
	\label{prop11}
\end{theorem}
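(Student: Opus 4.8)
The plan is to adapt the proof of the analogous one-quad statement in \cite[Theorem 4.3]{gps-fourier} (the ``localized'' version of the lower-tail bound on the spectral sample). The starting point is the second-moment/clustering philosophy: the event on the left forces $\Spec_f$ to be non-empty but unusually small and moreover confined to the macroscopic subdomain $RU'$ away from $\partial(RU)$. First I would recall the basic spectral-sample identities $\P[\Spec_f\cap W\ne\emptyset]\asymp$ (first-moment of pivotals in $W$) and the ``spatial independence on annuli'' estimate, i.e.\ that the restriction $\Spec_f\cap RU$ behaves, up to the four-arm normalizations, like the spectral sample of the quad-crossing event \emph{restricted to the region $RU$} with independent boundary data. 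Concretely, using the decomposition of the Fourier spectrum through interfaces, one shows that conditioned on $\Spec_f\cap RU\subset RU'$ and $\Spec_f\cap RU\neq\emptyset$, the set $\Spec_f\cap RU$ stochastically dominates the spectral sample of crossing a mesoscopic quad at scale comparable to $r$, so that it has at least $\gtrsim r^2\alpha_4(r)$ points with probability bounded below — unless $\Spec_f\cap RU=\emptyset$. This is exactly the mechanism by which the event in \eqref{e.loc} is rare.

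The key steps, in order: (1) Cover $RU'$ by $\Theta(R^2\diam(U)^{-2}\cdot\diam(U')^2)$ — or more precisely $\Theta((R/\ell)^2)$ at each dyadic scale $\ell\in[\bar r, R\diam(U)]$ — boxes, and use the first-moment bound together with quasi-multiplicativity $\alpha_4(\eta,1)\asymp \alpha_4(\eta,r)\alpha_4(r,1)$ to estimate $\E|\Spec_f\cap RU|$ and $\E[|\Spec_f\cap RU|^2]$ restricted to configurations with $\Spec_f\cap RU\subset RU'$. (2) Prove the crucial conditional lower bound: on the event $0<|\Spec_f\cap RU|$, conditionally on the ``first pivotal box'' at scale $r$, the spectral mass inside a neighborhood of that box at scale $r$ is $\gtrsim r^2\alpha_4(r)$ with probability $\gtrsim 1$; this uses the annulus structure of the spectrum and the multi-quad RSW/quasi-multiplicativity estimates — here is where one invokes that $\qs^o$ has finitely many components with piecewise-smooth boundaries so that at macroscopic scale the crossing event looks locally like a single-quad crossing (or a near-boundary two/three-arm event, handled as in the last paragraphs of the proof of Proposition \ref{p.tightness}). (3) Combine (1) and (2) via a union bound over the $\Theta((R/r)^2)$ possible locations of the minimal-scale pivotal cluster and the $R^2\alpha_4(R)^2 / (r^2\alpha_4(r)^2)$ factor comes out from the ratio of the global pivotal second moment $R^2\alpha_4(R)^2$ (really $(R^2\alpha_4(R))^2/R^2$ after accounting for the clustering) to the mesoscopic one $r^2\alpha_4(r)^2$, again using quasi-multiplicativity to rewrite $\alpha_4(R)/\alpha_4(r)\asymp\alpha_4(r,R)$.

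I expect the main obstacle to be step (2): establishing the conditional lower bound on the mesoscopic spectral mass in the \emph{multi-quad} setting, uniformly in $R$. In \cite{gps-fourier} this rests on a delicate second-moment analysis of the spectral sample of a single quad crossing together with the ``clustering'' lemma, and one must check that all the inputs — quasi-multiplicativity of four-arm events, RSW bounds, and the structure of pivotal points near the quad boundary — survive when $f$ is a conjunction of several crossing events. Away from the quad boundaries this is essentially local and reduces to the single-quad case; near boundaries and corners (including the $270^\circ$ corners arising when quad images overlap) one has to run the boundary arm-exponent bookkeeping described in the proof of Proposition \ref{p.tightness}, and check that the exponents there are strong enough that the boundary contributions do not dominate. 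A careful but routine adaptation should suffice; the uniformity in $R$ is guaranteed because $\alpha_4$ and all arm exponents are, up to constants, scale invariant on the triangular lattice.
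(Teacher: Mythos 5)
Your overall strategy --- follow the proof of the one-quad localized lower-tail bound in \cite{gps-fourier} (more precisely \cite[Theorem~7.1]{gps-fourier}, not Theorem~4.3), cluster the spectral sample into $r$-boxes, and supply a multi-quad version of the partial-independence lemma --- is exactly the paper's route, and you correctly pinpoint the genuinely new technical work in your step~(2): the conditional lower bound on spectral mass in a mesoscopic box for the multi-quad event. In the paper this is Proposition~\ref{prop20}, whose proof requires the geometric construction of Lemma~\ref{l.constr} (the auxiliary quads $q_1,\dots,q_4,\widehat q_1,\widehat q_3$ and the collections $A(Q)$) to replace the single-quad quasi-multiplicativity argument of \cite[Section~5]{gps-fourier}; your concern about $270^\circ$ corners and boundary arm exponents is well placed.

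However, your step~(1), as stated, would not close the argument. A first- and second-moment estimate on $|\Spec_{f}\cap RU|$ gives via Paley--Zygmund only something like $\P[\Spec_{f}\neq\emptyset]\gtrsim\alpha_4(R)$, which is both far too weak and controls the wrong event: moments alone say nothing about the lower tail of $|\Spec_{f}|$ conditioned on $\Spec_{f}\neq\emptyset$, which is what \eqref{e.loc} is about. What is actually needed is the box-counting/clustering bound --- Proposition~\ref{prop14} here, the multi-quad analogue of \cite[Proposition~4.2]{gps-fourier} --- namely $\P[\Spec\in\mathcal{S}(r,k)]\le c_a\,g(k)\,\gamma_r(\widehat R)$, which is a combinatorial estimate proved via annulus structures rather than a moment computation. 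Similarly, the combination in your step~(3) is not a naive union bound over $(R/r)^2$ locations: the synthesis of Propositions~\ref{prop14} and~\ref{prop20} goes through the ``very large deviations'' framework of \cite[Proposition~6.1]{gps-fourier}, which the paper invokes directly; this is what controls the sub-exponential factor $g(k)$ and yields $\lesssim\gamma_r(\widehat R)\asymp R^2\alpha_4(R)^2/(r^2\alpha_4(r)^2)$ after one application of quasi-multiplicativity. In short: your diagnosis of where the new work lies is correct and matches the paper, but the quantitative scaffolding around it is not the moments-based one you describe.
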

\begin{proof}
	The theorem follows from Propositions \ref{prop14} and \ref{prop20}, and from \cite[Proposition 6.1]{gps-fourier}. See the proof of \cite[Theorem 7.1]{gps-fourier} for a similar argument.
\end{proof}

\begin{prop}
	Given any $\delta>0$ we can find an open set $U\subset\ol{U} \subset\qs^o$, such that $\P[\scr S_{f}\subset RU]>1-\delta$.
	\label{prop10}
\end{prop}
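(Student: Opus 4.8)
\textbf{Proof proposal for Proposition \ref{prop10}.}

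The plan is to show that the spectral sample $\Spec_f$ puts essentially no mass near the ``bad'' part of the domain, namely near the boundaries of the quads and near those complementary components $V$ for which $\P[\cA_\square(RV,R\qs)]=0$, and then to take $U$ to be a slight shrinking of $\qs^o$ away from all of those bad regions. Concretely, recall that $\qs^o$ has finitely many connected components, each an open subset of a complementary component of the union of the (piecewise smooth) quad boundaries. For $\rho>0$ let $U=U_\rho\subset\qs^o$ be the set of points of $\qs^o$ at distance at least $\rho$ from $\partial\qs^o$ (equivalently, from the quad boundaries and from the boundaries of the discarded components); then $\ol U\subset\qs^o$, and $U_\rho\uparrow\qs^o$ as $\rho\downarrow 0$. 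It suffices to prove that $\P[\Spec_f\cap(R\C\setminus RU_\rho)\neq\emptyset]\to 0$ as $\rho\to 0$, uniformly in $R$ (and for each fixed $R$ this is a finite statement since $\Spec_f\subset\cI$).

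The key point is the first-moment / $L^1$ bound on the spectral sample: for any vertex $x$, $\P[x\in\Spec_f]$ equals $\P[x\text{ is pivotal for }f]$, which can be bounded by a product of arm-event probabilities governed by the geometry of $\qs$ near $x$. Thus
\[
\E\big[|\Spec_f\cap(R\C\setminus RU_\rho)|\big]=\sum_{x\in\cI,\ x\notin RU_\rho}\P[x\text{ is pivotal for }f],
\]
and Markov's inequality reduces everything to showing this sum is $o_\rho(1)R^2\alpha_4(R)$, uniformly in $R$. Near the interior of $\qs^o$ but within distance $R\rho$ of a quad boundary, a pivotal point requires a four-arm event out to the distance from the boundary and then fewer arms (three- or two-arm, including half-plane and corner exponents) from that scale to the macroscopic scale; summing the resulting arm probabilities over the annular region of width $R\rho$ around the boundary gives a bound of order $\rho^\kappa R^2\alpha_4(R)$ for some $\kappa>0$, using quasi-multiplicativity and the fact that the half-plane three-arm exponent $2$ and the two-arm boundary exponent exceed $2$ so the relevant geometric sums converge. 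This is exactly the kind of boundary/corner arm-exponent bookkeeping already carried out in the last two paragraphs of the proof of Proposition \ref{p.tightness} and in \cite{gps-fourier}; the only new feature is that there may be several quads and several 90- and 270-degree corners, but finitely many, so one simply sums the contributions. Near the discarded complementary components $V$ (those with $\P[\cA_\square(RV,R\qs)]=0$), and near components adjacent to them, one uses that a pivotal point there would force an atypically connected configuration; more precisely such $x$ cannot be pivotal at all once $R$ is large enough, so these contribute zero.

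I expect the main obstacle to be the careful classification of the local geometry near $\partial\qs^o$: one must enumerate the possible ``types'' of boundary points of $\qs^o$ (interior boundary arcs shared by two quads, boundary arcs on $\partial D$, and the various corner angles, in particular $270$-degree reflex corners) and, for each, identify the correct combination of (possibly boundary or corner) arm exponents controlling the pivotal probability, then check that in every case the sum over the width-$R\rho$ collar is $o_\rho(1)R^2\alpha_4(R)$. Granting the arm-exponent inputs (Smirnov–Werner, and their half-plane and corner analogues used already in this paper), this is a finite check. Once $\E[|\Spec_f\cap(R\C\setminus RU_\rho)|]\le \epsilon(\rho)R^2\alpha_4(R)$ with $\epsilon(\rho)\to0$, we use that on the event $\Spec_f\neq\emptyset$ we have $|\Spec_f|\ge s^{-1}R^2\alpha_4(R)$ with probability close to $1$ (the lower bound built into Theorem \ref{t. spectral tightness}, or directly the lower tail estimate for the spectral sample from \cite{gps-fourier}); combining, $\P[\Spec_f\cap(R\C\setminus RU_\rho)\neq\emptyset]\le \P[\Spec_f=\emptyset\ \text{or}\ |\Spec_f|<s^{-1}R^2\alpha_4(R)]+sR^2\alpha_4(R)^{-1}\E[|\Spec_f\cap(R\C\setminus RU_\rho)|]$, and choosing first $s$ large, then $\rho$ small (uniformly in $R$), makes the right side $<\delta$. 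Setting $U=U_\rho$ for this $\rho$ gives the proposition.
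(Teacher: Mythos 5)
Your first-moment bound $\E\bigl[|\Spec_f\cap(R\C\setminus RU_\rho)|\bigr]\le\epsilon(\rho)R^2\alpha_4(R)$ is correct (modulo the boundary/corner arm-exponent bookkeeping you acknowledge), and you are following essentially the ``more direct'' route that the paper's own remark below its proof of Proposition \ref{prop10} sketches. However, the final combination step has a genuine gap. The inequality
\[
\P[\Spec_f\cap(R\C\setminus RU_\rho)\neq\emptyset]\le \P[\Spec_f=\emptyset\ \text{or}\ |\Spec_f|<s^{-1}R^2\alpha_4(R)]+\frac{s}{R^2\alpha_4(R)}\,\E\bigl[|\Spec_f\cap(R\C\setminus RU_\rho)|\bigr]
\]
is not a valid inequality: the lower tail estimate tells you that $|\Spec_f|$ \emph{as a whole} is of order $R^2\alpha_4(R)$ when nonempty, but it says nothing about the size of the intersection $|\Spec_f\cap(R\C\setminus RU_\rho)|$, which could perfectly well be a single point even when $|\Spec_f|$ is typical. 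Markov's inequality needs $|\Spec_f\cap(\text{collar})|$ itself to be large on the event to be bounded, which you have not established. (Separately, the first term contains $\P[\Spec_f=\emptyset]=\wh f(\emptyset)^2$, which is a fixed positive constant, not something that becomes small; you would need $\P[\,0<|\Spec_f|<s^{-1}R^2\alpha_4(R)\,]$ there.) As a result, what you actually have at your disposal --- plain Markov on the first moment --- gives a bound $\epsilon(\rho)R^2\alpha_4(R)$ that diverges with $R$.

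The quantity that makes the ``collar + union bound'' argument work is not the expected number of spectral points in a box, but the \emph{probability that the spectral sample meets the box}, and these scale very differently. One has the elementary identity $\P[\Spec_f\cap B\neq\emptyset]=\|f-\E[f\mid\mcl F_{B^c}]\|_2^2$, and since $f-\E[f\mid\mcl F_{B^c}]$ vanishes unless $B$ is potentially pivotal, this is at most $4\,\P[\mcl A_\square(B,R\qs)]\lesssim\alpha_4(r,R)$ for a bulk box of side $r$. Covering the width-$\rho R$ collar around $\partial\qs^o$ with $O(\rho^{-1})$ boxes of side $\rho R$ then gives $\P[\Spec_f\cap(R\C\setminus RU_\rho)\neq\emptyset]\le O(\rho^{-1})\cdot O(\rho^{5/4+o(1)})=O(\rho^{1/4+o(1)})\to0$, with the boundary and corner contributions still requiring the kind of arm-exponent bookkeeping you flagged. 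The paper sidesteps all of this by invoking \cite[Theorem 1.5]{ss-planar-perc} directly: it builds a $\pm1$-valued $g$ that depends only on crossing information at distance $>Rs$ from the quad boundaries with $\|f-g\|_2$ small, bounds $\operatorname{tv}(\Spec_f,\Spec_g)\le\|f-g\|\,\|f+g\|$, and uses that $\Spec_g$ is supported away from $R\gamma$ by construction. This avoids rederiving the boundary estimates entirely, since they are already built into \cite{ss-planar-perc}.
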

\begin{proof}
	Let $\gamma=\bigcup_{Q\in\cQ}\partial Q\subset\C$ be the union of the quad boundaries. Given $s>0$ let $g:\Omega\to \{-1,1 \}$ be measurable with respect to the $\sigma$-algebra $\mcl F_s$ of quad crossing information at distance $>Rs$ from $R\gamma$, such that
	\eqbn
	g(\omega) = 
	\begin{cases}
		-1 \quad&\text{if}\quad \P[f=-1\,|\,\mcl F_s]>1/2,\\
		1 &\text{otherwise}.
	\end{cases}
	\eqen
	By \cite[Theorem 1.5]{ss-planar-perc}, given any $\ep>0$ and a quad $Q$ it holds for all $s$ and sufficiently small and $R$ sufficiently large that $\P[ \ep<\P[ \omega(Q)\,|\,\mcl F_s ]<1-\ep ]<\ep$, where $\omega(Q)\in\{0,1 \}$ indicates whether $Q$ is crossed. (Note that it is important here to assume that the boundaries of our quads are piecewise smooth.) Therefore, for sufficiently small $s$ and sufficiently large $R$,
	\eqbn
	\P[ \ep<\P[ f=-1\,|\,\mcl F_s ]<1-\ep ]<\ep.
	\eqen
	It follows that for sufficiently small $s$ and sufficiently large $R$,    
	\eqbn
	\begin{split}
		\P[ f\neq g ]\leq&\,\, \P[ f\neq g; \ep< \P[f=-1\,|\,\mcl F_s]<1-\ep] \\
		&+ \P[ f=1; \P[f=-1\,|\,\mcl F_s]>1-\ep ]\\
		&+ \P[ f=-1; \P[f=1\,|\,\mcl F_s]>1-\ep ]
		< 3\ep,
	\end{split}
	\eqen
	which implies that with $\|\cdot\|$ denoting the $L^2$ norm we have $\|f-g\|<10\sqrt{\ep}$.  
	With $\op{tv}$ denoting total variation distance,
	\eqbn
	\op{tv}(\scr S_f,\scr S_g)
	\leq 
	\sum_{S\subset\mcl I} |\wh f(S)^2-\wh g(S)^2| \leq \|f-g\| \|f+g\| < 20\sqrt{\eps},
	\eqen
	where the second inequality follows from \cite[equation (2.7)]{gps-fourier}.
	The spectral sample of $\scr S_g$ has distance at least $Rs$ from $R\gamma$, so we see that the proposition holds with $U'$ instead of $U$ if we let $U'\subset\C$ be the points which have distance at least $s$ from $\gamma$. We have that $\scr S\cap(U'\setminus\qs^o)=\emptyset$, and we obtain the proposition by defining $U=U'\cap\qs^o$.
\end{proof}

\begin{remark}
Note that a possibly more direct proof of this proposition would consist in decomposing the $\alpha$-neighborhood of the boundaries of each quads into $O(\alpha^{-1})$ squares of side length $\alpha$ and then argue through a first moment bound by noticing that 
$$\Pb{\scr S_{f} \text{ intersects the $\alpha$-neighbourhood of the boundaries of quads}}
$$ 
is dominated by the sum over all these squares of the probability that the spectral sample intersects the fixed given square. In the bulk this probability is $O(\alpha^{5/4+o(1)})$ and one can conclude the proof along those lines after dealing with boundary issues. In some sense such boundary issues are already dealt with in the work \cite{ss-planar-perc}, which explains why we have chosen this other approach.
\end{remark}

As in \cite{gps-fourier}, the proof of Theorem \ref{prop11} relies on two key properties: few squares intersect the spectral sample and partial independence in the spectral sample. In the single quad case these properties are established in \cite[Section 4]{gps-fourier} and \cite[Section 5]{gps-fourier}, respectively. The proof given in \cite[Section 4]{gps-fourier} generalizes without difficulty to our multiple quads setting, while the argument in  \cite[Section 5]{gps-fourier} requires slightly more work. Therefore we will simply state our variant of \cite[Proposition 4.2]{gps-fourier} right below, while we provide a more detailed adaption of \cite[Section 5]{gps-fourier} in Section \ref{app:pi}.
\begin{prop}
	Consider a collection $\qs$ of finitely many quads, and let $\Spec$ be the spectral sample of $f_{\qs}$.
	Let $U'\subset U\subset \qs^o$, let $\wh R$ denote the diameter of $U$, let $a\in (0,1)$,
	and suppose that the  distance from $U'$ to the complement of $U$ is at least $a\,\wh R$.
	Let $\mathcal S(r,k)$ be the collection of all sets $S\subseteq\mcl I$ such that
	$\bigl|(S\cap U)_r\bigr|=k$ and $S\cap (U\setminus U')=\emptyset$.
	Then for $g(k):=2^{\coa\log_2^2 (k+2)}$, with $\coa>0$ large enough, and $\gamma_r(\wh R):=(\wh R/r)^2 \alpha_4(r,\wh R)^2$, we have
	$$
	\forall k,r\in\N_+\qquad
	\Pb{\Spec\in\mathcal S(r,k)} \le c_a\,g(k)\,\gamma_r(\wh R)\,,
	$$
	where $c_a$ is a constant that depends only on $a$ and $\qs$.
	\label{prop14}
\end{prop}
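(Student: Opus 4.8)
The plan is to transplant the proof of \cite[Proposition 4.2]{gps-fourier} with only cosmetic changes, since that argument is entirely \emph{local}: it only ever examines the spectral sample inside $U$ and inside annuli centred at points of $U$ of radius at most $\widehat R=\diam(U)$, and it relies on just two inputs --- (i) first- and second-moment estimates for the projected spectral sample, expressed through percolation arm probabilities, and (ii) the standard toolbox of critical site percolation on $\Tg$ (RSW, quasi-multiplicativity of $\alpha_4$, the four-arm exponent $\alpha_4(r,R)=(r/R)^{5/4+o(1)}$). I would first record the one structural fact that makes (i) go through with finitely many quads. Since $\overline U\subset\qs^o$ and, by hypothesis, $\qs^o$ has finitely many components with piecewise-smooth boundaries, every $x\in U$ lies at some distance $\rho=\rho(U,\qs)>0$ from $\bigcup_{Q\in\qs}\partial Q$; moreover, flipping the colour of $x$ can change $f_\qs$ only by destroying the open crossing of a single quad $Q\ni x$ while the rest of $Q$ and all other quads stay crossed, so the pivotal event $\cA_\square(\{x\},\qs)$ is contained in a genuine bulk four-arm event $\cA_4(x,8,\rho)$ (together with connectivity constraints at scales $\geq\rho$). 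Combining this with the identity $\P[x\in\Spec]=I_x(f_\qs)=\P[x\text{ pivotal for }f_\qs]$ and quasi-multiplicativity, one obtains, uniformly over $x\in U$ and over $r$-boxes $B\subset U$, exactly the one-box first-moment bounds and two-point estimates for $\Spec$ used in \cite[Sections 2 and 4]{gps-fourier}, now with constants depending additionally on $\qs$ through the comparison at scale $\rho$ of the multi-quad pivotal probability with $\alpha_4(8,\rho)$.

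With these inputs I would reproduce the skeleton of \cite[Section 4]{gps-fourier}. (a) A first-moment computation gives $\P[\Spec\cap U\neq\emptyset\text{ and confined to a single }r\text{-box}]\lesssim(\widehat R/r)^2\alpha_4(r,\widehat R)^2$: there are on the order of $(\widehat R/r)^2$ candidate $r$-boxes $B$, and for $\Spec$ to be nonempty in $B$ yet avoid the surrounding annulus out to scale $\asymp a\widehat R$ (which is forced, since $\Spec\cap(U\setminus U')=\emptyset$ and $\mathrm{dist}(U',U^c)\geq a\widehat R$) one needs two nested four-arm configurations, costing $\asymp\alpha_4(r,\widehat R)^2$; this is the case $k=1$ of the proposition. (b) The dyadic/recursive step of \cite{gps-fourier} then bootstraps from $k=1$ to general $k$: partition $U$ into sub-squares, apply the $k=1$ bound on the relevant pieces, and control the combinatorics of how $k$ boxes can be distributed across scales --- this is exactly what produces the quasi-polynomial factor $g(k)=2^{\coa\log_2^2(k+2)}$ once $\coa$ is chosen large. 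The dependence on $a$ enters precisely as in \cite{gps-fourier}, through the room needed in step (a) to run the annulus inside $U$, which is why the final constant $c_a$ depends on $a$ (and on $\qs$).

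The only part requiring any care --- and it remains routine --- is confirming that the decoupling and quasi-independence statements invoked in \cite[Section 4]{gps-fourier} (as opposed to the sharper estimates of \cite[Section 5]{gps-fourier}, which we re-derive separately in Section \ref{app:pi}) stay valid. These are statements of the form ``for well-separated regions, the events that $\Spec$ avoids one region and is confined to another decorrelate up to a bounded factor,'' and in \cite{gps-fourier} they follow from the spectral-sample formalism of their Section 2 together with arm estimates, neither of which is sensitive to whether $f$ encodes a single crossing or a finite intersection of crossings. I expect the main obstacle, such as it is, to be pure bookkeeping: checking that every box and annulus the \cite{gps-fourier} argument opens around a point of $U$ stays within $\qs^o$, so that all arm events involved are bulk four-arm events and quasi-multiplicativity applies with $\qs$-uniform constants --- this is guaranteed by $\overline U\subset\qs^o$ together with the piecewise-smoothness of the quad boundaries. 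No genuinely new phenomenon appears at this stage; the substantive new input needed for Theorem \ref{t. spectral tightness} is instead the partial-independence estimate of Proposition \ref{prop20}, handled separately in Section \ref{app:pi}.
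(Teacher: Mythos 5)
Your proposal is correct and follows essentially the same route as the paper, which also proves this by adapting the annulus-structure argument of \cite[Section 4]{gps-fourier}, observing (as you do) that the only new input needed is the comparison of the multi-quad pivotal probability at a point of $U$ with a bulk four-arm probability, which holds uniformly over $U$ since $\ol U\subset\qs^o$. The paper additionally flags the one place where the multi-quad geometry enters — annulus structures must be built separately within each connected component of $\qs^o$ of diameter at least $a\wh R$ — but this is exactly the kind of bookkeeping you anticipate, and your reduction of $\cA_\square(\{x\},\qs)$ to a bulk four-arm event is the substantive observation that makes it go through.
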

\begin{proof}
The proposition is proved by adapting the techniques of \cite[Section 4]{gps-fourier}. In particular, we construct so-called annulus structures for the collection of quads $\cQ$ by defining annulus structures for each component of $\cQ^o$ with diameter at least $a\wh R$.
\end{proof}
We also point out here that another generalization of the techniques needed here have been analyzed in the work \cite{garban-vanneuville}, where the needed extension of \cite[Section 4]{gps-fourier} happened to be more substantial and was thus written with more details.

\subsection{Partial independence in the spectral sample}
\label{app:pi}
For a set $S\subset\Tg$ and $r>0$ define $S_r$ to be the collection of squares in $r\Z^2$ that intersect $S$.
\begin{prop}
	Let $\qs$ be a collection of finitely many quads, and let $U$ be an open set whose closure is contained in $\qs^o$. For $R>0$, let
	$\Spec:= \Spec_{f_{R\qs}}$ be the spectral sample of $f_{R\qs}$, the $\pm 1$ indicator function
	for the crossing event in $R\qs$. Then, there is a constant $\ol r=\ol r(U,\qs)$
	such that
	for any box $B\subset R\,U$ of radius $r \in [\ol r,R\diam(U)]$ and any set $W$ with $W\cap B=\emptyset$, we have
	\begin{align*}
	\Pb{\Spec_{f_{R\qs}} \cap B' \cap \Rs \neq \emptyset \md
		\Spec_{f_{R\qs}}\cap B\neq \emptyset,\, \Spec_{f_{R\qs}}\cap W =\emptyset}\geq
	a(U,\qs)\,,
	\end{align*}
	where $B'$ is concentric with $B$ and has radius $r/3$,
	the random set $\Rs$ contains each element of $\mcl I$ independently with probability $1/(\alpha_4(r) r^2)$, and $a(U,\qs)>0$ is a constant that depends only on $U$ and $\qs$.
	\label{prop20}
\end{prop}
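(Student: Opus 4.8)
The plan is to adapt the proof of \cite[Proposition 5.1]{gps-fourier} to the multi-quad setting, using the structure of $\qs^o$ near the box $B$. First I would recall the key mechanism from \cite{gps-fourier}: the spectral sample of a crossing event restricted to a ball, conditioned to be nonempty there, behaves (via the spectral sample's interpretation through the randomized algorithm / first-moment estimates on $\hat f(S)^2$) sufficiently like critical percolation pivotals that one can force, with positive probability, the spectral sample to contain a point in a smaller concentric ball at a location where an independent $\Rs$-type thinning lands. The crucial local input is that inside $B\subset R\,U$ with $U$ compactly contained in $\qs^o$, and for $r$ large enough (depending only on $U$ and $\qs$ through the minimal distance from $U$ to $\partial\qs^o$ and the geometry of the quad boundaries), the event $\cA_\square$ of being pivotal ``looks locally like'' an ordinary four-arm event: any site in $B'$ that is pivotal for $\qs$ is characterized, at scales below $r$, by a four-arm event to distance $\asymp r$, exactly as in the single-quad bulk case. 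This is where the hypothesis $\ol U\subset\qs^o$ is used: it guarantees that $B$ stays away from all quad boundaries by a definite fraction of $R\diam(U)$, so no corner/boundary arm events intrude at scales $\le r$.

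The main steps, in order, would be: (1) Localization. Show that conditionally on $\{\Spec\cap B\neq\emptyset,\ \Spec\cap W=\emptyset\}$, with probability bounded below the spectral sample, restricted to $B$, is contained in a single sub-box $B''\subset B$ of radius $r/10$ and is nonempty there; this is the ``few squares intersect the spectral sample'' phenomenon from Proposition \ref{prop14} (our variant of \cite[Prop.\ 4.2]{gps-fourier}) combined with the lower bound on $\Pb{\Spec\cap B\neq\emptyset}$ from the first-moment / quasi-multiplicativity estimates. (2) Bulk reduction. On this event, observe that the relevant crossing information that the spectral sample sees inside $B''$ is, by the localization in step (1) and the fact that $B\subset R\,U\subset R\qs^o$, governed purely by the four-arm connectivity structure inside $B$, with no dependence on the particular quads in $\qs$ beyond scale $r$. (3) Positive-probability forcing. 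Apply the argument of \cite[Section 5]{gps-fourier} verbatim: run the spectral-sample exploration inside $B$, and use Harris/FKG-type gluing together with the quasi-multiplicativity of $\alpha_4$ and the definition of $\Rs$ (each site of $\mcl I$ kept with probability $1/(\alpha_4(r)r^2)$) to conclude that with probability at least some $a(U,\qs)>0$ the set $\Spec\cap B'\cap\Rs$ is nonempty. The constant $a$ depends only on $U$ and $\qs$ because the only place $R$ enters is through the requirement $r\ge\ol r(U,\qs)$, below which the boundary/corner effects are excluded.

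The main obstacle I expect is step (2)–(3): making precise that ``pivotality for $\qs$ inside $B$ coincides with a four-arm event inside $B$'' uniformly over the (possibly several) connected components of $\qs^o$ and over the location of $B$ within $R\,U$. In the single-quad case of \cite{gps-fourier} this is essentially automatic, but here one must check that when $U$ touches a component of $\qs^o$ that is ``pinched'' (for instance where two quad boundaries come close, or near a $270^\circ$ corner of the region), the compact containment $\ol U\subset\qs^o$ still leaves enough room at scale $r$; this is exactly why we take $\ol r=\ol r(U,\qs)$ large, depending on the minimal width of $\qs^o$ along $U$. Once this geometric separation is in place, the probabilistic core is identical to \cite[Section 5]{gps-fourier} and I would simply cite it, indicating the one modification: replacing ``the quad $Q$'' by ``the collection $\qs$'' and ``$\alpha_4(R)$ for the single quad'' by the product structure of $\alpha_4$ that already appears in Proposition \ref{prop14} via $\gamma_r(\wh R)$. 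This mirrors how the companion statement \cite[Section 5]{gps-fourier} was handled, and as noted in the text, a fuller treatment of analogous extensions appears in \cite{garban-vanneuville}.
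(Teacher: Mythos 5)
Your high-level plan---reduce to GPS Section 5, with the new work concentrated in showing that multi-quad pivotality inside a box $B\subset RU$ looks like a four-arm event---is correct and matches the paper. But your description of the mechanism is off in a way that would not get you to the conclusion, and you omit the one genuinely new ingredient that the paper actually supplies. The paper proves Proposition \ref{prop20} by citing GPS Proposition 5.11, which is a first-and-second-moment (Paley--Zygmund) argument: one lower-bounds the conditional first moment of $|\Spec\cap B'\cap\Rs|$ given $\{\Spec\cap B\neq\emptyset,\,\Spec\cap W=\emptyset\}$ and upper-bounds the conditional second moment, then concludes via Cauchy--Schwarz. The two inputs are precisely Propositions \ref{prop19} and \ref{p.two point estimate}. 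Your step (1), localizing $\Spec\cap B$ to a sub-box $B''$ of radius $r/10$ (which is really the ``few squares'' mechanism of Proposition \ref{prop14}, used for Theorem \ref{prop11}, not here), does not by itself show $\Rs$ hits the spectral sample with positive probability---after localizing you would still have to control the conditional moments. Your step (3) ``run the spectral-sample exploration'' also does not correspond to an available tool; the spectral sample has no exploration process of this kind, and the argument you want is the moment method.

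The other, more substantive gap is that you never identify how the multi-quad first-moment estimate is actually proved. Proposition \ref{prop19} requires a quasi-multiplicativity statement of the form $\P[\omega',\omega''\in\cA_\square(x,R\cQ)]\succeq \P[\omega',\omega''\in\cA_4(x,B)]\,\P[\omega',\omega''\in\cA_\square(B,R\cQ)]$ where $\cA_\square$ is multi-quad pivotality. The paper's proof of this rests on the explicit geometric construction of Lemma \ref{l.constr}: quads $q_1,\dots,q_4,\wh q_1,\wh q_3$ and families $A(Q)$, $Q\in\cQ$, such that a prescribed pattern of open/closed crossings of these auxiliary quads forces $B$ to be pivotal for $\cQ$; RSW/FKG and compactness then give the chain of inequalities (a)--(d) in the proof of Proposition \ref{prop19}. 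You anticipate that FKG-gluing is needed, but without this construction the gluing has no well-defined target events, so the step is genuinely missing rather than merely sketchy. Finally (cf.\ Remark \ref{r.monot}), the GPS Section 5 techniques require $f$ to be a \emph{monotone} Boolean function; this is why the paper restricts $f_{R\cQ}$ to the indicator of the intersection of crossing events and handles general events elsewhere via inclusion-exclusion. If you cite GPS Section 5 you must note where monotonicity is used.
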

\begin{proof}
	The proof is identical to the proof of \cite[Proposition 5.11]{gps-fourier}. Propositions \ref{prop19} and \ref{p.two point estimate} below give the required first and second moment estimates.
\end{proof}

\begin{remark}\label{r.monot}
As we outline below, it is not too difficult to extend the proof of \cite{gps-fourier} to our present multiple-quad setting. 
Yet, one crucial property of our multiple-quad Boolean function $f=f_{R\qs}$ is that it is a monotone Boolean function. Otherwise the techniques from \cite{gps-fourier}, break down completely. See Remark 5.5 in \cite{gps-fourier}. This is the reason why we only control via Fourier analysis the intersection of several monotone events (crossing events) and deal with the more general ones via an inclusion-exclusion argument. 
\end{remark}

Let $B,W\subset\mcl I$ be disjoint. Let $\Lambda_B=\Lambda_{f,B}$ be the event that $B$ is pivotal for $f$. More precisely,
$\Lambda_B$ is the set of $\omega\in\Omega$ such that there is some $\omega'\in\Omega$
that agrees with $\omega$ on $B^c$ while $f(\omega)\ne f(\omega')$.
Also define $\llwb=\Lambda(B,W):=\Pb{\Lambda_B\md \mathcal{F}_{W^c}}$.

The following lemma is derived as in \cite[Section 5.3, equation (5.10)]{gps-fourier}.
\begin{lemma}
	Consider the setting of Proposition \ref{prop20}.
	Let $\wh f$ be a monotone function of a percolation configuration on $\cI$ such that $\wh f$ takes values in $\{ -1,1 \}$, and let $\wh{\S}$ denote the associated spectral sample. Let $\omega,\omega'$ be instances of critical percolation on $\Tg$ which are the same on $W^c$ and independent on $W$. Then the following two inequalities are equivalent for any constant $c_1>0$
	\eqbn
	\Pb{ \omega',\omega''\in A_\square(x,\qs)}
	\ge c_1\,
	\Pb{\omega',\omega''\in A_4(x,B)} \, \Pb{ \omega',\omega''\in A_\square(B,\qs)},
	\eqen
	\eqbn
	\Pb{x\in\wh\Spec,\ \wh\Spec \cap W=\emptyset} \geq c_1\, \Eb{\llwb^2}\, \alpha_4(r).
	\eqen
	\label{prop12}
\end{lemma}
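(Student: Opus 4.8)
Looking at Lemma \ref{prop12}, the statement asserts that two inequalities are equivalent, parametrized by an arbitrary constant $c_1 > 0$. The structure suggests this is an algebraic identity rather than a deep probabilistic fact: one should express both sides of each inequality via the same underlying quantity, namely a sum (or integral) over the randomness in $W$ of a product of "pivotality" probabilities, so that dividing out the common factor transforms one inequality into the other.

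My plan is as follows. First I would recall the Fourier-analytic formula for $\Pb{x \in \wh\Spec}$: for a monotone Boolean function $\wh f$ on $\cI$, the spectral weight $\Pb{x \in \wh\Spec} = \sum_{S \ni x} \wh{\wh f}(S)^2$ can be rewritten using the standard identity from \cite{gps-fourier} relating the spectral sample to two independent copies $\omega', \omega''$ of the configuration. Concretely, $\Pb{x\in\wh\Spec, \wh\Spec\cap W = \emptyset}$ equals, up to the combinatorial bookkeeping of \cite[Section 5.3]{gps-fourier}, a quantity of the form $\Pb{\omega',\omega'' \in A_4(x,B)}\cdot \Pb{\omega',\omega'' \in A_\square(B,\qs)}$ divided/multiplied by appropriate factors, where $\omega',\omega''$ agree off $W$ and are independent on $W$, together with the factor $\Eb{\llwb^2}$ coming from conditioning on $\mathcal F_{W^c}$. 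The key point is that both $\Eb{\llwb^2}\alpha_4(r)$ on the right side of the second inequality and the product $\Pb{\omega',\omega'' \in A_4(x,B)}\Pb{\omega',\omega'' \in A_\square(B,\qs)}$ on the right side of the first inequality are — modulo the fixed factor $\alpha_4(r)$ and the decomposition of $\llwb$ via $B$ being nested inside the pivotal structure — literally the same expression, because $\Lambda_B$ conditioned on $\mathcal F_{W^c}$ decouples into the four-arm event around $x$ inside $B$ (contributing $\alpha_4(r)$ after summing over $x \in B'$, or the event $A_4(x,B)$ for fixed $x$) and the event $A_\square(B,\qs)$ that $B$ is pivotal for the whole quad collection.

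The actual argument then proceeds by: (i) writing $x \in \wh\Spec$, $\wh\Spec \cap W = \emptyset$ in terms of the two-copy representation, using that $\wh f$ is monotone so that all the sign cancellations go through exactly as in \cite[Remark 5.5]{gps-fourier} and \cite[Section 5.3]{gps-fourier} — this is where monotonicity is essential; (ii) factoring the resulting two-point quantity through the disjoint regions $B$ and its complement, using spatial Markov / independence of percolation to write $\Pb{\omega',\omega'' \in A_\square(x,\qs)}$ (the event that $x$ is pivotal for $\qs$ in \emph{both} copies) as a product over the four-arm piece near $x$ within $B$ and the pivotality of $B$ for $\qs$; and (iii) observing that summing $\Pb{\omega',\omega'' \in A_4(x,B)}$ over $x$ in $B'$ produces the factor $\alpha_4(r)$ (quasi-multiplicativity, as in \cite{gps-pivotal}), so the two displayed inequalities differ only by this common positive factor and hence are equivalent for every fixed $c_1 > 0$. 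Since the manipulation is an equality between the two sides being compared — not just a one-directional bound — the equivalence of the inequalities is immediate once the identity is in place.

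The main obstacle I anticipate is step (ii): making the factorization of the two-copy pivotality probability $\Pb{\omega',\omega'' \in A_\square(x,\qs)}$ through the box $B$ rigorous in the multiple-quad setting. In the single-quad case of \cite{gps-fourier} this relies on the geometry of the four-arm event separating the local neighborhood of $x$ from the "outside" connectivity data relevant to the crossing; with several quads one must check that $B \subset RU$ with $\ol U \subset \qs^o$ guarantees $x$ lies in a connected component of the quad-complement where pivotality is governed by a genuine four-arm event (this is exactly why $\ol r = \ol r(U,\qs)$ and the restriction $r \le R\diam(U)$ appear), and that the pivotality event $A_\square(B,\qs)$ for the box is well-defined and behaves monotonically. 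Given Remark \ref{r.monot} and the fact that $f_{R\qs}$ is monotone, this should go through, but the bookkeeping of which arm events are four-arm versus two- or three-arm near quad boundaries (handled as in the proof of Proposition \ref{p.tightness}) requires care; fortunately the hypothesis $\ol U \subset \qs^o$ pushes $B$ away from all boundaries, so inside $B$ we always see a clean four-arm event and the delicate boundary analysis is not needed here. I would therefore present the proof as a direct transcription of \cite[Section 5.3]{gps-fourier}, pausing only to indicate where the collection $\qs$ replaces the single quad and why monotonicity of $f_{R\qs}$ lets the sign manipulations survive.
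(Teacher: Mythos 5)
Your high-level framing is correct: the lemma is a pure translation lemma, replacing spectral-sample quantities by two-copy pivotality probabilities so that the two displayed inequalities are the same statement written in different notation. However, steps (ii) and (iii) of your plan mis-identify what the lemma actually proves. You propose to ``factor the resulting two-point quantity through the disjoint regions $B$ and its complement'' into a product, and to obtain the factor $\alpha_4(r)$ by ``summing $\Pb{\omega',\omega''\in A_4(x,B)}$ over $x$ in $B'$''. Neither of these manipulations belongs in the proof of Lemma~\ref{prop12}, and the second is wrong dimensionally: summing a probability of order $\alpha_4(r)$ over the roughly $r^2$ sites of $B'$ gives a quantity of order $r^2\alpha_4(r)$, not $\alpha_4(r)$. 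More importantly, the factorization you describe \emph{is} the content of the first displayed inequality --- the quasi-multiplicativity estimate --- and it is established separately in the proof of Proposition~\ref{prop19} via the geometric construction with the events $E$, $E'$, $E''$ and Lemma~\ref{prop18}. If Lemma~\ref{prop12} already provided that factorization, there would be nothing left for Proposition~\ref{prop19} to prove, and the ``main obstacle'' you anticipate (making the factorization rigorous in the multi-quad setting) is an obstacle for that later proposition, not for this lemma.

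What the lemma actually requires is three identities applied term by term (and this is why the paper simply cites \cite[Section 5.3, eq.~(5.10)]{gps-fourier}). With $\wh f=f_{R\qs}$: (a) the two-copy formula for monotone Boolean functions gives $\Pb{x\in\wh\Spec,\,\wh\Spec\cap W=\emptyset}=\Pb{\omega',\omega''\in\Lambda_{\{x\}}}=\Pb{\omega',\omega''\in A_\square(x,\qs)}$, so the two left-hand sides are literally equal; (b) the same identity applied to $\Lambda_B$ gives $\Eb{\llwb^2}=\Eb{\Pb{\Lambda_B\md\mathcal F_{W^c}}^2}=\Pb{\omega',\omega''\in\Lambda_B}=\Pb{\omega',\omega''\in A_\square(B,\qs)}$; (c) since $A_4(x,B)$ is measurable with respect to the configuration in $B$, and $B\subset W^c$ where $\omega'$ and $\omega''$ agree, one has $\Pb{\omega',\omega''\in A_4(x,B)}=\Pb{A_4(x,B)}\asymp\alpha_4(r)$ for each fixed $x\in B'$ by quasi-multiplicativity --- no sum over $x$ appears, and no decomposition of $\Lambda_B$ into a four-arm piece is involved. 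With (a)--(c) in hand the two inequalities match up to a bounded factor in $c_1$. The multi-quad geometry of $\qs$ plays no special role at this stage; it only enters in Proposition~\ref{prop19}.
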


Recall Definition \ref{def:quad}. We call the boundary arcs $\partial_1 Q$ and $\partial_3 Q$ (resp.\ $\partial_2 Q$ and $\partial_4 Q$) the {\bf open boundary arcs} (resp.\ {\bf closed boundary arcs}) of $Q$. For an instance of site percolation on $\Tg$ the quad $Q$ is crossed (resp.\ not crossed) if and only if there is a path of open (resp.\ closed) sites connecting the two open (resp.\ closed) boundary arcs.
\begin{figure}
	\begin{center}
		\includegraphics[scale=1.5]{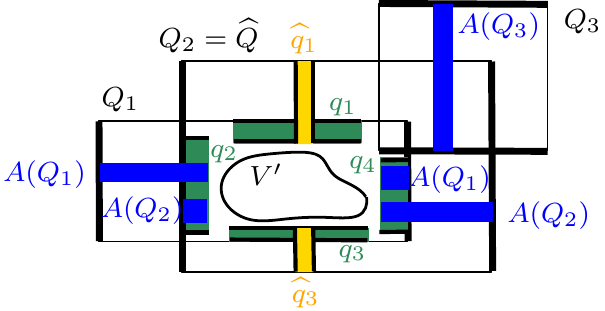}
	\end{center}
	\caption{Illustration of the quads defined in Lemma \ref{prop18}.  The bold black boundary arcs of the quads $Q_1,Q_2,Q_3,q_1,\dots,q_4,\wh q_1,\wh q_3$ indicate the boundary arcs which are connected on the event that the quads have an open crossing. For the quads $Q_1$ and $Q_2$ we are in case (b) of Lemma \ref{prop18}(ii), while for the quad $Q_3$ we are in case (a). Therefore $A(Q_1)$ and $Q(Q_2)$ consist of two quads each, while $A(Q_3)$ consists of a single quad. 
	}
\end{figure}

\begin{lemma}\label{l.constr}
	Let $V$ be a connected component of $\qs^o$. For any $V'\subset V$ such that $\overline{V'}\subset V$, we can find quads $q_1,\dots,q_4,\wh q_1,\wh q_3$ and a collection of quads $A(Q)$ for each $Q\in\qs$ such the following hold.
	\begin{itemize}
		\item[(i)] The quads  $q_1,\dots,q_4$ are bounded away from $V'$ and each other, and they are contained in $V$. One of the open boundary arcs of $q_1$ (resp.\ $q_3$) is equal to a boundary arc of $V$, while the other boundary arcs of $q_1$ (resp.\ $q_3$) are in the interior of $V$. The same property holds for $q_2$ and $q_4$, but with closed instead of open. The quads $q_1,\dots,q_4$ are in counterclockwise order around $\partial V$.
		\item[(ii)] For each quad $Q\in\qs$ one of the following properties (a) or (b) holds.
		\begin{itemize}
			\item[(a)] $A(Q)$ consists of a single quad $q$ contained in $Q$, which is such that the open boundary arcs of $q$ are contained in each of the open boundary arcs of $Q$. 
			\item[(b)] $A(Q)$ consists of two quads $q',q''$ contained in $Q$, which are such that one open boundary arc of $q'$ is contained in an open boundary arc of $Q$ and the other open boundary arc of $q'$ is contained in the closed boundary arc of either $q_2$ or $q_4$ which does not intersect $\partial V$. The same property holds for $q''$, except that $q''$ intersects the other open boundary arc of $Q$.
		\end{itemize}
		\item[(iii)] There is a quad $\wh Q\in\cQ$ such that $\wh q_1$ and $\wh q_3$ are contained in $\wh Q$. Furthermore, one closed boundary arc of $\wh q_1$ is contained in a closed boundary arc of $\wh Q$ and the other closed boundary arc of $\wh q_1$ is contained in the open boundary arc of $q_1$ which does not intersect $\partial V$. The same property holds for $\wh q_3$, except that the closed boundary arcs intersect the other closed boundary arc of $\wh Q$ and an open boundary arc of $q_4$, respectively.
		\item[(iv)] $q_1\cup q_3\cup \wh q_1\cup \wh q_3$ and $q_2\cup q_4\cup (\cup_{Q\in\qs}A(Q))$ are disjoint.
	\end{itemize}
	\label{prop18}
\end{lemma}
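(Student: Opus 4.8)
The plan is to give an explicit construction of the quads $q_1,\dots,q_4,\wh q_1,\wh q_3$ and the families $A(Q)$, working one connected component $V$ of $\qs^o$ at a time, and then to verify properties (i)--(iv) essentially by inspection of the construction. The key geometric input is that $V$ is an open connected set whose boundary $\partial V$ is a union of pieces of the piecewise-smooth quad boundaries, and that by definition of $\qs^o$ every point of $V$ is (with positive probability for large $R$) pivotal for the crossing event of $\qs$. Concretely, each point of $\partial V$ lies on the boundary of some quad $Q\in\qs$, and locally separates an ``open'' side (a side belonging to $\partial_1 Q$ or $\partial_3 Q$) from a ``closed'' side (belonging to $\partial_2 Q$ or $\partial_4 Q$), since a complementary component that has pivotal points must touch both an open and a closed boundary arc of at least one quad. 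I would first record this dichotomy precisely: for each boundary arc $\alpha\subset\partial V$ there is a quad $Q$ with $\alpha\subseteq \partial_i Q$ for some $i\in\{1,2,3,4\}$, and we call $\alpha$ an open or closed arc of $\partial V$ accordingly.

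Next I would fix the auxiliary set $V'$ with $\overline{V'}\subset V$ and build the six ``attachment'' quads. Pick two disjoint open sub-arcs $a_1,a_3$ of the open arcs of $\partial V$ and two disjoint closed sub-arcs $a_2,a_4$ of the closed arcs of $\partial V$, chosen in counterclockwise cyclic order around $\partial V$ and all at positive distance from $V'$ and from each other (this is possible because $V$ has at least one open and at least one closed arc, and $\partial V\setminus\overline{V'}$ is a nonempty relatively open subset of $\partial V$). For each $i$ let $q_i$ be a thin quad inside $V$ having $a_i$ as one of its boundary arcs of the correct type (open for $i=1,3$, closed for $i=2,4$) and its remaining boundary in the interior of $V$; shrinking $q_i$ toward $\partial V$ keeps it away from $V'$ and from the other $q_j$'s, giving (i). For (iii) I choose a quad $\wh Q$ that has a closed boundary arc adjacent to $\partial V$ and build thin quads $\wh q_1,\wh q_3$ inside $\wh Q$ straddling $\wh Q$'s closed boundary: one closed arc of $\wh q_1$ sits on $\partial_2\wh Q$ (or $\partial_4\wh Q$) and the other closed arc of $\wh q_1$ sits on the ``interior'' open arc of $q_1$, with the analogous statement for $\wh q_3$ and $q_4$. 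One has to check $\wh Q$ exists: since $\qs^o\neq\emptyset$ one of the quads has a closed boundary arc bordering $\qs^o$, and (after possibly relabeling which component $V$ we started from, or simply enlarging the interior of $\wh Q$ slightly) one can route $\wh q_1,\wh q_3$ through the interior of $\wh Q$. Finally, for each $Q\in\qs$ I construct $A(Q)$ by cases. If a single thin sub-quad $q\subset Q$ can have both its open arcs on the two open arcs of $Q$ while avoiding all the previously placed quads, take $A(Q)=\{q\}$ (case (a)); this happens when $Q$'s open arcs are ``visible'' to each other inside $Q$ without obstruction. Otherwise, route a thin quad $q'$ from one open arc of $Q$ to the ``interior'' closed arc of $q_2$ (or $q_4$) and a thin quad $q''$ from the other open arc of $Q$ to the interior closed arc of the other of $q_2,q_4$, and set $A(Q)=\{q',q''\}$ (case (b)), which is the role the closed attachment quads $q_2,q_4$ were designed for.

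The main obstacle, and where most of the real work sits, is the disjointness/compatibility bookkeeping, i.e.\ property (iv) together with the ``contained in $Q$'', ``bounded away'', and ``counterclockwise order'' clauses holding simultaneously. Since we have only finitely many quads and finitely many attachment quads, and each newly constructed quad is chosen to be an arbitrarily thin tube in the interior of a region where there is positive ``room'' (a topological argument: the relevant boundary arcs are in the interior of appropriate open sets and the quads can be thinned to lie in prescribed tubular neighborhoods), one can carry out the construction greedily: place $q_1,\dots,q_4$ first, then $\wh q_1,\wh q_3$ in the thin tubes around $\partial\wh Q$, then the $A(Q)$'s one at a time, each time shrinking the new tube so it avoids the finitely many already-placed (compact) quads. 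The key point making this work is that in a planar domain a thin enough tube connecting two boundary arcs can be made to avoid any fixed compact set disjoint from those arcs; piecewise smoothness of the quad boundaries guarantees the needed regularity for ``thin tube'' to make sense and for the arc-containment conditions to be satisfiable. I would therefore structure the proof as: (1) the open/closed arc dichotomy for $\partial V$; (2) existence of the cyclically ordered attachment arcs $a_1,\dots,a_4$ away from $V'$; (3) the thin-tube lemma (a standard planar topology fact, stated without detailed proof); (4) the greedy placement of $q_i$, then $\wh q_1,\wh q_3$, then the $A(Q)$; (5) a short verification of (i)--(iv) from the construction. I expect (4)--(5) to be where an expert would want the most care, precisely because the construction must make all six pieces and the $A(Q)$'s mutually consistent, but none of it requires more than elementary planar topology given the finiteness and the freedom to thin tubes.
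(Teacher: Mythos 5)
Your proposal misses the key idea that makes the paper's construction go through: the paper begins by fixing a specific percolation coloring in which some $x\in V$ is pivotal for the joint crossing event. That single configuration simultaneously supplies (a) a clean dichotomy between cases (ii)(a) and (ii)(b) --- a quad $Q$ falls in case (a) when it is actually crossed in the reference configuration, and in case (b) when it is not --- and (b) a globally coherent \emph{disjoint} routing, since the quads in $\cup_Q A(Q)$ are drawn through \emph{open} hexagons while $\wh q_1,\wh q_3$ are drawn through \emph{closed} hexagons, and the two hexagon families cannot overlap. That is exactly what guarantees property (iv) and the compatibility of all the sub-quad placements at once.

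Your greedy thin-tube argument does not address this global compatibility. Locally, you can indeed shrink a tube to avoid any fixed compact set disjoint from its endpoints, but you have not ruled out a topological obstruction forcing two of your required tubes to cross --- e.g.\ a tube from $\wh Q$'s closed boundary arc to $q_1$ versus a tube from some $Q$'s open boundary arc to $q_2$. In a multiply-connected union of quads there is no a priori reason such paths can be made disjoint by thinning alone; the paper sidesteps this entirely by inheriting planarity from the pivotal coloring. Relatedly, your dichotomy ``if a single thin sub-quad works, case (a), otherwise case (b)'' is not accompanied by an argument that case (b) always succeeds when case (a) fails, whereas the paper's crossing/non-crossing dichotomy makes both cases constructive. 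To repair your proof you would essentially have to import the paper's step of fixing a pivotal coloring (the existence of which is exactly what defines $\qs^o$) and read off the quads from its open and closed arms.
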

Observe that $V$ is pivotal for $\qs$ if 
all the quads in $\{q_2,q_4\}\cup\big(\bigcup_{Q\in\cQ} A(Q)\big)$ have open crossings, and none of the quads in $\{q_1,q_3,\wh q_1,\wh q_3\}$ have open crossings. (N.B. obviously this is not an iff). 
\begin{proof}
	Choose $R$ large and consider a coloring such that there is a pivotal point $x\in V$ for the event that all the quads in $R\qs$ are crossed. Let $x$ be closed. We may assume $R^{-1}x$ is bounded away from $\partial V$ and choose $V'$ such that $x\in V'$ and $\ol{V'}\subset V$. For quads for which there is an open crossing define $A(Q)$ and $q$ as in (ii)(a) by using the open crossing, e.g.\ consider a path of open hexagons in the dual lattice connecting the two open sides of $Q$ and let $q$ be contained in these hexagons. 
	
	For the remaining quads $Q\in\qs$ the vertex $x$ is pivotal. Define $\wh Q$ to be one of these remaining quads (in Figure \ref{fig:quads} we chose $\wh Q=Q_2$). If $Q$ is a quad which is not crossed (including, among others, the particular quad $\wh Q$), define $A(Q)$ as in (ii)(b) by using two open arms from $V$ to the open boundary arcs of $Q$. At this point we have not yet defined $q_2$ and $q_4$, so instead of the requirement involving $q_2,q_4$ in (ii)(b) we assume that one of the open sides of each quad in $A(Q)$ is contained in $V$. We may assume that the quads in $A(Q)$ do not enter and exit $V$ multiple times in the sense that for each $q'\in A(Q)$ the set $q'\setminus V$ has one connected component (viewing $q'$ as a subset of $\C$). If $q'$ does not satisfy this property then it will hold for some quad $\wt q$ contained in $q'$ (such that $\wt q$ still satisfies the requirements as specified in (ii)(b)), and we replace $q'$ by $\wt q$.
	
	Define  $\wh q_1,\wh q_3$ satisfying (iii) by using two closed arms from $V$ to the closed boundary arcs of $\wh Q$. Again we assume that $\wh q_1\setminus V$ and $\wh q_2\setminus V$ have one connected component.
	
	Note that (iv) is satisfied if we let the quads in $A(Q)$ for all $Q\in\qs$ along with $\wh q_1,\wh q_3$ be contained in the interior of the hexagons which define the crossings. Finally, we can find quads $q_1,\dots,q_4$ satisfying (i), (ii)(b), and (iii) (after doing local deformations of the parts of the quads in $A(Q)\cup\{\wh q_1,\wh q_3 \}$ intersecting $V$) since $\partial V$ can be divided into four arcs such that with these arcs in counterclockwise order, the first (resp.\ third) arc contains $\wh q_1\cap\partial V$ (resp.\ $\wh q_3\cap\partial V$), and the union of the remaining two arcs contain $q\cap\partial V$ for each $q$ in some set $A(Q)$.
\end{proof}

The following is our first moment estimate. It is an analogue of \cite[Proposition 5.2]{gps-fourier} for the case of multiple quads.
\begin{prop}
	Consider the setup of Proposition \ref{prop20}. There is a constant $c_1>0$ (depending on $U$ and $\cQ$) such that for any $x\in B'\cap\mcl I$,
	\begin{equation}\label{e.1stm}
	\Pb{x\in\Spec,\ \Spec \cap W=\emptyset} \geq c_1\, \Eb{\llwb^2}\, \alpha_4(r)\,.
	\end{equation}
	\label{prop19}
\end{prop}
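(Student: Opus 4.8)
The plan is to mirror the proof of \cite[Proposition 5.2]{gps-fourier}, with the multi-quad ingredients supplied by Lemma \ref{prop18}. First I would reformulate \eqref{e.1stm}. By Lemma \ref{prop12} applied to the monotone Boolean function $\wh f=f_{R\qs}$, the inequality \eqref{e.1stm} is equivalent to
\[
\P[\omega',\omega''\in A_\square(x,\qs)]\ \geq\ c_1\,\P[\omega',\omega''\in A_4(x,B)]\cdot\P[\omega',\omega''\in A_\square(B,\qs)],
\]
where $\omega',\omega''$ are critical percolations on $\Tg$ agreeing on $W^c$ and independent on $W$. Since $B\cap W=\emptyset$ we have $\omega'|_B=\omega''|_B$, so $A_4(x,B)$, which depends only on the configuration inside $B$, is a single event, and as $x\in B'$ (whence $\op{dist}(x,\partial B)\asymp r$) quasi-multiplicativity gives $\P[\omega',\omega''\in A_4(x,B)]\asymp\alpha_4(r)$. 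Moreover, conditioning on $\mcl F_{W^c}$ and using that $\{B\text{ pivotal for }\qs\text{ in }\omega'\}$ and $\{B\text{ pivotal for }\qs\text{ in }\omega''\}$ are conditionally independent given $\mcl F_{W^c}$ with common conditional probability $\llwb$, we get $\P[\omega',\omega''\in A_\square(B,\qs)]=\E[\llwb^2]$. Hence it suffices to prove
\[
\P[\omega',\omega''\in A_\square(x,\qs)]\ \gtrsim\ \alpha_4(r)\cdot\P[\omega',\omega''\in A_\square(B,\qs)].
\]

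To prove this I would follow \cite[Section 5.2]{gps-fourier}: reveal $\omega'|_{B^c}$ and $\omega''|_{B^c}$ and work on the event that $B$ is pivotal for $\qs$ in both; then the shared, still unrevealed configuration $\zeta:=\omega|_B$ is a fresh percolation on $B$, independent of the revealed data, and it is enough to show that with conditional probability $\gtrsim\alpha_4(r)$ this $\zeta$ makes $x$ pivotal for $\qs$ simultaneously in $(\zeta,\omega'|_{B^c})$ and in $(\zeta,\omega''|_{B^c})$; taking expectations then gives the displayed bound. The target configurations are those obtained from the all-open configuration on $B$ by carving out two disjoint thin closed arms from the hexagon of $x$ to $\partial B$, so that $\zeta$ realizes a four-arm event from $x$ to $\partial B$ — by quasi-multiplicativity the probability of the relevant family of such $\zeta$ is $\asymp\alpha_4(r)$. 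The surgery then amounts to checking, for an appropriate routing of the two thin closed arms, that opening $x$ keeps all quads crossed (since $B$-all-open does, by pivotality of $B$, and carving out a thin set does not destroy a robust crossing) while closing $x$ completes the exterior closed structure into a blocking path of some quad; hence $x$ is pivotal.

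The multiple quads enter — and here lies the main obstacle — in making this last step work simultaneously for $\omega'$ and $\omega''$ and for the whole collection $\qs$ rather than for one fixed quad: one must control where on $\partial B$ the relevant open and closed exterior connections of $\omega'$ and of $\omega''$ arrive, and route the two arms of $\zeta$ accordingly. To this end I would apply Lemma \ref{prop18} with $V$ the connected component of $\qs^o$ containing $U$ and $V'=U$, producing the deterministic auxiliary quads $q_1,\dots,q_4,\wh q_1,\wh q_3$ and $A(Q)$, all at distance $\gtrsim r$ from $B$; by the remark following that lemma, pivotality of $B$ is forced by the canonical event that $q_2,q_4$ and all the $A(Q)$ are crossed and $q_1,q_3,\wh q_1,\wh q_3$ are not crossed, and, crucially, by Lemma \ref{prop18}(iv) the open and the closed parts of this canonical structure occupy disjoint regions, so the inserted closed wedge through $x$ cannot cut an open crossing. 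Using RSW together with the arm-separation and arm-pushing lemmas (as in \cite{gps-pivotal,gps-fourier}) one can then, at the cost of a constant factor, force the relevant closed exterior connections to reach $\partial B$ in prescribed well-separated arcs, simultaneously for $\omega'$ and $\omega''$ — exactly as in the single-quad coupling argument of \cite[Section 5.2]{gps-fourier}, where the forced cyclic order of the four arms around one fixed quad makes this possible — and glue the two arms of $\zeta$ to them. The remaining ingredients (quasi-multiplicativity, RSW, arm separation) are standard and used as in \cite[Section 5.2]{gps-fourier}; Lemma \ref{prop12} together with the present proposition and the second-moment estimate Proposition \ref{p.two point estimate} then feed into Proposition \ref{prop20} exactly as in \cite[Proposition 5.11]{gps-fourier}.
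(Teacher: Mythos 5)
Your proposal starts in the same place as the paper: reduce via Lemma \ref{prop12} to the quasi-multiplicativity inequality $\P[\omega',\omega''\in A_\square(x,\qs)] \gtrsim \P[\omega',\omega''\in A_4(x,B)]\,\P[\omega',\omega''\in A_\square(B,\qs)]$, and use Lemma \ref{prop18}'s auxiliary quads $q_1,\dots,q_4,\wh q_1,\wh q_3,A(Q)$ to force pivotality. This is the right skeleton, and the same one the paper uses. But the middle of your argument, as stated, has a gap that matters.

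You propose to reveal $\omega'|_{B^c},\omega''|_{B^c}$ on the event $\{B \text{ pivotal in both}\}$, and then assert that the shared inner configuration $\zeta$ makes $x$ pivotal in both with \emph{conditional} probability $\gtrsim\alpha_4(r)$. This is not uniformly true over exterior realizations: on atypical exteriors the open and closed interfaces of $\omega'$ (or $\omega''$) can arrive at $\partial B$ in badly squeezed, mutually incompatible arcs, and then the conditional probability of threading a four-arm structure through $x$ and hooking it onto both exteriors is far smaller than $\alpha_4(r)$. You try to repair this by saying one can, ``at the cost of a constant factor, force the relevant closed exterior connections to reach $\partial B$ in prescribed well-separated arcs,'' but the order of operations is wrong: once you have conditioned on the exterior there is nothing left to force. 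The restriction to a well-separated sub-event must happen \emph{before} the conditioning (and be justified by an arm-separation/quasi-multiplicativity bound), and doing that cleanly is precisely the content the paper packages into its chain of inequalities. Concretely, the paper never conditions: it passes through the intermediate box $\wh B$ at scale $d\asymp R\op{dist}(U',\partial\qs^o)$, uses $A_\square(B,R\qs)\subseteq A_4(B,\wh B)$, then \cite[Proposition 5.6]{gps-fourier} (quasi-multiplicativity for the coupled pair $\omega',\omega''$) to glue $A_4(x,B)$ and $A_4(B,\wh B)$ into $A_4(x,\wh B)$, then RSW+FKG+compactness to append the separated landing structure $E\cap E'\cap E''$ (which incorporates the auxiliary quads of Lemma \ref{prop18}) at a constant cost, and finally observes that $A_4(x,\wh B)\cap E\cap E'\cap E''\subseteq A_\square(x,R\qs)$. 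The key technical ingredient you need to cite explicitly is the quasi-multiplicativity for coupled configurations \cite[Proposition 5.6]{gps-fourier}; it is exactly what replaces your uniform conditional claim by a bound that is valid in the aggregate.

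So: the ingredients you assembled are the right ones, and you clearly identified the multi-quad construction of Lemma \ref{prop18} as the new input, but the conditioning step as written does not stand and needs to be replaced by the paper's intermediate-scale chain of inequalities (or an equivalent restrict-then-condition argument invoking coupled quasi-multiplicativity).
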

\begin{proof} 
	In the proof below, we will rely on the notations introduced in \cite[Section 5]{gps-fourier}. It is sufficient to prove the first inequality of Lemma \ref{prop12} (which is a quasi-multiplicativity type of estimate). We will use for this the construction provided by Lemma \ref{l.constr}, where $V$ is the component of $\qs^o$ containing $R^{-1}x$. Let $V''$ be the connected component of $U$ which contains the point $R^{-1}x$, and set $d=R\op{dist}(V'',V^c)$. Then define $V'=\{y\in V\,:\, R\op{dist}(y,V^c)>d/3 \}$, so that $V''\subset V'\subset V$. 
	
	Let $\wh B\subset RV$ (resp.\ $\wh B'\subset RV$) be the square concentric with $B$ of side length $d/3+r$ (resp.\ $d/6+r$). Note that $\wh B$ has distance at least $d/3$ from $(RV')^c$. Let $L_0,\dots,L_7$ be defined as in \cite[Section 5]{gps-fourier} with the annulus $\wh B\setminus \wh B'$. Let $E$ be the event $\omega',\omega''\in\scr A_4(x,\wh B)$, with the additional requirement that the two open (resp.\ closed) arms cross the annulus $\wh B\setminus \wh B'$ inside $L_0,L_4$ (resp.\ $L_2,L_6$), and that there are open (resp.\ closed) paths that separate $\partial\wh B\cap L_j$ from $\partial\wh B'\cap L_j$ inside $L_j$ for $j=0,4$ (resp.\ $j=2,6$).
	Let $E'$ be the event that the quads $\bigcup_{Q\in \cQ}A(Q),q_2,q_4$ rescaled by $R$ have open crossings, and that $q_1,q_3,\wh q_1,\wh q_3$ rescaled by $R$ have closed crossings. Let $E''$ be the event that there is an open crossing from $\partial \wh B'$ to $R(\partial q_2\cap \partial V)$ inside $L_0\cup ((RV)\setminus \wh B)$, that there is a similar crossing with $L_4$ and $q_4$, and that there are similar closed crossings. We have
	\begin{align}
	\nonumber	&\P[ \omega',\omega''\in\scr A_4(x,B) ] 
	\P[ \omega',\omega''\in\scr A_\square(B,R\cQ) ]\\
	&\hspace{0.15\textwidth}\leq \P[ \omega',\omega''\in\scr A_4(x,B) ] 
	\P[ \omega',\omega''\in\scr A_4(B,\wh B) ]\tag{a}\label{e. a}\\
	&\hspace{0.15\textwidth}\preceq {\P}[ \omega',\omega''\in\scr A_4(x,\wh B) ]\tag{b}\label{e. b}\\
	&\hspace{0.15\textwidth}\preceq {\P}[ \omega',\omega''\in\scr A_4(x,\wh B)\cap E\cap E'\cap E'' ]\label{e. c}\tag{c}\\
	&\hspace{0.15\textwidth}\leq {\P}[ \omega',\omega''\in\scr A_\square(x,R\cQ) ]\label{e. d}\tag{d}.\\
	\end{align}
	Here \eqref{e. a} and \eqref{e. d} are immediate by inclusion of events, \eqref{e. b} is \cite[Proposition 5.6]{gps-fourier}, and 
	\eqref{e. c} follows by using the Russo-Seymour-Welsh theorem, the FKG inequality, and compactness.
\end{proof}

\begin{figure}
	\begin{center}
		\includegraphics[scale=1.5]{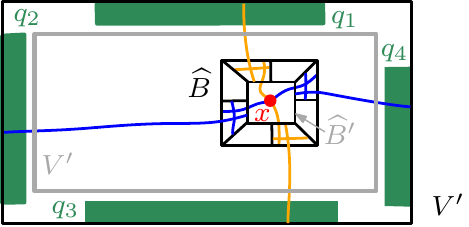}
	\end{center}
	\label{fig:quads}
	\caption{Illustration of the events $E'$ and $E''$ in the proof of Proposition \ref{prop19}. Open arms are blue and closed arms are orange.
	}
\end{figure}

The following is the (easier) second moment estimate. It is an analogue of \cite[Proposition 5.3]{gps-fourier} for the case of multiple quads.
\begin{prop}
	Let $\Spec$ be the spectral sample of $f=f_{R\qs}$, where $\qs$ is a collection of finitely many quads.
	Let $z$ be a point in one of the quads and let $r>0$. Set $B:= B(z,r)$ and $B':=B(z,r/3)$.
	Suppose that $B(z,r/2)\subset R\qs^o$ and that $B$ and $W$ are disjoint.
	Then for every $x,y\in B'\cap \mcl I$ we have
	\eqbn\label{e.2ndm}
	\Pb{x,y\in\Spec,\ \Spec \cap W=\emptyset} \leq c_2\, \Eb{\llwb^2}\, \alpha_4(|x-y|)\,\alpha_4(r)\,,
	\eqen
	where $c_2<\infty$ is an absolute constant.
	\label{p.two point estimate}
\end{prop}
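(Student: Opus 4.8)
The plan is to follow the proof of \cite[Proposition 5.3]{gps-fourier} essentially line by line. The point is that the only place where the passage from a single quad to the collection $\qs$ matters is a macroscopic crossing event, which is absorbed into the factor $\Eb{\llwb^2}$; everything else is a purely local four-arm computation in the bulk of a quad, insensitive to how many quads there are.

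First I would reduce to a statement about two coupled configurations, exactly as in \cite[Section 5.3]{gps-fourier} and as recorded for a single point in Lemma \ref{prop12}: the second-moment identity for the spectral sample gives
\[
\Pb{x,y\in\Spec,\ \Spec\cap W=\emptyset}\le c\,\Pb{\omega',\omega''\in\scr A_\square(\{x\},R\qs)\cap\scr A_\square(\{y\},R\qs)},
\]
where $\omega',\omega''$ are critical percolations on $\Tg$ with $\omega'|_{W^c}=\omega''|_{W^c}$ and $\omega',\omega''$ independent on $W$, and $\scr A_\square(\{x\},R\qs)$ is the event that $x$ is pivotal for the crossing of all quads of $R\qs$. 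Only an inequality is needed here, so monotonicity of $f=f_{R\qs}$ (Remark \ref{r.monot}) is all that is used.

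Next I would decompose this two-point pivotality event over scales. On the right-hand event, each of the two copies must exhibit four alternating arms from $x$ to $\partial B(x,|x-y|/3)$, four alternating arms from $y$ to $\partial B(y,|x-y|/3)$, four alternating arms across the annulus $B(z,r/3)\setminus B(z,|x-y|)$ (these make sense and lie in the bulk since $x,y\in B'$ and $B(z,r/2)\subset R\qs^o$), together with the macroscopic event that $B(z,r/2)$ is pivotal for $R\qs$. Because $B\cap W=\emptyset$ and $r\ge\ol r$ is large, the arm events near $x$ and near $y$ take place well inside a single quad and far from $W$; by independence (BK inequality) and quasi-multiplicativity of the four-arm probability \cite{gps-pivotal,smirnov-werner-percolation} they contribute a factor $O(\alpha_4(|x-y|)^2)$, and the annulus contributes $O(\alpha_4(|x-y|,r))$, both independently of the configuration on $W$ and of the macroscopic event. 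The leftover macroscopic factor $\Pb{\omega',\omega''\in\scr A_\square(B(z,r/2),R\qs)}$ is $O(\Eb{\llwb^2})$ by the same quasi-multiplicativity argument as in Lemma \ref{prop12}/\cite[Section 5.3]{gps-fourier}, with $\Lambda_B$ the event that $B=B(z,r/2)$ is pivotal for $f_{R\qs}$. Multiplying the three factors and using $\alpha_4(|x-y|)\,\alpha_4(|x-y|,r)\asymp\alpha_4(r)$ yields the claim.

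The main obstacle, and the only genuinely new input compared with \cite{gps-fourier}, is to justify that ``pivotal for $R\qs$'' splits at scale $r$ into a local four-arm event times a macroscopic crossing event, with constants depending only on $U$ and $\qs$ --- i.e.\ the quasi-multiplicativity used above and in Lemma \ref{prop12}. This is handled using that $B(z,r/2)\subset R\qs^o$, that $\qs^o$ has finitely many connected components with piecewise-smooth boundary, and RSW/FKG gluing constructions of the type carried out in Lemma \ref{l.constr} and Proposition \ref{prop19}; once these are in place the computation is identical to that of \cite[Proposition 5.3]{gps-fourier}.
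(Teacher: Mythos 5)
Your proposal is correct and takes essentially the same approach as the paper: reduce to the two-coupled-configuration pivotality event via the spectral-sample identities from \cite[Section 2]{gps-fourier}, decompose the two-point pivotality into local four-arm events at the scales $|x-y|$ and $r$ plus the macroscopic factor $\Eb{\llwb^2}$, and close via quasi-multiplicativity. The paper's proof is equally terse (it just says the argument of \cite[Proposition 5.3]{gps-fourier} carries over verbatim), with the one remark that \cite[Lemmas 2.1 and 2.2]{gps-fourier} already apply to general real-valued $f$, so monotonicity is not strictly needed at this step — a slightly different emphasis from your appeal to monotonicity, but both lead to the same bound.
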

\begin{proof}
	The proof is identical to the proof in \cite{gps-fourier}. Note that \cite[Lemmas 2.1 and 2.2]{gps-fourier}, which are used in the proof, hold for the spectral sample of general real-valued functions $f$ of the percolation configuration. For an arbitrary set $A\subset\mcl I$ we let $\Lambda_A$ be the event that $A$ is pivotal for our quad crossing event. One key geometric argument in proof which still holds in our setting is that if we condition on $\omega$ restricted to the complement of $W\cup\{x,y \}$ and if flipping $\omega_x$ affects $f(\omega)$, then we must have a four arm event from $x$ to distance $|x-y|/4$, and four arms in an annulus with outer boundary $\partial B$ and inner boundary defined by a box centered at $(x+y)/2$ with radius $2|x-y|$.
\end{proof}

\bibliographystyle{hmralphaabbrv} 
\bibliography{../biblong,../bibshort,../mybib0,../bib-emb}

\end{document}